\documentclass[ 10pt, american ]{article}

\usepackage[american]{babel}

\title{ Fractured Poroelastic Media in the Limit of Vanishing Aperture }

\author{
\textsc{Maximilian Hörl}\thanks{Institute of Applied Analysis and Numerical Simulation, University of Stuttgart, Pfaffenwaldring~57, D-70569 Stuttgart, Germany (\href{mailto:maximilian.hoerl@ians.uni-stuttgart.de}{maximilian.hoerl@ians.uni-stuttgart.de}, \href{mailto:christian.rohde@ians.uni-stuttgart.de}{christian.rohde@ians.uni-stuttgart.de}).} 
\and \textsc{Kundan Kumar}\thanks{Center for Modeling of Coupled Subsurface Dynamics, Department of Mathematics, University of Bergen, Allegaten 41, N-5007 Bergen, Norway (\href{mailto:kundan.kumar@uib.no}{kundan.kumar@uib.no}).} 
\and \textsc{Christian Rohde}\footnotemark[1]
}

\date{}

\newcommand{\headerleft}{Hörl $\cdot$ Kumar $\cdot$ Rohde}
\newcommand{\headerright}{ Fractured Poroelastic Media in the Limit of Vanishing Aperture }

\usepackage[utf8]{inputenc}
\usepackage[T1]{fontenc}

\usepackage{amsmath,amssymb,amsfonts,amsthm,mathtools}

\usepackage{bm}
\usepackage{stmaryrd}
\SetSymbolFont{stmry}{bold}{U}{stmry}{m}{n}

\usepackage{upgreek}
\usepackage{units}
\usepackage{scalerel}
\usepackage{csquotes}
\usepackage{enumitem}
\usepackage{arydshln}
\usepackage{subcaption}

\usepackage{hyperref}
\usepackage[noabbrev,nameinlink]{cleveref}
\usepackage[backend=biber, style=numeric, maxnames=10, giveninits=true]{biblatex}
\usepackage{appendix}

\usepackage[osf,sc]{mathpazo}
\usepackage[scaled=0.85]{beramono}

\usepackage[a4paper, margin=1.5in]{geometry}
\let\originalleft\left
\let\originalright\right
\renewcommand{\left}{\mathopen{}\mathclose\bgroup\originalleft}
\renewcommand{\right}{\aftergroup\egroup\originalright}

\theoremstyle{plain}
\newtheorem{definition}{Definition}
\newtheorem{assumption}[definition]{Assumption}
\newtheorem{notation}[definition]{Notation}
\newtheorem{proposition}[definition]{Proposition}
\newtheorem{corollary}[definition]{Corollary}
\newtheorem{lemma}[definition]{Lem\-ma}
\newtheorem{theorem}[definition]{Theorem}

\theoremstyle{remark}
\newtheorem{remark}[definition]{Remark}

\Crefname{notation}{Notation}{Notations}
\Crefname{subsection}{Section}{Sections}
\Crefname{assumption}{Assumption}{Assumptions}
\Crefname{paragraph}{Section}{Sections}

\newcommand*{\dimq}[1]{{\tilde{#1}}}
\newcommand*{\refq}[1]{{#1}^\star}
\newcommand{\rmb}{{\mathrm{b}}}
\newcommand{\rmd}{{\operatorname{d}}}
\newcommand{\rmD}{{\mathrm{D}}}
\newcommand{\rmf}{{\mathrm{f}}}
\newcommand{\rmN}{{\mathrm{N}}}
\newcommand{\rmt}{{\mathrm{t}}}
\newcommand{\rmT}{{\mathrm{T}}}
\newcommand{\pmf}{{\pm\rmf}}

\newcommand{\fracfac}[2][]{#1\lceil #2 #1\rfloor}
\newcommand{\norm}[2][]{{#1\Vert #2 #1\Vert}}
\newcommand{\abs}[2][]{{#1\vert #2 #1\vert}}
\newcommand{\fraceps }{{\fracfac{\!\epsilon\! }}}

\newcommand*{\jump}[2][]{#1\llbracket #2 #1\rrbracket}
\newcommand{\eff }{{\mathrm{eff}}}

\newcommand{\normiii}[2][]{{#1\vert\kern-0.25ex #1\vert\kern-0.25ex #1\vert #2 
    #1\vert\kern-0.25ex #1\vert\kern-0.25ex #1\vert}}

\newcommand{\nablapar}{\nabla_{\!\smallpar}\hspace{0.5pt}}
\newcommand{\nablaperp}{\nabla_{\!\vct{N}}}

\renewcommand{\rho}{\varrho}
\renewcommand{\epsilon}{\varepsilon}

\let\temp\phi
\let\phi\varphi
\let\varphi\temp

\newcommand{\upgreekmap}[1]{%
  \ifx#1\alpha\upalpha
  \else\ifx#1\beta\upbeta
  \else\ifx#1\gamma\upgamma
  \else\ifx#1\delta\updelta
  \else\ifx#1\epsilon\upepsilon
  \else\ifx#1\zeta\upzeta
  \else\ifx#1\eta\upeta
  \else\ifx#1\theta\uptheta
  \else\ifx#1\iota\upiota
  \else\ifx#1\kappa\upkappa
  \else\ifx#1\lambda\uplambda
  \else\ifx#1\mu\upmu
  \else\ifx#1\nu\upnu
  \else\ifx#1\xi\upxi
  \else\ifx#1\pi\uppi
  \else\ifx#1\rho\uprho
  \else\ifx#1\sigma\upsigma
  \else\ifx#1\tau\uptau
  \else\ifx#1\phi\upphi
  \else\ifx#1\chi\upchi
  \else\ifx#1\psi\uppsi
  \else\ifx#1\omega\upomega
  \else\ifx#1\Gamma\Upgamma
  \else\ifx#1\Delta\Updelta
  \else\ifx#1\Theta\Uptheta
  \else\ifx#1\Lambda\Uplambda
  \else\ifx#1\Xi\Upxi
  \else\ifx#1\Pi\Uppi
  \else\ifx#1\Sigma\Upsigma
  \else\ifx#1\Upsilon\Upupsilon
  \else\ifx#1\Phi\Upphi
  \else\ifx#1\Psi\Uppsi
  \else\ifx#1\Omega\Upomega
  \else#1 
  \fi\fi\fi\fi\fi\fi\fi\fi\fi\fi\fi\fi\fi\fi\fi\fi\fi\fi\fi\fi\fi\fi\fi\fi\fi\fi\fi\fi\fi\fi\fi\fi\fi
}

\newcommand{\matr}[1]{%
  \ifmmode 
    \ifcat\noexpand#1\relax 
      \boldsymbol{\upgreekmap{#1}}
    \else
      \mathbf{#1}
    \fi 
  \else
    \textbf{#1}
  \fi
}

\newcommand*{\vct}[1]{{\bm{#1}}}
\newcommand*{\tsr}[1]{{\mathbb{#1}}}

\newcommand*{\smallpar}{{\scaleobj{.75}{\parallel}}}
\newcommand*{\smallperp}{{\scaleobj{.75}{\perp}}}
\newcommand*{\smallo}{{\scalebox{.9}{$\scriptstyle\mathcal{O}$}}}

\newcommand*{\nablaeps}{\nabla^{\fracfac{\!\epsilon\!}}}

\usepackage[ headsepline=.5pt ]{scrlayer-scrpage}
\lohead{ \headerleft }
\rohead{ \headerright }%
\cfoot*{\pagemark}

\raggedbottom

\setlength{\parindent}{0pt}
\setlength{\parskip}{10pt plus2mm minus2mm}
\allowdisplaybreaks[3]

\numberwithin{equation}{section}
\numberwithin{definition}{section}

\makeatletter
\renewenvironment{abstract}{%
    \if@twocolumn
      \section*{\abstractname}%
    \else
      \begin{center}%
        {\bfseries  \abstractname\par}%
      \end{center}%
    \fi
}{}
\makeatother


\ExecuteBibliographyOptions{doi=false}
\ExecuteBibliographyOptions{url=false}
\newbibmacro{string+doi}[1]{%
  \iffieldundef{doi}{ \iffieldundef{url}{#1}{\href{\thefield{url}}{#1}}}{\href{http://dx.doi.org/\thefield{doi}}{#1}}}
\DeclareFieldFormat{title}{\usebibmacro{string+doi}{\mkbibemph{#1}}}
\DeclareFieldFormat[article,inbook,incollection,inproceedings,patent,thesis,unpublished]{title}{\usebibmacro{string+doi}{\mkbibquote{#1\isdot}}}

\renewbibmacro*{volume+number+eid}{%
  \printfield{volume}%
  \setunit*{\addnbthinspace}
  \printfield{number}%
  \setunit{\addcomma\space}%
  \printfield{eid}}
\DeclareFieldFormat[article]{number}{\mkbibparens{#1}}

\setcounter{secnumdepth}{4}

\captionsetup{font=small,labelfont=bf}
\captionsetup[sub]{font=small,labelfont=bf}

\ifpdf
\hypersetup{
  pdftitle={Fractured Poroelastic Media in the Limit of Vanishing Aperture},
  pdfauthor={M. Hörl, K. Kumar, and C. Rohde}
}
\fi

\bibliography{references}

\begin{document}

\maketitle

\begin{abstract}
\noindent 
We consider a poroelastic medium with a thin heterogeneity, also referred to as a fracture. 
Fluid flow and mechanical deformation inside both bulk and fracture are governed by the quasi-static Biot equations.
The fracture's material parameters, such as hydraulic conductivity and elasticity, are assumed to scale with powers of the width-to-length ratio~$\epsilon$ of the fracture.
Based on a~priori estimates, we rigorously derive limit models as~$\epsilon \rightarrow 0$ and identify different limit regimes.
We obtain five regimes for the hydraulic conductivity and two for the elasticity.
While many cases yield discrete fracture models, others result in two-scale limit problems dominated by normal flow or deformation.
\end{abstract}

\newlength{\kwlabelwidth}
\settowidth{\kwlabelwidth}{\textbf{Key Words}} 
\begin{itemize}[labelwidth=\kwlabelwidth,labelsep=1.4em,leftmargin=!]
\item[\textbf{Key Words}] Fractured porous media,  discrete fracture model, vanishing aperture, poroelasticity 
\end{itemize}

\newlength{\msclabelwidth}
\settowidth{\msclabelwidth}{\textbf{MSC Codes}} 
\begin{itemize}[labelwidth=\msclabelwidth,labelsep=1em,leftmargin=!]
\item[\textbf{MSC Codes}] 35B40, 35B45, 74F10, 74R99, 76S05
\end{itemize}

\section{Introduction}
\label{sec:1}
Thin heterogeneities in porous materials are characterized by extreme geometries with a small aperture but wide lateral extent. 
They may be partially sealed or highly porous and, despite their small thickness, can exert a strong influence on the mechanical behavior and fluid flow in the surrounding porous material.
Examples include geological fractures, thin sedimentary layers, membranes, and composite interfaces.
Applications arise not only in environmental and geotechnical contexts,
where we mention subsurface energy storage and exploitation, carbon sequestration, and groundwater flow, but also in engineered porous materials or biological tissues. 

We consider a poroelastic medium at the macroscopic scale, where fluid flow and mechanical deformation are governed by the Biot equations~\cite{biot41}.
Here, accurately modeling thin heterogeneities is challenging as their extreme length-to-aperture ratios cannot be resolved efficiently in corresponding numerical discretizations. 
 A common solution is the use of discrete fracture models, in which fractures are represented as lower-dimensional manifolds embedded within the surrounding poroelastic matrix.
 In this context, the term fracture is used in a generic sense to refer to any thin heterogeneity whose total aperture is well separated from both the pore scale and the size of the overall domain such that a lower-dimensional representation is appropriate. 
 While discrete fracture models substantially reduce the computational cost, careful formulation and coupling are required to preserve the physical accuracy in the interaction between flow and deformation across the interfaces.

A mathematically rigorous way to derive discrete fracture models is to consider the governing equations first in a full-dimensional fracture domain and analyze the convergence of the corresponding solutions towards a limit model as the width-to-length ratio of the fracture vanishes. 
This approach, also referred to as the limit of vanishing aperture, is the approach that we follow here.
Specifically, we consider a poroelastic medium with an isolated thin heterogeneity---here generically referred to as \enquote{fracture}---where the flow and deformation in both bulk and fracture are governed by the quasi-static Biot equations. 
Like the bulk domain, the fracture is therefore treated as poroelastic medium but characterized by distinct material properties, such as hydraulic conductivity and elasticity tensor. 
The material parameters within the fracture domain are assumed to scale with arbitrary powers of the fracture's width-to-length ratio such that the choice of these exponents leads to different asymptotic regimes in the resulting limit models. 
The key scaling parameters governing the limit behavior are those associated with the hydraulic conductivity and elastic stiffness within the fracture.
Here, we identify five distinct limit regimes for the hydraulic conductivity and two regimes for the elastic response.
While many of these cases lead to discrete fracture models, others do not permit a lower-dimensional reduction of the mechanics or the flow equation within the fracture.
In such cases, flow or deformation in normal direction across the fracture dominates, resulting in a two-scale limit problem that still involves the full-dimensional fracture---however, not as a thin domain but rescaled in normal direction. 
To the authors’ knowledge, this work provides the first rigorous analysis that simultaneously considers the limit of vanishing aperture for coupled fluid flow and mechanical deformation. Accounting for the flow–mechanics coupling and the correct scalings with respect to the fracture's width-to-length ratio---arising from different model parameters and spatial directions---renders the mathematical analysis particularly challenging.

Rigorous convergence results in the limit of vanishing aperture for flow in non-deformable porous media have been obtained in the literature for Darcy flow in both bulk and fracture domains~\cite{hoerl24,huy74,morales10,morales12,sanchez74}, Darcy bulk flow coupled with a Stokes fracture~\cite{morales17}, and unsaturated Richards flow in both bulk and fracture~\cite{list20}.
In particular, as in the present study, the works~\cite{hoerl24,list20} consider different scalings of the hydraulic conductivity within the fracture with respect to its width-to-length ratio.
Moreover, the literature provides rigorous convergence results for poroelastic plates without a surrounding porous medium. 
Here, we mention~\cite{marciniak15,mikelic19,mikelic16}, where isolated poroelastic plates or shells are analyzed, and~\cite{gahn22}, where rigorous homogenization results for a poroelastic plate with surrounding Stokes flow is presented. 
In contrast to~\cite{gahn22}, where the thickness of the poroelastic layer is of pore-scale size, the fracture aperture in our setting is well separated from the pore scale as in \cite{marciniak15,mikelic19,mikelic16}.
Consequently, homogenization from the pore scale and the limit of vanishing aperture constitute two separate procedures that have to be performed consecutively. 
Another rigorous strategy relies on Fourier analysis applied to infinite fracture geometries and has been applied for Darcy flow in both bulk and fractures~\cite{gander21} and Darcy flow coupled with a Stokes fracture~\cite{gander23}. 
This approach also allows for the derivation of quantitative error estimates.

In addition to rigorous methods, formal derivations of discrete fracture models are widely found in the literature, often based on vertical averaging across the fracture~\cite{ahmed17,brenner18,burbulla22,lesinigo11,martin05,rybak20}.
In particular, \cite{bukac17} considers a poroelastic bulk medium coupled with Brinkman flow inside the fracture, \cite{bergkamp22} investigates a poroelastic bulk with thin-film flow inside the fracture, and~\cite{brezina24} is concerned with a poroelastic medium in both bulk and fracture as in our case. 
Another class of formal approaches employs asymptotic expansions for the derivation of discrete fracture models~\cite{dugstad21,dugstad22,kumar20}.
Further related studies include~\cite{pop17}, where both formal and rigorous methods have been applied for reactive transport with nonlinear transmission conditions in fractured porous media, and~\cite{boon23}, where a hybrid-dimensional poromechanics model is developed based on a tailored mixed-dimensional calculus.
More complex models also account for the propagation of fractures and the evolution of the fracture aperture.
A popular approach to capture such dynamics are phase-field models~\cite{lee18,mikelic15}, which can also be combined with discrete fracture formulations~\cite{burbulla23}.

This paper is structured as follows.
First, in \Cref{sec:2}, we introduce the quasi-static Biot equations in a poroelastic medium with an isolated fracture and define the width-to-length ratio~$\epsilon$ of the fracture as a scaling parameter.  
Then, in \Cref{sec:3}, we derive a~priori estimates for the solutions of the full-dimensional problem and identify a weakly convergent subsequence as~$\epsilon \rightarrow 0$.
Depending on how the material parameters scale with~$\epsilon$, we identify different limit models in \Cref{sec:4} and provide rigorous convergence proofs.
The existence and uniqueness of weak solutions to the quasi-static Biot equations is discussed in~\Cref{sec:A}. 

\section{Poroelasticity with Full-Dimensional Fracture}
\label{sec:2}
We consider a poroelastic medium with a thin heterogeneity that we subsequently refer to as \enquote{fracture}. 
First, in \Cref{sec:2.1}, we introduce notations that are used throughout the paper, before defining the geometric setting in \Cref{sec:2.2}.
The quasi-static Biot equations, which model fluid flow and mechanical deformation within the poroelastic medium, are presented in \Cref{sec:2.3}.
In \Cref{sec:2.4}, we non-dimensionalize the Biot system from \Cref{sec:2.3} and introduce the width-to-length ratio~$\epsilon$ of the fracture as a scaling parameter.
While the fracture is treated as a full-dimensional poroelastic subdomain throughout this section, we eventually aim to derive limit models as $\epsilon \rightarrow 0$ (see \Cref{sec:4}).
In \Cref{sec:2.5}, we reformulate the Biot problem in dimensionless form, introduce a corresponding weak formulation, and discuss its wellposedness.
Finally, in \Cref{sec:2.6}, we apply a coordinate transformation to the weak formulation from \Cref{sec:2.5}, which allows us to define function spaces that are independent from the scaling parameter~$\epsilon$.

\subsection{Notation}
\label{sec:2.1}
Let $n \in \mathbb{N}$ with $n \ge 2$ denote the dimension of a poroelastic medium; relevant in applications are the cases $n\in\{ 2,3\}$.
For vectors~$\vct{a},\vct{b} \in \mathbb{R}^n$ and matrices~$\matr{A}, \matr{B} \in \mathbb{R}^{n\times n }$, we write $\vct{a} \cdot \vct{b}$ for the Euclidean and $\matr{A} : \matr{B} $ for the Frobenius product. 
Vectors and vector-valued functions and function spaces are written in bold italics~(e.g., $\vct{u}$), matrices and matrix-valued functions and function spaces in upright bold~(e.g., $\matr{K}$), and fourth-order tensors and suchlike tensor-valued functions in blackboard bold (e.g., $\tsr{C}$). 
In particular, for an open set $D \subset \mathbb{R}^n$, we use the Lebesgue spaces $L^r(D)$, $\vct{L}^r (D) := L^r (D )^n  $, and $\matr{L}^r (D) := L^r (D)^{n\times n} $ for $r\in [1,\infty]$, as well as the Sobolev spaces~$W^{1,r} (D )$ with $H^1(D) := W^{1,2} (D)$ and $\vct{H}^1 ( D) := H^1 (D)^n  $. 
The scalar product in $L^2 (D)$, $\vct{L}^2 (D)$, and $\matr{L}^2 (D)$ is denoted by $\langle \cdot ,\cdot \rangle_D $, where the specific space is clear from the context.
Besides, given an open bounded interval~$J\subset \mathbb{R}$ and a Banach space~$X$, we write $L^r (J ; X ) $, $W^{1,r} (J ; X )$,  and $H^1 (J  ; X)$ for the corresponding Bochner spaces.

Further, we use the symbols~$\lesssim$ and $\gtrsim$ to express inequalities that hold up to a multiplicative constant independent of the scaling parameter~$\epsilon$ as introduced in \Cref{sec:2.4}.
Moreover, we mark dimensional quantities and domains with a tilde to distinguish them from their non-dimensional counterparts that are introduced in \Cref{sec:2.4}. 
Constant reference quantities are labeled by a star.
Besides, quantities related to the transformed system presented in \Cref{sec:2.6} are marked with a hat.
Finally, we refer to the \Cref{not:pmf,not:fracfac,not:odot} below, where a compact notation tailored to the geometric setting introduced in \Cref{sec:2.2} is defined.

\subsection{Geometric Setting} 
\label{sec:2.2}
The geometric setting as defined below is sketched in \Cref{fig:fulldim_dimensional}. 
\begin{figure}
\centering
\includegraphics[scale=1.25]{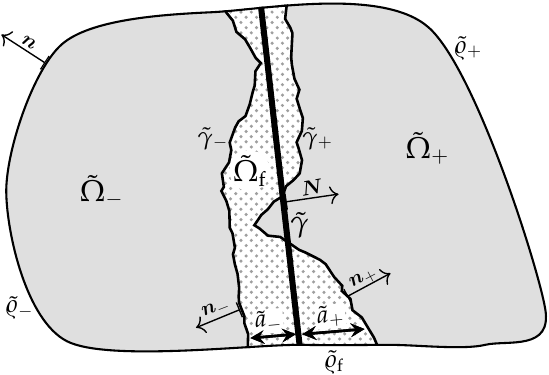}
\label{fig:fulldim_dimensional}
\caption{Geometry in $\dimq{\Omega}$ from  \cref{Omega} for the full-dimensional Biot system~\eqref{eq:fulldim-dimensional}.}
\end{figure}
We consider an isolated fracture~$\dimq{\Omega}_\rmf$ inside a poroelastic medium. 
The fracture domain~$\dimq{\Omega}_\rmf \subset \mathbb{R}^n$ is parameterized by an interface~$\overline{\dimq{\gamma }} \subset \mathbb{R}^n $ and by aperture functions~$\dimq{a}_\pm \in \mathcal{C}^{0,1} (\overline{\dimq{\gamma}})$~$[\unit{m}]$ such that the total aperture~$\dimq{a} := \dimq{a}_+ + \,\dimq{a}_- $ is positive. 
Here, the interface $\overline{\dimq{\gamma}}$ is assumed to be a compact and connected submanifold with $\mathcal{C}^{0,1}$-boundary of a hyperplane in~$\mathbb{R}^n$, i.e.,  $\overline{\dimq{\gamma}}$ has zero curvature and dimension~$n-1$. 
We write $\dimq{\gamma}$ for the interior of~$\overline{\dimq{\gamma}}$ and denote the unit normal on~$\dimq{\gamma}$ by~$\vct{N} \in \mathbb{R}^n$.
Specifically, we now define
\begin{align}
\dimq{\Omega}_\rmf := \bigl\{ \dimq{\vct{y}} + \dimq{s} \vct{N} \, \big\vert \, \dimq{\vct{y}} \in \dimq{\gamma } , \ -\dimq{a}_- ( \dimq{\vct{y}} ) < s < \dimq{a}_+ (\dimq{\vct{y}}) \bigr\} .
\end{align}
Further, we write
\begin{subequations}
\begin{align}
\label{eq:gammapm} \dimq{\gamma}_\pm &:= \bigl\{ \dimq{\vct{y}} \pm \dimq{a}_\pm (\dimq{\vct{y}}) \vct{N} \in \mathbb{R}^n \ \big\vert\ \dimq{\vct{y}} \in \dimq{\gamma} \bigr\} , \\
\dimq{\rho}_\rmf &:= \partial \dimq{\Omega}_\rmf \setminus \bigl( \dimq{\gamma}_+ \cup \dimq{\gamma}_- \bigr)
\end{align}
\end{subequations}
for the lateral and vertical boundaries of the fracture domain~$\dimq{\Omega}_\rmf$. 
The index~\enquote{$\pm$} in \cref{eq:gammapm} is used as an abbreviation to simultaneously refer to two mathematical objects on the left- and right-hand side of the fracture (cf.\ \Cref{not:pmf}).

\begin{remark} 
More generally, the results in this paper also hold for the case where the parameterizing fracture interface~$\overline{\dimq{\gamma}}\subset \mathbb{R}^n$ is a compact and connected $\mathcal{C}^2$-submanifold of dimension~$(n-1)$ with $\mathcal{C}^{0,1}$-boundary. 
For the treatment of such kind of curved fractures, we refer to~\cite{hoerl24}, where Darcy flow in a non-deformable fractured porous medium is considered.
As in~\cite{hoerl24}, we expect that no additional curvature terms appear.
\end{remark}

We assume that the fracture domain~$\dimq{\Omega}_\rmf$ is surrounded by two poroelastic bulk domains~$\dimq{\Omega}_+$ and~$\dimq{\Omega}_-$ with Lipschitz boundary on its left- and right-hand side such that $\partial \dimq{\Omega}_\pm \cap  \partial \dimq{\Omega}_\rmf = \dimq{\gamma}_\pm$ and the overall poroelastic domain~$\dimq{\Omega}$ is given by 
\begin{align}\label{Omega}
\dimq{\Omega} := \dimq{\Omega}_+ \cup \dimq{\Omega}_- \cup \dimq{\Omega}_\rmf \cup \dimq{\gamma}_+ \cup \dimq{\gamma}_- .
\end{align}
The subdomains $\dimq{\Omega}_+$, $\dimq{\Omega}_-$, and $\dimq{\Omega}_\rmf$ are required to be pairwise disjoint. 
We write $\dimq{\rho}_\pm := \partial \dimq{\Omega}_\pm \setminus \overline{\dimq{\gamma}}_\pm$ for the external boundaries of the bulk domains~$\dimq{\Omega}_\pm$. 
In particular, the external boundaries~$\dimq{\rho}_\pmf$ are disjointly subdivided into  sections with Dirichlet and Neumann boundary conditions, i.e., 
\begin{align}
\dimq{\rho}_\pmf = \dimq{\rho}_{\pmf, \rmD } \cup \dimq{\rho}_{\pmf, \rmN } , \quad\enspace \dimq{\rho}_{\pmf, \rmD } \cap \dimq{\rho}_{\pmf, \rmN } = \emptyset .
\end{align}
We assume that $\abs{\dimq{\rho}_{+ ,\rmD }} > 0$ or $\abs{\dimq{\rho}_{- ,\rmD }} > 0$.
Besides, we introduce the following notation. 
\begin{notation}
\label{not:pmf}
We use the index~\enquote{$\pmf$} to refer to the corresponding triple of mathematical objects (e.g., functions, constants, domains, spaces) with respect to the subdomains $\dimq{\Omega}_+$, $\dimq{\Omega}_-$, and~$\dimq{\Omega}_\rmf$.
Besides, the index~\enquote{$\pm$} refers to the corresponding pair of mathematical objects with respect to the bulk subdomains~$\dimq{\Omega}_+$ and~$\dimq{\Omega}_-$. 
Multiple indices within the same term refer to the same pair or triple.
For example, we write
\begin{align}
\dimq{\Omega}_\pm := \bigl( \dimq{\Omega}_+ , \dimq{\Omega}_- \bigr) \enspace\ \text{or} \enspace\ \dimq{\nabla} \cdot ( \matr{\alpha}_\pmf \dimq{\vct{u}}_\pmf ) := \bigl( \dimq{\nabla} \cdot ( \matr{\alpha}_+ \dimq{\vct{u}}_+ ) , \nabla \cdot ( \matr{\alpha}_- \dimq{\vct{u}}_- ) ,  \dimq{\nabla} \cdot ( \matr{\alpha}_\rmf \dimq{\vct{u}}_\rmf ) \bigr) .
\end{align}
\end{notation}

\subsection{Full-Dimensional Biot System in Dimensional Form}
\label{sec:2.3}
We consider the flow of a single-phase fluid and the mechanical deformation of the poroelastic domain~$\dimq{\Omega}$. 
Each of the subdomains $\dimq{\Omega}_+$, $\dimq{\Omega}_-$, and $\dimq{\Omega}_\rmf$ is treated as a (possibly inhomogeneous and anisotropic) poroelastic medium where the flow and deformation are governed by the quasi-static Biot equations. 
In particular, we use the word \enquote{fracture} to refer to a thin heterogeneity inside a poroelastic medium (e.g., a crack filled with debris and treat the fracture domain~$\dimq{\Omega}_\rmf$ equally as a poroelastic medium with (significantly) different material parameters.
We refer to~\cite{biot41} for the original derivation of Biot's equations and to~\cite{mikelic12} for a derivation based on homogenization theory.

Let $\dimq{I} := ( 0, \dimq{T} ) $ denote a time interval with final time~$\dimq{T} < \infty$~$[\unit{s}]$.
The flow and deformation in~$\dimq{\Omega} \times \dimq{I}$ are modeled by the following problem.

Find the pressure head~$\dimq{p}_\pmf \colon \dimq{\Omega}_\pmf \times \dimq{I} \rightarrow \mathbb{R}$~$[\unit{m}]$ and the displacement vector $\dimq{\vct{u}}_\pmf \colon \dimq{\Omega}_\pmf \times \dimq{I} \rightarrow \mathbb{R}^n$~$[\unit{m}]$ such that
\begin{subequations}
\begin{alignat}{2}
-\dimq{\nabla} \cdot \dimq{\matr{\sigma}}_\pmf ( \dimq{\vct{u}}_\pmf , \dimq{p}_\pmf ) &= \dimq{\vct{f}}_\pmf \quad\enspace &&\text{in } \dimq{\Omega}_\pmf \times \dimq{I} , \\
\label{eq:fulldim-dimensional_b}\partial_\dimq{t} \bigl( \refq{\rho}\! \refq{g} \dimq{\omega}_\pmf \dimq{p}_\pmf + \dimq{\nabla} \cdot ( \matr{\alpha}_\pmf \dimq{\vct{u}}_\pmf ) \bigr) - \dimq{\nabla} \cdot \bigl( \dimq{\matr{K}}_\pmf  \dimq{\nabla} \dimq{p}_\pmf \bigr) &= \dimq{q}_\pmf \quad\enspace &&\text{in } \dimq{\Omega}_\pmf \times \dimq{I}, 
\end{alignat}%
where the total stress $\dimq{\matr{\sigma}}_\pmf ( \dimq{\vct{u}}_\pmf , \dimq{p}_\pmf )$~$\unit{[Pa ]}$ is given by
\begin{align}
\dimq{\matr{\sigma}}_\pmf ( \dimq{\vct{u}}_\pmf , \dimq{p}_\pmf ) &:=  \dimq{\tsr{C}}_\pmf \dimq{\matr{e}} ( \dimq{\vct{u}}_\pmf ) - \refq{\rho} \bigl( \refq{g} \dimq{p}_\pmf + \dimq{G}_\pmf \bigr) \matr{\alpha}_\pmf .
\end{align}
Across the internal interfaces~$\dimq{\gamma}_\pm$, we require the continuity of pressure head, displacement vector, normal Darcy velocity, and normal stress, i.e.,
\begin{alignat}{2}
    \dimq{p}_\pm &= \dimq{p}_\rmf \quad\enspace &&\text{on } \dimq{\gamma}_\pm \times \dimq{I} , \\
    \dimq{\vct{u}}_\pm &= \dimq{\vct{u}}_\rmf \quad\enspace &&\text{on } \dimq{\gamma}_\pm \times \dimq{I} , \\
    \dimq{\matr{K}}_\pm  \dimq{\nabla} \dimq{p}_\pm  \cdot \vct{n}_\pm &= \dimq{\matr{K}}_\rmf  \dimq{\nabla} \dimq{p}_\rmf  \cdot \vct{n}_\pm \quad\enspace &&\text{on } \dimq{\gamma}_\pm \times \dimq{I} , \\
\dimq{\matr{\sigma}}_\pm ( \dimq{\vct{u}}_\pm , \dimq{p}_\pm ) \vct{n}_\pm &=  
\dimq{\matr{\sigma}}_\rmf ( \dimq{\vct{u}}_\rmf , \dimq{p}_\rmf ) \vct{n}_\pm \quad\enspace    &&\text{on } \dimq{\gamma}_\pm \times \dimq{I} ,
\end{alignat}
where $\vct{n}_\pm$ denotes the unit normals on~$\dimq{\gamma}_\pm$ that are external normals with respect to the fracture domain~$\dimq{\Omega}_\rmf$.
Besides, given the initial pressure head~$\dimq{p}_{0, \pmf} \colon \dimq{\Omega}_\pmf \rightarrow \mathbb{R}$~$[\unit{m}]$, we impose the boundary and initial conditions 
\begin{alignat}{2}
\dimq{\vct{u}}_\pmf &= \vct{0} \quad\enspace &&\text{on } \dimq{\rho}_\pmf \times \dimq{I} , \\
\dimq{p}_\pmf &= 0 \quad\enspace &&\text{on } \dimq{\rho}_{\pmf , \rmD} \times \dimq{I} , \\
\dimq{\matr{K}}_\pmf \dimq{\nabla} \dimq{p}_\pmf \cdot \vct{n} &= 0 \quad\enspace &&\text{on } \dimq{\rho}_{\pmf , \rmN} \times \dimq{I} , \\
\dimq{p}_\pmf ( \cdot , 0 ) &= \dimq{p}_{0, \pmf} \quad\enspace &&\text{on } \dimq{\Omega}_\pmf ,
\end{alignat}
where $\vct{n}$ denotes the outer unit normal on~$\partial \Omega$. 
Given~$\dimq{p}_{0,\pmf} \colon \dimq{\Omega}_\pmf \rightarrow \mathbb{R}$~$[\unit{m}]$, the initial displacement vector~$\dimq{\vct{u}}_\pmf (\cdot , 0) =: \dimq{\vct{u}}_{0, \pmf} \colon \dimq{\Omega}_\pmf \rightarrow \mathbb{R}^n$~$[\unit{m}]$ satisfies%
\begin{align}
- \nabla \cdot \dimq{\matr{\sigma}} ( \dimq{\vct{u}}_{0,\pmf} , \dimq{p}_{0,\pmf} )  &= \dimq{\vct{f}}_\pmf (\cdot, 0) \quad\enspace &&\text{in } \dimq{\Omega}_\pmf  , \\
\dimq{\vct{u}}_{0,\pm } &= \dimq{\vct{u}}_{0,\rmf} \quad\enspace &&\text{on } \dimq{\gamma}_\pm , \\
      \dimq{\matr{\sigma}} ( \dimq{\vct{u}}_{0,\pm} , \dimq{p}_{0,\pm} ) \vct{n}_\pm   
       &= \dimq{\matr{\sigma}} ( \dimq{\vct{u}}_{0,\rmf} , \dimq{p}_{0,\rmf} ) \vct{n}_\pm   
    &&\text{on } \dimq{\gamma}_\pm  , \\
\dimq{\vct{u}}_{0, \pmf} &= \vct{0} \quad\enspace &&\text{on } \dimq{\rho}_\pmf .
\end{align}%
\label{eq:fulldim-dimensional}%
\end{subequations}%
In \cref{eq:fulldim-dimensional}, $\dimq{\tsr{C}}_\pmf \colon \dimq{\Omega}_\pmf \rightarrow \mathbb{R}^{n \times n \times n \times n}$~$[\unit{Pa}]$ is the elasticity tensor and $\dimq{\matr{e}} ( \dimq{\vct{u}}_\pmf )$~$[\unit{1}]$ denotes the strain tensor given by
\begin{align}
\dimq{\matr{e}} ( \dimq{\vct{u}}_\pmf ) := \frac{1}{2} \bigl( \dimq{\nabla} \dimq{\vct{u}}_\pmf + ( \dimq{\nabla}  \dimq{\vct{u}}_\pmf )^\rmt \bigr) .
\end{align}
Further, $\matr{\alpha}_\pmf \colon \dimq{\Omega}_\pmf \rightarrow \mathbb{R}^{n\times n }$~$[\unit{1}]$ denotes the Biot tensor---considered here as a matrix following~\cite{mikelic12}---and $\dimq{\omega}_\pmf \colon \dimq{\Omega}_\rmf \rightarrow \mathbb{R}$~$[\unit{Pa^{-1}}]$ denotes the inverse Biot modulus. 
Besides, $\dimq{\matr{K}}_\pmf \colon \dimq{\Omega}_\pmf \rightarrow \mathbb{R}^{n\times n }$~$[\unit{m/s}]$ is the hydraulic conductivity matrix and~$\refq{\rho} \in \mathbb{R}^+$~$[\unit{kg/m^3}]$ denotes the density of the fluid.
The function~$\dimq{G}_\pmf\colon \dimq{\Omega}_\pmf \rightarrow \mathbb{R}$~$[\unit{m^2 /s^2}]$ is given by
\begin{align}
\dimq{G}_\pmf (\dimq{\vct{x}}) := \dimq{\vct{g}} \cdot \dimq{\vct{x}},
\end{align}
where $\dimq{\vct{g}} \in \mathbb{R}^n$~$[\unit{m/s^2}]$ is the gravitational acceleration with $\refq{g} := \abs{\dimq{\vct{g}}}$~$[\unit{m/s^2}]$. Moreover, $\dimq{\vct{f}}_\pmf  \colon \dimq{\Omega}_\pmf \times \dimq{I} \rightarrow \mathbb{R}^n$~$[\unit{Pa/m}]$ and $\dimq{q}_\pmf  \colon \dimq{\Omega}_\pmf \times \dimq{I} \rightarrow \mathbb{R}$~$[\unit{s^{-1}}]$ represent source or sink terms.

We remark that the system~\eqref{eq:fulldim-dimensional} is formulated in terms of the pressure head $\dimq{p}_\pmf$~$[\unit{m}]$, which is related to the pressure~$\dimq{\pi}_\pmf$~$[\unit{Pa}]$ via
\begin{align}
\dimq{\pi}_\pmf = \refq{\rho} \bigl( \refq{g} \dimq{p}_\pmf + \dimq{G}_\pmf \bigr).
\end{align} 

\subsection{Non-Dimensionalization}
\label{sec:2.4}
Let $\refq{a}$~$[\unit{m}]$ be a characteristic value for the aperture~$\dimq{a}$ of the fracture and let $\refq{L}$~$[\unit{m}]$ be a characteristic value of its longitudinal extension, e.g., $\refq{L} := \sup\nolimits_{\dimq{\vct{y}}_1 , \dimq{\vct{y}}_2 \in \dimq{\gamma}} \vert \dimq{\vct{y}}_1 - \dimq{\vct{y}}_2 \vert$.
Then, we define the characteristic width-to-length ratio of the fracture as 
\begin{align}
\epsilon := \frac{\refq{a}}{\refq{L}} > 0 .
\end{align}
Subsequently, we assume that the material parameters inside the fracture domain~$\dimq{\Omega}_\rmf$ depend on~$\epsilon$.
Moreover, we treat the width-to-length ratio~$\epsilon$ as a scaling parameter, i.e., we  analyze how the Biot system~\eqref{eq:fulldim-dimensional} behaves in the limit~$\epsilon \rightarrow 0$. 
As a first step, we non-dimensionalize the Biot system~\eqref{eq:fulldim-dimensional} in order to emphasize the $\epsilon$-dependencies of the system.
While \Cref{sec:3} will deal with the sequence of Biot problems~\eqref{eq:fulldim-dimensional} for $\epsilon \in ( 0 ,1]$ and the convergence as $\epsilon \rightarrow 0$, this section is first concerned with the Biot problem~\eqref{eq:fulldim-dimensional} for a fixed value of the width-to-length ratio~$\epsilon > 0$ of the fracture. 

Let $\refq{K}_\rmb$~$[\unit{m/s}]$ be a characteristic value of the hydraulic conductivity in the bulk domains~$\dimq{\Omega}_\pm$. 
Then, with the reference time~$\refq{t} := \refq{L} / \refq{K}_\rmb$~$[\unit{s}]$, we define the non-di\-men\-sio\-nal position vector and time as 
\begin{align}
\vct{x} = \frac{\dimq{x}}{\refq{L}} , \quad t = \frac{\dimq{t}}{\refq{t}} .
\end{align}
This also necessitates a rescaling of derivative operators 
\begin{align}
\nabla = \refq{L} \dimq{\nabla} , \quad \partial_t = \refq{t} \partial_\dimq{t} 
\end{align}
and requires us to introduce the non-dimensional domains and interfaces
\begin{equation} 
\begin{alignedat}{4}
\Omega_\pmf^\epsilon &:= \frac{\dimq{\Omega}_\pmf}{\refq{L}} , \quad &\Omega^\epsilon &:= \frac{\dimq{\Omega}}{\refq{L}} \quad &\gamma_\pm^\epsilon &:= \frac{\dimq{\gamma}_\pm}{\refq{L}} ,  \quad &\gamma &:= \frac{\dimq{\gamma}}{\refq{L}},  \\
\rho_\pmf^\epsilon &:= \frac{\dimq{\rho}_\pmf}{\refq{L}} , \quad &\rho_{\pmf, \rmD}^\epsilon &:= \frac{\dimq{\rho}_{\pmf, \rmD}}{\refq{L}} , \quad &\rho_{\pmf, \rmN}^\epsilon &:= \frac{\dimq{\rho}_{\pmf, \rmN}}{\refq{L}}, \quad &I &:= \frac{\dimq{I}}{\refq{t}} .
\end{alignedat}
\end{equation}
In particular, the dimensionless fracture domain~$\Omega_\rmf^\epsilon$ is given by
\begin{align}
\Omega_\rmf^\epsilon = \bigl\{ \vct{y} + \epsilon s \vct{N} \ \big\vert\ \vct{y} \in \gamma , \ - a_-(\vct{y}) < s <  a_+ (\vct{y}) \bigr\} .
\end{align}
Thus, as the width-to-length ratio~$\epsilon$ of the fracture decreases, the fracture domain~$\Omega_\rmf^\epsilon$ is compressed in normal direction, while the bulk domains~$\Omega_\pm^\epsilon$ are translated in normal direction.
A sketch of the non-dimensional geometry and the corresponding limit geometry as $\epsilon\rightarrow 0$ is shown in \Cref{fig:nondim}.

In addition, we define the non-dimensional quantities
\begin{equation} 
\begin{alignedat}{5}
p_\pmf^\epsilon &:= \frac{\dimq{p}_\pmf}{\refq{L}}, \quad &\vct{u}_\pmf^\epsilon &:= \frac{\dimq{u}_\pmf}{\refq{L}}, \quad &a_\pm &:= \frac{\dimq{a}_\pm}{\refq{a}} , \quad &a &:= \frac{\dimq{a}}{\refq{a}}, \quad &\tsr{C}_\pm^\epsilon &:= \frac{\dimq{\tsr{C}}_\pm}{\refq{C}_\rmb},  \\
\tsr{C}_\rmf^\epsilon &:= \frac{\dimq{\tsr{C}}_\rmf}{\refq{C}_\rmf} , \quad &\vct{f}_\pm^\epsilon &:= \frac{\dimq{\vct{f}}_\pm}{\refq{f}_\rmb} , \quad &\vct{f}_\rmf^\epsilon &:= \frac{\dimq{\vct{f}}_\rmf}{\refq{f}_\rmf} , \quad &\omega_\pm^\epsilon &:= \frac{\dimq{\omega}_\pm}{\refq{\omega}_\rmb} , \quad &\omega_\rmf^\epsilon &:= \frac{\dimq{\omega}_\rmf}{\refq{\omega}_\rmf},  \\
\matr{K}_\pm^\epsilon &:= \frac{\dimq{\matr{K}}_\pm}{\refq{K}_\rmb} , \quad &\matr{K}_\rmf^\epsilon &:= \frac{\dimq{\matr{K}}_\rmf}{\refq{K}_\rmf} , \quad &q_\pm^\epsilon &:= \frac{\dimq{q}_\pm}{\refq{q}_\rmb} , \quad &q_\rmf^\epsilon &:= \frac{\dimq{q}_\rmf}{\refq{q}_\rmf}, \quad &G_\pmf^\epsilon &:= \frac{\dimq{G}_\pmf}{\refq{g}\!\refq{L}} , \\
\vct{u}_{0,\pmf}^\epsilon &:= \frac{\dimq{\vct{u}}_{0,\pmf}}{\refq{L}} , \quad &p_{0,\pmf}^\epsilon &:= \frac{\dimq{p}_{0,\pmf}}{\refq{L}}, \quad &T &:= \frac{\dimq{T}}{\refq{t}}, \quad &\vct{g} &:= \frac{\dimq{g}}{\refq{g}},
\end{alignedat}
\end{equation}
where $\refq{C}_\rmb := \refq{\rho}\!\refq{g}\!\refq{L}$~$[\unit{Pa}]$, $\refq{f}_\rmb := \refq{\rho}\!\refq{g}$~$[\unit{Pa / m}]$, $\refq{\omega}_\rmb := 1 / \refq{C}_\rmb$~$[\unit{Pa^{-1}}]$, $\refq{q}_\rmb := \refq{K}_\rmb / \refq{L}$~$[\unit{s^{-1}}]$.
We assume that the material parameters inside the fracture domain scale with certain powers of~$\epsilon$ as $\epsilon \rightarrow 0$. 
This $\epsilon$-dependency is retrieved in the non-di\-men\-sio\-na\-li\-za\-tion process through the characteristic reference quantities inside the fracture.
Specifically, we assume that there exists parameters~$\nu_\tsr{C}, \nu_{\!\vct{f}} , \nu_\omega , \nu_\matr{K} , \nu_q \in \mathbb{R} $ such that the reference quantities inside the fracture scale like 
\begin{align}
\refq{C}_\rmf = \epsilon^{\nu_\tsr{C}} \refq{C}_\rmb , \quad \refq{f}_\rmf = \epsilon^{\nu_{\!\vct{f}}} \refq{f}_\rmb ,\quad \refq{\omega}_\rmf = \epsilon^{\nu_\omega } \refq{\omega}_\rmb , \quad \refq{K}_\rmf = \epsilon^{\nu_\matr{K}} \refq{K}_\rmb , \quad \refq{q}_\rmf = \epsilon^{\nu_q} \refq{q}_\rmb . \label{eq:eps_scaling1}
\end{align} 
Further, we also consider an~$\epsilon$-dependency of the (already dimensionless) Biot tensor~$\matr{\alpha}_\pmf$. 
\begin{figure}
\centering
\includegraphics[scale=1.05]{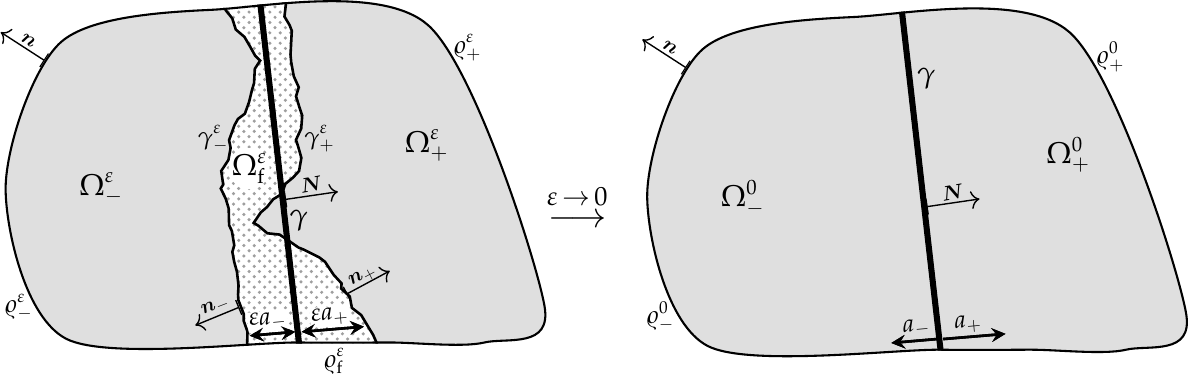}
\label{fig:nondim}
\caption{Sketch of the geometry for the dimensionless full-dimensional Biot system~\eqref{eq:fulldim-nondim} (left) and the corresponding limit geometry as $\epsilon \rightarrow 0$ (right).}
\end{figure}
Here, we write 
\begin{align}
{\matr{\alpha}}_\pm^\epsilon (\vct{x} , t ) := \matr{\alpha}_\pm ( \dimq{\vct{x}}, \dimq{t} ) , \quad  \epsilon^{\!\matr{\nu}_{\!\matr{\alpha}}} \matr{\alpha}_\rmf^\epsilon (\vct{x} , t  )\,  :=  \matr{\alpha}_\rmf (\dimq{\vct{x}}, \dimq{t} ),  \label{eq:eps_scaling2}
\end{align}
and assume that the Biot tensor~$\matr{\alpha}_\rmf^\epsilon$ inside the fracture is block-diagonal with respect to the fracture, i.e., $\matr{\alpha}_\rmf^\epsilon \vct{N} = ( \matr{\alpha}_\rmf^\epsilon )^\rmt \vct{N} = \vct{0}$.
In addition, we consider different scalings in normal and tangential direction, i.e., $\matr{\nu}_{\!\matr{\alpha}}$ in \cref{eq:eps_scaling2} denotes the matrix
\begin{align}
{\matr{\nu}}_{\!\matr{\alpha}} = \bigl( \matr{I}_n - \vct{N} \vct{N}^\rmt \bigr)  \nu_{\!\matr{\alpha}}^\smallpar +  \vct{N} \vct{N}^\rmt  \, \nu_{\!\matr{\alpha}}^\smallperp \in \mathbb{R}^{n\times n } 
\end{align} 
for parameters~$\nu_{\matr{\alpha}}^\smallpar , \nu_{\matr{\alpha}}^\smallperp \in \mathbb{R}$. The expression~$\epsilon^{\!\matr{\nu}_{\!\matr{\alpha}}}  \in \mathbb{R}^{n\times n}$ in \cref{eq:eps_scaling2} is to be understood as a matrix exponential, i.e.,
\begin{align}
\epsilon^{\!\matr{\nu}_{\!\matr{\alpha}}} := \exp ( \ln(\epsilon ) \matr{\nu}_{\!\matr{\alpha}}) = \epsilon^{\nu_{\!\matr{\alpha}}^\smallpar} \bigl( \matr{I}_n - \vct{N} \vct{N}^\rmt \bigr) + \epsilon^{\nu_{\!\matr{\alpha}}^\smallperp} \vct{N}\vct{N}^\rmt .
\end{align}

\subsection{Full-Dimensional Biot System in Dimensionless Form}
\label{sec:2.5}
We can now formulate the Biot system~\eqref{eq:fulldim-dimensional} in dimensionless form. 
For this, it is convenient to introduce the following notation.
\begin{notation} 
\label{not:fracfac}
For a triple~$\zeta_\pmf$ of functions on~$\Omega_\pmf^\epsilon$ as introduced in \Cref{not:pmf} and a factor~$\lambda$ that is only accounted for in the fracture domain~$\Omega_\rmf^\epsilon$, we write
\begin{align}
\fracfac{\lambda } \zeta_\pmf := \bigl( \zeta_+ , \zeta_- , \lambda \zeta_\rmf \bigr) .
\end{align}
\end{notation}
The dimensionless form of the Biot problem~\eqref{eq:fulldim-dimensional} now reads as follows.

Find the pressure head~$p_\pmf^\epsilon \colon \Omega_\pmf^\epsilon \times I \rightarrow \mathbb{R}$ and the displacement vector~$\vct{u}_\pmf^\epsilon \colon \Omega_\pmf^\epsilon \times I \rightarrow \mathbb{R}^n$ such that 
\begin{subequations}
\begin{alignat}{2}
-\nabla \cdot {\matr{\sigma}}_\pmf^\epsilon ( \vct{u}_\pmf^\epsilon , p_\pmf^\epsilon ) &= \fracfac{\epsilon^{\nu_{\!\vct{f}}\! } } \vct{f}_\pmf^\epsilon \quad\enspace &&\text{in } \Omega_\pmf^\epsilon \times I , \\[6pt]
\hskip-1cm \begin{array}{c}
\partial_t \bigl( \fracfac{\epsilon^{\nu_\omega \! }} \omega_\pmf^\epsilon p_\pmf^\epsilon + \nabla \cdot \bigl(  \fracfac{\epsilon^{\!\matr{\nu}_{\!\matr{\alpha}}\! }} \matr{\alpha}_\pmf^\epsilon \vct{u}_\pmf^\epsilon  \bigr) \bigr) \\[6pt]
\hspace{4.75cm} -\: \nabla \cdot \bigl( \fracfac{\epsilon^{\nu_\matr{K}}\! }\matr{K}_\pmf^\epsilon  \nabla p_\pmf^\epsilon \bigr)
\end{array}
&= \fracfac{\epsilon^{\nu_q}\! } q_\pmf^\epsilon \quad\enspace &&\text{in } \Omega_\pmf^\epsilon \times I
\end{alignat}
and the coupling conditions
\begin{alignat}{2}
p_\pm^\epsilon &= p_\rmf^\epsilon \quad\enspace &&\text{on } \gamma_\pm^\epsilon \times I , \\
\vct{u}_\pm^\epsilon &= \vct{u}_\rmf^\epsilon \quad\enspace &&\text{on } \gamma_\pm^\epsilon \times I , \\
\matr{K}_\pm^\epsilon \nabla p_\pm^\epsilon \cdot \vct{n}_\pm &= \epsilon^{\nu_\matr{K}} \matr{K}_\rmf^\epsilon \nabla p_\rmf^\epsilon  \cdot \vct{n}_\pm \quad\enspace &&\text{on } \gamma_\pm^\epsilon \times I , \\
{\matr{\sigma}}_\pm^\epsilon ( \vct{u}_\pm^\epsilon , p_\pm^\epsilon ) \vct{n}_\pm &= {\matr{\sigma}}_\rmf^\epsilon ( \vct{u}_\rmf^\epsilon , p_\rmf^\epsilon ) \vct{n}_\pm \quad\enspace &&\text{on } \gamma_\pm^\epsilon \times I ,
\intertext{as well as the boundary and initial conditions}
\vct{u}_\pmf^\epsilon &= \vct{0} \quad\enspace &&\text{on } \rho_{\pmf}^\epsilon \times I , \\
p_\pmf^\epsilon &= 0 \quad\enspace &&\text{on } \rho_{\pmf, \rmD}^\epsilon \times I , \\
\fracfac{\epsilon^{\nu_\matr{K} \! }} \matr{K}_\pmf^\epsilon \nabla p_\pmf^\epsilon \cdot \vct{n} &= 0 \quad\enspace &&\text{on } \rho_{\pmf, \rmN}^\epsilon \times I , \\
\label{eq:fulldim-nondim-initial-p} p_\pmf^\epsilon ( \cdot , 0 ) &= p_{0, \pmf}^\epsilon \quad\enspace &&\text{on } \Omega_\pmf^\epsilon 
\end{alignat}%
are satisfied.
Here, the total stress~${\matr{\sigma}}_\pmf^\epsilon ( \vct{u}_\pmf^\epsilon , p_\pmf^\epsilon )$ is given by 
\begin{align}
{\matr{\sigma}}_\pmf^\epsilon ( \vct{u}_\pmf^\epsilon , p_\pmf^\epsilon ) &:= \fracfac{\epsilon^{\nu_{\tsr{C}} \! } } \tsr{C}_\pmf^\epsilon \matr{e} ( \vct{u}_\pmf^\epsilon ) -  \bigl( p_\pmf^\epsilon + G_\pmf^\epsilon \bigr)\fracfac{\epsilon^{\!\matr{\nu}_{\!\matr{\alpha}}\! }}  \matr{\alpha}_\pmf^\epsilon .
\end{align}
 Besides, given the initial pressure head~$p_{0, \pmf}^\epsilon\colon \Omega_\pmf^\epsilon \rightarrow \mathbb{R}$, the initial displacement vector~$\vct{u}_{0, \pmf}^\epsilon = \vct{u}_\pmf^\epsilon ( \cdot , 0 )$ satisfies
\begin{align}
-\nabla \cdot {\matr{\sigma}}_\pmf^\epsilon ( \vct{u}_{0,\pmf}^\epsilon , p_{0,\pmf}^\epsilon ) &= \fracfac{\epsilon^{\nu_{\!\vct{f}} \! } } \vct{f}_\pmf^\epsilon (\cdot, 0) \enspace &&\text{in } \Omega_\pmf^\epsilon , \\
\vct{u}_{\pm , 0}^\epsilon &= \vct{u}_{\rmf, 0}^\epsilon \quad\enspace &&\text{on } \gamma_\pm^\epsilon , \\
{\matr{\sigma}}_\pm^\epsilon ( \vct{u}_{0,\pm}^\epsilon , p_{0,\pm}^\epsilon ) \vct{n}_\pm &= {\matr{\sigma}}_\rmf^\epsilon ( \vct{u}_{0,\rmf}^\epsilon , p_{0,\rmf}^\epsilon ) \vct{n}_\pm  &&\text{on } \gamma_\pm^\epsilon , \\
\vct{u}_{0,\pmf }^\epsilon &= \vct{0} \quad\enspace &&\text{on } \rho_\pmf^\epsilon .
\end{align}%
\label{eq:fulldim-nondim}%
\end{subequations}%

Next, we define the function spaces
\begin{subequations}
\begin{align}
\vct{V}^\epsilon &:= \Bigl\{ \vct{v}_\pmf^\epsilon \in  \vct{H}^1_{0, \rho_\pmf^\epsilon }\! \bigl( \Omega_\pmf^\epsilon \bigr)  \ \Big\vert \ \vct{v}_\pm^\epsilon = \vct{v}_\rmf^\epsilon \enspace \text{on } \gamma_\pm^\epsilon \Bigr\} \cong \vct{H}^1_0 (\Omega^\epsilon ), \\
\Phi^\epsilon &:= \Bigl\{ \phi_\pmf^\epsilon \in  H^1_{0, \rho_{\pmf , \rmD}^\epsilon }\! \bigl( \Omega_\pmf^\epsilon \bigr)  \ \Big\vert \ \phi_\pm^\epsilon = \phi_\rmf^\epsilon \enspace \text{on } \gamma_\pm^\epsilon \Bigr\} \cong H^1_0 (\Omega^\epsilon ) , \\[1pt]
\vct{\Xi}^\epsilon &:= \bigl\{ \vct{\xi}_\pmf^\epsilon \in \vct{H}_\mathrm{div} ( \Omega_\pmf^\epsilon ) \ \big\vert\ \vct{\xi}_\pm \cdot \vct{n}_\pm = \vct{\xi}_\rmf \cdot \vct{n}_\pm \enspace\text{on } \gamma_\pm^\epsilon \bigr\} \cong \vct{H}_\mathrm{div} (\Omega^\epsilon )  , \\[1pt]
\Lambda^{\!\epsilon} &:=  L^2 \bigl( \Omega_\pmf^\epsilon \bigr) \cong L^2 (\Omega^\epsilon ),
\end{align}%
\label{eq:spaces_eps}%
\end{subequations}%
as well as $\vct{\Lambda}^{\!\epsilon} := (\Lambda^{\!\epsilon} )^n$ and $\matr{\Lambda}^{\!\epsilon} := ( \Lambda^{\!\epsilon} )^{n\times n}$.
The spaces in \cref{eq:spaces_eps} are tailored to our geometric setting with two bulk and a fracture subdomain and are isomorphic to the corresponding spaces on the overall domain~$\Omega^\epsilon$ as indicated.
While the spaces~$\vct{V}^\epsilon $ and~$\Phi^\epsilon$ will respectively serve as solution spaces for the pressure head~$p_\pmf^\epsilon$ and the displacement~$\vct{u}_\pmf^\epsilon$, the space~$\vct{\Xi}^\epsilon$ is only introduced to impose a regularity assumption on the initial velocity~$\matr{K}_\pmf^\epsilon \nabla p_{0, \pmf}^\epsilon$.
We write $\langle \cdot , \cdot \rangle_{\epsilon }$ for the scalar product on~$\Lambda^{\!\epsilon}$ given by 
\begin{align}
\bigl\langle \phi_\pmf, \psi_\pmf \bigr\rangle_{\!\epsilon } = \sum\nolimits_{i\in\{+,-,\rmf\} } \bigl\langle \phi_i , \psi_i\bigr\rangle_{\Omega_i^\epsilon} \label{eq:inner_Lambda}
\end{align}
and use the same notation for the scalar products on~$\vct{\Lambda}^{\!\epsilon}$ and~$\matr{\Lambda}^{\!\epsilon}$, with the specific space being clear from the context.
Besides, we make the following assumptions on the model parameters. 
\begin{assumption} \label{asm:biot} We assume that
\begin{enumerate}[label=(\roman*),itemsep=3pt,topsep=0pt]
\item \label{asm:alpha}$\matr{\alpha}_\pmf^\epsilon \colon \Omega_\pmf^\epsilon \rightarrow \mathbb{R}^{n\times n}$ is symmetric and piecewise constant and $\matr{\alpha}_\rmf^\epsilon$ is block-diagonal with respect to the fracture, i.e., $\matr{\alpha}_\rmf^\epsilon \vct{N} =  \vct{0}$,
\item \label{asm:fq} $\vct{f}_\pmf^\epsilon \in H^2 (I ; \vct{\Lambda}^{\!\epsilon })$ and $q_\pmf^\epsilon \in H^1 ( I ; \Lambda^{\!\epsilon } )$,
\item \label{asm:p0} $p_{0,\pmf}^\epsilon \in \Phi^\epsilon$ and $\matr{K}_\pmf^\epsilon \nabla p_{0 ,\pmf}^\epsilon \in \vct{\Xi}^\epsilon $,
\item \label{asm:omega} $\omega_\pmf^\epsilon \in L^\infty (\Omega_\pmf^\epsilon)$ is almost everywhere uniformly positive, i.e., $\omega_\pmf^\epsilon \gtrsim 1 $,
\item \label{asm:C} $\tsr{C}_\pmf^\epsilon \in L^\infty ( \Omega_\pmf^\epsilon ; \mathbb{R}^{n \times n \times n \times n} )$ is almost everywhere uniformly elliptic, i.e., ${\tsr{C}}^\epsilon_\pmf \matr{M} : \matr{M} \gtrsim \matr{M} : \matr{M}$ for all $\matr{M} \in \mathbb{R}^{n\times n }$, and satisfies the symmetry properties
\begin{align*}
( \tsr{C}_\pmf^\epsilon )_{ijkl} = ( \tsr{C}_\pmf^\epsilon )_{klij} , \quad ( \tsr{C}_\pmf^\epsilon )_{ijkl} = ( \tsr{C}_\pmf^\epsilon )_{jikl} , \quad ( \tsr{C}_\pmf^\epsilon )_{ijkl} = ( \tsr{C}_\pmf^\epsilon )_{ijlk},
\end{align*}
\item \label{asm:K} $\matr{K}_\pmf^\epsilon \in \matr{L}^\infty (\Omega_\pmf^\epsilon ) $ is almost everywhere symmetric and uniformly elliptic, i.e., ${\matr{K}}_\pmf^\epsilon \vct{w} \cdot \vct{w} \gtrsim \vert \vct{w} \vert^2$ for all $\vct{w} \in \mathbb{R}^n$.
\end{enumerate}
\end{assumption}
Then, a weak formulation of the Biot system~\eqref{eq:fulldim-nondim} is given by the following problem. 

Find the pressure head~$p_\pmf^\epsilon \in H^1 ( I ; \Lambda^{\!\epsilon} ) \cap L^2 (I ; \Phi^\epsilon )$ and the displacement vector~$\vct{u}_\pmf^\epsilon \in H^1 (I ; \vct{V}^\epsilon )$ such that the initial condition~\eqref{eq:fulldim-nondim-initial-p} is satisfied and
\begin{subequations}
\begin{align}
\mathcal{A}^\epsilon \bigl( \vct{u}_\pmf^\epsilon , \vct{v}_\pmf^\epsilon \bigr) - \mathcal{B}^\epsilon \bigl( p_\pmf^\epsilon , \vct{v}_\pmf^\epsilon \bigr) &= \mathcal{L}^\epsilon \bigl( \vct{v}_\pmf^\epsilon \bigr) ,  \\
\mathcal{D}^\epsilon \bigl( p_\pmf^\epsilon , \phi_\pmf^\epsilon \bigr) + \mathcal{C}^\epsilon \bigl( \partial_t p_\pmf^\epsilon , \phi_\pmf^\epsilon \bigr) + \mathcal{B}^\epsilon \bigl(  \phi_\pmf^\epsilon , \partial_t \vct{u}_\pmf^\epsilon \bigr) &= \bigl\langle \fracfac{\epsilon^{\nu_q}\! } q_\pmf^\epsilon , \phi_\pmf^\epsilon \bigr\rangle_{ \!  \epsilon } ,
\intertext{hold for all $\vct{v}_\pmf^\epsilon \in \vct{V}^\epsilon$ and $\phi_\pmf^\epsilon \in \Phi^\epsilon $ and for almost all $t \in I$. Besides, for all $\vct{v}_\pmf^\epsilon \in \vct{V}^\epsilon$, the initial displacement vector~$\vct{u}_\pmf^\epsilon (\cdot , 0) =: \vct{u}_{0, \pmf}^\epsilon $ satisfies }%
 \begin{split}
 \mathcal{A}^\epsilon \bigl( \vct{u}_{0,\pmf }^\epsilon , \vct{v}_\pmf^\epsilon \bigr) - \mathcal{B}^\epsilon \bigl( p_{0,\pmf }^\epsilon , \vct{v}_\pmf^\epsilon  \bigr) &= \mathcal{L}^\epsilon \bigl( \vct{v}_\pmf^\epsilon \bigr) \big\vert_{t=0} .
 \end{split}
\end{align}%
\label{eq:fulldim-weak}%
\end{subequations}%
\indent In \cref{eq:fulldim-weak}, the bilinear forms $\mathcal{A}^\epsilon \colon \vct{V}^\epsilon \times \vct{V}^\epsilon \rightarrow \mathbb{R}$, $\mathcal{B}^\epsilon \colon \Phi^\epsilon \times \vct{V}^\epsilon \rightarrow \mathbb{R}$, $\mathcal{C}^\epsilon \colon \Lambda^\epsilon \times \Lambda^\epsilon \rightarrow \mathbb{R}$, and $\mathcal{D}^\epsilon \colon \Phi^\epsilon \times \Phi^\epsilon \rightarrow \mathbb{R}$, as well as the linear form~$\mathcal{L}^\epsilon \colon \vct{V}^\epsilon \rightarrow \mathbb{R}$, are given by
\begin{subequations} 
\begin{align}
\mathcal{A}^\epsilon ( \vct{u}_\pmf^\epsilon , \vct{v}_\pmf^\epsilon ) &:= \bigl\langle \fracfac{\epsilon^{\nu_\tsr{C}}\! } \tsr{C}_\pmf^\epsilon \matr{e} ( \vct{u}_\pmf^\epsilon ) , \matr{e} ( \vct{v}_\pmf^\epsilon )  \bigr\rangle_{\!\epsilon } ,\\
\mathcal{B}^\epsilon ( p_\pmf^\epsilon , \vct{v}_\pmf^\epsilon ) &:= \bigl\langle  \fracfac{\epsilon^{\!\matr{\nu}_{\!\matr{\alpha}}\! }}   p_\pmf^\epsilon \matr{\alpha}_\pmf^\epsilon   , \nabla \vct{v}_\pmf^\epsilon \bigr\rangle_{\!\epsilon } , \\
\mathcal{C}^\epsilon (\psi_\pmf^\epsilon , \phi_\pmf^\epsilon ) &:= 
\bigl\langle \fracfac{\epsilon^{\nu_\omega }\! }  \omega_\pmf^\epsilon \psi_\pmf^\epsilon  , \phi_\pmf^\epsilon  \bigr\rangle_{ \! \epsilon } , \\
 \mathcal{D}^\epsilon (p_\pmf^\epsilon , \phi_\pmf^\epsilon ) &:= 
\bigl\langle \fracfac{\epsilon^{\nu_\matr{K}}\! }\matr{K}_\pmf^\epsilon  \nabla p_\pmf^\epsilon  , \nabla \phi_\pmf^\epsilon \bigr\rangle_{\! \epsilon } , \\
 \mathcal{L}^\epsilon ( \vct{v}_\pmf^\epsilon ) &:= \bigl\langle \fracfac{\epsilon^{\nu_{\!\vct{f}}\! }} \vct{f}_\pmf^\epsilon , \vct{v}_\pmf^\epsilon \bigr\rangle_{ \! \epsilon } + \bigl\langle  \fracfac{\epsilon^{\!\matr{\nu}_{\!\matr{\alpha}}\! }}  G_\pmf^\epsilon \matr{\alpha}_\pmf^\epsilon  , \nabla \vct{v}_\pmf^\epsilon \bigr\rangle_{\!\epsilon } .
\end{align}
\end{subequations}
The wellposedness of the Biot problem~\eqref{eq:fulldim-weak} with full-dimensional fracture is guaranteed by the following proposition. 
\begin{proposition}
\label{prop:wellposed-fulldim}
Given the \Cref{asm:biot}, the Biot problem~\eqref{eq:fulldim-weak} admits unique solutions~$p_\pmf^\epsilon \in H^1 ( I ; \Phi^\epsilon ) \cap L^\infty (I ; \Phi^\epsilon ) \cap W^{1,\infty } (I ; \Lambda^{\!\epsilon } )$ and $\vct{u}_\pmf^\epsilon \in W^{1,\infty} ( I ; \vct{V}^\epsilon )$ for all~$\epsilon > 0$. 
\end{proposition}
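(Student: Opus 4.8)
The plan is to observe that, for each fixed $\epsilon > 0$, the weak problem \eqref{eq:fulldim-weak} is a standard quasi-static Biot consolidation problem posed on the composite domain $\Omega^\epsilon$. By \Cref{asm:biot}, the $\epsilon$-scaled coefficients entering $\mathcal{A}^\epsilon$, $\mathcal{C}^\epsilon$ and $\mathcal{D}^\epsilon$ are, for fixed $\epsilon$, bounded and uniformly positive (respectively elliptic) on each subdomain, while the data satisfy $\vct{f}_\pmf^\epsilon \in H^2(I;\vct{\Lambda}^\epsilon)$, $q_\pmf^\epsilon \in H^1(I;\Lambda^\epsilon)$, $p_{0,\pmf}^\epsilon \in \Phi^\epsilon$ and $\matr{K}_\pmf^\epsilon \nabla p_{0,\pmf}^\epsilon \in \vct{\Xi}^\epsilon$. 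The composite geometry is harmless, since by \eqref{eq:spaces_eps} the spaces $\vct{V}^\epsilon$, $\Phi^\epsilon$, $\vct{\Xi}^\epsilon$, $\Lambda^\epsilon$ are isomorphic to the ordinary Sobolev and $L^2$ spaces on $\Omega^\epsilon$, so the analysis is that of the classical Biot system; the details are carried out in \Cref{sec:A}.

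First I would eliminate the displacement. The form $\mathcal{A}^\epsilon$ is symmetric, bounded, and, by \Cref{asm:biot}\,\ref{asm:C} together with Korn's inequality on $\vct{V}^\epsilon \cong \vct{H}^1_0(\Omega^\epsilon)$ (using $\abs{\dimq{\rho}_{+,\rmD}} > 0$ or $\abs{\dimq{\rho}_{-,\rmD}} > 0$), coercive on $\vct{V}^\epsilon$; hence the first line of \eqref{eq:fulldim-weak} defines a bounded solution operator $p \mapsto \vct{u}(p)$, affine in the data. Inserting $\partial_t \vct{u}(p)$ into the second line yields a single evolution equation for $p$ of the form $\langle (\mathcal{C}^\epsilon + S^\epsilon)\,\partial_t p, \phi \rangle_\epsilon + \mathcal{D}^\epsilon(p,\phi) = \langle F^\epsilon(t), \phi \rangle_\epsilon$, where $S^\epsilon$ is the symmetric, positive semidefinite Schur complement produced by eliminating $\mathcal{A}^\epsilon$, and $F^\epsilon$ collects $\fracfac{\epsilon^{\nu_q}}q_\pmf^\epsilon$ together with the contribution of $\partial_t \mathcal{L}^\epsilon$. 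Since $\omega_\pmf^\epsilon \gtrsim 1$ by \Cref{asm:biot}\,\ref{asm:omega}, already $\mathcal{C}^\epsilon$ — hence $\mathcal{C}^\epsilon + S^\epsilon$ — is coercive on $\Lambda^\epsilon$, and $\mathcal{D}^\epsilon$ is coercive on $\Phi^\epsilon$ by \Cref{asm:biot}\,\ref{asm:K}, so this is a non-degenerate linear parabolic equation in the Gelfand triple $\Phi^\epsilon \hookrightarrow \Lambda^\epsilon \hookrightarrow (\Phi^\epsilon)'$. A Galerkin approximation together with the standard energy estimate (testing with $p$ and with $\partial_t p$) and passage to the limit then produces a solution $p \in H^1(I;\Lambda^\epsilon) \cap L^2(I;\Phi^\epsilon)$ with $\vct{u} = \vct{u}(p) \in H^1(I;\vct{V}^\epsilon)$, attaining the initial condition \eqref{eq:fulldim-nondim-initial-p}.

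To reach the full regularity claimed in \Cref{prop:wellposed-fulldim} I would bootstrap by differentiating the reduced equation in time (legitimately at the Galerkin level) and testing with $\partial_t p$. This requires $\partial_t F^\epsilon \in L^2(I;\Lambda^\epsilon)$; since $F^\epsilon$ contains $\partial_t \mathcal{L}^\epsilon$, and hence $\partial_t \vct{f}_\pmf^\epsilon$, one needs two time derivatives of $\vct{f}_\pmf^\epsilon$ and one of $q_\pmf^\epsilon$ in $L^2$ — which is exactly \Cref{asm:biot}\,\ref{asm:fq}. The initial value $\partial_t p(\cdot,0)$ is read off from the reduced equation at $t = 0$: it equals $(\mathcal{C}^\epsilon + S^\epsilon)^{-1}\bigl(F^\epsilon(0) - \mathcal{D}^\epsilon p_{0,\pmf}^\epsilon\bigr)$, which lies in $\Lambda^\epsilon$ because $\matr{K}_\pmf^\epsilon \nabla p_{0,\pmf}^\epsilon \in \vct{\Xi}^\epsilon$ makes the functional $\mathcal{D}^\epsilon(p_{0,\pmf}^\epsilon,\cdot)$ representable in $\Lambda^\epsilon$ (\Cref{asm:biot}\,\ref{asm:p0}), while the compatibility relation for $(\vct{u}_{0,\pmf}^\epsilon, p_{0,\pmf}^\epsilon)$ in \eqref{eq:fulldim-weak} fixes $\vct{u}_{0,\pmf}^\epsilon$. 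The resulting estimate yields $\partial_t p \in L^\infty(I;\Lambda^\epsilon) \cap L^2(I;\Phi^\epsilon)$ and $p \in L^\infty(I;\Phi^\epsilon)$, i.e.\ $p_\pmf^\epsilon \in H^1(I;\Phi^\epsilon) \cap L^\infty(I;\Phi^\epsilon) \cap W^{1,\infty}(I;\Lambda^\epsilon)$; differentiating the first line of \eqref{eq:fulldim-weak} in time and invoking coercivity of $\mathcal{A}^\epsilon$ then gives $\partial_t \vct{u}_\pmf^\epsilon \in L^\infty(I;\vct{V}^\epsilon)$, that is, $\vct{u}_\pmf^\epsilon \in W^{1,\infty}(I;\vct{V}^\epsilon)$. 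Uniqueness follows from the reduced equation: for the difference of two solutions with vanishing data, testing with $p$ and using coercivity of $\mathcal{C}^\epsilon + S^\epsilon$ and $\mathcal{D}^\epsilon$ forces $p \equiv 0$, whence $\vct{u} = \vct{u}(0) \equiv \vct{0}$.

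The main obstacle I anticipate is not the abstract solvability but making the regularity bootstrap fully rigorous: one must perform the time differentiations at the Galerkin level, correctly identify the initial value of $\partial_t p$ from the compatibility conditions, and keep the Schur-complement contribution — which feeds $\partial_t \vct{u}$ back into the pressure equation through $\mathcal{B}^\epsilon$ — consistent across the successive testing steps so that the elliptic and parabolic energies combine. Verifying Korn's inequality and the integration-by-parts identities relating the weak form \eqref{eq:fulldim-weak} to the strong system \eqref{eq:fulldim-nondim} on the composite domain with mixed Dirichlet–Neumann boundary conditions is routine but also needs care.
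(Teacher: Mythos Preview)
Your proposal is correct and follows essentially the same route as the paper: identify the decomposed problem with a single-domain Biot system on $\Omega^\epsilon$ via the isomorphisms \eqref{eq:spaces_eps}, then establish existence, regularity, and uniqueness by Galerkin approximation and energy estimates (the paper carries this out in Appendix~\ref{sec:A}, where the displacement is likewise eliminated via the Schur complement at the Galerkin level before the a~priori bounds are derived on the coupled system). One small slip: Korn's inequality on $\vct{V}^\epsilon\cong\vct{H}^1_0(\Omega^\epsilon)$ needs only that the displacement vanishes on all of $\rho_\pmf^\epsilon$, which it does; the hypothesis $\abs{\rho_{\pm,\rmD}^\epsilon}>0$ concerns the pressure space $\Phi^\epsilon$ and is what makes $\mathcal{D}^\epsilon$ coercive via Poincar\'e.
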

\begin{proof}
By defining the functions in~\cref{eq:fulldim-weak} as functions on the overall domain~$\Omega^\epsilon$, e.g., 
\begin{align*}
 \matr{K}^\epsilon (\vct{x} ) = \begin{cases}
 		\matr{K}_+^\epsilon (\vct{x} ) &\text{if } \vct{x} \in \Omega_+^\epsilon , \\
 		\matr{K}_-^\epsilon (\vct{x} ) &\text{if } \vct{x} \in \Omega_-^\epsilon , \\
 		\epsilon^{\nu_\matr{K}} \matr{K}_\rmf^\epsilon (\vct{x} ) &\text{if } \vct{x} \in \Omega_\rmf^\epsilon ,
 \end{cases} 
\end{align*}
it is easy to see that the domain-decomposed Biot problem~\eqref{eq:fulldim-weak} with fracture is equivalent to a single-domain Biot problem as formulated in \cref{eq:biot_sd_weak} in Appendix~\ref{sec:A}.
The existence and uniqueness of solutions to the single-domain Biot problem is established in Appendix~\ref{sec:A} (see \Cref{thm:wellposed_biot_sd}) and directly implies the stated result.
\end{proof}

\subsection{Coordinate Transformation}
\label{sec:2.6}
The Biot problem~\eqref{eq:fulldim-weak} is formulated in terms of $\epsilon$-dependent domains and hence $\epsilon$-dependent function spaces. 
This makes it difficult to derive a~priori estimates for the solution with respect to an $\epsilon$-independent norm.
Thus, in order to obtain $\epsilon$-independent function spaces, we apply the coordinate transformation
\begin{subequations}
\begin{alignat}{3}
\vct{T}_\pm^\epsilon &\colon &\Omega_\pm^\epsilon &\rightarrow \Omega_\pm^0 , \quad &\vct{x} &\mapsto \vct{x} \mp \frac{\epsilon}{2} \vct{N} , \\
\vct{T}_\rmf^\epsilon &\colon &\Omega_\rmf^\epsilon &\rightarrow \Omega_\rmf^1 , \quad &\vct{x} &\mapsto \vct{x} + \bigl( \epsilon^{-1} - 1 \bigr) ( \vct{x} \cdot \vct{N} ) \vct{N}
\end{alignat}%
\label{eq:trafo}%
\end{subequations}%
to the Biot problem~\eqref{eq:fulldim-weak}.
This means that the transformation~$\vct{T}_\pm^\epsilon$ translates the bulk domains~$\Omega_\pm^\epsilon$ in normal direction so that they directly abut on the interface~$\gamma$.
In contrast, the transformation~$\vct{T}_\rmf^\epsilon$ rescales the normal coordinate inside the fracture domain~$\Omega_\rmf^\epsilon$ by~$\epsilon$, which also affects normal derivatives. 
Thus, for $\phi \in H^1 (\Omega_\rmf^1 ) $ and $\vct{v}\in H^1 (\Omega_\rmf^1 ) $, we define the $\epsilon$-scaled gradient~$\nabla^\epsilon \phi$ and Jacobian~$\nabla^\epsilon \vct{v}$ by 
\begin{subequations}
\begin{alignat}{2}
\nabla^\epsilon \phi &:= \nabla_{\!\smallpar} \phi + \epsilon^{-1} \nabla_{\!\vct{N}} \phi , \quad\enspace &\nabla^\epsilon \vct{v} &:= \nabla_{\!\smallpar} \vct{v} + \epsilon^{-1} \nabla_{\!\vct{N}} \vct{v} ,\\
\intertext{with the normal and tangential components given by}
\nabla_{\!\vct{N}} \phi &:= ( \partial_{\!\vct{N}} \phi ) \vct{N} , \quad\enspace &\nabla_{\!\vct{N}} \vct{v} &:= ( \nabla \vct{v} ) \vct{N}\vct{N}^\rmt , \\
\nabla_{\!\smallpar} \phi &:= \nabla \phi - \nabla_{\!\vct{N}} \phi , \quad\enspace   &\nabla_{\!\smallpar} \vct{v} &:= \nabla \vct{v} - \nabla_{\!\vct{N}} \vct{v},
\end{alignat}%
\label{eq:nablaeps}%
\end{subequations}%
where $\partial_{\!\vct{N}} \phi := \nabla \phi \cdot \vct{N}$.
Besides, we write 
\begin{align} \label{eq:straineps}
\matr{e}^\epsilon (\vct{v} ) &:= \tfrac{1}{2} \bigl( \nabla^\epsilon \vct{v} + ( \nabla^\epsilon \vct{v} )^\rmt \bigr) 
\end{align}
for the $\epsilon$-scaled strain tensor in~$\Omega_\rmf^1$.
Further, we define the transformed model parameters 
\begin{equation} 
\begin{alignedat}{3}
\hat{\tsr{C}}_\pmf^\epsilon &:= \tsr{C}_\pmf^\epsilon \circ (\vct{T}_\pmf^\epsilon )^{-1} , \quad\enspace &\hat{\matr{\alpha}}_\pmf^\epsilon &:= \matr{\alpha}_\pmf^\epsilon \circ (\vct{T}_\pmf^\epsilon )^{-1} , \quad\enspace &\hat{\omega}_\pmf^\epsilon &:= \omega_\pmf^\epsilon \circ (\vct{T}_\pmf^\epsilon)^{-1} , \\
\hat{\matr{K}}_\pmf^\epsilon &:= \matr{K}_\pmf^\epsilon \circ (\vct{T}_\pmf^\epsilon )^{-1} , \quad\enspace &\hat{\vct{f}}_\pmf^\epsilon &:= \vct{f}_\pmf^\epsilon \circ (\vct{T}_\pmf^\epsilon )^{-1} , \quad\enspace &\hat{q}_\pmf^\epsilon &:= q_\pmf^\epsilon \circ (\vct{T}_\pmf^\epsilon )^{-1} , \\
\hat{G}_\pmf^\epsilon &:= G_\pmf^\epsilon \circ (\vct{T}_\pmf^\epsilon )^{-1} , \quad\enspace &\hat{p}_{0,\pmf}^\epsilon &:= p_{0, \pmf}^\epsilon \circ (\vct{T}_\pmf^\epsilon )^{-1} .
\end{alignedat}%
\label{eq:paramtrafo}%
\end{equation}%
In addition, we introduce the following notation.
\begin{notation} \label{not:odot}
We continue to use the \Cref{not:pmf,not:fracfac} for the transformed domains~$\Omega_+^0$, $\Omega_-^0$, and~$\Omega_\rmf^1$. 
Besides, we write the index~\enquote{$\odot$\!} in a bulk-fracture triple of mathematical objects (with index~\enquote{$\pmf$}) when referring to the index~\enquote{$0$} in the bulk domains~$\Omega_\pm^0$ and to the index~\enquote{$1$} in the fracture domain~$\Omega_\rmf^1$. 
In addition, we use the \enquote{fracture bracket}~$\fracfac{\cdot }$ not only to indicate that a prefactor in a bulk-fracture triple is only present for the fracture domain~$\Omega_\rmf^1$ as introduced in \Cref{not:fracfac}, but also as an index~\enquote{$\fracfac{\epsilon } \!$} to indicate a dependency on the scaling parameter~$\epsilon$ that is only present inside the fracture domain~$\Omega_\rmf^1$.  
For example, we write
\begin{subequations}
\begin{align}
\Omega_\pmf^\odot &:= \bigl( \Omega_+^0 , \Omega_-^0 , \Omega_\rmf^1 \bigr) , \\ \fracfac{\epsilon^{\nu_\matr{K} + 1 }} \hat{\matr{K}}_\pmf^\epsilon  \nabla^{\fracfac{\!\epsilon\! }} \hat{p}_\pmf^\epsilon &:= \bigl( \hat{\matr{K}}_+^\epsilon \nabla \hat{p}_+^\epsilon ,\ \hat{\matr{K}}_-^\epsilon \nabla \hat{p}_-^\epsilon ,\ \epsilon^{\nu_\matr{K} + 1} \hat{\matr{K}}_\rmf^\epsilon \nabla^\epsilon \hat{p}^\epsilon_\rmf \bigr) .
\end{align}
\end{subequations}
\end{notation}
Further, we define the transformed function spaces 
\begin{subequations}
\begin{align}
\label{eq:V} \vct{V} &:= \Bigl\{ \vct{v}_\pmf \in  \vct{H}^1_{0, \rho_\pmf^\odot } \bigl( \Omega_\pmf^\odot \bigr)  \ \Big\vert \ \vct{v}_\pm = \Pi_\pm \vct{v}_\rmf \enspace \text{on } \gamma \Bigr\} , \\
\label{eq:Phi} \Phi &:= \Bigl\{ \phi_\pmf \in H^1_{0, \rho_{\pmf, \rmD}^\odot } \! \bigl( \Omega_\pmf^\odot \bigr)  \ \Big\vert \ \phi_\pm = \Pi_\pm \phi_\rmf \enspace \text{on } \gamma \Bigr\}  , \\[2pt]
\Lambda &:=  L^2 \bigl( \Omega_\pmf^\odot \bigr) .
\end{align}
\end{subequations}
Here, $\Pi_\pm$ denotes the projections from the fracture boundaries~$\gamma_\pm^1$ onto the bulk boundary~$\gamma$ given by
\begin{align}
\Pi_\pm \colon L^2 (\gamma_\pm^1 ) \rightarrow L^2 (\gamma ) , \quad \bigl( \Pi_\pm \phi_\pm \bigr) (\vct{x} ) := \phi_\pm \bigl( \vct{T}_\pm^1 (\vct{x} ) \bigr) .
\label{eq:proj}
\end{align}
Moreover, let $\vct{\Lambda} := \Lambda^n$ and $\matr{\Lambda} := \Lambda^{n \times n }$.
We write~$\langle \cdot , \cdot \rangle$ for the scalar product in~$\Lambda$, $\vct{\Lambda}$, and~$\matr{\Lambda}$ with analogous definition as in \cref{eq:inner_Lambda} and the specific space being clear from the context.
Then, the application of the coordinate transformation~\eqref{eq:trafo} to the Biot problem~\eqref{eq:fulldim-weak} yields the following transformed problem.

Find the pressure head~$\hat{p}_\pmf^\epsilon \in H^1 (I ; \Lambda ) \cap L^2 ( I ; \Phi )$ and the displacement vector~$\hat{\vct{u}}_\pmf^\epsilon \in H^1 (I ; \vct{V} )$ such that 
\begin{subequations}
\begin{align}
\hat{p}_\pmf^\epsilon ( \cdot , 0 ) &= \hat{p}_{0 , \pmf}^\epsilon
\end{align}
and the equations
\begin{align}
\label{eq:fulldim-weak-trafo-b}  
\hat{\mathcal{A}}^\epsilon \bigl( \hat{\vct{u}}_\pmf^\epsilon , \vct{v}_\pmf \bigr)  - \hat{\mathcal{B}}^\epsilon \bigl( \hat{p}_\pmf^\epsilon , \vct{v}_\pmf \bigr)
& = \hat{\mathcal{L}}^\epsilon ( \vct{v}_\pmf ) ,  \\
\label{eq:fulldim-weak-trafo-c}   \hat{\mathcal{D}}^\epsilon \bigl( \hat{p}_\pmf^\epsilon , \phi_\pmf \bigr) + \hat{\mathcal{C}}^\epsilon \bigl( \partial_t \hat{p}_\pmf^\epsilon , \phi_\pmf \bigr) + \hat{\mathcal{B}}^\epsilon \bigl(  \phi_\pmf, \partial_t \hat{\vct{u}}_\pmf^\epsilon \bigr) &= \bigl\langle \fracfac{\epsilon^{\nu_q + 1}} \hat{q}_\pmf^\epsilon , \phi_\pmf \bigr\rangle
\intertext{hold for all $\vct{v}_\pmf \in \vct{V}$ and $\phi_\pmf \in \Phi$ and almost all~$t \in I$. Besides, for all  $\vct{v}_\pmf \in \vct{V}$,  the initial displacement vector~$\hat{\vct{u}}_{0,\pmf}^\epsilon \in \vct{V}$ satisfies}
\label{eq:fulldim-weak-trafo-d}  
\hat{\mathcal{A}}^\epsilon \bigl( \hat{\vct{u}}_{0,\pmf}^\epsilon , \vct{v}_\pmf \bigr)  - \hat{\mathcal{B}}^\epsilon \bigl( \hat{p}_{0,\pmf }^\epsilon , \vct{v}_\pmf \bigr) &= \hat{\mathcal{L}}^\epsilon (\vct{v}_\pmf ) \big\vert_{t=0} .
\end{align}%
\label{eq:fulldim-weak-trafo}%
\end{subequations}%

Here, the transformed bilinear forms $\hat{\mathcal{A}}^\epsilon \colon \vct{V} \times \vct{V} \rightarrow \mathbb{R}$,  $\hat{\mathcal{B}}^\epsilon \colon \Phi \times \vct{V} \rightarrow \mathbb{R}$, $\hat{\mathcal{C}}^\epsilon \colon \Lambda \times \Lambda \rightarrow \mathbb{R}$, and $\mathcal{D}^\epsilon\colon \Phi \times \Phi \rightarrow \mathbb{R}$, as well as the transformed linear form~$\hat{\mathcal{L}}^\epsilon \colon \vct{V} \rightarrow \mathbb{R}$,
are defined by 
\begin{subequations}
\begin{align}
\hat{\mathcal{A}}^\epsilon\! \bigl( \hat{\vct{u}}_\pmf^\epsilon , \vct{v}_\pmf \bigr) &:= \bigl\langle \fracfac{\epsilon^{\nu_\tsr{C} + 1}} \hat{\tsr{C}}_\pmf^\epsilon \matr{e}^{\fracfac{\!\epsilon\! } \! } ( \hat{\vct{u}}_\pmf^\epsilon ) , \matr{e}^{\fracfac{\!\epsilon\! }\!  } ( \vct{v}_\pmf )  \bigr\rangle ,\\
\hat{\mathcal{B}}^\epsilon \bigl( \hat{p}_\pmf^\epsilon , \vct{v}_\pmf \bigr) &:= \bigl\langle \fracfac{\epsilon^{\!\matr{\nu}_{\!\matr{\alpha}} + \matr{I}  } } \hat{p}_\pmf^\epsilon \hat{\matr{\alpha}}_\pmf^\epsilon  , \nabla^{\fracfac{\!\epsilon\! }} \vct{v}_\pmf \bigr\rangle , \\
\hat{\mathcal{C}}^\epsilon \bigl( \hat{\psi}_\pmf^\epsilon , \phi_\pmf \bigr) &:= 
\bigl\langle \fracfac{\epsilon^{\nu_\omega +1 }}  \hat{\omega}_\pmf^\epsilon  \hat{\psi}_\pmf^\epsilon  , \phi_\pmf \bigr\rangle , \\
 \hat{\mathcal{D}}^\epsilon \bigl( \hat{p}_\pmf^\epsilon , \phi_\pmf \bigr) &:=  \bigl\langle \fracfac{\epsilon^{\nu_\matr{K} + 1 }} \hat{\matr{K}}_\pmf^\epsilon \nabla^{\fracfac{\!\epsilon\! }} \hat{p}_\pmf^\epsilon  , \nabla^\fraceps\! \phi_\pmf \bigr\rangle ,\\
 \hat{\mathcal{L}}^\epsilon ( \vct{v}_\pmf ) &:= \bigl\langle \fracfac{\epsilon^{\nu_{\!\vct{f}} + 1} }\hat{\vct{f}}_\pmf^\epsilon , \vct{v}_\pmf \bigr\rangle + \bigl\langle  \fracfac{\epsilon^{\!\matr{\nu}_{\!\matr{\alpha}} + \matr{I}  } } \hat{G}_\pmf^\epsilon \hat{\matr{\alpha}}_\pmf^\epsilon , \nabla^\fraceps \vct{v}_\pmf \bigr\rangle .
\end{align}
\end{subequations}
The transformed Biot problem~\eqref{eq:fulldim-weak-trafo} is wellposed as a consequence of \Cref{prop:wellposed-fulldim}.
\begin{corollary}
Given \Cref{asm:biot}, the transformed Biot problem~\eqref{eq:fulldim-weak-trafo} admits unique solutions $\hat{p}_\pmf^\epsilon \in H^1 (I ; \Phi ) \cap L^\infty ( I; \Phi ) \cap W^{1,\infty } ( I ; \Lambda )$ and~$\hat{\vct{u}}_\pmf^\epsilon \in W^{1,\infty } (I ; \vct{V} )$ for all~$\epsilon > 0$. 
\end{corollary}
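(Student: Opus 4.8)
The plan is to observe that the transformed weak problem~\eqref{eq:fulldim-weak-trafo} is nothing but the original weak problem~\eqref{eq:fulldim-weak} rewritten in the $\epsilon$-rescaled coordinates~\eqref{eq:trafo}, so that the assertion will follow from \Cref{prop:wellposed-fulldim} once one checks that~\eqref{eq:trafo} induces isomorphisms of the relevant function spaces together with an exact identification of the two variational formulations. First I would record the elementary structure of the transformations: $\vct{T}_\pm^\epsilon$ is a translation, hence a bijection of $\Omega_\pm^\epsilon$ onto $\Omega_\pm^0$ with Jacobian $\matr{I}_n$ and Jacobian determinant $1$, while $\vct{T}_\rmf^\epsilon$ is the linear bijection of $\Omega_\rmf^\epsilon$ onto $\Omega_\rmf^1$ with constant Jacobian $\matr{D}_\epsilon := \matr{I}_n + (\epsilon^{-1}-1)\vct{N}\vct{N}^\rmt$, which is symmetric and positive definite with $\det\matr{D}_\epsilon = \epsilon^{-1}$. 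For each fixed $\epsilon>0$, the composition operator $\zeta\mapsto\zeta\circ(\vct{T}_\pmf^\epsilon)^{-1}$ is therefore a bounded linear bijection, with bounded linear inverse, between the $L^2$-, $H^1$-, $\vct{H}^1$-, and $\vct{H}_\mathrm{div}$-type spaces on $\Omega_\pmf^\epsilon$ and those on $\Omega_\pmf^\odot$; it commutes with $\partial_t$ and so also maps the corresponding Bochner spaces over $I$ isomorphically. Since $\gamma_\pm^\epsilon$ is mapped onto $\gamma$ by $\vct{T}_\pm^\epsilon$ and onto $\gamma_\pm^1$ by $\vct{T}_\rmf^\epsilon$, a trace condition $v_\pm^\epsilon = v_\rmf^\epsilon$ on $\gamma_\pm^\epsilon$ transforms into $v_\pm = \Pi_\pm v_\rmf$ on $\gamma$ with $\Pi_\pm$ as in~\eqref{eq:proj}, which yields $\vct{V}^\epsilon \cong \vct{V}$, $\Phi^\epsilon \cong \Phi$, $\Lambda^\epsilon \cong \Lambda$, and, via~\eqref{eq:paramtrafo}, identifies the transformed data with the images of the original data.

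The second ingredient is the behavior of the differential operators. Writing $\hat\zeta := \zeta\circ(\vct{T}_\rmf^\epsilon)^{-1}$, the chain rule gives $\nabla\zeta = \bigl(\matr{D}_\epsilon\nabla\hat\zeta\bigr)\circ\vct{T}_\rmf^\epsilon$ for scalars and $\nabla\vct{v} = \bigl((\nabla\hat{\vct{v}})\matr{D}_\epsilon\bigr)\circ\vct{T}_\rmf^\epsilon$ for vector fields, and from $\matr{D}_\epsilon = \matr{I}_n + (\epsilon^{-1}-1)\vct{N}\vct{N}^\rmt$ one computes $\matr{D}_\epsilon\nabla\hat\zeta = \nabla_{\!\smallpar}\hat\zeta + \epsilon^{-1}\nabla_{\!\vct{N}}\hat\zeta = \nabla^\epsilon\hat\zeta$ and $(\nabla\hat{\vct{v}})\matr{D}_\epsilon = \nabla^\epsilon\hat{\vct{v}}$, which are exactly the scaled operators of~\eqref{eq:nablaeps}; symmetrizing yields the scaled strain $\matr{e}^\epsilon$ of~\eqref{eq:straineps}. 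Performing the substitution $\vct{x} = (\vct{T}_\pmf^\epsilon)^{-1}(\hat{\vct{x}})$ in each integral defining $\mathcal{A}^\epsilon,\mathcal{B}^\epsilon,\mathcal{C}^\epsilon,\mathcal{D}^\epsilon,\mathcal{L}^\epsilon$ leaves the bulk contributions untouched, whereas in the fracture each volume element acquires the factor $|\det\matr{D}_\epsilon^{-1}| = \epsilon$ and each gradient, Jacobian, or strain is replaced by its $\epsilon$-scaled counterpart. Collecting these factors---and using $\epsilon^{\matr{\nu}_{\!\matr{\alpha}}+\matr{I}} = \epsilon\,\epsilon^{\matr{\nu}_{\!\matr{\alpha}}}$ for the matrix exponential---one recovers precisely the definitions of $\hat{\mathcal{A}}^\epsilon,\hat{\mathcal{B}}^\epsilon,\hat{\mathcal{C}}^\epsilon,\hat{\mathcal{D}}^\epsilon,\hat{\mathcal{L}}^\epsilon$: the extra shifts $\nu_\tsr{C}+1$, $\nu_\omega+1$, $\nu_\matr{K}+1$, $\nu_q+1$, $\nu_{\!\vct{f}}+1$ and $\matr{\nu}_{\!\matr{\alpha}}+\matr{I}$ are nothing but the fracture Jacobian-determinant factor, with no other modification, and likewise the initial condition and the initial-displacement relation correspond under the same identification.

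With these two ingredients in place the conclusion is immediate: $(\hat{p}_\pmf^\epsilon,\hat{\vct{u}}_\pmf^\epsilon)$ solves~\eqref{eq:fulldim-weak-trafo} if and only if $(\hat{p}_\pmf^\epsilon\circ\vct{T}_\pmf^\epsilon,\hat{\vct{u}}_\pmf^\epsilon\circ\vct{T}_\pmf^\epsilon)$ solves~\eqref{eq:fulldim-weak}. By \Cref{prop:wellposed-fulldim}, the latter has, for every $\epsilon>0$, a unique solution with $p_\pmf^\epsilon \in H^1(I;\Phi^\epsilon)\cap L^\infty(I;\Phi^\epsilon)\cap W^{1,\infty}(I;\Lambda^\epsilon)$ and $\vct{u}_\pmf^\epsilon \in W^{1,\infty}(I;\vct{V}^\epsilon)$; composing with $(\vct{T}_\pmf^\epsilon)^{-1}$ and using that this map is a bounded linear bijection commuting with $\partial_t$ transfers both existence, uniqueness, and the stated regularity to~\eqref{eq:fulldim-weak-trafo}.

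I expect the only part requiring genuine care to be the computation of the transformed coupling and gravity terms $\hat{\mathcal{B}}^\epsilon$ and $\hat{\mathcal{L}}^\epsilon$. There one has to invoke the block-diagonal structure of $\matr{\alpha}_\rmf^\epsilon$ from \Cref{asm:biot} (that is, $\matr{\alpha}_\rmf^\epsilon\vct{N} = \vct{0}$ together with symmetry) in order to see that the contraction $\matr{\alpha}_\rmf^\epsilon : (\nabla\hat{\vct{v}})\matr{D}_\epsilon$ reduces to $\matr{\alpha}_\rmf^\epsilon : \nabla_{\!\smallpar}\hat{\vct{v}}$, so that the anisotropic scaling $\matr{\nu}_{\!\matr{\alpha}}$ combines with the Jacobian-determinant factor into the clean matrix exponent $\matr{\nu}_{\!\matr{\alpha}}+\matr{I}$; all remaining manipulations are a routine transport of integral identities under an affine change of variables.
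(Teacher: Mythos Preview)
Your proposal is correct and is precisely the argument the paper implicitly relies on: the corollary is stated without proof, merely as ``a consequence of \Cref{prop:wellposed-fulldim}'', and you have supplied the details of why the coordinate change~\eqref{eq:trafo} yields an equivalence between~\eqref{eq:fulldim-weak} and~\eqref{eq:fulldim-weak-trafo} at the level of function spaces and bilinear forms. Your remark that the block-diagonal structure $\hat{\matr{\alpha}}_\rmf^\epsilon\vct{N}=\vct{0}$ is needed to obtain the clean form of $\hat{\mathcal{B}}^\epsilon$ is accurate and worth recording.
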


\section{A~Priori Estimates}
\label{sec:3}
In the following, we derive a~priori estimates for the family of solutions~$\{ \hat{p}_\pmf^\epsilon \}_{\epsilon \in ( 0, 1] } \subset \Phi$ and~$\{ \hat{\vct{u}}_\pmf^\epsilon \}_{\epsilon \in ( 0, 1] } \subset \vct{V}$ of the transformed Biot problem~\eqref{eq:fulldim-weak-trafo}. 
As a consequence, we can then find a subsequence~$\{ \epsilon_k \}_{k \in \mathbb{N}} \subset ( 0, 1]$ with $\epsilon_k \searrow 0$ as $k \rightarrow \infty$ along which these solutions families converge in a weak sense. 
We continue to use the notation introduced in \Cref{sec:2}, and in particular recall the bulk-fracture triple notation introduced in the \Cref{not:pmf,not:fracfac,not:odot}, as well as the definition of the $\epsilon$-scaled gradient~$\nabla^\epsilon$ and strain tensor~$\matr{e}^\epsilon $ in the \cref{eq:nablaeps,eq:straineps}.

We begin by stating \enquote{domain-decomposed} versions of Poincaré's and Korn's inequality which fit the geometric setting after the coordinate transformation~\eqref{eq:trafo} with scaling factors that are only present inside the fracture domain~$\Omega_\rmf^1$ and not inside the bulk domains~$\Omega_\pm^0$. 
First, we have the following Poincaré inequalities.
\begin{lemma} 
\label{lem:poincare_decomp}
Let $\alpha \ge 0$. Then, for all $\phi_\pmf \in \Phi$, we have
\begin{subequations}
\begin{align}
\norm[\big ]{\fracfac{\epsilon^{\alpha} }\phi_\pmf}_\Lambda^2 \lesssim \norm[\big ]{\fracfac{\epsilon } \nabla^{\fracfac{\!\epsilon\! }} \phi_\pmf}_{\vct{\Lambda}}^2 .
\end{align}
Moreover, if both bulk domains~$\Omega_\pm^0$ have a boundary section with Dirichlet conditions, i.e., $\abs{\rho^0_{+, \rmD}} > 0$ and $\abs{\rho^0_{-, \rmD}} > 0$, we have
\begin{align}
\norm[\big ]{\fracfac{\epsilon^{\alpha} }\phi_\pmf}_\Lambda^2 \lesssim \sum\nolimits_{i = \pm } \norm{\nabla \phi_i}_{\vct{L}^2 (\Omega_i^0)}^2 + \norm{  \epsilon^\alpha \nabla_{\!\vct{N}} \phi_\rmf}_{\vct{L}^2 (\Omega_\rmf^1 ) }^2  \le \norm[\big ]{\fracfac{\epsilon^{\alpha + 1} } \nabla^{\fracfac{\!\epsilon\! }} \phi_\pmf}_{\vct{\Lambda}}^2 .
\end{align}
In addition, for all $\vct{v}_\pmf \in \vct{V}$, it is
\begin{align}
\norm[\big ]{\fracfac{\epsilon^{\alpha} }\vct{v}_\pmf}_{\vct{\Lambda}}^2 \lesssim \sum\nolimits_{i = \pm } \norm{\nabla \vct{v}_i}_{\matr{L}^2 (\Omega_i^0)}^2 + \norm{  \epsilon^\alpha \nabla_{\!\vct{N}} \vct{v}_\rmf}_{\matr{L}^2 (\Omega_\rmf^1 ) }^2  \le \norm[\big ]{\fracfac{\epsilon^{\alpha + 1} } \nabla^{\fracfac{\!\epsilon\! }} \vct{v}_\pmf}_{\matr{\Lambda}}^2 .
\end{align}
\end{subequations}
\end{lemma}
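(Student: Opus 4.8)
The plan is to reduce the three estimates to classical inequalities on the \emph{fixed} reference domains $\Omega_\pm^0$ and $\Omega_\rmf^1$ and to let the coupling condition $\phi_\pm=\Pi_\pm\phi_\rmf$ on $\gamma$ carry information between them. On each bulk domain $\Omega_\pm^0$ — which is independent of $\epsilon$, so that all constants below are $\epsilon$-independent — I would invoke three standard facts: the Friedrichs inequality $\norm{\phi}_{L^2(D)}\lesssim\norm{\nabla\phi}_{\vct{L}^2(D)}$ on a bounded Lipschitz domain $D$ with homogeneous Dirichlet data on a boundary portion of positive measure; its trace variant $\norm{\phi}_{L^2(D)}\lesssim\norm{\nabla\phi}_{\vct{L}^2(D)}+\norm{\phi}_{L^2(\Gamma)}$ for $\Gamma\subset\partial D$ with $\abs{\Gamma}>0$; and the trace theorem $\norm{\phi}_{L^2(\partial D)}\lesssim\norm{\phi}_{H^1(D)}$. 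On the fracture $\Omega_\rmf^1$ the decisive device is a \emph{one-dimensional} Poincaré estimate along the normal fibres $s\mapsto\phi_\rmf(\vct{y}+s\vct{N})$, $-a_-(\vct{y})<s<a_+(\vct{y})$: the fundamental theorem of calculus and Cauchy--Schwarz (the fibres have length $\lesssim1$) give, after integration over $\vct{y}\in\gamma$ and using $a_\pm\in\mathcal{C}^{0,1}$ to compare the surface measures on $\gamma_\pm^1$ with the measure on $\gamma$,
\begin{align*}
\norm{\phi_\rmf}_{L^2(\Omega_\rmf^1)}^2 &\lesssim \norm{\phi_\rmf}_{L^2(\gamma_\pm^1)}^2 + \norm{\partial_{\!\vct{N}}\phi_\rmf}_{L^2(\Omega_\rmf^1)}^2 , \\
\norm{\phi_\rmf}_{L^2(\gamma_-^1)}^2 &\lesssim \norm{\phi_\rmf}_{L^2(\gamma_+^1)}^2 + \norm{\partial_{\!\vct{N}}\phi_\rmf}_{L^2(\Omega_\rmf^1)}^2 .
\end{align*}
Using fibres rather than a blanket Poincaré inequality in the fracture is the crucial point: only $\partial_{\!\vct{N}}\phi_\rmf$ enters, and $\norm{\partial_{\!\vct{N}}\phi_\rmf}_{L^2(\Omega_\rmf^1)}=\norm{\nabla_{\!\vct{N}}\phi_\rmf}_{\vct{L}^2(\Omega_\rmf^1)}$ is exactly the component of $\nabla^\epsilon\phi_\rmf=\nabla_{\!\smallpar}\phi_\rmf+\epsilon^{-1}\nabla_{\!\vct{N}}\phi_\rmf$ that survives at full strength after multiplication by $\epsilon$, whereas a tangential term $\norm{\nabla_{\!\smallpar}\phi_\rmf}$ would be controllable only up to a factor $\epsilon$.

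For the first (weakest) estimate, assume w.l.o.g.\ that $\abs{\rho_{+,\rmD}^0}>0$. By Friedrichs on $\Omega_+^0$ and the trace theorem, $\norm{\phi_+}_{L^2(\Omega_+^0)}$ and $\norm{\phi_+}_{L^2(\gamma)}$ are bounded by $\norm{\nabla\phi_+}_{\vct{L}^2(\Omega_+^0)}$. Since $\Pi_+$ and its inverse are bounded uniformly in $\epsilon$ (pullbacks along fixed bi-Lipschitz maps) and $\phi_+=\Pi_+\phi_\rmf$ on $\gamma$, this yields $\norm{\phi_\rmf}_{L^2(\gamma_+^1)}\lesssim\norm{\nabla\phi_+}$; substituting into the two fibrewise estimates bounds $\norm{\phi_\rmf}_{L^2(\Omega_\rmf^1)}^2$ and $\norm{\phi_\rmf}_{L^2(\gamma_-^1)}^2$ by $\norm{\nabla\phi_+}^2+\norm{\nabla_{\!\vct{N}}\phi_\rmf}^2$. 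Finally, the trace variant on the possibly Dirichlet-free $\Omega_-^0$, applied with $\Gamma=\gamma$, gives $\norm{\phi_-}_{L^2(\Omega_-^0)}^2\lesssim\norm{\nabla\phi_-}^2+\norm{\phi_-}_{L^2(\gamma)}^2\lesssim\norm{\nabla\phi_-}^2+\norm{\phi_\rmf}_{L^2(\gamma_-^1)}^2$. Summing the three contributions and using $\epsilon\le1$, $\alpha\ge0$ (hence $\epsilon^{2\alpha}\le1$) on the left, together with $\abs{\epsilon\nabla^\epsilon\phi_\rmf}^2=\epsilon^2\abs{\nabla_{\!\smallpar}\phi_\rmf}^2+\abs{\nabla_{\!\vct{N}}\phi_\rmf}^2\ge\abs{\nabla_{\!\vct{N}}\phi_\rmf}^2$ on the right, gives the claimed bound.

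Under the extra hypothesis that $\abs{\rho_{\pm,\rmD}^0}>0$ for both signs, Friedrichs applies directly on each $\Omega_\pm^0$, so $\norm{\phi_\pm}_{L^2(\Omega_\pm^0)}^2+\norm{\phi_\pm}_{L^2(\gamma)}^2\lesssim\norm{\nabla\phi_\pm}^2$; this is what is needed here, because only the weighted quantity $\norm{\epsilon^\alpha\nabla_{\!\vct{N}}\phi_\rmf}$ appears on the right, so one must \emph{not} route through $\gamma_-^1$. Using only the fibrewise estimate off the $\gamma_+^1$-face, $\epsilon^{2\alpha}\norm{\phi_\rmf}_{L^2(\Omega_\rmf^1)}^2\lesssim\epsilon^{2\alpha}\norm{\phi_+}_{L^2(\gamma)}^2+\norm{\epsilon^\alpha\nabla_{\!\vct{N}}\phi_\rmf}^2\lesssim\norm{\nabla\phi_+}^2+\norm{\epsilon^\alpha\nabla_{\!\vct{N}}\phi_\rmf}^2$, which with the two bulk bounds gives the first ``$\lesssim$'' of the second chain; its concluding ``$\le$'' is the elementary identity $\norm{\fracfac{\epsilon^{\alpha+1}}\nabla^{\fracfac{\!\epsilon\!}}\phi_\pmf}_{\vct{\Lambda}}^2=\norm{\nabla\phi_+}^2+\norm{\nabla\phi_-}^2+\epsilon^{2\alpha+2}\norm{\nabla_{\!\smallpar}\phi_\rmf}^2+\epsilon^{2\alpha}\norm{\nabla_{\!\vct{N}}\phi_\rmf}^2\ge\norm{\nabla\phi_+}^2+\norm{\nabla\phi_-}^2+\norm{\epsilon^\alpha\nabla_{\!\vct{N}}\phi_\rmf}^2$. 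The vector estimate for $\vct{v}_\pmf\in\vct{V}$ follows by running every scalar step on the Cartesian components of $\vct{v}_\rmf$ (here $\vct{v}_\pm$ vanishes on all of $\rho_\pm^0\supseteq\rho_{\pm,\rmD}^0$, so Friedrichs on $\Omega_\pm^0$ is immediate), using $\abs{\nabla_{\!\vct{N}}\vct{v}_\rmf}^2=\sum_j\abs{\partial_{\!\vct{N}}(\vct{v}_\rmf)_j}^2$ and $\abs{\epsilon\nabla^\epsilon\vct{v}_\rmf}^2=\epsilon^2\abs{\nabla_{\!\smallpar}\vct{v}_\rmf}^2+\abs{\nabla_{\!\vct{N}}\vct{v}_\rmf}^2$. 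I expect the main obstacle to be the geometric and measure-theoretic bookkeeping around $\Pi_\pm$ — identifying the trace of $\phi_\pm$ on $\gamma$ with that of $\phi_\rmf$ on $\gamma_\pm^1$ with constants uniform in $\epsilon$, given only $a_\pm\in\mathcal{C}^{0,1}$ — together with, in the first estimate, the transport of Poincaré control from the Dirichlet bulk, through the $\epsilon$-rescaled fracture, to the possibly Dirichlet-free bulk; the bookkeeping of powers of $\epsilon$ is then automatic.
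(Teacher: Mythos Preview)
Your argument is correct and complete. The paper does not actually supply a proof of this lemma; it simply cites \cite[Lem.~3.6]{hoerl24}. Your approach --- Friedrichs on the fixed bulk domains, a one-dimensional fibrewise Poincar\'e estimate in the normal direction on $\Omega_\rmf^1$, and the trace theorem together with the coupling $\phi_\pm=\Pi_\pm\phi_\rmf$ to propagate control across $\gamma$ --- is the natural self-contained route and is presumably what the cited reference does. Your handling of the three cases is careful: in particular, you correctly note that in the first estimate one must route from the Dirichlet bulk through the fracture to the possibly Dirichlet-free bulk, whereas in the second and third estimates both bulk pieces are controlled independently and only the $\epsilon^\alpha$-weighted normal derivative in the fracture is needed. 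The orthogonality $\abs{\epsilon^\beta\nabla^\epsilon\phi_\rmf}^2=\epsilon^{2\beta}\abs{\nabla_{\!\smallpar}\phi_\rmf}^2+\epsilon^{2\beta-2}\abs{\nabla_{\!\vct{N}}\phi_\rmf}^2$ that you exploit for the final ``$\le$'' is exactly right.
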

\begin{proof}
See \cite[Lem.\ 3.6]{hoerl24}. 
\end{proof}
Further, the following Korn inequality holds true.  
\begin{lemma}
\label{lem:korn_decomp}
Let $\alpha \ge \frac{1}{2}$. 
Then, for all $\vct{v}_\pmf \in \vct{V}$, we have 
\begin{align}
\norm[\big ]{ \fracfac{\epsilon^\alpha} \nablaeps \vct{v}_\pmf}^2_{\matr{\Lambda}} \lesssim \norm[\big ]{\fracfac{\epsilon^\alpha} \matr{e}^{\fraceps\! } ( \vct{v}_\pmf ) }^2_{\matr{\Lambda}} . 
\end{align}
\end{lemma}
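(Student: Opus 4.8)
The plan is to deduce the inequality from the classical first Korn inequality applied not on the thin fracture domain~$\Omega_\rmf^1$, but on the recombined full domain~$\Omega^\epsilon$, which stays non-degenerate as $\epsilon \to 0$. A Korn inequality posed directly on~$\Omega_\rmf^1$ with the $\epsilon$-scaled gradient~$\nabla^\epsilon$ has a constant that degenerates as $\epsilon \to 0$, and it is precisely the coupling of the fracture with the adjacent bulk---encoded in the trace conditions on~$\gamma$ that define~$\vct{V}$---that restores a uniform constant. Since $\vct{V}^\epsilon \cong \vct{H}^1_0(\Omega^\epsilon)$, undoing the coordinate transformation of \Cref{sec:2.6} shows that every $\vct{v}_\pmf \in \vct{V}$ is of the form $\vct{v}_i = \vct{v}^\epsilon|_{\Omega_i^\epsilon} \circ (\vct{T}_i^\epsilon)^{-1}$ for $i \in \{+,-,\rmf\}$ and a unique $\vct{v}^\epsilon \in \vct{H}^1_0(\Omega^\epsilon)$.

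First I would carry out the change-of-variables bookkeeping. As $\vct{T}_\pm^\epsilon$ are translations while $\vct{T}_\rmf^\epsilon$ rescales the normal coordinate by~$\epsilon$ with constant Jacobian determinant~$\epsilon^{-1}$, the chain rule together with \eqref{eq:nablaeps} and \eqref{eq:straineps} gives $\nabla(\vct{v}^\epsilon|_{\Omega_\rmf^\epsilon}) = (\nabla^\epsilon \vct{v}_\rmf) \circ \vct{T}_\rmf^\epsilon$ and $\matr{e}(\vct{v}^\epsilon|_{\Omega_\rmf^\epsilon}) = (\matr{e}^\epsilon(\vct{v}_\rmf)) \circ \vct{T}_\rmf^\epsilon$, hence
\[
\norm[\big]{ \fracfac{\epsilon^{1/2}} \nablaeps \vct{v}_\pmf }^2_{\matr{\Lambda}} = \norm{ \nabla \vct{v}^\epsilon }^2_{\matr{L}^2(\Omega^\epsilon)} , \qquad \norm[\big]{ \fracfac{\epsilon^{1/2}} \matr{e}^{\fraceps}(\vct{v}_\pmf) }^2_{\matr{\Lambda}} = \norm{ \matr{e}(\vct{v}^\epsilon) }^2_{\matr{L}^2(\Omega^\epsilon)} .
\]
As $\vct{v}^\epsilon \in \vct{H}^1_0(\Omega^\epsilon)$, the first Korn inequality $\norm{\nabla \vct{w}}^2_{\matr{L}^2(D)} \le 2 \norm{\matr{e}(\vct{w})}^2_{\matr{L}^2(D)}$ for $\vct{w} \in \vct{H}^1_0(D)$---which holds with a universal constant, in particular independent of the domain~$D$ and hence of~$\epsilon$---settles the case $\alpha = \tfrac12$. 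For general $\alpha \ge \tfrac12$ I would then split bulk and fracture: on the fixed Lipschitz domains~$\Omega_\pm^0$, where $\vct{v}_\pm$ vanishes on the boundary part~$\rho_\pm^0$ of positive surface measure, the standard second Korn (Korn--Poincaré) inequality yields $\norm{\nabla \vct{v}_\pm}^2_{\matr{L}^2(\Omega_\pm^0)} \lesssim \norm{\matr{e}(\vct{v}_\pm)}^2_{\matr{L}^2(\Omega_\pm^0)}$ with $\epsilon$-independent constants; for the fracture, the $\alpha = \tfrac12$ estimate reads $\epsilon \norm{\nabla^\epsilon \vct{v}_\rmf}^2_{\matr{L}^2(\Omega_\rmf^1)} \lesssim \sum\nolimits_{i=\pm} \norm{\matr{e}(\vct{v}_i)}^2_{\matr{L}^2(\Omega_i^0)} + \epsilon \norm{\matr{e}^\epsilon(\vct{v}_\rmf)}^2_{\matr{L}^2(\Omega_\rmf^1)}$, and multiplying by $\epsilon^{2\alpha-1} \le 1$ (the only place where $\alpha \ge \tfrac12$ and $\epsilon \in (0,1]$ enter) and discarding the now harmless factor in front of the bulk terms gives $\epsilon^{2\alpha} \norm{\nabla^\epsilon \vct{v}_\rmf}^2_{\matr{L}^2(\Omega_\rmf^1)} \lesssim \sum\nolimits_{i=\pm} \norm{\matr{e}(\vct{v}_i)}^2_{\matr{L}^2(\Omega_i^0)} + \epsilon^{2\alpha} \norm{\matr{e}^\epsilon(\vct{v}_\rmf)}^2_{\matr{L}^2(\Omega_\rmf^1)}$. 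Adding the bulk and fracture estimates and recalling the bulk--fracture triple notation of \Cref{not:pmf,not:fracfac,not:odot} yields the claim.

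I do not expect a single deep difficulty; the two points to handle with care are the change-of-variables bookkeeping above and the fact that a single global Korn inequality on~$\Omega^\epsilon$ does \emph{not} by itself give the statement once $\alpha > \tfrac12$---it only controls the bulk gradients by the quantity $\epsilon \norm{\matr{e}^\epsilon(\vct{v}_\rmf)}^2_{\matr{L}^2(\Omega_\rmf^1)}$ rather than by the smaller $\epsilon^{2\alpha} \norm{\matr{e}^\epsilon(\vct{v}_\rmf)}^2_{\matr{L}^2(\Omega_\rmf^1)}$---so the bulk gradients must be controlled separately via the second Korn inequality on the fixed domains~$\Omega_\pm^0$, while only the fracture gradient is routed through the global estimate together with the elementary bound $\epsilon^{2\alpha-1} \le 1$.
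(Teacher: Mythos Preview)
Your proposal is correct and follows essentially the same route as the paper's proof: both pull back to $\vct{V}^\epsilon \cong \vct{H}^1_0(\Omega^\epsilon)$, apply the first Korn inequality on $\Omega^\epsilon$ (with its universal constant) for the fracture contribution and the second Korn inequality on the fixed bulk domains $\Omega_\pm^\epsilon \cong \Omega_\pm^0$, and combine the two via the factor $\epsilon^{2\alpha-1}\le 1$. The paper merely packages the bulk and fracture estimates into a single chain of inequalities rather than treating $\alpha=\tfrac12$ first, but the ingredients and the logic are the same.
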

\begin{proof}
Let $\epsilon > 0$ and $\vct{v}_\pmf^\epsilon \in \vct{V}^\epsilon$. 
Further, define~$\vct{v}^\epsilon \in \vct{H}^1_0 (\Omega^\epsilon )$ so that $\vct{v}^\epsilon \vert_{\Omega_i^\epsilon } := \vct{v}_i^\epsilon $ for $i \in \{ + , - , \rmf\} $. 
Then, we can apply Korn's inequality in~$\Omega_\pm^\epsilon$ and~$\Omega^\epsilon$, i.e.,
\begin{align*}
\norm[\big ]{\nabla \vct{v}_\pm^\epsilon}_{\matr{L}^2 (\Omega_\pm^\epsilon )} \lesssim \norm[\big ]{\matr{e}( \vct{v}_\pm^\epsilon ) }_{\matr{L}^2 (\Omega_\pm^\epsilon )}  \quad\ \text{and}\quad\ \norm{\nabla \vct{v}^\epsilon }_{\matr{L}^2 (\Omega^\epsilon )} \lesssim \norm{\matr{e} (\vct{v}^\epsilon )}_{\matr{L}^2 (\Omega^\epsilon )} ,
\end{align*}
where the implicit constants are independent of~$\epsilon$ as the $\matr{L}^2 (\Omega_\pm^\epsilon )$-norms of $\nabla \vct{v}_\pm^\epsilon$ and $\matr{e}(\vct{v}_\pm^\epsilon ) $ are invariant under the translation~$\vct{T}_\pm^\epsilon$ from~\cref{eq:trafo} and $\vct{v}^\epsilon$ satisfies $\vct{v}^\epsilon \vert_{\partial \Omega } = \vct{0}$ on the whole boundary~$\partial\Omega^\epsilon$. 
Thus, we find
\begin{align*}
\norm[\big ]{ \fracfac{\epsilon^{ \alpha - \frac{1}{2}}} \nabla \vct{v}_\pmf^\epsilon }^2_{\matr{\Lambda}^{\!\epsilon} } &\le \norm[\big ]{\nabla \vct{v}_\pm^\epsilon }^2_{\matr{L}^2 (\Omega_\pm^\epsilon ) } + \epsilon^{2\alpha - 1} \norm{\nabla \vct{v}^\epsilon }_{\matr{L}^2 (\Omega^\epsilon ) }^2 \\
&\lesssim \norm[\big ]{\matr{e} ( \vct{v}_\pm^\epsilon ) }^2_{\matr{L}^2 (\Omega_\pm^\epsilon ) } + \epsilon^{2\alpha - 1} \norm{\matr{e} (\vct{v}^\epsilon ) }^2_{\matr{L}^2 ( \Omega^\epsilon ) } \lesssim \norm[\big ]{\fracfac{\epsilon^{ \alpha - \frac{1}{2}}} \matr{e} ( \vct{v}_\pmf^\epsilon ) }^2_{\matr{\Lambda}^{\!\epsilon }} .
\end{align*}
The assertion now follows by applying the coordinate transformation~\eqref{eq:trafo}. 
\end{proof}

We assume that the $\epsilon$-dependent model parameters of the transformed Biot problem~\eqref{eq:fulldim-weak-trafo} satisfy the following properties.
Some of these assumptions have already been stated in \Cref{asm:biot}, where they were required to prove the wellposedness of the Biot problem~\eqref{eq:fulldim-weak} with fixed~$\epsilon$ in \Cref{prop:wellposed-fulldim}, and are restated here in terms of the transformed parameters defined in \cref{eq:paramtrafo}.
In addition, since we now consider a sequence of problems parameterized by~$\epsilon$, we introduce new assumptions concerning the strong convergence and uniform boundedness of the model parameters as~$\epsilon \to 0$. 
\begin{assumption} 
\label{asm:eps}
Let $\iota ( \nu ) := \frac{1}{2} ( \nu + 1 )$. For $\epsilon \in ( 0, 1 ]$, we assume that 
\begin{enumerate}[label=(\roman*),itemsep=3pt,topsep=0pt]
\item \label{asm:alpha_eps}
\begin{itemize}[itemsep=0pt]
\item $\hat{\matr{\alpha}}_\pmf^\epsilon \colon \Omega_\pmf^\odot \rightarrow \mathbb{R}^{n\times n}$ is symmetric and piecewise constant,  
\item $\hat{\matr{\alpha}}_\rmf^\epsilon$ is block-diagonal with respect to the fracture, i.e., $\hat{\matr{\alpha}}_\rmf^\epsilon \vct{N} = \vct{0}$,
\item $\hat{\matr{\alpha}}_\pmf^\epsilon \rightarrow \hat{\matr{\alpha}}_\pmf$ in~$\matr{L}^\infty (\Omega_\pmf^\odot )$ as $\epsilon \rightarrow 0$,
\end{itemize}
\item \label{asm:f_eps} 
\begin{itemize}[itemsep=0pt]
\item $\norm[\big ]{\hat{\vct{f}}_\pmf^\epsilon }_{H^2 ( I ; \vct{\Lambda} )}  \lesssim 1$, 
\item $\hat{\vct{f}}_\pmf^\epsilon \rightarrow \hat{\vct{f}}_\pmf$ in $H^1 (I ; \vct{\Lambda} )$ as $\epsilon \rightarrow 0$, 
\end{itemize}
\item \label{asm:q_eps}
\begin{itemize}[itemsep=0pt]
\item $\norm[\big ]{\hat{q}_\pmf^\epsilon }_{H^1 ( I ; \Lambda )}  \lesssim 1$, 
\item $\hat{q}_\pmf^\epsilon \rightarrow \hat{q}_\pmf$ in~$L^2 ( I ; \Lambda )$ as $\epsilon \rightarrow 0$, 
\end{itemize}
\item \label{asm:p0_eps}
\begin{itemize}[itemsep=0pt]
\item $p_{0,\pmf}^\epsilon = \hat{p}_{0,\pmf}^\epsilon \circ \vct{T}_\pmf^\epsilon$ satisfies \Cref{asm:biot}~\ref{asm:p0},
\item $\norm{\fracfac{\epsilon^{\iota ( \nu_\matr{K})} } \nabla^\fraceps \hat{p}_{0,\pmf}^\epsilon }^2_{\vct{\Lambda}} \lesssim 1 $, \vspace{2pt}
\item $\hat{p}_{0,\pmf}^\epsilon \rightarrow \hat{p}_{0,\pmf}$ in~$\Phi$ as $\epsilon \rightarrow 0$,
\end{itemize}
\item \label{asm:omega_eps}
\begin{itemize}[itemsep=0pt]
\item $\hat{\omega}_\pmf^\epsilon \in L^\infty (\Omega_\pmf^\odot)$ is almost everywhere uniformly positive, i.e., $\omega_\pmf^\epsilon \gtrsim 1 $, 
\item $\hat{\omega}_\pmf^\epsilon \rightarrow \hat{\omega}_\pmf$ in $L^\infty (\Omega_\pmf^\odot )$ as $\epsilon \rightarrow 0$,
\end{itemize}
\item \label{asm:C_eps} 
\begin{itemize}[itemsep=0pt]
\item $\hat{\tsr{C}}_\pmf^\epsilon \in L^\infty ( \Omega_\pmf^\odot ; \mathbb{R}^{n \times n \times n \times n} )$ is almost everywhere uniformly elliptic, i.e., $\hat{\tsr{C}}^\epsilon_\pmf \matr{M} : \matr{M} \gtrsim \matr{M} : \matr{M}$ for all $\matr{M} \in \mathbb{R}^{n\times n }$, 
\item  $\hat{\tsr{C}}_\pmf^\epsilon$ satisfies the symmetry properties
\begin{align*}
( \hat{\tsr{C}}_\pmf^\epsilon )_{ijkl} = ( \hat{\tsr{C}}_\pmf^\epsilon )_{klij} , \quad ( \hat{\tsr{C}}_\pmf^\epsilon )_{ijkl} = ( \hat{\tsr{C}}_\pmf^\epsilon )_{jikl} , \quad ( \hat{\tsr{C}}_\pmf^\epsilon )_{ijkl} = ( \hat{\tsr{C}}_\pmf^\epsilon )_{ijlk},
\end{align*}
\item $\hat{\tsr{C}}_\pmf^\epsilon \rightarrow \hat{\tsr{C}}_\pmf$ in~$L^\infty ( \Omega_\pmf^\odot ; \mathbb{R}^{n\times n \times n \times n } )$ as $\epsilon \rightarrow 0$,
\end{itemize}
\item \label{asm:K_eps} 
\begin{itemize}[itemsep=0pt]
\item $\hat{\matr{K}}_\pmf^\epsilon \in \matr{L}^\infty (\Omega_\pmf^\odot ) $ is almost everywhere symmetric and uniformly elliptic, i.e., $\hat{\matr{K}}_\pmf^\epsilon \vct{w} \cdot \vct{w} \gtrsim \vert \vct{w} \vert^2$ for all $\vct{w} \in \mathbb{R}^n$,
\item $\hat{\matr{K}}_\pmf^\epsilon \rightarrow \hat{\matr{K}}_\pmf$ in~$\matr{L}^\infty (\Omega_\pmf^\odot )$ as $\epsilon \rightarrow 0$.
\end{itemize}
\end{enumerate}
\end{assumption}
Besides, the convergence of the gravitational term~$\hat{G}_\pmf^\epsilon$  can be characterized as follows as $\epsilon \rightarrow 0$.
\begin{lemma} \label{lem:G}
As $\epsilon \rightarrow 0$, we have 
\begin{align}
\hat{G}_\pmf^\epsilon \rightarrow \hat{G}_\pmf^0 \quad\enspace \text{in } L^\infty (\Omega_\pmf^\odot ) , 
\end{align}
where $\hat{G}_\pm^0 ( \vct{x} ) := \vct{x} \cdot \vct{g}$ and $\hat{G}_\rmf^0 ( \vct{x} ) := ( \vct{x} - (\vct{x} \cdot \vct{N} ) \vct{N} ) \cdot \vct{g}$.
\end{lemma}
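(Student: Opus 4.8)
The plan is to prove this by a direct computation, using that both the gravitational term and the coordinate transformation~\eqref{eq:trafo} are affine. First I would recall from the non-dimensionalization in \Cref{sec:2.4} that, since $\dimq{G}_\pmf(\dimq{\vct{x}}) = \dimq{\vct{g}} \cdot \dimq{\vct{x}}$ and $\dimq{\vct{x}} = \refq{L}\vct{x}$, one has $G_\pmf^\epsilon(\vct{x}) = \dimq{\vct{g}} \cdot (\refq{L}\vct{x})/(\refq{g}\refq{L}) = \vct{g}\cdot\vct{x}$ on $\Omega_\pmf^\epsilon$; that is, the untransformed gravitational term is one and the same affine function for every $\epsilon \in (0,1]$. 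Hence all $\epsilon$-dependence of $\hat{G}_\pmf^\epsilon$ enters only through the inverse transformations $(\vct{T}_\pmf^\epsilon)^{-1}$, and the proof reduces to composing a fixed affine function with an explicit affine change of coordinates and then sending $\epsilon \to 0$.

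Next I would invert the transformations in~\eqref{eq:trafo}. For the bulk domains, $(\vct{T}_\pm^\epsilon)^{-1}(\vct{x}) = \vct{x} \pm \tfrac{\epsilon}{2}\vct{N}$, which gives $\hat{G}_\pm^\epsilon(\vct{x}) = \vct{g}\cdot\vct{x} \pm \tfrac{\epsilon}{2}(\vct{g}\cdot\vct{N})$. For the fracture domain, decomposing a vector into its component $\vct{x} - (\vct{x}\cdot\vct{N})\vct{N}$ tangential to the fracture plane and its normal component $(\vct{x}\cdot\vct{N})\vct{N}$ shows that $\vct{T}_\rmf^\epsilon$ leaves the tangential part unchanged and multiplies the normal part by $\epsilon^{-1}$; consequently $(\vct{T}_\rmf^\epsilon)^{-1}(\vct{x}) = \vct{x} + (\epsilon-1)(\vct{x}\cdot\vct{N})\vct{N}$ and therefore $\hat{G}_\rmf^\epsilon(\vct{x}) = \vct{g}\cdot\vct{x} + (\epsilon-1)(\vct{x}\cdot\vct{N})(\vct{g}\cdot\vct{N})$.

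Finally I would compare with the claimed limits $\hat{G}_\pm^0(\vct{x}) = \vct{g}\cdot\vct{x}$ and $\hat{G}_\rmf^0(\vct{x}) = \vct{g}\cdot(\vct{x} - (\vct{x}\cdot\vct{N})\vct{N})$. Subtracting yields $\hat{G}_\pm^\epsilon - \hat{G}_\pm^0 = \pm\tfrac{\epsilon}{2}(\vct{g}\cdot\vct{N})$ and $\hat{G}_\rmf^\epsilon - \hat{G}_\rmf^0 = \epsilon\,(\vct{x}\cdot\vct{N})(\vct{g}\cdot\vct{N})$. Since the transformed domains $\Omega_\pmf^\odot$ are fixed and bounded, $|\vct{x}\cdot\vct{N}|$ is uniformly bounded on $\Omega_\rmf^1$, while $\vct{g}$ and $\vct{N}$ are fixed; hence both differences are $\mathcal{O}(\epsilon)$ in $L^\infty(\Omega_\pmf^\odot)$, and the stated convergence follows as $\epsilon \to 0$. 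I expect no genuine obstacle in this argument; the only step requiring a moment of care is the explicit inversion of $\vct{T}_\rmf^\epsilon$, which becomes transparent exactly after decomposing vectors into their parts parallel and normal to the fracture plane.
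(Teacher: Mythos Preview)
Your proposal is correct and follows essentially the same approach as the paper: both compute $\hat{G}_\pmf^\epsilon$ explicitly by composing $G_\pmf^\epsilon(\vct{x}) = \vct{g}\cdot\vct{x}$ with the inverse transformations, obtaining $\hat{G}_\pm^\epsilon(\vct{x}) = (\vct{x} \pm \tfrac{\epsilon}{2}\vct{N})\cdot\vct{g}$ and $\hat{G}_\rmf^\epsilon(\vct{x}) = (\vct{x} + (\epsilon-1)(\vct{x}\cdot\vct{N})\vct{N})\cdot\vct{g}$, from which the $L^\infty$-convergence is immediate. Your version simply spells out the inversion of $\vct{T}_\rmf^\epsilon$ and the $\mathcal{O}(\epsilon)$ estimate in more detail than the paper, which states these formulas and concludes directly.
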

\begin{proof}
The functions $\hat{G}_\pmf^\epsilon \colon \Omega_\pmf^\odot \rightarrow \mathbb{R}$ as defined in \cref{eq:paramtrafo} are given by 
\begin{align*}
\hat{G}_\pm^\epsilon ( \vct{x} ) = \bigl( \vct{x} \pm \tfrac{\epsilon}{2} \vct{N} \bigr) \cdot \vct{g} , \quad\enspace \hat{G}_\rmf^\epsilon ( \vct{x} ) = \bigl( \vct{x} + (\epsilon - 1) (\vct{x} \cdot \vct{N}) \vct{N} \bigr) \cdot \vct{g} 
\end{align*}
so that the result follows immediately.
\end{proof}
We make the following assumptions on the scaling parameters from the \cref{eq:eps_scaling1,eq:eps_scaling2}.
\begin{assumption} 
\label{asm:nu}
We assume that 
\begin{enumerate}[label=(\roman*),topsep=0pt,itemsep=0pt]
\item \label{asm:nuomega} $\nu_\omega \ge -1$,
\item \label{asm:nuC} $\nu_\tsr{C} \in \mathbb{R}$,
\item \label{asm:nuK} $\nu_{\matr{K}}\in \mathbb{R}$, 
\item \label{asm:nuq} $2 \nu_q \ge \nu_\omega - 1 $ or $(2 \nu_q \ge \nu_\matr{K} - 3$ and $\nu_q \ge -1)$, 
\item \label{asm:nuf} $2 \nu_{\!\vct{f}} \ge \nu_{\tsr{C}} - 3$ and $\nu_{\!\vct{f}} \ge -1$,
\item \label{asm:nualpha} $2 \nu_{\!\matr{\alpha}}^\smallpar \ge \max\{ \nu_{\tsr{C}} , 0\}  - 1$ and $2\nu_{\!\matr{\alpha}}^\smallperp \ge \max \{ \nu_{\tsr{C}} , 0 \} - 1$,
\item \label{asm:dirichlet} $\abs{\rho^0_{+, \rmD}} > 0$ and $\abs{\rho^0_{-, \rmD}} > 0$ if $\nu_\matr{K} > 1$, otherwise  $\abs{\rho^0_{+, \rmD}} > 0$ or $\abs{\rho^0_{-, \rmD}} > 0$.
\end{enumerate}
\end{assumption}
Next, we derive the following estimate for the initial displacement vector~$\hat{\vct{u}}_{0 , \pmf}^\epsilon \in \vct{V}$.
\begin{lemma} 
\label{lem:apriori_u0} 
Let. $\iota ( \nu ) = \frac{1}{2} ( \nu + 1 )$. 
Given \Cref{asm:eps} and \Cref{asm:nu}~\ref{asm:nuf} and~\ref{asm:nualpha}, the initial displacement vector $\hat{\vct{u}}_{0,\pmf}^\epsilon \in \vct{V}$ satisfies  
\begin{align}
\norm[\big ]{\fracfac[\big ]{\epsilon^{\iota ( \nu_{\tsr{C}} ) } } \matr{e}^\fraceps ( \hat{\vct{u}}_{0 , \pmf}^\epsilon ) }_{\matr{\Lambda}} \lesssim 1.
\end{align}
\end{lemma}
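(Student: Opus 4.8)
The plan is to test the variational equation~\eqref{eq:fulldim-weak-trafo-d} for the initial displacement with $\hat{\vct{u}}_{0,\pmf}^\epsilon$ itself and to read off the bound from the coercivity of $\hat{\mathcal{A}}^\epsilon$. Since~\eqref{eq:fulldim-weak-trafo-d} holds for every $\vct{v}_\pmf \in \vct{V}$ and $\hat{\vct{u}}_{0,\pmf}^\epsilon \in \vct{V}$, choosing $\vct{v}_\pmf = \hat{\vct{u}}_{0,\pmf}^\epsilon$ gives
\[
\hat{\mathcal{A}}^\epsilon \bigl( \hat{\vct{u}}_{0,\pmf}^\epsilon , \hat{\vct{u}}_{0,\pmf}^\epsilon \bigr) = \hat{\mathcal{B}}^\epsilon \bigl( \hat{p}_{0,\pmf}^\epsilon , \hat{\vct{u}}_{0,\pmf}^\epsilon \bigr) + \hat{\mathcal{L}}^\epsilon \bigl( \hat{\vct{u}}_{0,\pmf}^\epsilon \bigr) \big\vert_{t=0} .
\]
By the uniform ellipticity and the symmetries of $\hat{\tsr{C}}_\pmf^\epsilon$ from \Cref{asm:eps}, the left-hand side is bounded below by a positive $\epsilon$-independent multiple of $D^2 := \norm[\big ]{\fracfac[\big ]{\epsilon^{\iota ( \nu_{\tsr{C}} ) } } \matr{e}^\fraceps ( \hat{\vct{u}}_{0 , \pmf}^\epsilon ) }_{\matr{\Lambda}}^2$, because the fracture prefactor $\fracfac{\epsilon^{\nu_\tsr{C}+1}}$ has square root $\epsilon^{\iota(\nu_\tsr{C})}$ and there is no fracture prefactor in the bulk. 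It therefore suffices to dominate the right-hand side by a constant multiple of $D$; then $D^2 \lesssim D$ gives the assertion.

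For the right-hand side I would split each pairing into its bulk and fracture parts. In the bulk domains $\Omega_\pm^0$ all $\epsilon$-prefactors and the scaled gradient reduce to their unscaled versions, and a Cauchy--Schwarz step together with the uniform bounds on $\hat{p}_{0,\pmf}^\epsilon$ in $\Phi$, on $\hat{\vct{f}}_\pmf^\epsilon ( \cdot , 0 )$ in $\vct{\Lambda}$ (via $H^2 ( I ; \vct{\Lambda} ) \hookrightarrow C^0 ( \bar{I} ; \vct{\Lambda} )$ and \Cref{asm:eps}), on $\hat{G}_\pmf^\epsilon$ (\Cref{lem:G}) and on $\hat{\matr{\alpha}}_\pmf^\epsilon$ (\Cref{asm:eps}) reduces the bulk contribution to $\sum_{i=\pm} \bigl( \norm{\nabla \hat{\vct{u}}_{0,i}^\epsilon}_{\matr{L}^2(\Omega_i^0)} + \norm{\hat{\vct{u}}_{0,i}^\epsilon}_{\vct{L}^2(\Omega_i^0)} \bigr)$. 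In the fracture, the $\hat{\mathcal{B}}^\epsilon$-term and the gravitational part of $\hat{\mathcal{L}}^\epsilon$ formally carry a singular factor $\epsilon^{-1}$ through $\nabla^\fraceps$, but since $\hat{\matr{\alpha}}_\rmf^\epsilon$ is symmetric and block-diagonal, $\hat{\matr{\alpha}}_\rmf^\epsilon \vct{N} = \vct{0}$, the normal part $\nablaperp \hat{\vct{u}}_{0,\rmf}^\epsilon$ is contracted against zero and drops out, so this contribution equals $\epsilon^{\nu_{\!\matr{\alpha}}^\smallpar + 1} \bigl\langle ( \hat{p}_{0,\rmf}^\epsilon + \hat{G}_\rmf^\epsilon ) \hat{\matr{\alpha}}_\rmf^\epsilon , \nablapar \hat{\vct{u}}_{0,\rmf}^\epsilon \bigr\rangle$ and is controlled, using $\norm{\nablapar \hat{\vct{u}}_{0,\rmf}^\epsilon}_{\matr{L}^2(\Omega_\rmf^1)} \le \norm{\nabla^\epsilon \hat{\vct{u}}_{0,\rmf}^\epsilon}_{\matr{L}^2(\Omega_\rmf^1)}$, by $\epsilon^{\nu_{\!\matr{\alpha}}^\smallpar + 1} \norm{\nabla^\epsilon \hat{\vct{u}}_{0,\rmf}^\epsilon}_{\matr{L}^2(\Omega_\rmf^1)}$. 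The remaining $\hat{\vct{f}}$-part of $\hat{\mathcal{L}}^\epsilon$ in the fracture gives $\epsilon^{\nu_{\!\vct{f}} + 1} \norm{\hat{\vct{u}}_{0,\rmf}^\epsilon}_{\vct{L}^2(\Omega_\rmf^1)}$. Collecting,
\[
D^2 \lesssim \sum_{i=\pm} \bigl( \norm{\nabla \hat{\vct{u}}_{0,i}^\epsilon}_{\matr{L}^2(\Omega_i^0)} + \norm{\hat{\vct{u}}_{0,i}^\epsilon}_{\vct{L}^2(\Omega_i^0)} \bigr) + \epsilon^{\nu_{\!\matr{\alpha}}^\smallpar + 1} \norm{\nabla^\epsilon \hat{\vct{u}}_{0,\rmf}^\epsilon}_{\matr{L}^2(\Omega_\rmf^1)} + \epsilon^{\nu_{\!\vct{f}} + 1} \norm{\hat{\vct{u}}_{0,\rmf}^\epsilon}_{\vct{L}^2(\Omega_\rmf^1)} .
\]

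The final step is to bound each of these four terms by $D$. Put $\beta := \max \{ \iota ( \nu_\tsr{C} ) , \tfrac12 \}$. Since $\beta \ge \tfrac12$, \Cref{lem:korn_decomp} applies and, using $\beta \ge \iota ( \nu_\tsr{C} )$ and $\epsilon \le 1$, yields $\norm{\fracfac{\epsilon^\beta} \nabla^\fraceps \hat{\vct{u}}_{0,\pmf}^\epsilon}_{\matr{\Lambda}} \lesssim \norm{\fracfac{\epsilon^\beta} \matr{e}^\fraceps ( \hat{\vct{u}}_{0,\pmf}^\epsilon )}_{\matr{\Lambda}} \le D$; in particular $\norm{\nabla \hat{\vct{u}}_{0,\pm}^\epsilon}_{\matr{L}^2(\Omega_\pm^0)} \lesssim D$ and $\epsilon^\beta \norm{\nabla^\epsilon \hat{\vct{u}}_{0,\rmf}^\epsilon}_{\matr{L}^2(\Omega_\rmf^1)} \lesssim D$. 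The standard Poincaré inequality on the fixed domains $\Omega_\pm^0$ (where $\hat{\vct{u}}_{0,\pm}^\epsilon$ vanishes on the external boundary piece $\rho_\pm^0$ of positive measure) then gives $\norm{\hat{\vct{u}}_{0,\pm}^\epsilon}_{\vct{L}^2(\Omega_\pm^0)} \lesssim D$, settling the first two terms. The third term equals $\epsilon^{\nu_{\!\matr{\alpha}}^\smallpar + 1 - \beta} \cdot \epsilon^\beta \norm{\nabla^\epsilon \hat{\vct{u}}_{0,\rmf}^\epsilon}_{\matr{L}^2(\Omega_\rmf^1)}$, and $\nu_{\!\matr{\alpha}}^\smallpar + 1 - \beta \ge 0$ follows from \Cref{asm:nu}, namely $2 \nu_{\!\matr{\alpha}}^\smallpar \ge \max \{ \nu_\tsr{C} , 0 \} - 1$, after distinguishing $\nu_\tsr{C} \ge 0$ (so $\beta = \iota ( \nu_\tsr{C} )$) and $\nu_\tsr{C} < 0$ (so $\beta = \tfrac12$). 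For the last term, I would choose $\alpha$ in the interval $[ \max \{ \beta - 1 , 0 \} , \nu_{\!\vct{f}} + 1 ]$ --- nonempty by \Cref{asm:nu}, namely $2 \nu_{\!\vct{f}} \ge \nu_\tsr{C} - 3$ and $\nu_{\!\vct{f}} \ge -1$, again by the same case distinction --- and apply \Cref{lem:poincare_decomp} (third inequality) combined with $\norm{\nablaperp \hat{\vct{u}}_{0,\rmf}^\epsilon}_{\matr{L}^2(\Omega_\rmf^1)} \le \epsilon \norm{\nabla^\epsilon \hat{\vct{u}}_{0,\rmf}^\epsilon}_{\matr{L}^2(\Omega_\rmf^1)}$ and $\epsilon^\beta \norm{\nabla^\epsilon \hat{\vct{u}}_{0,\rmf}^\epsilon}_{\matr{L}^2(\Omega_\rmf^1)} \lesssim D$ to obtain $\epsilon^\alpha \norm{\hat{\vct{u}}_{0,\rmf}^\epsilon}_{\vct{L}^2(\Omega_\rmf^1)} \lesssim D$, whence $\epsilon^{\nu_{\!\vct{f}} + 1} \norm{\hat{\vct{u}}_{0,\rmf}^\epsilon}_{\vct{L}^2(\Omega_\rmf^1)} = \epsilon^{\nu_{\!\vct{f}} + 1 - \alpha} \cdot \epsilon^\alpha \norm{\hat{\vct{u}}_{0,\rmf}^\epsilon}_{\vct{L}^2(\Omega_\rmf^1)} \lesssim D$. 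Altogether $D^2 \lesssim D$, i.e.\ $D \lesssim 1$.

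The part I expect to be most delicate is the bookkeeping of the $\epsilon$-exponents. Because \Cref{lem:korn_decomp} is available only for exponents $\ge \tfrac12$, one cannot simply work with $\iota ( \nu_\tsr{C} )$ when $\nu_\tsr{C} < 0$; this is precisely why the scaling hypotheses in \Cref{asm:nu} on $\nu_{\!\vct{f}}$ and $\nu_{\!\matr{\alpha}}^\smallpar$ are stated with $\max \{ \nu_\tsr{C} , 0 \}$, and why a case distinction $\nu_\tsr{C} \gtrless 0$ is needed at two places. A secondary subtlety is that the fracture coupling term is non-singular only thanks to the block-diagonal structure $\hat{\matr{\alpha}}_\rmf^\epsilon \vct{N} = \vct{0}$.
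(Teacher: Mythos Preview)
Your argument is correct and follows the same overall strategy as the paper's proof: test \eqref{eq:fulldim-weak-trafo-d} with $\hat{\vct{u}}_{0,\pmf}^\epsilon$, use the coercivity of $\hat{\mathcal{A}}^\epsilon$ to produce $D^2$ on the left, and bound the right-hand side by $D$ via the domain-decomposed Poincar\'e and Korn inequalities (\Cref{lem:poincare_decomp,lem:korn_decomp}) together with \Cref{asm:nu}~\ref{asm:nuf} and~\ref{asm:nualpha}.

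The one genuine difference is your treatment of the coupling term. You exploit the block-diagonal structure $\hat{\matr{\alpha}}_\rmf^\epsilon\vct{N}=\vct{0}$ (\Cref{asm:eps}~\ref{asm:alpha_eps}) to observe that the normal contribution $\langle \hat{p}_{0,\rmf}^\epsilon\hat{\matr{\alpha}}_\rmf^\epsilon,\nablaperp\hat{\vct{u}}_{0,\rmf}^\epsilon\rangle$ vanishes identically, so only the $\nu_{\!\matr{\alpha}}^\smallpar$ exponent enters. The paper instead keeps both tangential and normal parts, writes the identity $\|\epsilon^{\matr{\nu}_{\!\matr{\alpha}}+\matr{I}}\nabla^\epsilon\hat{\vct{u}}_{0,\rmf}^\epsilon\|^2=\|\epsilon^{\nu_{\!\matr{\alpha}}^\smallpar+1}\nablapar\hat{\vct{u}}_{0,\rmf}^\epsilon\|^2+\|\epsilon^{\nu_{\!\matr{\alpha}}^\smallperp}\nablaperp\hat{\vct{u}}_{0,\rmf}^\epsilon\|^2$, and controls the normal piece via the hypothesis $2\nu_{\!\matr{\alpha}}^\smallperp\ge\max\{\nu_\tsr{C},0\}-1$. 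Your route is slightly more economical here (it does not invoke the $\nu_{\!\matr{\alpha}}^\smallperp$ part of \Cref{asm:nu}~\ref{asm:nualpha}), while the paper's decomposition is the one reused later in \Cref{prop:apriori} and \Cref{lem:fracconv1}. Your explicit introduction of $\beta=\max\{\iota(\nu_\tsr{C}),\tfrac12\}$ is also a cleaner way to record the Korn threshold than the paper's more implicit handling; the paper applies \Cref{lem:korn_decomp} directly with the exponents $\nu_{\!\vct{f}}+2$ and $\min\{\nu_{\!\matr{\alpha}}^\smallpar+1,\nu_{\!\matr{\alpha}}^\smallperp+1\}$ and checks afterwards that these exceed both $\tfrac12$ and $\iota(\nu_\tsr{C})$.
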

\begin{proof}
We choose $\vct{v}_\pmf = \hat{\vct{u}}_{0 , \pmf}^\epsilon$ in \cref{eq:fulldim-weak-trafo-d} and use \Cref{asm:eps}~\ref{asm:alpha_eps}, \ref{asm:f_eps}, \ref{asm:p0_eps}, and~\ref{asm:C_eps} and \Cref{lem:G} to obtain
\begin{align*}
\norm[\big ]{\fracfac[\big]{\epsilon^{\iota ( \nu_{\tsr{C}} )} } \matr{e}^\fraceps ( \hat{\vct{u}}_{0 , \pmf}^\epsilon ) }_{\matr{\Lambda}}^2 \lesssim  \norm[\big ]{\fracfac[\big]{\epsilon^{\nu_{\!\vct{f}} +1 }} \hat{\vct{u}}_{0,\pmf}^\epsilon }_{\vct{\Lambda}} + \norm[\big ]{\fracfac[\big ]{\epsilon^{\!\matr{\nu}_{\! {\matr{\alpha}}} + \matr{I}}} \nabla^\fraceps \hat{\vct{u}}_{0 , \pmf}^\epsilon }_{\matr{\Lambda}} .
\end{align*}
Applying the \Cref{lem:poincare_decomp,lem:korn_decomp} and \Cref{asm:nu}~\ref{asm:nuf} yields
\begin{align*}
\norm[\big ]{\fracfac[\big]{\epsilon^{\nu_{\!\vct{f}} +1 }} \hat{\vct{u}}_{0,\pmf}^\epsilon }_{\vct{\Lambda}} \lesssim \norm[\big ]{\fracfac[\big]{\epsilon^{\nu_{\!\vct{f}} +2 }} \nabla^\fraceps \hat{\vct{u}}_{0,\pmf}^\epsilon }_{\vct{\Lambda}} \lesssim \norm[\big ]{\fracfac[\big]{\epsilon^{\iota( \nu_{\tsr{C}} ) }} \matr{e}^\fraceps ( \hat{\vct{u}}_{0,\pmf}^\epsilon ) }_{\vct{\Lambda}} .
\end{align*}
Besides, by using the identity 
\begin{align*}
\norm[\big ]{\epsilon^{\!\matr{\nu}_{\! {\matr{\alpha}}} + \matr{I}} \nabla^\epsilon \hat{\vct{u}}_{0 , \rmf}^\epsilon }_{\matr{L}^2 (\Omega_\rmf^1 ) }^2  &= \norm[\big ]{\epsilon^{\nu_{\! {\matr{\alpha}}}^\smallpar + 1} \nabla_{\!\smallpar} \hat{\vct{u}}_{0 , \rmf}^\epsilon }_{\matr{L}^2 (\Omega_\rmf^1 ) }^2 + \norm[\big ]{\epsilon^{\nu_{\! {\matr{\alpha}}}^\smallperp } \nabla_{\!\vct{N}} \hat{\vct{u}}_{0 , \rmf}^\epsilon }_{\matr{L}^2 (\Omega_\rmf^1 ) }^2 .
\end{align*}
and applying \Cref{lem:korn_decomp} and \Cref{asm:nu}~\ref{asm:nualpha}, we find
\begin{align*}
\norm[\big ]{\fracfac[\big ]{\epsilon^{\!\matr{\nu}_{\! {\matr{\alpha}}} + \matr{I}}} \nabla^\fraceps \hat{\vct{u}}_{0 , \pmf}^\epsilon }_{\matr{\Lambda}} \lesssim \norm[\big ]{\fracfac[\big ]{\epsilon^{ \min\{  \nu_{\!\matr{\alpha}}^\smallpar + 1, \nu_{\!\matr{\alpha}}^\smallperp  +1  \} \! }} \nabla^\fraceps \hat{\vct{u}}_{0 , \pmf}^\epsilon }_{\matr{\Lambda}} \lesssim \norm[\big ]{\fracfac[\big]{\epsilon^{\iota (\nu_{\tsr{C}} )  }} \matr{e}^\fraceps ( \hat{\vct{u}}_{0,\pmf}^\epsilon ) }_{\vct{\Lambda}} \! . \tag*{\qedhere}
\end{align*}
\end{proof}
Using the \Cref{lem:poincare_decomp,lem:korn_decomp,lem:G,lem:apriori_u0} and the \Cref{asm:eps,asm:nu}, we derive the following a~priori estimates for the solutions of the transformed Biot problem~\eqref{eq:fulldim-weak-trafo}.
\begin{proposition}
\label{prop:apriori} 
Let $\theta(\nu ) := \max\{ 0 , \tfrac{1}{2} ( \nu - 1)\}$. With $\iota ( \nu ) = \tfrac{1}{2} ( \nu + 1 ) $ and 
given the \Cref{asm:eps,asm:nu}, the solutions~$\hat{p}_\pmf^\epsilon \in \Phi$ and $\hat{\vct{u}}_\pmf^\epsilon \in\vct{V}$ of the transformed Biot system~\eqref{eq:fulldim-weak-trafo} satisfy for all $\epsilon \in ( 0, 1]$ the a~priori estimates 
\begin{subequations}
\begin{align}
\label{eq:apriori_a} \norm[\big ]{\fracfac[\big ]{\epsilon^{\theta ( \nu_{\tsr{C}} )  } } \hat{\vct{u}}_\pmf^\epsilon }_{L^\infty ( I ; \vct{\Lambda})} + \norm[\big ]{\fracfac[\big ]{\epsilon^{\theta ( \nu_{\tsr{C}} ) } } \hat{\vct{u}}_\pmf^\epsilon }_{H^1 ( I ; \vct{\Lambda})}  &\lesssim 1 , \\
\label{eq:apriori_b} \norm[\big ]{\fracfac[\big ]{\epsilon^{\max\{ \frac{1}{2}\! ,\, \iota (\nu_\tsr{C} ) \}\! }}  \nabla^\fraceps \hat{\vct{u}}_\pmf^\epsilon }_{L^\infty ( I ; \matr{\Lambda }) } +  \norm[\big ]{\fracfac[\big ]{\epsilon^{\max\{ \frac{1}{2}\! ,\, \iota (\nu_\tsr{C} ) \}\!}}  \nabla^\fraceps \hat{\vct{u}}_\pmf^\epsilon }_{H^1 ( I ; \matr{\Lambda }) }  &\lesssim 1 , \\
\label{eq:apriori_c}  \norm[\big ]{\fracfac[\big ]{\epsilon^{\iota (\nu_\tsr{C} )}} \matr{e}^\fraceps (  \hat{\vct{u}}_\pmf^\epsilon)  }_{L^\infty ( I ; \matr{\Lambda }) } +  \norm[\big ]{\fracfac[\big ]{\epsilon^{\iota (\nu_\tsr{C} )}}  \matr{e}^\fraceps ( \hat{\vct{u}}_\pmf^\epsilon ) }_{H^1 ( I ; \matr{\Lambda }) }  &\lesssim 1 , \\
\label{eq:apriori_d} \norm[\big ]{\fracfac[\big ]{\epsilon^{ \min\{ \iota ( \nu_\omega ) ,\,  \theta ( \nu_\matr{K} )  \} \! }} \hat{p}_\pmf^\epsilon }_{L^\infty ( I ; \Lambda ) } + \norm[\big ]{\fracfac[\big ]{\epsilon^{\iota (\nu_\omega ) }} \hat{p}_\pmf^\epsilon }_{H^1 ( I ; \Lambda ) }  &\lesssim 1 , \\
\label{eq:apriori_e} \norm[\big ]{\fracfac[\big ]{\epsilon^{\iota (\nu_\matr{K} )}} \nabla^\fraceps \hat{p}_\pmf^\epsilon }_{L^\infty (I ; \vct{\Lambda})}  &\lesssim 1  .
\end{align}
\end{subequations}
\end{proposition}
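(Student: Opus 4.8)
The plan is to run two successive energy estimates for the transformed system~\eqref{eq:fulldim-weak-trafo} — one at the level of $(\hat{p}_\pmf^\epsilon,\hat{\vct{u}}_\pmf^\epsilon)$ and one at the level of the time derivatives $(\partial_t\hat{p}_\pmf^\epsilon,\partial_t\hat{\vct{u}}_\pmf^\epsilon)$ — and then to convert the resulting $\epsilon$-uniform bounds on the bilinear forms $\hat{\mathcal{A}}^\epsilon$, $\hat{\mathcal{C}}^\epsilon$, and $\hat{\mathcal{D}}^\epsilon$ into the weighted norms of \cref{eq:apriori_a}--\cref{eq:apriori_e} by combining their uniform positivity and ellipticity (\Cref{asm:eps}~\ref{asm:omega_eps}, \ref{asm:C_eps}, \ref{asm:K_eps}) with a chain of applications of \Cref{lem:korn_decomp,lem:poincare_decomp}. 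Throughout, \Cref{prop:wellposed-fulldim} (via its corollary) ensures that $\hat{p}_\pmf^\epsilon$ and $\hat{\vct{u}}_\pmf^\epsilon$ carry enough temporal regularity for the differentiations in time and the test-function choices below to be legitimate; the only point at issue is that every constant be independent of $\epsilon$.

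For the zeroth-order estimate I would test \cref{eq:fulldim-weak-trafo-b} with $\vct{v}_\pmf=\partial_t\hat{\vct{u}}_\pmf^\epsilon$ and \cref{eq:fulldim-weak-trafo-c} with $\phi_\pmf=\hat{p}_\pmf^\epsilon$ and add the two identities. Since $\hat{\mathcal{A}}^\epsilon$ and $\hat{\mathcal{C}}^\epsilon$ are symmetric with time-independent coefficients, the leading terms assemble into $\tfrac{1}{2}\tfrac{\rmd}{\rmd t}\bigl(\hat{\mathcal{A}}^\epsilon(\hat{\vct{u}}_\pmf^\epsilon,\hat{\vct{u}}_\pmf^\epsilon)+\hat{\mathcal{C}}^\epsilon(\hat{p}_\pmf^\epsilon,\hat{p}_\pmf^\epsilon)\bigr)$, and the coupling terms $\hat{\mathcal{B}}^\epsilon(\hat{p}_\pmf^\epsilon,\partial_t\hat{\vct{u}}_\pmf^\epsilon)$ cancel. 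Integrating over $(0,t)$ and integrating $\int_0^t\hat{\mathcal{L}}^\epsilon(\partial_t\hat{\vct{u}}_\pmf^\epsilon)$ by parts in time (legitimate by \Cref{asm:eps}~\ref{asm:f_eps} and the time-independence of $\hat{G}_\pmf^\epsilon$, cf.\ \Cref{lem:G}), the left-hand side controls $\hat{\mathcal{A}}^\epsilon(\hat{\vct{u}}_\pmf^\epsilon,\hat{\vct{u}}_\pmf^\epsilon)(t)$, $\hat{\mathcal{C}}^\epsilon(\hat{p}_\pmf^\epsilon,\hat{p}_\pmf^\epsilon)(t)$, and $\int_0^t\hat{\mathcal{D}}^\epsilon(\hat{p}_\pmf^\epsilon,\hat{p}_\pmf^\epsilon)$. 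On the right-hand side, Cauchy--Schwarz followed by Young's inequality is used so that — after \Cref{lem:poincare_decomp,lem:korn_decomp} — each source term and each term involving $\hat{\matr{\alpha}}_\pmf^\epsilon$ either picks up a non-negative power of $\epsilon$ (this is precisely where \Cref{asm:nu}~\ref{asm:nuomega}, \ref{asm:nuq}, \ref{asm:nuf}, and~\ref{asm:nualpha} enter) or is absorbed into the coercive terms on the left, while the initial contributions $\hat{\mathcal{A}}^\epsilon(\hat{\vct{u}}_{0,\pmf}^\epsilon,\hat{\vct{u}}_{0,\pmf}^\epsilon)$ and $\hat{\mathcal{C}}^\epsilon(\hat{p}_{0,\pmf}^\epsilon,\hat{p}_{0,\pmf}^\epsilon)$ are controlled by \Cref{lem:apriori_u0} and \Cref{asm:eps}~\ref{asm:p0_eps}. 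A Gronwall argument then bounds $\sup_t\hat{\mathcal{A}}^\epsilon(\hat{\vct{u}}_\pmf^\epsilon,\hat{\vct{u}}_\pmf^\epsilon)$, $\sup_t\hat{\mathcal{C}}^\epsilon(\hat{p}_\pmf^\epsilon,\hat{p}_\pmf^\epsilon)$, and $\int_0^T\hat{\mathcal{D}}^\epsilon(\hat{p}_\pmf^\epsilon,\hat{p}_\pmf^\epsilon)$ uniformly in $\epsilon$. By the ellipticity of $\hat{\tsr{C}}_\pmf^\epsilon$ the first of these is the $L^\infty(I;\matr{\Lambda})$ part of \cref{eq:apriori_c}, from which the $L^\infty$ parts of \cref{eq:apriori_b} and \cref{eq:apriori_a} follow by \Cref{lem:korn_decomp} and then \Cref{lem:poincare_decomp}; the second yields $\norm[\big]{\fracfac{\epsilon^{\iota(\nu_\omega)}}\hat{p}_\pmf^\epsilon}_{L^\infty(I;\Lambda)}\lesssim1$.

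For the first-order estimate I would differentiate \cref{eq:fulldim-weak-trafo-b} in time and test with $\vct{v}_\pmf=\partial_t\hat{\vct{u}}_\pmf^\epsilon$, test \cref{eq:fulldim-weak-trafo-c} with $\phi_\pmf=\partial_t\hat{p}_\pmf^\epsilon$, and add; now the coupling terms $\hat{\mathcal{B}}^\epsilon(\partial_t\hat{p}_\pmf^\epsilon,\partial_t\hat{\vct{u}}_\pmf^\epsilon)$ cancel and $\hat{\mathcal{D}}^\epsilon(\hat{p}_\pmf^\epsilon,\partial_t\hat{p}_\pmf^\epsilon)=\tfrac{1}{2}\tfrac{\rmd}{\rmd t}\hat{\mathcal{D}}^\epsilon(\hat{p}_\pmf^\epsilon,\hat{p}_\pmf^\epsilon)$. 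After integration over $(0,t)$, the left-hand side controls $\int_0^t\hat{\mathcal{A}}^\epsilon(\partial_t\hat{\vct{u}}_\pmf^\epsilon,\partial_t\hat{\vct{u}}_\pmf^\epsilon)$, $\int_0^t\hat{\mathcal{C}}^\epsilon(\partial_t\hat{p}_\pmf^\epsilon,\partial_t\hat{p}_\pmf^\epsilon)$, and $\hat{\mathcal{D}}^\epsilon(\hat{p}_\pmf^\epsilon,\hat{p}_\pmf^\epsilon)(t)$; on the right-hand side, the initial term $\hat{\mathcal{D}}^\epsilon(\hat{p}_{0,\pmf}^\epsilon,\hat{p}_{0,\pmf}^\epsilon)$ is bounded thanks to the hypothesis $\norm[\big]{\fracfac{\epsilon^{\iota(\nu_\matr{K})}}\nabla^\fraceps\hat{p}_{0,\pmf}^\epsilon}_{\vct{\Lambda}}^2\lesssim1$ in \Cref{asm:eps}~\ref{asm:p0_eps}. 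The pressure source $\int_0^t\langle\fracfac{\epsilon^{\nu_q+1}}\hat{q}_\pmf^\epsilon,\partial_t\hat{p}_\pmf^\epsilon\rangle$ is handled according to the dichotomy in \Cref{asm:nu}~\ref{asm:nuq}: when $2\nu_q\ge\nu_\omega-1$ it is split directly by Young's inequality against $\hat{\mathcal{C}}^\epsilon(\partial_t\hat{p}_\pmf^\epsilon,\partial_t\hat{p}_\pmf^\epsilon)$, while when $2\nu_q\ge\nu_\matr{K}-3$ and $\nu_q\ge-1$ it is first integrated by parts in time (using \Cref{asm:eps}~\ref{asm:q_eps}) and the remaining terms are estimated with the help of the zeroth-order bounds and \Cref{lem:poincare_decomp}. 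Together with $\int_0^t\partial_t\hat{\mathcal{L}}^\epsilon(\partial_t\hat{\vct{u}}_\pmf^\epsilon)$, treated analogously via the temporal regularity of $\hat{\vct{f}}_\pmf^\epsilon$, a second Gronwall argument bounds $\int_0^T\hat{\mathcal{A}}^\epsilon(\partial_t\hat{\vct{u}}_\pmf^\epsilon,\partial_t\hat{\vct{u}}_\pmf^\epsilon)$, $\int_0^T\hat{\mathcal{C}}^\epsilon(\partial_t\hat{p}_\pmf^\epsilon,\partial_t\hat{p}_\pmf^\epsilon)$, and $\sup_t\hat{\mathcal{D}}^\epsilon(\hat{p}_\pmf^\epsilon,\hat{p}_\pmf^\epsilon)$ uniformly in $\epsilon$. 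The first furnishes the $H^1(I;\matr{\Lambda})$ part of \cref{eq:apriori_c} and hence, again via \Cref{lem:korn_decomp,lem:poincare_decomp}, of \cref{eq:apriori_b} and \cref{eq:apriori_a}; the second furnishes the $H^1(I;\Lambda)$ part of \cref{eq:apriori_d}; and the third is \cref{eq:apriori_e} after invoking the ellipticity of $\hat{\matr{K}}_\pmf^\epsilon$. Finally, combining \cref{eq:apriori_e} with \Cref{lem:poincare_decomp} — here \Cref{asm:nu}~\ref{asm:dirichlet} enters, in its stronger form when $\nu_\matr{K}>1$ — produces the missing $L^\infty(I;\Lambda)$ bound on $\hat{p}_\pmf^\epsilon$ with the exponent $\min\{\iota(\nu_\omega),\theta(\nu_\matr{K})\}$, which completes \cref{eq:apriori_d}.

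The structure of the two energy estimates is standard; the real work lies in the bookkeeping of the powers of $\epsilon$. Every application of Young's inequality must split a product so that the factor landing on a datum receives a non-negative power of $\epsilon$ (so that the uniform bounds of \Cref{asm:eps} apply) while the other factor, after a suitable sequence of \Cref{lem:korn_decomp,lem:poincare_decomp} — which distinguish tangential from normal directions inside $\Omega_\rmf^1$ and each shift the $\epsilon$-exponent by at most one — matches exactly the weight of the coercive form available for absorption. The inequalities collected in \Cref{asm:nu} are precisely those under which this balancing always succeeds, and the two alternatives for $\nu_q$ in \Cref{asm:nu}~\ref{asm:nuq} together with the two regimes in \Cref{asm:nu}~\ref{asm:dirichlet} record whether the pressure source and the pressure itself are absorbed against $\hat{\mathcal{C}}^\epsilon$ or against $\hat{\mathcal{D}}^\epsilon$ and the corresponding Poincaré inequality.
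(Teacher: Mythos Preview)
Your proposal is correct and follows essentially the same two-step energy strategy as the paper: test \cref{eq:fulldim-weak-trafo-b}--\cref{eq:fulldim-weak-trafo-c} with $(\partial_t\hat{\vct{u}}_\pmf^\epsilon,\hat{p}_\pmf^\epsilon)$ for the zeroth-order bounds, then with $(\partial_t\hat{\vct{u}}_\pmf^\epsilon,\partial_t\hat{p}_\pmf^\epsilon)$ after differentiating the mechanics equation in time, with the same bookkeeping of $\epsilon$-weights via \Cref{lem:poincare_decomp,lem:korn_decomp,lem:apriori_u0} and the same case distinction on \Cref{asm:nu}~\ref{asm:nuq}. The only cosmetic differences are that the paper justifies the time-differentiation of \cref{eq:fulldim-weak-trafo-b} by a test-function and density argument (your direct differentiation is equally valid given the regularity from \Cref{prop:wellposed-fulldim}), and that the paper closes Step~1 by integrating the pointwise inequality over $I$ and reinserting rather than invoking Gronwall.
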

\begin{proof}
\emph{Step 1.} 
We choose $\vct{v}_\pmf = \partial_t \hat{\vct{u}}_\pmf^\epsilon $ and $\phi_\pmf = \hat{p}_\pmf^\epsilon$ in \cref{eq:fulldim-weak-trafo} and find%
\begin{equation}
\begin{multlined}[c][0.875\displaywidth]
\frac{\rmd }{\rmd t } \Bigl( \hat{\mathcal{A}}^\epsilon \bigl( \hat{\vct{u}}_\pmf^\epsilon , \hat{\vct{u}}_\pmf^\epsilon \bigr)   + \hat{\mathcal{C}}^\epsilon \bigl( \hat{p}_\pmf^\epsilon , \hat{p}_\pmf^\epsilon \bigr) \Bigr) + \norm[\big]{\fracfac[\big]{\epsilon^{ \iota (\nu_\matr{K} ) } } \nabla^{\fracfac{\! \epsilon\! }} \hat{p}^\epsilon_\pmf }_{\vct{\Lambda}}^2 \\
\lesssim \bigl\langle \fracfac{\epsilon^{\nu_{\!\vct{f}} + 1}} \hat{\vct{f}}_\pmf^\epsilon , \partial_t \hat{\vct{u}}_\pmf^\epsilon \bigr\rangle
 + \bigl\langle  \fracfac{\epsilon^{\!\matr{\nu}_{\!\matr{\alpha}} + \matr{I}  } } \hat{G}_\pmf^\epsilon \hat{\matr{\alpha}}_\pmf^\epsilon , \nabla^\fraceps \partial_t \hat{\vct{u}}^\epsilon_\pmf \bigr\rangle
 + \norm[\big]{\fracfac[\big]{\epsilon^{\nu_q + 1} } \hat{p}_\pmf^\epsilon }_{\Lambda} ,
\end{multlined}%
\label{eq:apriori_1}%
\end{equation}%
where we have used \Cref{asm:eps}~\ref{asm:q_eps} and~\ref{asm:K_eps}.
Next, we integrate from~$0$ to~$t \in \bar{I}$ in \cref{eq:apriori_1}. 
Integration by parts yields%
\begin{multline*} \hspace*{-0.25cm}
\int_0^t \Bigl( \bigl\langle \fracfac{\epsilon^{\nu_{\!\vct{f}} + 1}} \hat{\vct{f}}_\pmf^\epsilon , \partial_t \hat{\vct{u}}_\pmf^\epsilon \bigr\rangle  + \bigl\langle  \fracfac{\epsilon^{\!\matr{\nu}_{\!\matr{\alpha}} + \matr{I}  } } \hat{G}_\pmf^\epsilon \hat{\matr{\alpha}}_\pmf^\epsilon , \nabla^\fraceps \partial_t \hat{\vct{u}}^\epsilon_\pmf \bigr\rangle \Bigr) \, \rmd \bar{t}\\
\phantom{i}= \bigl\langle \fracfac{\epsilon^{\nu_{\!\vct{f}} + 1}} \hat{\vct{f}}_\pmf^\epsilon , \hat{\vct{u}}_\pmf^\epsilon \bigr\rangle \Bigr\vert_0^t 
+  \bigl\langle  \fracfac{\epsilon^{\!\matr{\nu}_{\!\matr{\alpha}} + \matr{I}  } } \hat{G}_\pmf^\epsilon \hat{\matr{\alpha}}_\pmf^\epsilon , \nabla^\fraceps \hat{\vct{u}}^\epsilon_\pmf \bigr\rangle\Bigr\vert_0^t
- \int_0^t \bigl\langle \fracfac{\epsilon^{\nu_{\!\vct{f}} + 1}} \partial_t \hat{\vct{f}}_\pmf^\epsilon ,  \hat{\vct{u}}_\pmf^\epsilon \bigr\rangle \, \rmd \bar{t} .
\end{multline*}
Besides, we use \Cref{asm:eps}~\ref{asm:omega_eps} and~\ref{asm:C_eps} to obtain
\begin{align*}
\hat{\mathcal{A}}^\epsilon \bigl( \hat{\vct{u}}_\pmf^\epsilon  , \hat{\vct{u}}_\pmf^\epsilon  \bigr) \,  + \hat{\mathcal{C}}^\epsilon \bigl( \hat{p}_\pmf^\epsilon , \hat{p}_\pmf^\epsilon \bigr)
\gtrsim \norm[\big ]{\fracfac[\big ]{\epsilon^{\iota ( \nu_{\tsr{C}})} } \matr{e}^\fraceps ( \hat{\vct{u}}_\pmf^\epsilon )  }_{\matr{\Lambda}}^2  
+ \norm[\big ]{\fracfac[\big ]{\epsilon^{ \iota (\nu_\omega  )}} \hat{p}_\pmf^\epsilon }^2_{\Lambda } .
\end{align*}
Thus, integrating in~\cref{eq:apriori_1}, applying Young's inequality on the right-hand side, and using \Cref{lem:G} and \Cref{asm:eps}~\ref{asm:alpha_eps} and~\ref{asm:f_eps} yields the inequality 
\begin{align*}
\begin{split}
&\norm[\big ]{\fracfac[\big ]{\epsilon^{\iota ( \nu_{\tsr{C}})} } \matr{e}^\fraceps ( \hat{\vct{u}}_\pmf^\epsilon (t) ) }_{\matr{\Lambda}}^2  
+ \norm[\big ]{\fracfac[\big ]{\epsilon^{ \iota (\nu_\omega  )}} \hat{p}_\pmf^\epsilon  (t) }^2_{\Lambda } 
+ \norm[\big]{\fracfac[\big]{\epsilon^{ \iota (\nu_\matr{K} ) } } \nabla^{\fracfac{\! \epsilon\! }} \hat{p}^\epsilon_\pmf }_{L^2 (0, t;  \vct{\Lambda})}^2 \\
&\hspace{0.75cm}\lesssim 
 \delta^{-1} + \norm[\big ]{\fracfac[\big ]{\epsilon^{ \iota (\nu_{\tsr{C}}  )}} \matr{e}^\fraceps ( \hat{\vct{u}}_{0, \pmf}^\epsilon ) }^2_{\matr{\Lambda} } 
+ \norm[\big ]{\fracfac[\big ]{\epsilon^{ \iota (\nu_\omega  )}} \hat{p}_{0, \pmf}^\epsilon  }^2_{\Lambda } 
+  \norm[\big]{\fracfac[\big]{\epsilon^{\nu_{\!\vct{f}} + 1} }  \hat{\vct{u}}_{0, \pmf}^\epsilon }_{ \vct{\Lambda}} 
\\
&\hspace{1.5cm}+   \norm[\big]{\fracfac[\big]{\epsilon^{\! \matr{\nu}_{\!\matr{\alpha}} + \matr{I} } } \nabla^\fraceps  \hat{\vct{u}}^\epsilon_{0, \pmf } }_{\vct{\Lambda}} +  \delta \Bigl(  \norm[\big]{\fracfac[\big]{\epsilon^{\nu_{\!\vct{f}} +1} }  \hat{\vct{u}}_\pmf^\epsilon (t) }_{ \vct{\Lambda}}^2  
+ \norm[\big]{\fracfac[\big]{\epsilon^{\nu_{\!\vct{f}} +1 } }  \hat{\vct{u}}_\pmf^\epsilon }_{L^2 ( I ; \vct{\Lambda})}^2  \\
&\hspace{5cm} + \norm[\big]{\fracfac[\big]{\epsilon^{\nu_q +1 } } \hat{p}_\pmf^\epsilon }_{L^2 ( 0,t ; \Lambda ) }^2 + \norm[\big]{\fracfac[\big]{\epsilon^{\! \matr{\nu}_{\!\matr{\alpha}} + \matr{I} } } \nabla^\fraceps  \hat{\vct{u}}^\epsilon_\pmf (t) }_{\vct{\Lambda}}^2  \Bigr)
\end{split} 
\end{align*}
for any $\delta > 0$.
Further, with \Cref{lem:poincare_decomp} and \Cref{asm:nu}~\ref{asm:nuq} and~\ref{asm:dirichlet}, we find
\begin{align*}
\norm[\big]{\fracfac[\big]{\epsilon^{\nu_q +1 } } \hat{p}_\pmf^\epsilon }_{L^2 ( 0,t ; \Lambda ) }^2 &\lesssim 
\begin{cases}
\norm[\big]{\fracfac[\big]{\epsilon^{\iota ( \nu_\omega )  } } \hat{p}_\pmf^\epsilon }_{L^2 ( I ; \Lambda ) }^2 &\text{if } 2\nu_q \ge \nu_\omega - 1 , \\[4pt]
\norm[\big]{\fracfac[\big]{\epsilon^{ \iota (\nu_\matr{K} ) } } \nabla^{\fracfac{\! \epsilon\! }} \hat{p}^\epsilon_\pmf }_{L^2 (0, t;  \vct{\Lambda})}^2 &\text{if } 2 \nu_q \ge \max \{ - 2 ,  \nu_\matr{K} - 3 \} .
\end{cases}
\end{align*}
Then, by using the \Cref{lem:poincare_decomp,lem:korn_decomp,lem:apriori_u0}, \Cref{asm:eps}~\ref{asm:p0_eps}, and \Cref{asm:nu}~\ref{asm:nuomega}, \ref{asm:nuf}, and~\ref{asm:nualpha}, we obtain
\begin{align}
\begin{split}
 &\norm[\big ]{\fracfac[\big ]{\epsilon^{\iota ( \nu_{\tsr{C}}) } } \matr{e}^\fraceps ( \hat{\vct{u}}_\pmf^\epsilon (t) )}_{\matr{\Lambda}}^2  
+ \norm[\big ]{\fracfac[\big ]{\epsilon^{ \iota (\nu_\omega  ) }} \hat{p}_\pmf^\epsilon  (t) }^2_{\Lambda }  
+ \norm[\big]{\fracfac[\big]{\epsilon^{ \iota (\nu_\matr{K} ) } } \nabla^{\fracfac{\! \epsilon\! }} \hat{p}^\epsilon_\pmf }_{L^2 (0, t;  \vct{\Lambda})}^2 \\
&\hspace{1.7cm}\lesssim 1 + \delta^{-1} + \delta \Bigl( \norm[\big]{\fracfac[\big]{\epsilon^{\iota ( \nu_\tsr{C} ) } } \matr{e}^\fraceps (  \hat{\vct{u}}_\pmf^\epsilon ) }_{L^2 ( I ; \vct{\Lambda})}^2 + \norm[\big]{\fracfac[\big]{\epsilon^{\iota ( \nu_\omega )  } } \hat{p}_\pmf^\epsilon }_{L^2 ( I ; \Lambda ) }^2 \Bigr)
\end{split}%
\label{eq:apriori_2}%
\end{align}%
for $\delta > 0 $ sufficiently small. 
Integrating \cref{eq:apriori_2} from~$0$ to~$T$ now yields 
\begin{align}
\label{eq:apriori_3}
\norm[\big ]{\fracfac[\big ]{\epsilon^{\iota ( \nu_{\tsr{C}})} } \matr{e}^\fraceps ( \hat{\vct{u}}_\pmf^\epsilon )  }_{L^2 ( I ; \matr{\Lambda})}^2  
+ \norm[\big ]{\fracfac[\big ]{\epsilon^{ \iota (\nu_\omega  )}} \hat{p}_\pmf^\epsilon  }^2_{ L^2 ( I ; \Lambda ) } \lesssim 1 .
\end{align}
Then, by inserting \cref{eq:apriori_3} back into \cref{eq:apriori_2}, we obtain 
\begin{align*}
\norm[\big ]{\fracfac[\big ]{\epsilon^{\iota ( \nu_{\tsr{C}})} } \matr{e}^\fraceps ( \hat{\vct{u}}_\pmf^\epsilon ) }_{L^\infty ( I ; \matr{\Lambda})}
+ \norm[\big ]{\fracfac[\big ]{\epsilon^{ \iota (\nu_\omega  )}} \hat{p}_\pmf^\epsilon  }_{L^\infty ( I ; \Lambda ) } 
+ \norm[\big]{\fracfac[\big]{\epsilon^{ \iota (\nu_\matr{K} ) } } \nabla^{\fracfac{\! \epsilon\! }} \hat{p}^\epsilon_\pmf }_{L^2 (I;  \vct{\Lambda})}^2 \lesssim 1 .
\end{align*}
Further, applying the \Cref{lem:poincare_decomp,lem:korn_decomp} yields 
\begin{align*}
\norm[\big ]{\fracfac[\big ]{\epsilon^{ \theta ( \nu_{\tsr{C}} )  } } \hat{\vct{u}}_\pmf^\epsilon }_{L^\infty ( I ; \vct{\Lambda})} \lesssim \norm[\big ]{\fracfac[\big ]{\epsilon^{\max \{ \frac{1}{2} \! , \, \iota ( \nu_{\tsr{C}}) \}\! }  } \nabla^\fraceps ( \hat{\vct{u}}_\pmf^\epsilon ) }_{L^\infty ( I ; \matr{\Lambda})} \lesssim 1.
\end{align*}

\emph{Step 2.} 
Next, for any $\bar{\vct{v}}_\pmf \in H^2 ( I ; \vct{V} ) $, we choose $\vct{v}_\pmf = \partial_{tt} \bar{\vct{v}} $ in \cref{eq:fulldim-weak-trafo-b}. 
Then, by integrating \cref{eq:fulldim-weak-trafo-b} from~$0$ to $t\in\bar{I}$, integrating by parts, and using~\eqref{eq:fulldim-weak-trafo-d}, we find that 
\begin{align}
\int_0^t \Bigl( \hat{\mathcal{A}}^\epsilon \bigl( \partial_t \hat{\vct{u}}_\pmf^\epsilon , \partial_t \bar{\vct{v}}_\pmf \bigr) - \hat{\mathcal{B}}^\epsilon \bigl( \partial_t \hat{p}_\pmf^\epsilon , \partial_t \bar{\vct{v}}_\pmf \bigr) \Bigr) \,\rmd \bar{t} &= \int_0^t \bigl\langle \fracfac[\big ]{\epsilon^{\nu_{\!\vct{f}} + 1}} \partial_t \hat{\vct{f}}_\pmf^\epsilon , \partial_t \bar{\vct{v}}_\pmf \bigr\rangle_{\!\vct{\Lambda}} \,\rmd \bar{t}  . \label{eq:apriori_dtu}
\end{align}
A density argument shows that \cref{eq:apriori_dtu} also holds for~$\bar{\vct{v}}_\pmf \in H^1 ( I ; \vct{V} ) $.
Now, by choosing $\bar{\vct{v}}_\pmf = \hat{\vct{u}}_\pmf^\epsilon$ in \cref{eq:apriori_dtu} and $\phi_\pmf = \partial_t \hat{p}^\epsilon_\pmf$ in \cref{eq:fulldim-weak-trafo-c}, we obtain
\begin{equation}
\begin{multlined}[c][0.875\displaywidth]
 \int_0^t \Bigl( \hat{\mathcal{A}}^\epsilon \bigl( \partial_t \hat{\vct{u}}_\pmf^\epsilon , \partial_t \hat{\vct{u}}_\pmf^\epsilon \bigr) 
 + \hat{\mathcal{C}}^\epsilon \bigl( \partial_t \hat{p}_\pmf^\epsilon , \partial_t \hat{p}_\pmf^\epsilon \bigr) \Bigr) \,\rmd \bar{t}  
 + \tfrac{1}{2} \hat{\mathcal{D}}^\epsilon \bigl( \hat{p}_\pmf^\epsilon , \hat{p}_\pmf^\epsilon \bigr)\Bigr\vert_0^t \\
= \int_0^t \Bigl( \bigl\langle \fracfac{\epsilon^{\nu_{\!\vct{f}}  + 1}} \partial_t \hat{\vct{f}}_\pmf^\epsilon , \partial_t \hat{\vct{u}}_\pmf^\epsilon \bigr\rangle
 + \bigl\langle \fracfac{\epsilon^{\nu_q  + 1}} \hat{q}_\pmf^\epsilon , \partial_t \hat{p}_\pmf^\epsilon  \bigr\rangle  \Bigr) \,\rmd \bar{t} . 
\end{multlined}%
\label{eq:apriori_4}%
\end{equation}%
For the last term in \cref{eq:apriori_4}, we can now distinguish between two cases in \Cref{asm:nu}~\ref{asm:nuq}.
The case $2\nu_q \ge \nu_\omega - 1$ is straightforward; subsequently, we consider the case~$2\nu_q \ge \max \{ -2 , \nu_\matr{K} - 3\}$.
By integrating by parts in time in \cref{eq:apriori_4} and using Young's inequality, \Cref{asm:eps}~\ref{asm:q_eps} and~\ref{asm:p0_eps}, \Cref{asm:nu}~\ref{asm:dirichlet}, \Cref{lem:poincare_decomp}, and the result from step~1, we obtain
\begin{align*}
\bigl\langle \fracfac{\epsilon^{\nu_q  + 1}} \hat{q}_\pmf^\epsilon ,  \hat{p}_\pmf^\epsilon  \bigr\rangle \Bigr\vert_0^t - \int_0^t \bigl\langle \fracfac{\epsilon^{\nu_q  + 1}} \partial_t \hat{q}_\pmf^\epsilon ,  \hat{p}_\pmf^\epsilon  \bigr\rangle  \,\rmd \bar{t} \lesssim \delta^{-1} + \delta \norm[\big ]{ \fracfac[\big ]{\epsilon^{\iota ( \nu_\matr{K})}} \nabla^\fraceps \hat{p}^\epsilon_\pmf (t) }^2_{\vct{\Lambda}}
\end{align*}
for any~$\delta > 0$.
Thus, applying the \Cref{lem:poincare_decomp,lem:korn_decomp}, \Cref{asm:eps}~\ref{asm:f_eps}, \ref{asm:p0_eps}, \ref{asm:omega_eps}, \ref{asm:C_eps}, and \ref{asm:K_eps}, and \Cref{asm:nu}~\ref{asm:nuf}, we find that \cref{eq:apriori_4} implies the inequality 
\begin{align*}
\norm[\big ]{\fracfac[\big ]{\epsilon^{\iota ( \nu_\tsr{C} ) }} \matr{e}^\fraceps ( \partial_t  \hat{\vct{u}}_\pmf^\epsilon ) }^2_{L^2 ( 0,t ; \vct{\Lambda} ) } + \norm[\big ]{\fracfac[\big ]{\epsilon^{\iota ( \nu_\omega ) }} \partial_t \hat{p}_\pmf^\epsilon }_{L^2 ( 0, t ; \Lambda ) }^2 + \norm[\big ]{\fracfac[\big ]{\epsilon^{\iota ( \nu_{\matr{K}} )}} \nabla^\fraceps \hat{p}^\epsilon_\pmf (t) }_{\vct{\Lambda}}^2 \lesssim 1 .
\end{align*}
Further, the \Cref{lem:poincare_decomp,lem:korn_decomp} now imply 
\begin{align*}
\norm[\big ]{\fracfac[\big]{\epsilon^{ \theta (\nu_\tsr{C} )  }} \partial_t \hat{\vct{u}}_\pmf^\epsilon }_{L^2 ( I ; \vct{\Lambda})} \lesssim \norm[\big ]{\fracfac[\big ]{\epsilon^{\max \{ \frac{1}{2} \! , \, \iota ( \nu_\tsr{C} )  \} \! }} \partial_t \nabla^\fraceps \hat{\vct{u}}_\pmf^\epsilon  }_{L^2 (I ; \vct{\Lambda} ) }  \lesssim 1 .
\end{align*}
Besides, with \Cref{lem:poincare_decomp} and \Cref{asm:nu}~\ref{asm:dirichlet}, we obtain
\begin{align*}
\norm[\big ]{\fracfac[\big ]{\epsilon^{  \theta ( \nu_\matr{K} )   } } \hat{p}_\pmf^\epsilon }_{L^\infty ( I ; \Lambda ) } \lesssim 1 . \tag*{\qedhere}
\end{align*}
\end{proof}
The a~priori estimates in \Cref{prop:apriori} imply the existence of a subsequence along which the solutions of the transformed Biot problem~\eqref{eq:fulldim-weak-trafo} converge weakly in the following sense as $\epsilon \rightarrow 0 $. 
\begin{corollary} 
\label{cor:conv}
We define the spaces 
\begin{align}
H^1_{\!\vct{N}} (\Omega_\rmf^1 ) &:= \bigl\{ \phi_\rmf \in L^2 ( \Omega_\rmf^1 ) \ \big\vert\ \nabla_{\!\vct{N}} \phi_\rmf \in \vct{L}^2 ( \Omega_\rmf^1 )  \bigr\} , \quad\enspace \vct{H}^1_{\!\vct{N}} (\Omega_\rmf^1 ) := \bigl[ H^1_{\!\vct{N}} (\Omega_\rmf^1 ) \bigr]^n .
\end{align}
Besides, we write $\Phi^\#$ for the closure of~$\Phi$ in $H^1 (\Omega_+^0 ) \times H^1 (\Omega_-^0 ) \times H^1_{\! \vct{N}} (\Omega_\rmf^1 ) $ and $\vct{V}^\#$ for the closure of~$\vct{V}$ in $\vct{H}^1 (\Omega_+^0 ) \times \vct{H}^1 (\Omega_-^0 ) \times \vct{H}^1_{\! \vct{N}} (\Omega_\rmf^1 ) $.
Then, given the \Cref{asm:eps,asm:nu}, there exists a subsequence $\{ \epsilon_k \}_{k \in \mathbb{N}} \subset ( 0, 1]$ with $\epsilon_k \searrow 0$ as $k \rightarrow 0$ such that 
\begin{subequations}
\begin{alignat}{3}
\label{eq:conv_p_a} \hat{p}_\pm^{\epsilon_k} &\rightharpoonup \hat{p}_\pm^\# \quad\enspace &&\text{in } H^1 \bigl( I ; L^2 ( \Omega_\pm^0 ) \bigr) ,   \\
\label{eq:conv_p_b}  \hat{p}_\pm^{\epsilon_k} &\rightharpoonup \hat{p}_\pm^\# \quad\enspace &&\text{in } L^2 \bigl( I ; H^1 ( \Omega_\pm^0 ) \bigr) ,   \\
\label{eq:conv_p_c} \hat{p}_\pm^{\epsilon_k} &\overset{\ast}{\rightharpoonup} \hat{p}_\pm^\# \quad\enspace &&\text{in } L^\infty \bigl( I ; H^1 ( \Omega_\pm^0 ) \bigr) ,   \\
\label{eq:conv_p_d} \hat{p}_\rmf^{\epsilon_k} &\rightharpoonup \hat{p}_\rmf^\# \quad\enspace &&\text{in } L^2 \bigl( I ; H^1 ( \Omega_\rmf^1 ) \bigr) \quad\enspace &&\text{if } \nu_\matr{K} \le -1 , \\ 
\label{eq:conv_p_e} \hat{p}_\rmf^{\epsilon_k} &\overset{\ast}{\rightharpoonup} \hat{p}_\rmf^\# \quad\enspace &&\text{in } L^\infty \bigl( I ; H^1 ( \Omega_\rmf^1 ) \bigr) \quad\enspace &&\text{if } \nu_\matr{K} \le -1 , \\
\label{eq:conv_p_f} \hat{p}_\rmf^{\epsilon_k} &\rightharpoonup \hat{p}_\rmf^\# \quad\enspace &&\text{in } L^2 \bigl( I ; H^1_{\!\vct{N}} ( \Omega_\rmf^1 ) \bigr) \quad\enspace &&\text{if } \nu_\matr{K} \le 1 ,\\
\label{eq:conv_p_g} \hat{p}_\rmf^{\epsilon_k} &\overset{\ast}{\rightharpoonup} \hat{p}_\rmf^\# \quad\enspace &&\text{in } L^\infty \bigl( I ; H^1_{\!\vct{N}} ( \Omega_\rmf^1 ) \bigr) \quad\enspace &&\text{if } \nu_\matr{K} \le 1 ,\\
\label{eq:conv_p_h} \hat{p}_\rmf^{\epsilon_k} &\rightharpoonup \hat{p}_\rmf^\# \quad\enspace &&\text{in } H^1 \bigl( I ; L^2 ( \Omega_\rmf^1 ) \bigr) \quad &&\text{if } \nu_\omega = -1 , \\
\label{eq:conv_p_i} \hat{p}_\rmf^{\epsilon_k} &\overset{\ast}{\rightharpoonup} \hat{p}_\rmf^\# \quad\enspace &&\text{in } L^\infty \bigl( I ; L^2 ( \Omega_\rmf^1 ) \bigr) \quad &&\text{if } \nu_\omega = -1 .
\end{alignat}
\end{subequations}
In particular, we have $\hat{p}_\pmf^\# \in L^\infty ( I ; \Phi )$ if $\nu_\matr{K} \le -1$ and $\hat{p}_\pmf^\# \in L^\infty ( I ; \Phi^\# ) $ if $\nu_\matr{K} \le 1$.
Moreover, with $\theta(\nu ) := \max\{ 0 , \tfrac{1}{2} ( \nu - 1)\}$, it is
\begin{subequations}
\begin{alignat}{3}
\label{eq:conv_u_a} \hat{\vct{u}}_\pm^{\epsilon_k} &\rightharpoonup \hat{\vct{u}}_\pm^\# \quad\enspace &&\text{in } H^1 \bigl( I ; \vct{H}^1 ( \Omega_\pm^0 ) \bigr) ,   \\
\label{eq:conv_u_b} \hat{\vct{u}}_\pm^{\epsilon_k} &\overset{\ast}{\rightharpoonup} \hat{\vct{u}}_\pm^\# \quad\enspace &&\text{in } L^\infty \bigl( I ; \vct{H}^1 ( \Omega_\pm^0 ) \bigr) ,   \\
\label{eq:conv_u_c} \epsilon_k^{\theta ( \nu_\tsr{C})}\hat{\vct{u}}_\rmf^{\epsilon_k} &\rightharpoonup \hat{\vct{u}}_\rmf^\# \quad\enspace &&\text{in } H^1 \bigl( I ; \vct{H}^1_{\!\vct{N}} ( \Omega_\rmf^1 ) \bigr)  , \\
\label{eq:conv_u_d} \epsilon_k^{\theta ( \nu_\tsr{C})} \hat{\vct{u}}_\rmf^{\epsilon_k} &\overset{\ast}{\rightharpoonup} \hat{\vct{u}}_\rmf^\# \quad\enspace &&\text{in } L^\infty \bigl( I ; \vct{H}^1_{\!\vct{N}} ( \Omega_\rmf^1 ) \bigr)  .
\end{alignat}
\end{subequations}
We have $\hat{\vct{u}}_\pmf^\# \in H^1 ( I ; \vct{V}^\# ) \cap L^\infty ( I ; \vct{V}^\# ) $ if $\nu_\tsr{C} \le 1$.
\end{corollary}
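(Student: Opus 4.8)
The plan is to obtain the statement as a direct consequence of the a~priori estimates in \Cref{prop:apriori}, by decomposing every bulk--fracture triple into its bulk and fracture components and then invoking the standard weak-compactness machinery. The key observation I would use is that the fracture bracket~$\fracfac{\cdot}$ carries a prefactor \emph{only} on the fracture component, and that the scaled gradient~$\nablaeps$ equals the ordinary gradient~$\nabla$ on the bulk domains~$\Omega_\pm^0$, whereas on~$\Omega_\rmf^1$ it splits as $\nabla^\epsilon = \nabla_{\!\smallpar} + \epsilon^{-1}\nabla_{\!\vct N}$ into two summands that are orthogonal in~$\vct{L}^2(\Omega_\rmf^1)$ (and likewise for $\matr{e}^{\fracfac{\!\epsilon\!}}$).

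First I would restrict the estimates \eqref{eq:apriori_a}--\eqref{eq:apriori_c} to the bulk. Since the bracket prefactor is absent there and $\nablaeps = \nabla$, this immediately gives $\norm{\hat{\vct u}_\pm^\epsilon}_{H^1(I;\vct H^1(\Omega_\pm^0))\cap L^\infty(I;\vct H^1(\Omega_\pm^0))}\lesssim 1$ uniformly in~$\epsilon$; likewise \eqref{eq:apriori_d}--\eqref{eq:apriori_e} restricted to the bulk yield $\norm{\hat p_\pm^\epsilon}_{H^1(I;L^2(\Omega_\pm^0))\cap L^\infty(I;H^1(\Omega_\pm^0))}\lesssim 1$, and hence also a bound in $L^2(I;H^1(\Omega_\pm^0))$ since~$I$ is bounded. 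On the fracture, the same estimates together with the orthogonal splitting show that $\epsilon^{\iota(\nu_\matr K)}\nabla_{\!\smallpar}\hat p_\rmf^\epsilon$ and $\epsilon^{\iota(\nu_\matr K)-1}\nabla_{\!\vct N}\hat p_\rmf^\epsilon$ are bounded in $L^\infty(I;\vct L^2(\Omega_\rmf^1))$, that $\epsilon^{\iota(\nu_\omega)}\hat p_\rmf^\epsilon$ is bounded in $H^1(I;L^2(\Omega_\rmf^1))$ and $\epsilon^{\min\{\iota(\nu_\omega),\theta(\nu_\matr K)\}}\hat p_\rmf^\epsilon$ in $L^\infty(I;L^2(\Omega_\rmf^1))$, and that $\epsilon^{\max\{1/2,\iota(\nu_\tsr C)\}}\nabla_{\!\smallpar}\hat{\vct u}_\rmf^\epsilon$, $\epsilon^{\max\{1/2,\iota(\nu_\tsr C)\}-1}\nabla_{\!\vct N}\hat{\vct u}_\rmf^\epsilon$, and $\epsilon^{\theta(\nu_\tsr C)}\hat{\vct u}_\rmf^\epsilon$ are bounded both in $L^\infty(I;\cdot)$ and in $H^1(I;\cdot)$.

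The second step is to convert these scaled bounds into the unscaled (or $\epsilon^{\theta(\nu_\tsr C)}$-scaled) bounds claimed in the statement, using only the monotonicity $\epsilon^{\beta}\le\epsilon^{\beta'}$ for $\epsilon\in(0,1]$, $\beta\ge\beta'$, together with \Cref{asm:nu}. From $\iota(\nu)-1 = \tfrac12(\nu-1)$ and $\theta(\nu) = \max\{0,\tfrac12(\nu-1)\}$ one checks: $\theta(\nu_\matr K)=0$ and, since $\nu_\omega\ge-1$, also $\min\{\iota(\nu_\omega),\theta(\nu_\matr K)\}=0$ whenever $\nu_\matr K\le1$; $\iota(\nu_\matr K)-1\le0$ for $\nu_\matr K\le1$ and additionally $\iota(\nu_\matr K)\le0$ for $\nu_\matr K\le-1$; $\iota(\nu_\omega)=0$ for $\nu_\omega=-1$; and $\max\{1/2,\iota(\nu_\tsr C)\}-1\le\theta(\nu_\tsr C)$ always. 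Absorbing the surplus nonnegative powers of $\epsilon\le1$ then yields: $\hat p_\rmf^\epsilon$ bounded in $L^\infty(I;H^1(\Omega_\rmf^1))$ if $\nu_\matr K\le-1$; in $L^\infty(I;H^1_{\!\vct N}(\Omega_\rmf^1))$ if $\nu_\matr K\le1$; in $H^1(I;L^2(\Omega_\rmf^1))\cap L^\infty(I;L^2(\Omega_\rmf^1))$ if $\nu_\omega=-1$; and $\epsilon^{\theta(\nu_\tsr C)}\hat{\vct u}_\rmf^\epsilon$ bounded in $H^1(I;\vct H^1_{\!\vct N}(\Omega_\rmf^1))\cap L^\infty(I;\vct H^1_{\!\vct N}(\Omega_\rmf^1))$. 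All spaces here are Hilbert spaces, so the reflexive Bochner spaces $H^1(I;X)$, $L^2(I;X)$ are weakly sequentially compact on bounded sets, while $L^\infty(I;X)=(L^1(I;X'))'$ has a separable predual and is weak-$*$ sequentially compact; iterated extraction (finitely many statements) produces a single subsequence $\{\epsilon_k\}$ with $\epsilon_k\searrow0$ along which everything converges, with consistent limits by uniqueness of the weak limit and continuity of the embeddings. These limits define $\hat p_\pmf^\#$ and $\hat{\vct u}_\pmf^\#$ piecewise, giving \eqref{eq:conv_p_a}--\eqref{eq:conv_u_d}. The membership statements are then automatic: the sequences lie in $L^\infty(I;\Phi)$ when $\nu_\matr K\le-1$, in $L^\infty(I;\Phi^\#)$ when $\nu_\matr K\le1$, and in $H^1(I;\vct V^\#)\cap L^\infty(I;\vct V^\#)$ when $\nu_\tsr C\le1$ (so that $\theta(\nu_\tsr C)=0$), and these are weakly, respectively weak-$*$, closed since $\Phi\subset\Phi^\#$ and $\vct V\subset\vct V^\#$ are closed subspaces; hence the coupling conditions across~$\gamma$ pass to the limit in the appropriate (possibly relaxed) sense.

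The only genuine work lies in the exponent bookkeeping of the second step: I must verify that each power of~$\epsilon$ generated by splitting $\nablaeps$ and by \Cref{prop:apriori} is, under \Cref{asm:nu}, no smaller than the exponent demanded in \eqref{eq:conv_p_a}--\eqref{eq:conv_u_d}, so that the difference can be discarded via $\epsilon\le1$; everything else is routine. A point to watch is that for $-1<\nu_\matr K\le1$ only the normal component $\nabla_{\!\vct N}\hat p_\rmf^\epsilon$ (not the full gradient) is controlled, which is precisely why the fracture pressure limit then lies only in $H^1_{\!\vct N}(\Omega_\rmf^1)$, and analogously for the displacement, which explains the appearance of $\Phi^\#$ and $\vct V^\#$.
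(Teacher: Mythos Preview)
Your proposal is correct and is precisely the argument the paper has in mind: the paper's proof reads in its entirety ``The result follows directly from \Cref{prop:apriori},'' and what you have written is a careful unpacking of that sentence---splitting the triple estimates into bulk and fracture parts, doing the exponent bookkeeping under \Cref{asm:nu}, and invoking standard weak/weak-$*$ sequential compactness and closedness of the relevant subspaces.
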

\begin{proof}
The result follows directly from \Cref{prop:apriori}. 
\end{proof}
Next, we introduce the following averaging operators for the fracture domain~$\Omega_\rmf^1$.%
\begin{definition} \label{def:average}
We define
\begin{subequations}
\begin{alignat}{2}
\mathfrak{A}_{\!\vct{N}} &\colon L^2 (\Omega_\rmf^1 ) \rightarrow L^2_a ( \Omega_\rmf^1 ) , \quad &&\bigl( \mathfrak{A}_{\!\vct{N}} f \bigr) ( \vct{y}  ) := \frac{1}{a (\vct{y})} \int_{-a_- ( \vct{y} ) }^{a_+ ( \vct{y} ) } f ( \vct{y} + s \vct{N} ) \,\rmd s , \\
\mathfrak{A}_\rmf &\colon L^2 (\Omega_\rmf^1 ) \rightarrow \mathbb{R} , \quad &&\mathfrak{A}_\rmf f  := \frac{1}{ \int_\gamma a \,\rmd A } \int_{\Omega_\rmf^1  } f \,\rmd \vct{x} .
\end{alignat}
\end{subequations}
Here, for a measurable non-negative weight function~$w\colon \Omega_\rmf^1 \rightarrow \mathbb{R}$, the space $L^2_w (\Omega_\rmf^1 )$ is given by
\begin{align}
L^2_w (\Omega_\rmf^1 ) := \Bigl\{ \phi_\rmf \colon \Omega_\rmf^1 \rightarrow \mathbb{R} \ \Big\vert\ \int_{\Omega_\rmf^1} w \phi_\rmf^2 \,\rmd\vct{x} < \infty \Bigr\}. \label{eq:weightedL2}
\end{align}
\end{definition}
Then, as a consequence of the a~priori estimates in \Cref{prop:apriori}, we can conclude that the limit pressure head~$\hat{p}_\rmf^\#$ and the limit displacement vector~$\hat{\vct{u}}_\rmf^\#$ inside the fracture are constant or constant in normal direction for certain ranges of the scaling parameters~$\nu_\matr{K}$ and~$\nu_\tsr{C}$.
\begin{corollary} \label{cor:const}
Let the \Cref{asm:eps,asm:nu} hold true. Then, for almost all~$\vct{y} + s \vct{N} \in \Omega_\rmf^1$ and $t\in I$ with $\vct{y} \in \gamma$ and $s \in (-a_- ( \vct{y} ) , a_+ ( \vct{y} ) )$, we have
\begin{subequations}
\begin{alignat}{4}
\nabla \hat{p}_\rmf^\# (\vct{y} + s \vct{N} ) &= \vct{0} \quad\ &&\text{if } \nu_\matr{K} < -1 \quad &&\Rightarrow \enspace  &\hat{p}_\rmf^\# (\vct{y} + s\vct{N} )  &= \mathfrak{A}_\rmf \hat{p}_\rmf^\# = \mathrm{const}. , \\
\nabla_{\!\vct{N}} \hat{p}_\rmf^\# (\vct{y} + s \vct{N} ) &= \vct{0} \quad\ &&\text{if } \nu_\matr{K} < 1 \quad &&\Rightarrow \enspace  &\hat{p}_\rmf^\# (\vct{y} + s\vct{N} )  &= \bigl( \mathfrak{A}_{\!\vct{N}} \hat{p}_\rmf^\# \bigr) (\vct{y}) , \\
\nabla_{\!\vct{N}} \hat{\vct{u}}_\rmf^\# (\vct{y} + s \vct{N} ) &= \matr{0} \quad\ &&\text{if } \nu_\tsr{C} < 1 \quad &&\Rightarrow \enspace  &\hat{\vct{u}}_\rmf^\# (\vct{y} + s\vct{N} )  &= \bigl( \mathfrak{A}_{\!\vct{N}}  \hat{\vct{u}}_\rmf^\# \bigr) (\vct{y} )  .
\end{alignat}
\end{subequations}
\end{corollary}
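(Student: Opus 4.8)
The plan is to read off all three degeneracies directly from the a~priori bounds of \Cref{prop:apriori}, exploiting that inside the fracture the scaled gradient decomposes orthogonally into its tangential and normal parts: for $\phi_\rmf \in H^1_{\!\vct{N}}(\Omega_\rmf^1)$ one has $\nabla^\epsilon \phi_\rmf = \nabla_{\!\smallpar}\phi_\rmf + \epsilon^{-1}\nabla_{\!\vct{N}}\phi_\rmf$ with $\nabla_{\!\smallpar}\phi_\rmf \perp \nabla_{\!\vct{N}}\phi_\rmf$ pointwise, so that $\norm{\nabla^\epsilon \phi_\rmf}_{\vct{L}^2(\Omega_\rmf^1)}^2 = \norm{\nabla_{\!\smallpar}\phi_\rmf}_{\vct{L}^2(\Omega_\rmf^1)}^2 + \epsilon^{-2}\norm{\nabla_{\!\vct{N}}\phi_\rmf}_{\vct{L}^2(\Omega_\rmf^1)}^2$, and likewise componentwise for vector fields.

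First I would take the fracture component of the estimate \eqref{eq:apriori_e} and split it along this decomposition. With $\iota(\nu) = \tfrac12(\nu+1)$ this gives $\norm{\epsilon^{\iota(\nu_\matr{K})}\nabla_{\!\smallpar}\hat{p}_\rmf^\epsilon}_{L^\infty(I;\vct{L}^2(\Omega_\rmf^1))} + \norm{\epsilon^{\iota(\nu_\matr{K})-1}\nabla_{\!\vct{N}}\hat{p}_\rmf^\epsilon}_{L^\infty(I;\vct{L}^2(\Omega_\rmf^1))} \lesssim 1$. Since $\iota(\nu_\matr{K}) - 1 = \tfrac12(\nu_\matr{K}-1) < 0$ whenever $\nu_\matr{K} < 1$, the normal part forces $\norm{\nabla_{\!\vct{N}}\hat{p}_\rmf^\epsilon}_{L^\infty(I;\vct{L}^2(\Omega_\rmf^1))} \lesssim \epsilon^{(1-\nu_\matr{K})/2} \to 0$, and if in addition $\nu_\matr{K} < -1$ then $\iota(\nu_\matr{K}) < 0$, so also $\norm{\nabla_{\!\smallpar}\hat{p}_\rmf^\epsilon}_{L^\infty(I;\vct{L}^2(\Omega_\rmf^1))} \to 0$, i.e.\ $\nabla\hat{p}_\rmf^\epsilon \to \vct{0}$ in $L^\infty(I;\vct{L}^2(\Omega_\rmf^1))$ and hence in $L^2(I;\vct{L}^2(\Omega_\rmf^1))$. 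In the same way, the fracture component of \eqref{eq:apriori_b} with $\beta := \max\{\tfrac12,\iota(\nu_\tsr{C})\}$, together with $\beta < 1$ for $\nu_\tsr{C} < 1$, yields $\norm{\nabla_{\!\vct{N}}\hat{\vct{u}}_\rmf^\epsilon}_{L^\infty(I;\matr{L}^2(\Omega_\rmf^1))} \lesssim \epsilon^{1-\beta} \to 0$.

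Next I would pass to the limit along the subsequence $\{\epsilon_k\}$ from \Cref{cor:conv}. For $\nu_\matr{K} \le 1$ the convergence \eqref{eq:conv_p_f} gives $\nabla_{\!\vct{N}}\hat{p}_\rmf^{\epsilon_k} \rightharpoonup \nabla_{\!\vct{N}}\hat{p}_\rmf^\#$ in $L^2(I;\vct{L}^2(\Omega_\rmf^1))$, and comparing with the strong convergence to $\vct{0}$ from the previous step, uniqueness of limits yields $\nabla_{\!\vct{N}}\hat{p}_\rmf^\# = \vct{0}$ a.e.\ whenever $\nu_\matr{K} < 1$. For $\nu_\matr{K} \le -1$ the stronger convergence \eqref{eq:conv_p_d} gives $\nabla\hat{p}_\rmf^{\epsilon_k} \rightharpoonup \nabla\hat{p}_\rmf^\#$ in $L^2(I;\vct{L}^2(\Omega_\rmf^1))$, so the same argument gives $\nabla\hat{p}_\rmf^\# = \vct{0}$ a.e.\ whenever $\nu_\matr{K} < -1$. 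For the displacement, $\nu_\tsr{C} < 1$ makes $\theta(\nu_\tsr{C}) = \max\{0,\tfrac12(\nu_\tsr{C}-1)\} = 0$, so \eqref{eq:conv_u_c} reads $\hat{\vct{u}}_\rmf^{\epsilon_k} \rightharpoonup \hat{\vct{u}}_\rmf^\#$ in $H^1(I;\vct{H}^1_{\!\vct{N}}(\Omega_\rmf^1))$, hence $\nabla_{\!\vct{N}}\hat{\vct{u}}_\rmf^{\epsilon_k} \rightharpoonup \nabla_{\!\vct{N}}\hat{\vct{u}}_\rmf^\#$ in $L^2(I;\matr{L}^2(\Omega_\rmf^1))$ and the same comparison gives $\nabla_{\!\vct{N}}\hat{\vct{u}}_\rmf^\# = \matr{0}$ a.e.

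Finally I would translate the vanishing of these weak derivatives into the stated representations. Viewing $\Omega_\rmf^1$ as the union over $\vct{y} \in \gamma$ of the normal fibers $s \mapsto \vct{y} + s\vct{N}$, $s \in (-a_-(\vct{y}), a_+(\vct{y}))$, a function whose weak normal derivative vanishes restricts, for a.e.\ $\vct{y}$, to a one-dimensional $H^1$ function on that fiber with zero derivative, hence is a.e.\ constant along the fiber and therefore coincides with its fiberwise mean $\mathfrak{A}_{\!\vct{N}}$; this gives the second and third lines of the assertion for a.e.\ $t \in I$. In the case $\nu_\matr{K} < -1$ the full gradient $\nabla\hat{p}_\rmf^\#(\cdot,t)$ vanishes, so by connectedness of $\Omega_\rmf^1$---inherited from that of $\gamma$---the function $\hat{p}_\rmf^\#(\cdot,t)$ is constant on $\Omega_\rmf^1$ and thus equals $\mathfrak{A}_\rmf\hat{p}_\rmf^\#$. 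I expect the only genuinely delicate point to be this last step: since $\Omega_\rmf^1$ is in general not a product domain, the fiberwise-constancy statement must be argued by slicing, and the global constancy relies on the connectedness built into the geometric setting; everything else is a matter of keeping track of the exponents $\iota$ and $\theta$ and a routine weak--strong limit comparison.
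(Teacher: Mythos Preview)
Your argument is correct and is exactly the unpacking of the paper's one-line proof, which simply states that the result follows directly from \Cref{prop:apriori} and \Cref{cor:conv}. You have spelled out the orthogonal splitting of $\nabla^\epsilon$ into tangential and normal parts, tracked the exponents $\iota(\nu_\matr{K})$, $\iota(\nu_\matr{K})-1$, and $\max\{\tfrac12,\iota(\nu_\tsr{C})\}-1$ to obtain strong convergence of the relevant gradient pieces to zero, and then matched these against the weak limits from \Cref{cor:conv}; this is precisely what the paper leaves to the reader.
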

\begin{proof}
The result follows directly from \Cref{prop:apriori} and \Cref{cor:conv}.
\end{proof}
Further, we find that the limit solutions are continuous across the fracture interface~$\gamma$ for certain ranges of the scaling parameters~$\nu_\matr{K}$ and~$\nu_\tsr{C}$.
\begin{proposition}
Given the \Cref{asm:eps,asm:nu}, the limit pressure head $\hat{p}_\pmf^\# \in \Phi^\#$ and the limit displacement vector~$\hat{\vct{u}}_\pmf^\# \in \vct{V}^\#$ satisfy
\begin{subequations}
\begin{alignat}{2}
\hat{p}_\pm^\# &= \mathfrak{A}_{\!\vct{N}} \hat{p}_\rmf^\# \quad &&\text{a.e.\ on}~\gamma \enspace\ \text{if } \nu_\matr{K} < 1 ,  \\
\hat{\vct{u}}_\pm^\# &= \mathfrak{A}_{\!\vct{N}} \hat{\vct{u}}_\rmf^\# \quad &&\text{a.e.\ on}~\gamma \enspace\ \text{if } \nu_\tsr{C} < 1 .
\end{alignat}
\end{subequations}
\end{proposition}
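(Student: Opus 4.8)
The plan is to pass to the limit $\epsilon_k \to 0$ in the interface-continuity conditions that are built into the $\epsilon$-independent solution spaces $\Phi$ and $\vct{V}$, and then to combine the resulting trace identity with the fibre-wise constancy of the limit established in \Cref{cor:const}. I would carry this out for the pressure head under the hypothesis $\nu_\matr{K} < 1$; the displacement case under $\nu_\tsr{C} < 1$ is obtained in exactly the same way after replacing $\Phi$, $\nu_\matr{K}$, and \eqref{eq:conv_p_b} and \eqref{eq:conv_p_f} by $\vct{V}$, $\nu_\tsr{C}$, and \eqref{eq:conv_u_a} and \eqref{eq:conv_u_c}. Note that for $\nu_\tsr{C} < 1$ one has $\theta(\nu_\tsr{C}) = 0$, so the scaling factor in \eqref{eq:conv_u_c} is trivial and $\hat{\vct{u}}_\rmf^{\epsilon_k} \rightharpoonup \hat{\vct{u}}_\rmf^\#$ in $H^1(I ; \vct{H}^1_{\!\vct{N}}(\Omega_\rmf^1))$.

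First I would record that, since each $\hat{p}_\pmf^{\epsilon_k}$ lies in $L^2(I ; \Phi)$, the definition \eqref{eq:Phi} of $\Phi$ gives the trace identity $\hat{p}_\pm^{\epsilon_k} = \Pi_\pm \hat{p}_\rmf^{\epsilon_k}$ on $\gamma$ for almost every $t \in I$. The key ingredient for passing to the limit here is that the relevant trace maps are bounded linear, hence weakly continuous: for the bulk this is the standard trace $H^1(\Omega_\pm^0) \to L^2(\gamma)$, and for the fracture the trace $H^1_{\!\vct{N}}(\Omega_\rmf^1) \to L^2(\gamma_\pm^1)$ follows from the one-dimensional Sobolev inequality applied on each normal fibre $s \mapsto \phi_\rmf(\vct{y} + s\vct{N})$, integrated over $\vct{y} \in \gamma$ and using that the aperture $a$ is bounded and bounded below by a positive constant. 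Composing with the bounded operator $\Pi_\pm$ from \eqref{eq:proj} and invoking \eqref{eq:conv_p_b} and \eqref{eq:conv_p_f} (both valid since $\nu_\matr{K} \le 1$), I obtain that the trace of $\hat{p}_\pm^{\epsilon_k}$ on $\gamma$ converges weakly to that of $\hat{p}_\pm^\#$ in $L^2(I ; L^2(\gamma))$ and that $\Pi_\pm \hat{p}_\rmf^{\epsilon_k} \rightharpoonup \Pi_\pm \hat{p}_\rmf^\#$ in the same space, whence $\hat{p}_\pm^\# = \Pi_\pm \hat{p}_\rmf^\#$ on $\gamma$. Equivalently, this identity is already built into $\hat{p}_\pmf^\# \in L^\infty(I ; \Phi^\#)$ from \Cref{cor:conv}, since the trace-matching condition of \eqref{eq:Phi} is the kernel of a bounded linear map and hence passes to the closure $\Phi^\#$.

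It remains to replace $\Pi_\pm$ by $\mathfrak{A}_{\!\vct{N}}$ on the right-hand side. Since $\nu_\matr{K} < 1$, \Cref{cor:const} gives $\hat{p}_\rmf^\#(\vct{y} + s\vct{N}) = (\mathfrak{A}_{\!\vct{N}} \hat{p}_\rmf^\#)(\vct{y})$ for almost every $(\vct{y},s)$ and $t$; in particular $\hat{p}_\rmf^\#$ is constant along each normal fibre, so its trace on $\gamma_\pm^1$ equals that constant fibre value, which by \Cref{def:average} is precisely $\mathfrak{A}_{\!\vct{N}} \hat{p}_\rmf^\#$. As $\Pi_\pm$ in \eqref{eq:proj} merely pulls back along the parameterisation of $\gamma_\pm^1$ over $\gamma$, applying it to a function already constant along fibres returns $\mathfrak{A}_{\!\vct{N}} \hat{p}_\rmf^\#$, regarded as a function on $\gamma$. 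Combining the two steps yields $\hat{p}_\pm^\# = \mathfrak{A}_{\!\vct{N}} \hat{p}_\rmf^\#$ on $\gamma$, and the displacement statement follows identically.

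I expect the only genuinely delicate point to be the trace estimate on the degenerate space $H^1_{\!\vct{N}}(\Omega_\rmf^1)$, where only normal derivatives are controlled: there is no trace on the lateral part of $\partial\Omega_\rmf^1$, but a trace on $\gamma_\pm^1$, reached by travelling along the normal fibres, is still well defined, and it is precisely this object to which both \eqref{eq:Phi} and \Cref{cor:const} refer. The remainder is bookkeeping — in particular, checking that $\nu_\matr{K} < 1$ (respectively $\nu_\tsr{C} < 1$) simultaneously supplies the $H^1_{\!\vct{N}}$-convergence needed in the first step and the fibre-wise constancy needed in the second.
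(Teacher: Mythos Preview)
Your argument is correct and is precisely the natural approach: pass the trace-matching condition built into $\Phi$ (resp.\ $\vct{V}$) to the weak limit using boundedness of the relevant trace operators, then invoke \Cref{cor:const} to identify the fibre trace with the normal average. The paper itself does not give a self-contained proof here but defers to \cite[Prop.~3.10]{hoerl24}; your write-up is exactly the kind of argument that reference contains. One small remark: your claim that $a$ is bounded below by a positive constant is in fact justified (since $a \in \mathcal{C}^{0,1}(\bar{\gamma})$ is positive on the compact set $\bar{\gamma}$), so $L^2_a(\gamma) = L^2(\gamma)$ with equivalent norms and the trace estimate on $H^1_{\!\vct{N}}(\Omega_\rmf^1)$ goes through as you describe; alternatively, this trace is exactly \Cref{lem:trace_H1N}, which the paper states separately.
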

\begin{proof}
The result follows analogously to \cite[Prop.\ 3.10]{hoerl24}. 
\end{proof}
In addition, using the following trace theorem on~$H^1_{\!\vct{N}} (\Omega_\rmf^1 )$, we can show that the trace of~$\smash{\hat{\vct{u}}_\rmf^\#}$ vanishes on~$\gamma_\pm^1$ for~$\nu_\tsr{C} > 1$, i.e., the limit of the scaled fracture displacement~$\smash{\hat{\vct{u}}_\rmf^\#}$ is not directly coupled to the bulk displacement~$\smash{\hat{\vct{u}}_\pm^\#}$.
\begin{lemma} \label{lem:trace_H1N}
There exists a uniquely defined bounded linear operator
\begin{subequations}
\begin{align}
\mathfrak{T}_\pm \colon H^1_{\!\vct{N}} (\Omega_\rmf^1 ) \rightarrow L^2_a ( \gamma ) 
\end{align}
such that for $\vct{y} \in \gamma$ and all $h \in \mathcal{C}^{0,1} ( \bar{\Omega}_\rmf^1 ) $, we have
\begin{align}
\bigl( \mathfrak{T}_\pm h \bigr) ( \vct{y} ) = h \bigl( \vct{y} \pm a_\pm ( \vct{y} ) \vct{N} \bigr) .
\end{align}
\end{subequations}
\end{lemma}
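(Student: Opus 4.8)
The plan is to exploit that the rescaled fracture $\Omega_\rmf^1$ fibers over the interface $\gamma$ in the direction $\vct{N}$, the fiber over $\vct{y}\in\gamma$ being the interval $(-a_-(\vct{y}),a_+(\vct{y}))$ of length $a(\vct{y})$. Since $\gamma$ lies in a hyperplane and $\vct{N}$ is a unit vector, the map $(\vct{y},s)\mapsto\vct{y}+s\vct{N}$ has unit Jacobian, so by Fubini a function $\phi_\rmf$ lies in $H^1_{\!\vct{N}}(\Omega_\rmf^1)$ if and only if for almost every $\vct{y}\in\gamma$ the fiber restriction $s\mapsto\phi_\rmf(\vct{y}+s\vct{N})$ belongs to $H^1\bigl((-a_-(\vct{y}),a_+(\vct{y}))\bigr)$, with $\int_\gamma\bigl(\norm{\phi_\rmf(\vct{y}+\cdot\,\vct{N})}_{L^2}^2+\norm{\partial_{\!\vct{N}}\phi_\rmf(\vct{y}+\cdot\,\vct{N})}_{L^2}^2\bigr)\,\rmd A=\norm{\phi_\rmf}_{L^2(\Omega_\rmf^1)}^2+\norm{\nabla_{\!\vct{N}}\phi_\rmf}_{\vct{L}^2(\Omega_\rmf^1)}^2=:\norm{\phi_\rmf}_{H^1_{\!\vct{N}}(\Omega_\rmf^1)}^2$. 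Hence one-dimensional trace theory applies fiberwise, and I would first prove the asserted bound on the dense subspace $\mathcal{C}^{0,1}(\bar{\Omega}_\rmf^1)$ and then extend it by continuity.

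For $h\in\mathcal{C}^{0,1}(\bar{\Omega}_\rmf^1)$ and almost every $\vct{y}\in\gamma$, the fundamental theorem of calculus along the fiber gives $h(\vct{y}\pm a_\pm(\vct{y})\vct{N})=h(\vct{y}+s\vct{N})+\int_s^{\pm a_\pm(\vct{y})}\partial_{\!\vct{N}}h(\vct{y}+\tau\vct{N})\,\rmd\tau$ for every $s\in(-a_-(\vct{y}),a_+(\vct{y}))$. Averaging in $s$ over the fiber turns the first term into $(\mathfrak{A}_{\!\vct{N}}h)(\vct{y})$, and applying the Cauchy--Schwarz inequality on the fiber to the average and to the remainder yields the pointwise estimate \begin{align*}\abs{h(\vct{y}\pm a_\pm(\vct{y})\vct{N})}^2\lesssim\frac{1}{a(\vct{y})}\int_{-a_-(\vct{y})}^{a_+(\vct{y})}\abs{h(\vct{y}+s\vct{N})}^2\,\rmd s+a(\vct{y})\int_{-a_-(\vct{y})}^{a_+(\vct{y})}\abs{\partial_{\!\vct{N}}h(\vct{y}+s\vct{N})}^2\,\rmd s.\end{align*} Multiplying by $a(\vct{y})$, integrating over $\gamma$, applying Fubini, and using that $a\in\mathcal{C}^{0,1}(\bar{\gamma})$ is bounded then gives $\norm{\mathfrak{T}_\pm h}_{L^2_a(\gamma)}^2\lesssim\norm{h}_{L^2(\Omega_\rmf^1)}^2+\norm{\nabla_{\!\vct{N}}h}_{\vct{L}^2(\Omega_\rmf^1)}^2=\norm{h}_{H^1_{\!\vct{N}}(\Omega_\rmf^1)}^2$, where I write $\mathfrak{T}_\pm h:=h(\,\cdot\pm a_\pm(\cdot)\vct{N})$. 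Thus $\mathfrak{T}_\pm$ is a bounded linear map from $\bigl(\mathcal{C}^{0,1}(\bar{\Omega}_\rmf^1),\norm{\cdot}_{H^1_{\!\vct{N}}(\Omega_\rmf^1)}\bigr)$ into $L^2_a(\gamma)$.

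It remains to show that $\mathcal{C}^{0,1}(\bar{\Omega}_\rmf^1)$ is dense in $H^1_{\!\vct{N}}(\Omega_\rmf^1)$; granting this, $\mathfrak{T}_\pm$ extends uniquely to a bounded linear operator on $H^1_{\!\vct{N}}(\Omega_\rmf^1)$, and the uniqueness claim is immediate. For the density I would flatten the fibers by the bi-Lipschitz change of variables $\vct{y}+s\vct{N}\mapsto\bigl(\vct{y},(s+a_-(\vct{y}))/a(\vct{y})\bigr)$ onto the fixed cylinder $\gamma\times(0,1)$; because $a_\pm$ are Lipschitz and $a$ is bounded above and below by positive constants on the compact set $\bar{\gamma}$, this identifies $H^1_{\!\vct{N}}(\Omega_\rmf^1)$ with $\{\tilde{\phi}\in L^2(\gamma\times(0,1))\ \vert\ \partial_t\tilde{\phi}\in L^2(\gamma\times(0,1))\}$ up to equivalence of norms. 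On this cylinder, mollification in $t$ (after reflecting across the endpoints $t\in\{0,1\}$) followed by the usual $L^2(\gamma)$-approximation of the coefficients in the $\gamma$-variable shows that $\mathcal{C}^\infty(\bar{\gamma}\times[0,1])$, and hence, after transporting back, $\mathcal{C}^{0,1}(\bar{\Omega}_\rmf^1)$, is dense. I expect this density step to be the main obstacle: the space $H^1_{\!\vct{N}}(\Omega_\rmf^1)$ is anisotropic, carrying no tangential regularity, so classical density results for $H^1$ do not apply and one must genuinely reduce, via the flattening, to a one-dimensional mollification argument. (Alternatively, the existence of $\mathfrak{T}_\pm$ could be obtained without density by defining $\mathfrak{T}_\pm\phi_\rmf$ directly as the fiberwise $H^1$-trace at $s=\pm a_\pm(\vct{y})$, its $\vct{y}$-measurability and the bound following from the fiberwise estimate above; but density is still needed for the uniqueness assertion.)
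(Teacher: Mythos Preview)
Your argument is correct. The paper does not actually prove this lemma here; it simply cites \cite[Lem.~3.1]{hoerl24} and moves on. Your fiberwise approach---fundamental theorem of calculus along the normal direction, averaging, Cauchy--Schwarz to get the pointwise trace inequality, then integrating over $\gamma$ with the weight $a$---is exactly the natural proof and is presumably what the cited reference does as well. The observation that the weight $a$ in the target space $L^2_a(\gamma)$ is precisely what is needed to absorb the fiber-length factors is the key structural point, and you have it.

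Your identification of the density of $\mathcal{C}^{0,1}(\bar{\Omega}_\rmf^1)$ in the anisotropic space $H^1_{\!\vct{N}}(\Omega_\rmf^1)$ as the main technical step is accurate. The flattening to $\gamma\times(0,1)$ is the right move; one small caution is that the bi-Lipschitz property requires $a$ to be bounded away from zero on $\bar{\gamma}$, which the paper's assumptions (positivity of $a$ on the open set $\gamma$ together with $a_\pm\in\mathcal{C}^{0,1}(\bar{\gamma})$) do not literally guarantee at $\partial\gamma$. If $a$ may vanish on $\partial\gamma$, the weighted norm on the cylinder side degenerates and one has to be slightly more careful---your alternative construction via the direct fiberwise $H^1$-trace then becomes the cleaner route for existence, with density only needed for the uniqueness clause (which, in turn, can be obtained by mollifying on compact subsets of $\gamma$ where $a$ is uniformly positive and exhausting). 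This is a technical wrinkle rather than a gap, and the cited reference presumably addresses it.
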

\begin{proof}
See \cite[Lem.\ 3.1]{hoerl24}.
\end{proof}
\begin{proposition} \label{prop:uftrace}
For $\nu_\tsr{C} > 1$, we have $\mathfrak{T}_\pm \hat{\vct{u}}_\rmf^\# = \vct{0}$. 
\end{proposition}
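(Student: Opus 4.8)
The plan is to use that $\theta(\nu_{\tsr{C}}) = \tfrac12(\nu_{\tsr{C}}-1)$ is strictly positive for $\nu_{\tsr{C}} > 1$, so that the \emph{scaled} fracture displacement $\epsilon_k^{\theta(\nu_{\tsr{C}})}\hat{\vct{u}}_\rmf^{\epsilon_k}$ --- which is the quantity converging to $\hat{\vct{u}}_\rmf^\#$ in \Cref{cor:conv}, see \eqref{eq:conv_u_c} --- carries a vanishing prefactor in front of an otherwise uniformly bounded boundary trace, and to conclude by uniqueness of weak limits. The relevant bound is already implicit in \Cref{prop:apriori}: by \eqref{eq:apriori_c} together with the Korn inequality \Cref{lem:korn_decomp} (applicable since $\iota(\nu_{\tsr{C}}) = \tfrac12(\nu_{\tsr{C}}+1) \ge \tfrac12$), one has $\norm{\epsilon^{\iota(\nu_{\tsr{C}})}\nabla^\epsilon\hat{\vct{u}}_\rmf^\epsilon}_{L^\infty(I;\matr{L}^2(\Omega_\rmf^1))} \lesssim 1$, and splitting $\nabla^\epsilon = \nabla_{\!\smallpar} + \epsilon^{-1}\nabla_{\!\vct{N}}$ into its orthogonal tangential and normal parts yields $\norm{\epsilon^{\theta(\nu_{\tsr{C}})}\nabla_{\!\vct{N}}\hat{\vct{u}}_\rmf^\epsilon}_{L^\infty(I;\matr{L}^2(\Omega_\rmf^1))} \lesssim 1$, because $\iota(\nu_{\tsr{C}}) - 1 = \theta(\nu_{\tsr{C}})$ when $\nu_{\tsr{C}} > 1$; this is exactly the estimate underpinning $\epsilon_k^{\theta(\nu_{\tsr{C}})}\hat{\vct{u}}_\rmf^{\epsilon_k} \rightharpoonup \hat{\vct{u}}_\rmf^\#$ in $L^2(I;\vct{H}^1_{\!\vct{N}}(\Omega_\rmf^1))$.

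I would next show that $\{\mathfrak{T}_\pm\hat{\vct{u}}_\rmf^{\epsilon_k}\}_k$ is bounded in $L^2(I;L^2_a(\gamma)^n)$ uniformly in $k$. Since $\hat{\vct{u}}_\pmf^{\epsilon_k} \in \vct{V}$, the coupling condition $\hat{\vct{u}}_\pm^{\epsilon_k} = \Pi_\pm\hat{\vct{u}}_\rmf^{\epsilon_k}$ on $\gamma$ from \eqref{eq:proj} identifies $\mathfrak{T}_\pm\hat{\vct{u}}_\rmf^{\epsilon_k}$, up to the bi-Lipschitz reparameterization of $\gamma_\pm^1$ over $\gamma$ underlying $\Pi_\pm$, with the trace on $\gamma$ of the bulk displacement $\hat{\vct{u}}_\pm^{\epsilon_k}$; by the uniform bound~\eqref{eq:conv_u_b} on $\hat{\vct{u}}_\pm^{\epsilon_k}$ in $L^\infty(I;\vct{H}^1(\Omega_\pm^0))$ and the trace theorem on the fixed domain $\Omega_\pm^0$, the claimed uniform bound follows (the norm of $L^2_a(\gamma)$ being equivalent to that of $L^2(\gamma)$ since the aperture $a$ is bounded away from $0$ and $\infty$). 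As $\theta(\nu_{\tsr{C}}) > 0$, this gives $\epsilon_k^{\theta(\nu_{\tsr{C}})}\mathfrak{T}_\pm\hat{\vct{u}}_\rmf^{\epsilon_k} \to \vct{0}$ strongly in $L^2(I;L^2_a(\gamma)^n)$.

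On the other hand, the trace operator $\mathfrak{T}_\pm$ from \Cref{lem:trace_H1N}, applied componentwise and extended to Bochner spaces, is bounded and linear from $L^2(I;\vct{H}^1_{\!\vct{N}}(\Omega_\rmf^1))$ into $L^2(I;L^2_a(\gamma)^n)$, hence weakly sequentially continuous; therefore $\epsilon_k^{\theta(\nu_{\tsr{C}})}\mathfrak{T}_\pm\hat{\vct{u}}_\rmf^{\epsilon_k} \rightharpoonup \mathfrak{T}_\pm\hat{\vct{u}}_\rmf^\#$ weakly in $L^2(I;L^2_a(\gamma)^n)$. Comparing with the preceding paragraph and using uniqueness of weak limits forces $\mathfrak{T}_\pm\hat{\vct{u}}_\rmf^\# = \vct{0}$. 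The main obstacle I anticipate is the bookkeeping in the second step --- rigorously matching the $H^1(\Omega_\rmf^1)$-trace of a $\vct{V}$-function, the operator $\mathfrak{T}_\pm$, and the bulk trace through $\Pi_\pm$ --- which can be handled as in \cite{hoerl24} by verifying the identification first on $\mathcal{C}^{0,1}(\bar{\Omega}_\rmf^1)$ and passing to the limit by density.
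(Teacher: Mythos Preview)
Your proposal is correct and follows essentially the same approach as the paper: both arguments use the weak continuity of the trace $\mathfrak{T}_\pm$ together with the convergence $\epsilon_k^{\theta(\nu_{\tsr{C}})}\hat{\vct{u}}_\rmf^{\epsilon_k}\rightharpoonup\hat{\vct{u}}_\rmf^\#$ in $\vct{H}^1_{\!\vct{N}}(\Omega_\rmf^1)$ on one hand, and on the other the coupling condition in~$\vct{V}$ to identify $\mathfrak{T}_\pm\hat{\vct{u}}_\rmf^{\epsilon_k}$ with the bulk trace $\hat{\vct{u}}_\pm^{\epsilon_k}\vert_\gamma$, whose boundedness (via the trace theorem in $\vct{H}^1(\Omega_\pm^0)$ and \Cref{prop:apriori}) forces $\epsilon_k^{\theta(\nu_{\tsr{C}})}\mathfrak{T}_\pm\hat{\vct{u}}_\rmf^{\epsilon_k}\to\vct{0}$ strongly. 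The paper packages this into a single inner-product identity rather than invoking uniqueness of weak limits explicitly, and works without spelling out the Bochner-space setting, but the substance is identical.
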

\begin{proof} Let $\nu_\tsr{C} > 1$ and $\theta ( \nu_\tsr{C} ) := \tfrac{1}{2} ( \nu_\tsr{C} - 1)$. 
Besides, we write $\langle \cdot , \cdot \rangle_{\vct{L}^2_a ( \gamma ) } := \langle a \, \cdot , \cdot \rangle_\gamma$ for the scalar product on~$\vct{L}^2_a ( \gamma ) = [ L^2_a ( \gamma )]^n$.
With \Cref{lem:trace_H1N} and \cref{eq:conv_u_c} in \Cref{cor:conv}, we have 
\begin{align*}
\epsilon_k^{\theta ( \nu_\tsr{C} ) } \mathfrak{T}_\pm \hat{\vct{u}}_\rmf^{\epsilon_k} \rightharpoonup \mathfrak{T}_\pm \hat{\vct{u}}_\rmf^\# \quad\enspace \text{in } \vct{L}^2_a ( \gamma ) 
\end{align*}
 as $k\rightarrow \infty$.
Besides, the \cref{eq:apriori_a,eq:apriori_b} in \Cref{prop:apriori}  and the trace theorem in $H^1(\Omega_\pm^0)$ imply $\smash{\norm{\epsilon_k^{\theta ( \nu_\tsr{C})} \hat{\vct{u}}_\pm^{\epsilon_k} }_{L^2 ( \gamma ) }} \rightarrow 0$ as $k\rightarrow \infty$.
Thus, for all $\vct{\zeta} \in \vct{L}^2_a ( \gamma ) $, we have 
\begin{align*}
\bigl\langle \mathfrak{T}_\pm \hat{\vct{u}}_\rmf^\# , \vct{\zeta} \bigr\rangle_{\! \vct{L}^2_a ( \gamma ) } = \bigl\langle \mathfrak{T}_\pm \bigl( \hat{\vct{u}}_\rmf^\# - \epsilon_k^{\theta (\nu_\tsr{C})}  \hat{\vct{u}}_\rmf^{\epsilon_k} \bigr) , \vct{\zeta} \bigr\rangle_{\! \vct{L}^2_a ( \gamma ) } + \bigl\langle \epsilon_k^{\theta ( \nu_\tsr{C} ) } \hat{\vct{u}}_\pm^{\epsilon_k} \bigr\vert_\gamma , \vct{\zeta} \bigr\rangle_{\! \vct{L}^2_a ( \gamma ) } \rightarrow 0. \tag*{\qedhere}
\end{align*}
\end{proof}

\section{Limit Models} 
\label{sec:4}
Having established the existence of a weakly convergent subsequence as the width-to-length ratio of the fracture vanishes (see \Cref{sec:3}), we subsequently present the resulting limit models and corresponding proofs of convergence. 
Moreover, we discuss uniqueness for the limit systems and show that the weak convergences in \Cref{cor:conv} (excluding the weak-$\ast$ convergences) in fact hold as strong convergences for the entire sequences $\{ \hat{\vct{u}}_\pmf^\epsilon \}_{\epsilon \in ( 0,1]}$ and $\{ \hat{p}_\pmf^\epsilon \}_{\epsilon \in ( 0,1]}$ as $\epsilon\rightarrow 0$.
In many cases, the resulting limit models can be reduced to a discrete fracture model, i.e., a model on the bulk domains~$\Omega_\pm^0$ and the fracture interface~$\gamma$ only.
However, in certain cases, we obtain two-scale limit problems that still depend on the rescaled full-dimensional fracture~$\Omega_\rmf^1$, either for the pressure head, the displacement vector, or both.

The specific form of the limit models as $\epsilon \rightarrow 0$ depends on the choice of the scaling coefficients in \Cref{asm:nu}.
Two key parameters determine the respective solution spaces: the scaling coefficient~$\nu_\tsr{C}$ of the elasticity tensor for the mechanics equation~\eqref{eq:fulldim-weak-trafo-b} and the scaling of coefficient~$\nu_\matr{K}$ of the hydraulic conductivity matrix for the flow equation~\eqref{eq:fulldim-weak-trafo-c}.
Other scaling exponents mainly determine whether specific terms vanish or persist in the limit model. 
These cases can be treated uniformly by introducing effective model parameters that are set to zero whenever the corresponding terms disappear. 
The various scaling parameters can be combined independently so that we obtain a whole family of possible limit models.
In the following, we briefly discuss the role of the different scaling coefficients. 
A quick reference to the different limit regimes as~$\epsilon \rightarrow 0$ can be found in \Cref{tab:regimes}.
\begin{table}
\setlength\dashlinedash{0.5pt}
\setlength\dashlinegap{1.25pt}
\centering
\caption{Quick reference to the limit model regimes as $\epsilon \rightarrow 0$.}
\label{tab:regimes}
 \begin{subtable}[t]{0.45\textwidth}
 \centering    
 \caption{Role of the key scaling parameters~$\nu_\tsr{C}$ and~$\nu_\matr{K}$: interpretation of the fracture and resulting model type for pressure ($\nu_\matr{K}$) and displacement ($\nu_\tsr{C}$): discrete fracture (DF) or two-scale model (2S).}
 \renewcommand*{\arraystretch}{1.25}
 \small
  \begin{tabular}{r|l|l}
   $\nu_\tsr{C} = -1$ & soft & DF/2S \\
   \hdashline
   $\nu_\tsr{C} > -1$ & very soft & 2S \\
   \hline 
   $\nu_\matr{K} < -1$ & ideal conduit & DF \\
   \hdashline
   $\nu_\matr{K} = -1$ & conduit & DF \\
   \hdashline
   $\nu_\matr{K} \in (-1, 1)$ & neutral & DF \\
   \hdashline
   $\nu_\matr{K} = 1$ & barrier & DF/2S \\
   \hdashline
   $\nu_\matr{K} > 1$ & wall & DF 
  \end{tabular}
 \end{subtable}%
\hfill
  \begin{subtable}[t]{0.45\textwidth}
  \centering
  \caption{Role of the secondary scaling parameters~$\nu_{{\matr{\alpha}}}^\perp$, $\nu_\omega$, $\nu_q$, and $\nu_\vct{f}$: affected terms that persist or vanish in the fracture limit models.}
  \small
  \renewcommand*{\arraystretch}{1.25}
  \begin{tabular}{r|l}
   2$\nu_{\matr{\alpha}}^\smallperp = \nu_\tsr{C} - 1$ & Biot coupling  \\
   \hdashline 
   2$\nu_{\matr{\alpha}}^\smallperp > \nu_\tsr{C} - 1$ & no Biot coupling  \\
   \hline
   $\nu_\omega = -1$ & storage term \\
   \hdashline 
   $\nu_\omega > -1$ & no storage term \\
   \hline 
   $\nu_q = -1$ & flow source \\
   \hdashline 
   $\nu_q > -1$ & no flow source  \\
   \hline
   $2\nu_\vct{f} = \nu_\tsr{C} -3$ & mechanics source  \\
   \hdashline
   $2\nu_\vct{f} > \nu_\tsr{C} -3$ & no mechanics source 
  \end{tabular}
 \end{subtable}
\end{table}
\begin{itemize}[leftmargin=*]
\item The coefficient $\nu_\tsr{C}$ regulates the elasticity of the fracture in the limit~$\epsilon\rightarrow 0$.
\begin{itemize}
\item For $\nu_\tsr{C} = 1$, we obtain a soft fracture with a jump of the normal bulk displacement across the interface~$\gamma$. 
This yields both a full-dimensional limit model and a discrete fracture model.
The former includes an ODE for the normal derivative of the displacement inside the fracture, while the latter solves only for the bulk displacement such that the normal displacement jump across~$\gamma$ scales with an effective normal elasticity.
Details can be found in \Cref{sec:4.2.1}. 
\item For $\nu_\tsr{C} > 1$, we obtain a very soft fracture, as detailed in \Cref{sec:4.2.2}. The original mechanics equation~\eqref{eq:fulldim-weak-trafo-b} vanishes. Instead, we obtain an ODE inside the full-dimensional fracture domain for the normal derivative of~$\hat{\vct{u}}^\#_\rmf$, i.e., the weak limit of scaled fracture displacement vector~$\smash{\epsilon_k^{(\nu_\tsr{C} - 1)/2} \hat{\vct{u}}_\rmf^{\epsilon_k}}$. 
This equation is not directly coupled to the bulk displacement;  interaction occurs only indirectly through the flow equation.
\item We do not consider limit models with $\nu_\tsr{C} < 1$ in this paper as we do not obtain closed limit models with a Biot-type coupling between flow and mechanics inside the fracture for this case. 
A key problem is that the domain-decomposed Korn inequality in \Cref{lem:korn_decomp} is not applicable here. We conjecture that this stiff case leads to an elastic plate-type model in the limit.
\end{itemize}
\item The coefficient $\nu_\matr{K}$ controls the hydraulic conductivity of the fracture in the limit model. 
We distinguish between five cases in analogy to~\cite{hoerl24}, where Darcy flow without deformation is considered.
\begin{itemize}
\item For $\nu_\matr{K} < -1$, as detailed in in \cref{sec:4.3.1}, the fracture becomes an ideal conduit and we obtain a discrete fracture model where the pressure head on the fracture interface~$\gamma$ is completely constant.
\item \Cref{sec:4.3.2} is concerned with the case $\nu_\matr{K} =-1$. 
Here, we obtain a discrete fracture model with a conductive fracture such that the fracture pressure head satisfies a PDE on the interface~$\gamma$ and the bulk pressure head is continuous across~$\gamma$. 
\item \Cref{sec:4.3.3} treats the case $\nu_\matr{K} \in (-1, 1)$ of a neutral fracture, where the pressure head is continuous across the interface~$\gamma$ and the hydraulic conductivity within the fracture does not affect the solution of the limit model.
\item For $\nu_\matr{K} = 1$, the fracture becomes a permeable barrier as detailed in \Cref{sec:4.3.4}. 
The limit model includes an ODE in normal direction inside the full-dimensional fracture.
A reduction to a discrete fracture model is only possible for special cases.
\item For $\nu_\matr{K} > 1$, the fracture turns into an impermeable barrier with no flow from either side across the interface~$\gamma$. For details, see \Cref{sec:4.3.5}.
\end{itemize}
\item The coefficient $\nu_\omega$ determines the presence of the storage term inside the fracture in the limit model. The storage term is present for $\nu_\omega = -1$ and vanishes for $\nu_\omega > -1$.
\item The coefficient $\nu_{\!\matr{\alpha}}^\smallpar$ for the scaling of the Biot tensor~$\hat{\matr{\alpha}}_\rmf$ in tangential direction does not have an impact on the limit solution.
\item The coefficient $\nu_{\!\matr{\alpha}}^\smallperp$ for the scaling of the Biot tensor~$\hat{\matr{\alpha}}_\rmf$ in normal direction determines whether flow and mechanics equation will be directly coupled inside the fracture in the limit model or not. The coupled case occurs for $2\nu_{\!\matr{\alpha}}^\smallperp = \nu_\tsr{C} -1$, the uncoupled case for  $2\nu_{\!\matr{\alpha}}^\smallperp > \nu_\tsr{C} -1$.
\item The coefficient $\nu_{\!\vct{f}}$ controls the presence of the mechanics source term~$\hat{\vct{f}}_\rmf$ in the limit model. 
The source term is present for $2 \nu_{\!\vct{f}} = \nu_\tsr{C} -3$ and vanishes for $2 \nu_{\!\vct{f}} > \nu_\tsr{C} -3$.
\item The coefficient $\nu_q$ determines the presence of the flow source term~$\hat{q}_\rmf$ in the limit model. The source term is present for~$\nu_q = -1$ and vanishes for~$\nu_q > -1$.
\end{itemize}

In order to avoid repetition, we present the different components of the limit models separately. 
First, in \Cref{sec:4.1}, we consider the bulk limit problem. 
\Cref{sec:4.2} addresses the limit problems for the mechanics equation~\eqref{eq:fulldim-weak-trafo-b} within the fracture.
Finally, in \Cref{sec:4.3}, we discuss the different limit problems for the flow equation~\eqref{eq:fulldim-weak-trafo-c} inside the fracture and provide the complete limit models in both strong and weak form, as well as the corresponding convergence theorems.
We remark that the analysis is carried out in a weak setting and the strong formulations are included solely for convenience.

\subsection{Bulk Limit Problem} 
\label{sec:4.1}
The following lemma addresses the convergence of the individual terms related to the bulk domains~$\Omega_\pm^0$ in the weak formulation~\eqref{eq:fulldim-weak-trafo}.
\begin{lemma} \label{lem:bulkconv}
Let the \Cref{asm:eps,asm:nu} hold. Then, for all $\vct{v}_\pm \in \vct{H}^1 (\Omega_\pm^0 ) $ and $\phi_\pm \in H^1 (\Omega_\pm^0 )$,  we have 
\begin{subequations}
\begin{align}
 \bigl\langle \hat{\tsr{C}}_\pm^{\epsilon_k} \matr{e} ( \hat{\vct{u}}_\pm^{\epsilon_k} ) , \matr{e} ( \vct{v}_\pm )  \bigr\rangle_{\Omega_\pm^0    } &\rightarrow \bigl\langle \hat{\tsr{C}}_\pm \matr{e} ( \hat{\vct{u}}_\pm^\# ) , \matr{e} ( \vct{v}_\pm )  \bigr\rangle_{\Omega_\pm^0 } , \\
 \bigl\langle   \hat{p}_\pm^{\epsilon_k} \hat{\matr{\alpha}}_\pm^{\epsilon_k}  , \nabla \vct{v}_\pm \bigr\rangle_{\Omega_\pm^0 } &\rightarrow \bigl\langle   \hat{p}_\pm^\# \hat{\matr{\alpha}}_\pm , \nabla \vct{v}_\pm \bigr\rangle_{ \Omega_\pm^0  } , \\
\bigl\langle   \hat{\omega}_\pm^{\epsilon_k}  \partial_t \hat{p}_\pm^{\epsilon_k}  , \phi_\pm \bigr\rangle_{\Omega_\pm^0  } &\rightarrow \bigl\langle   \hat{\omega}_\pm \partial_t \hat{p}_\pm^\#  , \phi_\pm \bigr\rangle_{\Omega_\pm^0 } , \\
 \bigl\langle \hat{\matr{K}}_\pm^{\epsilon_k} \nabla \hat{p}_\pm^{\epsilon_k}  , \nabla \phi_\pm \bigr\rangle_{\Omega_\pm^0 } &\rightarrow  \bigl\langle \hat{\matr{K}}_\pm \nabla \hat{p}_\pm^\#  , \nabla \phi_\pm \bigr\rangle_{\Omega_\pm^0  }  
\end{align}
as $k\rightarrow \infty$. Moreover, as $\epsilon \rightarrow 0$, it is
\begin{align}
\bigl\langle \hat{\vct{f}}_\pm^\epsilon , \vct{v}_\pm \bigr\rangle_{\Omega_\pm^0} & \rightarrow  \bigl\langle \hat{\vct{f}}_\pm , \vct{v}_\pm \bigr\rangle_{\Omega_\pm^0 } , \\
 \bigl\langle  \hat{G}_\pm^\epsilon \hat{\matr{\alpha}}_\pm^\epsilon , \nabla \vct{v}_\pm \bigr\rangle_{\Omega_\pm^0 } &\rightarrow \bigl\langle  \hat{G}_\pm \hat{\matr{\alpha}}_\pm , \nabla \vct{v}_\pm \bigr\rangle_{\Omega_\pm^0 } , \\
\bigl\langle \hat{q}_\pm^\epsilon , \phi_\pm \bigr\rangle_{\Omega_\pm^0 } &\rightarrow \bigl\langle \hat{q}_\pm , \phi_\pm \bigr\rangle_{\Omega_\pm^0 } .
\end{align}
\end{subequations}
\end{lemma}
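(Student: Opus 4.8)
The plan is to derive all seven limit relations from one elementary device: a coefficient converging strongly in $L^\infty(\Omega_\pm^0)$, tested against a weakly convergent solution factor and a fixed test field, converges to the product of the limits. The observation that makes this uniform across all scaling regimes is that inside the bulk domains $\Omega_\pm^0$ the fracture prefactors $\fracfac{\cdot}$ equal one and $\nablaeps$, $\matr{e}^{\fracfac{\epsilon}}$ reduce to $\nabla$, $\matr{e}$, so the a~priori estimates \eqref{eq:apriori_a}, \eqref{eq:apriori_c}, \eqref{eq:apriori_d}, \eqref{eq:apriori_e} of \Cref{prop:apriori} carry no $\epsilon$-weight on the bulk. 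Hence $\{\hat{\vct{u}}_\pm^{\epsilon}\}$ is bounded in $H^1(I;\vct{H}^1(\Omega_\pm^0))$ and $\{\hat{p}_\pm^{\epsilon}\}$ in $H^1(I;L^2(\Omega_\pm^0)) \cap L^\infty(I;H^1(\Omega_\pm^0))$, so each of $\matr{e}(\hat{\vct{u}}_\pm^{\epsilon_k})$, $\hat{p}_\pm^{\epsilon_k}$, $\partial_t\hat{p}_\pm^{\epsilon_k}$, $\nabla\hat{p}_\pm^{\epsilon_k}$ is bounded in $L^2(I;L^2(\Omega_\pm^0))$ and converges weakly there to the corresponding limit by \Cref{cor:conv} (in particular \eqref{eq:conv_u_a}, \eqref{eq:conv_p_a}, \eqref{eq:conv_p_b}). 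I would read the asserted convergences in the time-integrated sense, i.e.\ after pairing with an arbitrary $\psi \in L^2(I)$, which is exactly the form needed for the limit passages in \Cref{sec:4.2,sec:4.3}.

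For the four terms carrying a solution factor I would write, with $\kappa^{\epsilon_k}$ the coefficient, $\kappa$ its $L^\infty$-limit, $w^{\epsilon_k}$ the solution factor and $\chi$ the fixed test field,
\[
\langle \kappa^{\epsilon_k} w^{\epsilon_k}, \chi\rangle_{\Omega_\pm^0}
= \langle (\kappa^{\epsilon_k} - \kappa)\, w^{\epsilon_k}, \chi\rangle_{\Omega_\pm^0} + \langle w^{\epsilon_k},\ \kappa\chi\rangle_{\Omega_\pm^0},
\]
where in the second pairing $\kappa$ acts on $\chi$ through the appropriate contraction (using the transpose where needed, harmless by the symmetry of $\hat{\matr{\alpha}}_\pm^{\epsilon_k}$, $\hat{\matr{K}}_\pm^{\epsilon_k}$, $\hat{\tsr{C}}_\pm^{\epsilon_k}$ from \Cref{asm:eps}). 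The first term is bounded by $\norm{\kappa^{\epsilon_k} - \kappa}_{L^\infty(\Omega_\pm^0)}\,\norm{w^{\epsilon_k}}_{L^2(\Omega_\pm^0)}\,\norm{\chi}_{L^2(\Omega_\pm^0)}$; integrating against $\psi$ and using the uniform bound on $w^{\epsilon_k}$ together with the strong convergences $\hat{\tsr{C}}_\pm^\epsilon\to\hat{\tsr{C}}_\pm$, $\hat{\matr{\alpha}}_\pm^\epsilon\to\hat{\matr{\alpha}}_\pm$, $\hat{\omega}_\pm^\epsilon\to\hat{\omega}_\pm$, $\hat{\matr{K}}_\pm^\epsilon\to\hat{\matr{K}}_\pm$ in $L^\infty$ from \Cref{asm:eps}~\ref{asm:alpha_eps}, \ref{asm:omega_eps}, \ref{asm:C_eps}, and~\ref{asm:K_eps}, this vanishes. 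In the second term $\psi\,\kappa\chi$ is a fixed element of $L^2(I;L^2(\Omega_\pm^0))$, so the weak convergence of $w^{\epsilon_k}$ from \Cref{cor:conv} gives convergence to $\langle w^\#, \kappa\chi\rangle = \langle \kappa w^\#, \chi\rangle$, the claimed limit. For the Biot coupling term I would instead use $\langle \hat{p}_\pm^{\epsilon_k}\hat{\matr{\alpha}}_\pm^{\epsilon_k}, \nabla\vct{v}_\pm\rangle = \langle \hat{p}_\pm^{\epsilon_k}, \hat{\matr{\alpha}}_\pm^{\epsilon_k} : \nabla\vct{v}_\pm\rangle$ with the scalar weak convergence of $\hat{p}_\pm^{\epsilon_k}$ and $\hat{\matr{\alpha}}_\pm^{\epsilon_k}\to\hat{\matr{\alpha}}_\pm$ in $\matr{L}^\infty(\Omega_\pm^0)$.

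The three data terms are more direct: \Cref{asm:eps}~\ref{asm:f_eps} gives $\hat{\vct{f}}_\pm^\epsilon\to\hat{\vct{f}}_\pm$ in $H^1(I;\vct{\Lambda})$, hence in $L^2(I;\vct{L}^2(\Omega_\pm^0))$, and \Cref{asm:eps}~\ref{asm:q_eps} gives $\hat{q}_\pm^\epsilon\to\hat{q}_\pm$ in $L^2(I;\Lambda)$, so pairing with the fixed $\vct{v}_\pm$, respectively $\phi_\pm$, converges; for the gravitational term, \Cref{lem:G} and \Cref{asm:eps}~\ref{asm:alpha_eps} yield $\hat{G}_\pm^\epsilon\hat{\matr{\alpha}}_\pm^\epsilon\to\hat{G}_\pm\hat{\matr{\alpha}}_\pm$ in $\matr{L}^\infty(\Omega_\pm^0)$ (note $\hat{G}_\pm=\hat{G}_\pm^0$), and pairing with the fixed $\nabla\vct{v}_\pm$ gives the result. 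I do not expect a genuine obstacle here — this is the classical weak--strong product passage — the only thing requiring care being the bookkeeping of which Bochner space from \Cref{cor:conv} each weak limit lives in, which is precisely where the fact that no $\epsilon$-weights survive on the bulk is used.
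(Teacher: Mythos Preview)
Your proposal is correct and is exactly the weak--strong product argument the paper has in mind: the paper's proof is the single line ``The result follows directly from \Cref{asm:eps}, \Cref{lem:G}, and \Cref{cor:conv},'' and you have simply unpacked that reference into the standard split $\langle \kappa^{\epsilon_k} w^{\epsilon_k},\chi\rangle = \langle(\kappa^{\epsilon_k}-\kappa)w^{\epsilon_k},\chi\rangle + \langle w^{\epsilon_k},\kappa\chi\rangle$ with the correct identification of which bulk bounds in \Cref{prop:apriori} and which weak convergences in \Cref{cor:conv} are needed for each term. There is no divergence in approach, only in the level of detail.
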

\begin{proof}
The result follows directly from \Cref{asm:eps}, \Cref{lem:G}, and \Cref{cor:conv}.
\end{proof}

As a consequence of \Cref{lem:bulkconv}, the bulk part of the limit problem has the following form for all cases.

Find $p_\pm \colon \Omega_\pm^0 \times I \rightarrow \mathbb{R}$ and $\vct{u}_\pm \colon \Omega_\pm^0 \times I \rightarrow \mathbb{R}^n$ such that 
\begin{subequations}
\begin{alignat}{2}
-\nabla \cdot\hat{\matr{\sigma}}_\pm \bigl( \vct{u}_\pm , p_\pm \bigr) &= \hat{\vct{f}}_\pm \quad &&\text{in } \Omega_\pm^0 \times I , \\ 
\partial_t \bigl( \hat{\omega}_\pm p_\pm + \nabla \cdot \bigl( \hat{\matr{\alpha}}_\pm \vct{u}_\pm \bigr) \bigr) - \nabla \cdot \bigl( \hat{\matr{K}}_\pm \nabla p_\pm \bigr) &= \hat{q}_\pm \quad &&\text{in } \Omega_\pm^0 \times I , \\
\vct{u}_\pm &= \vct{0} \quad &&\text{on } \rho^0_{\pm } \times I , \\
p_\pm &= 0 \quad &&\text{on } \rho^0_{\pm, \mathrm{D} } \times I , \\
\hat{\matr{K}}_\pm \nabla p_\pm \cdot \vct{n}_\pm &= \vct{0} \quad &&\text{on } \rho^0_{\pm, \mathrm{N} } \times I , \\
p_\pm ( \cdot , 0 ) &= \hat{p}_{0, \pm} \quad &&\text{on } \Omega_\pm^0 . \\
\intertext{Besides, the initial displacement vector~$\vct{u}_{0,\pm} := \vct{u}_\pm(\cdot , 0)$ satisfies}
-\nabla \cdot \hat{\matr{\sigma}}_\pm ( \vct{u}_{0,\pm} , \hat{p}_{0,\pm} ) &= \hat{\vct{f}}_\pm (0) \quad &&\text{in } \Omega_\pm^0 , \\ 
\vct{u}_{0, \pm} &= \vct{0} \quad &&\text{on } \rho^0_{\pm, \mathrm{D}}  .
\end{alignat}%
The total bulk stress tensor is given by
\begin{align}
\hat{\matr{\sigma}}_\pm \bigl( \vct{u}_\pm , p_\pm \bigr) := \hat{\tsr{C}}_\pm \matr{e} ( \vct{u}_\pm ) - \bigl( p_\pm + \hat{G}_\pm \bigr) \hat{\matr{\alpha}}_\pm  .
\end{align}
\label{eq:bulklimit}%
\end{subequations}%
\indent The bulk pressure head~$p_\pm$ and displacement vector~$\vct{u}_\pm$ in \cref{eq:bulklimit} correspond to the limit functions~$\hat{p}_\pm^\#$ and $\hat{\vct{u}}_\pm^\#$ from \Cref{cor:conv}.
The bulk limit problem~\eqref{eq:bulklimit} is not uniquely solvable and has to be supplemented with conditions on the interface~$\gamma$ or a coupled fracture problem in~$\Omega_\rmf^1$ (see \Cref{sec:4.2,sec:4.3}).

Further, in order to pass to a weak formulation of the limit problem~\eqref{eq:bulklimit}, we define the bulk bilinear forms~$\hat{\mathcal{A}}^0_\rmb \colon \vct{H}^1 (\Omega_\pm^0 ) \times \vct{H}^1 (\Omega_\pm^0 ) \rightarrow \mathbb{R}$, $\hat{\mathcal{B}}^0_\rmb \colon H^1 (\Omega_\pm^0 ) \times \vct{H}^1 (\Omega_\pm^0 ) \rightarrow \mathbb{R} $, $\hat{\mathcal{C}}^0_\rmb \colon L^2 (\Omega_\pm^0 ) \times L^2  (\Omega_\pm^0 ) \rightarrow \mathbb{R}$, and $\hat{\mathcal{D}}^0_\rmb \colon H^1 (\Omega_\pm^0 ) \times H^1 (\Omega_\pm^0 ) \rightarrow \mathbb{R}$, as well as the bulk linear form $\hat{\mathcal{L}}^0_\rmb \colon \vct{H}^1 (\Omega_\pm^0 ) \rightarrow \mathbb{R}$ by
\begin{subequations}
\begin{align}
\hat{\mathcal{A}}_\rmb^0 ( \vct{u}_\pm , \vct{v}_\pm ) &:= \bigl\langle \hat{\mathbb{C}}_\pm \matr{e} ( \vct{u}_\pm ) , \matr{e} ( \vct{v}_\pm ) \bigr\rangle_{\Omega_\pm^0  } , \\
\hat{\mathcal{B}}_\rmb^0 ( p_\pm , \vct{v}_\pm ) &:= \bigl\langle p_\pm \hat{\matr{\alpha}}_\pm , \nabla \vct{v}_\pm \bigr\rangle_{\Omega_\pm^0  } , \\
\hat{\mathcal{C}}^0_\rmb ( \psi_\pm , \phi_\pm ) &:= \bigl\langle \hat{\omega}_\pm \psi_\pm , \phi_\pm \bigr\rangle_{\Omega_\pm^0 } , \\
\hat{\mathcal{D}}^0_\rmb ( p_\pm , \phi_\pm ) &:= \bigl\langle \hat{\matr{K}}_\pm \nabla p_\pm , \nabla \phi_\pm \bigr\rangle_{ \Omega_\pm^0  }  , \\
\hat{\mathcal{L}}^0_\rmb ( \vct{v}_\pm ) &:= \bigl\langle \hat{\vct{f}}_\pm , \vct{v}_\pm \bigr\rangle_{\Omega_\pm^0  } + \bigl\langle \hat{G}_\pm \hat{\matr{\alpha}}_\pm , \nabla \vct{v}_\pm \bigr\rangle_{\Omega_\pm^0 } .
\end{align}
\end{subequations}
A weak formulation of the bulk limit problem~\eqref{eq:bulklimit} is presented together with the fracture limit problems in the \Cref{sec:4.2,sec:4.3}.

\subsection{Fracture Limit Problems for the Mechanics Equation \texorpdfstring{\eqref{eq:fulldim-weak-trafo-b}}{}}
\label{sec:4.2}
We consider the limit of the mechanics equation~\eqref{eq:fulldim-weak-trafo-b} as $\epsilon\rightarrow 0$ for $\nu_\tsr{C} = 1$ in \Cref{sec:4.2.1} and for $\nu_\tsr{C} > 1$ in \Cref{sec:4.2.2}.
However, first, in \Cref{lem:fracconv1}, we analyze the convergence of the individual terms in~\eqref{eq:fulldim-weak-trafo-b} and introduce effective model parameters for the limit problems.
For the complete limit models (including the limit of the flow equation~\eqref{eq:fulldim-weak-trafo-b}) and the corresponding convergence theorems, we refer to \Cref{sec:4.3}.

We define the normal elasticity tensor~$\hat{\matr{C}}_{\!\vct{N}} \in \matr{L}^\infty ( \Omega_\rmf^1 ) $, the piecewise constant effective normal Biot vector~$\hat{\vct{\alpha}}_\rmf^\mathrm{eff} \colon  \Omega_\rmf^1  \rightarrow \mathbb{R}^n $, and the effective source term~$\smash{\hat{\vct{f}}^\mathrm{eff}} \in H^1(I ; \vct{L}^2 (\Omega_\rmf^1 ) )$ inside the fracture by
\begin{subequations}
\begin{align}
\label{eq:Ceff} ( \hat{\matr{C}}_\rmf^{\!\vct{N}} )_{ik} &:= \sum_{j=1}^n \sum_{l=1}^n ( \hat{\mathbb{C}}_\rmf )_{ijkl} N_j N_l , \\
\hat{\vct{\alpha}}_\rmf^\mathrm{eff} &:= \begin{cases} 
\hat{\matr{\alpha}}_\rmf \vct{N} &\text{if } \nu_{\!\matr{\alpha}}^\smallperp =  \frac{\nu_\mathbb{C} -1}{2} , \\
\vct{0} &\text{if } \nu_{\!\matr{\alpha}}^\smallperp > \frac{\nu_\mathbb{C} -1}{2} ,  \end{cases} \\
\hat{\vct{f}}_\rmf^\mathrm{eff} &:= \begin{cases} \hat{\vct{f}}_\rmf &\text{if } \nu_{\!\vct{f}} = \frac{\nu_\mathbb{C} -3}{2} , \\ \vct{0} &\text{if } \nu_{\!\vct{f}} > \frac{\nu_\mathbb{C} -3}{2} ,  \end{cases} 
\end{align}%
\label{eq:eff1}%
\end{subequations}%
where $N_j$, $j\in \{1, \dots , n\}$, denote the components of the normal~$\vct{N}$.
The normal elasticity tensor~$\hat{\matr{C}}_\rmf^{\!\vct{N}}$ now has the following properties.
\begin{lemma} \label{lem:eff1}
$\hat{\matr{C}}_\rmf^{\!\vct{N}} \in \matr{L}^\infty ( \Omega_\rmf^1 ) $ as defined in \cref{eq:Ceff} is almost everywhere symmetric and uniformly elliptic, i.e., $\hat{\matr{C}}_\rmf^{\!\vct{N}} \vct{\xi} \cdot \vct{\xi} \gtrsim \abs{\vct{\xi}}^2$ for all $\vct{\xi} \in \mathbb{R}^n$. 
\end{lemma}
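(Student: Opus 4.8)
The plan is to read off all three properties from \Cref{asm:eps}~\ref{asm:C_eps} once it has been transferred to the limit tensor $\hat{\tsr{C}}_\rmf = \hat{\mathbb{C}}_\rmf$ appearing in \eqref{eq:Ceff}. Since $\hat{\tsr{C}}_\pmf^\epsilon \rightarrow \hat{\tsr{C}}_\pmf$ in $L^\infty(\Omega_\pmf^\odot)$, the limit $\hat{\tsr{C}}_\rmf$ still satisfies, almost everywhere, the symmetry relations $(\hat{\tsr{C}}_\rmf)_{ijkl} = (\hat{\tsr{C}}_\rmf)_{klij} = (\hat{\tsr{C}}_\rmf)_{jikl} = (\hat{\tsr{C}}_\rmf)_{ijlk}$ and the uniform ellipticity $\hat{\tsr{C}}_\rmf \matr{M} : \matr{M} \gtrsim \matr{M} : \matr{M}$ for all $\matr{M}\in\mathbb{R}^{n\times n}$, with the same implicit constant (the latter by passing to the limit in the $\epsilon$-uniform estimate for a countable dense set of matrices $\matr{M}$ and using continuity in $\matr{M}$). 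The membership $\hat{\matr{C}}_\rmf^{\!\vct{N}} \in \matr{L}^\infty(\Omega_\rmf^1)$ is then immediate from \eqref{eq:Ceff}, since every entry $(\hat{\matr{C}}_\rmf^{\!\vct{N}})_{ik}$ is a finite sum of entries of $\hat{\tsr{C}}_\rmf$ weighted by the bounded factors $N_j N_l$ with $\abs{N_j}\le 1$.

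For the symmetry of $\hat{\matr{C}}_\rmf^{\!\vct{N}}$ I would use the major symmetry directly: $(\hat{\matr{C}}_\rmf^{\!\vct{N}})_{ki} = \sum_{j,l}(\hat{\tsr{C}}_\rmf)_{kjil} N_j N_l = \sum_{j,l}(\hat{\tsr{C}}_\rmf)_{ilkj} N_j N_l$, and relabelling the summation indices $j \leftrightarrow l$ turns this into $\sum_{j,l}(\hat{\tsr{C}}_\rmf)_{ijkl} N_l N_j = (\hat{\matr{C}}_\rmf^{\!\vct{N}})_{ik}$, almost everywhere in $\Omega_\rmf^1$.

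For the ellipticity, the key observation is that the quadratic form of $\hat{\matr{C}}_\rmf^{\!\vct{N}}$ is precisely the action of the full tensor $\hat{\tsr{C}}_\rmf$ on the rank-one matrix $\matr{M} := \vct{\xi}\vct{N}^\rmt$: for $\vct{\xi}\in\mathbb{R}^n$,
\begin{align*}
\hat{\matr{C}}_\rmf^{\!\vct{N}} \vct{\xi}\cdot\vct{\xi} = \sum_{i,j,k,l}(\hat{\tsr{C}}_\rmf)_{ijkl}\,\xi_i N_j \xi_k N_l = \bigl(\hat{\tsr{C}}_\rmf \matr{M}\bigr):\matr{M} \gtrsim \matr{M}:\matr{M} = \abs{\vct{\xi}}^2 \abs{\vct{N}}^2 = \abs{\vct{\xi}}^2,
\end{align*}
where the inequality is the transferred uniform ellipticity and the last equality uses that $\vct{N}$ is a unit vector; the constant is independent of $\epsilon$. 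If one prefers to invoke ellipticity only on symmetric matrices, one replaces $\matr{M}$ by its symmetric part $\tfrac{1}{2}(\vct{\xi}\vct{N}^\rmt + \vct{N}\vct{\xi}^\rmt)$, which by the minor symmetries leaves $(\hat{\tsr{C}}_\rmf\matr{M}):\matr{M}$ unchanged and whose squared Frobenius norm equals $\tfrac{1}{2}(\abs{\vct{\xi}}^2 + (\vct{\xi}\cdot\vct{N})^2)\ge \tfrac{1}{2}\abs{\vct{\xi}}^2$, giving the same conclusion. The whole argument is elementary algebra; the only mild subtlety — and the point I would be most careful about — is exactly this interplay between the ellipticity hypothesis and the minor symmetries when testing $\hat{\tsr{C}}_\rmf$ against the non-symmetric matrix $\vct{\xi}\vct{N}^\rmt$.
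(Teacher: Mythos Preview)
Your proposal is correct and, in its symmetrized alternative, is exactly the paper's proof: the paper writes $\hat{\matr{C}}_\rmf^{\!\vct{N}}\vct{\xi}\cdot\vct{\xi} = (\hat{\tsr{C}}_\rmf)_{ijkl}\xi_i N_j \xi_k N_l = \tfrac14(\hat{\tsr{C}}_\rmf)_{ijkl}[\xi_iN_j+\xi_jN_i][\xi_kN_l+\xi_lN_k] \gtrsim \tfrac12(\abs{\vct{\xi}}^2+(\vct{\xi}\cdot\vct{N})^2)\ge \tfrac12\abs{\vct{\xi}}^2$, and declares symmetry trivial. Your extra remark on transferring the ellipticity and symmetry from $\hat{\tsr{C}}_\rmf^\epsilon$ to the $L^\infty$-limit $\hat{\tsr{C}}_\rmf$ is a point the paper leaves implicit, and your caution about testing against the non-symmetric rank-one matrix versus its symmetric part is well placed (the paper goes straight to the symmetric part).
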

\begin{proof}
For $\vct{\xi} = (\xi_i) \in \mathbb{R}^n$, we have
\begin{align*}
\hat{\matr{C}}_\rmf^{\!\vct{N}} \vct{\xi} \cdot \vct{\xi} &= ( \hat{\mathbb{C}}_\rmf )_{ijkl} \xi_i N_j \xi_k N_l = \tfrac{1}{4} ( \hat{\mathbb{C}}_\rmf )_{ijkl} [ \xi_iN_j + \xi_j N_i ] [ \xi_kN_l + \xi_lN_k ] \\
&\gtrsim   \tfrac{1}{4} [ \xi_iN_j + \xi_j N_i ] [ \xi_iN_j + \xi_j N_i ] = \tfrac{1}{2} [ \norm{\vct{\xi}}^2 \norm{\vct{N}}^2 + ( \vct{\xi} \cdot \vct{N} )^2 ] \ge \tfrac{1}{2} \norm{\vct{\xi}}^2 ,
\end{align*}
where we have used \Cref{asm:eps}~\ref{asm:C_eps}.
Symmetry is trivial.
\end{proof}

Next, using \Cref{prop:apriori} and \Cref{cor:conv}, we examine the convergence of the individual terms in \cref{eq:fulldim-weak-trafo-b} that are associated with the fracture domain~$\Omega_\rmf^1$. 
\begin{lemma} \label{lem:fracconv1}
Let the \Cref{asm:eps,asm:nu} hold and let $\nu_\tsr{C} \ge 1$.
\begin{subequations}
\begin{enumerate}[label=(\roman*)]
\item As $k\rightarrow \infty$, we have for all $\vct{v}_\rmf \in \vct{H}^1 (\Omega_\rmf^1 ) $ that
\begin{align}
\bigl\langle \epsilon_k^{\nu_\tsr{C} + 1 } \hat{\tsr{C}}_\rmf^{\epsilon_k} \matr{e}^{\epsilon_k} ( \hat{\vct{u}}_\rmf^{\epsilon_k} ) , \matr{e}^{\epsilon_k} ( \vct{v}_\rmf ) \bigr\rangle_{ \Omega_\rmf^1  } &\rightarrow \begin{cases} \bigl\langle \hat{\tsr{C}}_\rmf^{\!\vct{N}} \partial_{\!\vct{N}} \hat{\vct{u}}_\rmf^\#\! , \partial_{\!\vct{N}} \vct{v}_\rmf \bigr\rangle_{\Omega_\rmf^1  } &\text{if } \nu_\tsr{C} = 1 , \\ 
0 &\text{if } \nu_\tsr{C} > 1 , \end{cases} \\
\bigl\langle \epsilon_k^{\frac{\nu_\tsr{C} + 3}{2} } \hat{\tsr{C}}_\rmf^{\epsilon_k} \matr{e}^{\epsilon_k} ( \hat{\vct{u}}_\rmf^{\epsilon_k} ) , \matr{e}^{\epsilon_k} ( \vct{v}_\rmf ) \bigr\rangle_{ \Omega_\rmf^1  } &\rightarrow \bigl\langle \hat{\tsr{C}}_\rmf^{\!\vct{N}} \partial_{\!\vct{N}} \hat{\vct{u}}_\rmf^\#\! , \partial_{\!\vct{N}} \vct{v}_\rmf \bigr\rangle_{\Omega_\rmf^1  }  .
\end{align}
\item Let $\nu_\matr{K} \le 1$ or $\nu_\omega = -1$. Then, as $k\rightarrow\infty$, we have 
\begin{align}
\label{eq:fracconv1_c} \bigl\langle  \epsilon_k^{\matr{\nu}_{\!\matr{\alpha}} + \matr{I}} \, \hat{p}_\rmf^{\epsilon_k} \hat{\matr{\alpha}}_\rmf^{\epsilon_k} \! , \nabla^{\epsilon_k} \vct{v}_\rmf  \bigr\rangle_{\Omega_\rmf^1  }  &\rightarrow \begin{cases}
\bigl\langle \hat{p}_\rmf^\# \hat{\vct{\alpha}}_\rmf^\mathrm{eff} \! , \partial_{\!\vct{N}} \vct{v}_\rmf \bigr\rangle_{\Omega_\rmf^1  } &\text{if } \nu_\tsr{C} = 1, \\
0 &\text{if } \nu_\tsr{C} > 1 \end{cases} \\
\label{eq:fracconv1_d}  \bigl\langle \epsilon_k^\frac{1-\nu_\tsr{C}}{2} \epsilon_k^{\matr{\nu}_{\!\matr{\alpha}} + \matr{I}} \, \hat{p}_\rmf^{\epsilon_k} \hat{\matr{\alpha}}_\rmf^{\epsilon_k} \! , \nabla^{\epsilon_k} \vct{v}_\rmf  \bigr\rangle_{\Omega_\rmf^1  }  &\rightarrow 
\bigl\langle \hat{p}_\rmf^\# \hat{\vct{\alpha}}_\rmf^\mathrm{eff} \! , \partial_{\!\vct{N}} \vct{v}_\rmf \bigr\rangle_{\Omega_\rmf^1  } ,
\end{align}
for all $\vct{v}_\rmf \in \vct{H}^1 (\Omega_\rmf^1 ) $.
Further, if $\nu_\matr{K} > 1$ and $\nu_\omega > -1$, as well as $2 \nu_{\!\matr{\alpha}}^\smallpar >  \min \{ \nu_\omega - 1 , \nu_\matr{K} - 3 \} $ and $2 \nu_{\!\matr{\alpha}}^\smallperp >   \min \{ \nu_\omega + 1 , \nu_\matr{K} - 1 \} $, it is 
\begin{align}
\label{eq:fracconv1_e}  \bigl\langle  \epsilon_k^{\matr{\nu}_{\!\matr{\alpha}} + \matr{I}} \, \hat{p}_\rmf^{\epsilon_k} \hat{\matr{\alpha}}_\rmf^{\epsilon_k} \!  , \nabla^{\epsilon_k} \vct{v}_\rmf  \bigr\rangle_{\Omega_\rmf^1  } &\rightarrow 0 , \\
\label{eq:fracconv1_f}  \bigl\langle   \epsilon_k^\frac{1-\nu_\tsr{C}}{2} \epsilon_k^{\matr{\nu}_{\!\matr{\alpha}} + \matr{I}} \, \hat{p}_\rmf^{\epsilon_k} \hat{\matr{\alpha}}_\rmf^{\epsilon_k} \! , \nabla^{\epsilon_k} \vct{v}_\rmf  \bigr\rangle_{\Omega_\rmf^1  } &\rightarrow 0 .
\end{align}
\item As $\epsilon \rightarrow 0$, for all $\vct{v}_\rmf \in \vct{H}^1 (\Omega_\rmf^1 )$, it is 
\begin{align}
\bigl\langle \epsilon^{\nu_{\!\vct{f}} + 1 } \hat{\vct{f}}^{\epsilon}_\rmf , \vct{v}_\rmf \bigr\rangle_{\Omega_\rmf^1  } &\rightarrow \begin{cases} 
\bigl\langle \hat{\vct{f}}_\rmf^\mathrm{eff} \! , \vct{v}_\rmf \bigr\rangle_{\Omega_\rmf^1  } &\text{if } \nu_\tsr{C} = 1 , \\
0 &\text{if } \nu_\tsr{C} >  1 , \end{cases} \\
\bigl\langle \epsilon^\frac{2\nu_{\!\vct{f}} - \nu_\mathbb{C} +3}{2} \hat{\vct{f}}_\rmf^\epsilon , \vct{v}_\rmf \bigr\rangle_{\!\vct{\Lambda} }  &\rightarrow \bigl\langle \hat{\vct{f}}_\rmf^\mathrm{eff} \! , \vct{v}_\rmf \bigr\rangle_{\Omega_\rmf^1  } , \\
 \bigl\langle  \epsilon^{\matr{\nu}_{\!\matr{\alpha }} + \matr{I}}  \, \hat{G}_\rmf^{\epsilon} \hat{\matr{\alpha}}_\rmf^{\epsilon} , \nabla^{\epsilon} \vct{v}_\rmf \bigr\rangle_{\Omega_\rmf^1  } &\rightarrow \begin{cases} \bigl\langle \hat{G}_\rmf \hat{\vct{\alpha}}_\rmf^\mathrm{eff}\! , \partial_{\!\vct{N}} \vct{v}_\rmf \bigr\rangle_{\Omega_\rmf^1}  &\text{if } \nu_\tsr{C} = 1  , \\ 
0   &\text{if } \nu_\tsr{C} > 1 , \end{cases}  \\
\bigl\langle   \epsilon_k^\frac{1-\nu_\tsr{C}}{2} \epsilon^{\matr{\nu}_{\!\matr{\alpha }} + \matr{I}} \, \hat{G}_\rmf^{\epsilon} \hat{\matr{\alpha}}_\rmf^{\epsilon}   , \nabla^{\epsilon} \vct{v}_\rmf \bigr\rangle_{\Omega_\rmf^1  }&\rightarrow 
\bigl\langle \hat{G}_\rmf \hat{\vct{\alpha}}_\rmf^\mathrm{eff}\! , \partial_{\!\vct{N}} \vct{v}_\rmf \bigr\rangle_{\Omega_\rmf^1} 
\end{align}
\end{enumerate}
\end{subequations}
\end{lemma}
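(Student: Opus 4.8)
The plan is to prove \Cref{lem:fracconv1} term by term, always splitting each $\epsilon_k$-weighted inner product over $\Omega_\rmf^1$ into its tangential and normal contributions by means of $\nabla^{\epsilon_k}\vct{v}_\rmf=\nabla_{\!\smallpar}\vct{v}_\rmf+\epsilon_k^{-1}\nabla_{\!\vct{N}}\vct{v}_\rmf$ and the splitting $\epsilon_k^{\matr{\nu}_{\!\matr{\alpha}}+\matr{I}}=\epsilon_k^{\nu_{\!\matr{\alpha}}^\smallpar+1}(\matr{I}_n-\vct{N}\vct{N}^\rmt)+\epsilon_k^{\nu_{\!\matr{\alpha}}^\smallperp+1}\vct{N}\vct{N}^\rmt$, comparing the powers of $\epsilon_k$ with the a~priori bounds of \Cref{prop:apriori} and the weak(-$\ast$) convergences of \Cref{cor:conv}, and then passing to the limit in every product of a weakly convergent state with a strongly convergent coefficient, using the $L^\infty$-convergences of the model parameters from \Cref{asm:eps} and \Cref{lem:G}. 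All convergences below are understood, e.g., in $L^2(I)$ after pairing with a time test function.

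For item~(i) I would factor $\epsilon_k^{\nu_\tsr{C}+1}=(\epsilon_k^{\iota(\nu_\tsr{C})})^2$ so that one factor $\epsilon_k^{\iota(\nu_\tsr{C})}$ sits on each strain, and for the second display use $\tfrac{\nu_\tsr{C}+3}{2}=\iota(\nu_\tsr{C})+1$ to put $\epsilon_k^{\iota(\nu_\tsr{C})}$ on $\hat{\vct{u}}_\rmf^{\epsilon_k}$ and a single power $\epsilon_k$ on $\vct{v}_\rmf$. The crucial step is to show that $\epsilon_k^{\iota(\nu_\tsr{C})}\matr{e}^{\epsilon_k}(\hat{\vct{u}}_\rmf^{\epsilon_k})\rightharpoonup\tfrac12\bigl((\partial_{\!\vct{N}}\hat{\vct{u}}_\rmf^\#)\vct{N}^\rmt+\vct{N}(\partial_{\!\vct{N}}\hat{\vct{u}}_\rmf^\#)^\rmt\bigr)$ in $\matr{\Lambda}$: writing $\epsilon_k^{\iota(\nu_\tsr{C})}\matr{e}^{\epsilon_k}(\hat{\vct{u}}_\rmf^{\epsilon_k})=\operatorname{sym}\bigl(\epsilon_k^{\iota(\nu_\tsr{C})}\nabla_{\!\smallpar}\hat{\vct{u}}_\rmf^{\epsilon_k}+\epsilon_k^{\theta(\nu_\tsr{C})}\nabla_{\!\vct{N}}\hat{\vct{u}}_\rmf^{\epsilon_k}\bigr)$ (using $\iota(\nu_\tsr{C})-1=\theta(\nu_\tsr{C})$ for $\nu_\tsr{C}\ge1$), the normal part converges to $\nabla_{\!\vct{N}}\hat{\vct{u}}_\rmf^\#$ by \cref{eq:conv_u_c}, while the tangential part is bounded in $\matr{\Lambda}$ by \cref{eq:apriori_b} and, since $\epsilon_k^{\iota(\nu_\tsr{C})}\hat{\vct{u}}_\rmf^{\epsilon_k}=\epsilon_k\,\epsilon_k^{\theta(\nu_\tsr{C})}\hat{\vct{u}}_\rmf^{\epsilon_k}\to\vct{0}$ in $\vct{L}^2(\Omega_\rmf^1)$, equals $\nabla_{\!\smallpar}(\epsilon_k^{\iota(\nu_\tsr{C})}\hat{\vct{u}}_\rmf^{\epsilon_k})\to\matr{0}$ in the sense of distributions, hence converges weakly to $\matr{0}$. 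On the test side, $\epsilon_k^{\iota(\nu_\tsr{C})}\matr{e}^{\epsilon_k}(\vct{v}_\rmf)$ converges strongly to $\tfrac12\bigl((\partial_{\!\vct{N}}\vct{v}_\rmf)\vct{N}^\rmt+\vct{N}(\partial_{\!\vct{N}}\vct{v}_\rmf)^\rmt\bigr)$ if $\nu_\tsr{C}=1$ and to $\matr{0}$ if $\nu_\tsr{C}>1$, whereas $\epsilon_k\matr{e}^{\epsilon_k}(\vct{v}_\rmf)$ converges to that same limit unconditionally; combining this with $\hat{\tsr{C}}_\rmf^{\epsilon_k}\to\hat{\tsr{C}}_\rmf$ in $L^\infty$ and the major symmetry of $\hat{\tsr{C}}_\rmf$ produces the limit $\langle\hat{\tsr{C}}_\rmf\operatorname{sym}(\nabla_{\!\vct{N}}\hat{\vct{u}}_\rmf^\#),\operatorname{sym}(\nabla_{\!\vct{N}}\vct{v}_\rmf)\rangle_{\Omega_\rmf^1}$ (or $0$). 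By the minor symmetries of $\hat{\tsr{C}}_\rmf$ and the identity $\nabla_{\!\vct{N}}\vct{w}=(\partial_{\!\vct{N}}\vct{w})\vct{N}^\rmt$, this equals $\langle\hat{\matr{C}}_\rmf^{\!\vct{N}}\partial_{\!\vct{N}}\hat{\vct{u}}_\rmf^\#,\partial_{\!\vct{N}}\vct{v}_\rmf\rangle_{\Omega_\rmf^1}$ — precisely the computation behind \cref{eq:Ceff} and the proof of \Cref{lem:eff1}.

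For item~(ii) the tangential piece of $\langle\epsilon_k^{\matr{\nu}_{\!\matr{\alpha}}+\matr{I}}\hat{p}_\rmf^{\epsilon_k}\hat{\matr{\alpha}}_\rmf^{\epsilon_k},\nabla^{\epsilon_k}\vct{v}_\rmf\rangle$ carries the power $\epsilon_k^{\nu_{\!\matr{\alpha}}^\smallpar+1}$ and the normal piece reduces to $\epsilon_k^{\nu_{\!\matr{\alpha}}^\smallperp}$ times the scalar $\hat{p}_\rmf^{\epsilon_k}$ contracted with $\hat{\matr{\alpha}}_\rmf^{\epsilon_k}\vct{N}$ and $\partial_{\!\vct{N}}\vct{v}_\rmf$. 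If $\nu_\matr{K}\le1$ or $\nu_\omega=-1$, then $\hat{p}_\rmf^{\epsilon_k}\rightharpoonup\hat{p}_\rmf^\#$ in $L^2(I;L^2(\Omega_\rmf^1))$ by \cref{eq:conv_p_f}--\cref{eq:conv_p_i}, and since $\hat{\matr{\alpha}}_\rmf^{\epsilon_k}\to\hat{\matr{\alpha}}_\rmf$ strongly: the tangential piece vanishes because $\nu_{\!\matr{\alpha}}^\smallpar+1>0$ by \Cref{asm:nu}~\ref{asm:nualpha}; the normal piece converges to $\langle\hat{p}_\rmf^\#\hat{\vct{\alpha}}_\rmf^\mathrm{eff},\partial_{\!\vct{N}}\vct{v}_\rmf\rangle_{\Omega_\rmf^1}$ when $\nu_\tsr{C}=1$ and $2\nu_{\!\matr{\alpha}}^\smallperp=\nu_\tsr{C}-1$ (so $\epsilon_k^{\nu_{\!\matr{\alpha}}^\smallperp}=1$), and to $0$ otherwise (when $\nu_\tsr{C}>1$, or $2\nu_{\!\matr{\alpha}}^\smallperp>\nu_\tsr{C}-1$, then $\nu_{\!\matr{\alpha}}^\smallperp>0$), in agreement with the definition of $\hat{\vct{\alpha}}_\rmf^\mathrm{eff}$ in \cref{eq:eff1}; the extra factor $\epsilon_k^{(1-\nu_\tsr{C})/2}$ in \cref{eq:fracconv1_d} precisely compensates the $\nu_\tsr{C}>1$ loss (and, by \Cref{asm:nu}~\ref{asm:nualpha}, still kills the tangential piece), so the full effective limit is always obtained. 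If instead $\nu_\matr{K}>1$ and $\nu_\omega>-1$, only the scaled bound $\epsilon_k^{\min\{\iota(\nu_\omega),\theta(\nu_\matr{K})\}}\hat{p}_\rmf^{\epsilon_k}$ bounded in $L^\infty(I;L^2(\Omega_\rmf^1))$ is available (from \cref{eq:apriori_d}); rewriting $\hat{p}_\rmf^{\epsilon_k}$ in terms of this quantity leaves residual powers $\epsilon_k^{\nu_{\!\matr{\alpha}}^\smallpar+1-\min\{\iota(\nu_\omega),\theta(\nu_\matr{K})\}}$ and $\epsilon_k^{\nu_{\!\matr{\alpha}}^\smallperp-\min\{\iota(\nu_\omega),\theta(\nu_\matr{K})\}}$, which are strictly positive exactly under the two additional hypotheses $2\nu_{\!\matr{\alpha}}^\smallpar>\min\{\nu_\omega-1,\nu_\matr{K}-3\}$ and $2\nu_{\!\matr{\alpha}}^\smallperp>\min\{\nu_\omega+1,\nu_\matr{K}-1\}$, forcing both terms to $0$. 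Item~(iii) is the most routine: all coefficients converge strongly in $L^\infty$, $\hat{\vct{f}}_\rmf^{\epsilon}\to\hat{\vct{f}}_\rmf$ strongly in $H^1(I;\vct{\Lambda})$, and $\hat{G}_\rmf^{\epsilon}\to\hat{G}_\rmf$ in $L^\infty$ by \Cref{lem:G}, so after the same tangential/normal split one only has to check, for each $\epsilon$-power, whether it vanishes (the term persists, producing $\hat{\vct{f}}_\rmf^\mathrm{eff}$ resp.\ $\langle\hat{G}_\rmf\hat{\vct{\alpha}}_\rmf^\mathrm{eff},\partial_{\!\vct{N}}\vct{v}_\rmf\rangle$) or is strictly positive (the term vanishes); the rescaled displays with $\epsilon^{(2\nu_{\!\vct{f}}-\nu_\tsr{C}+3)/2}$ and $\epsilon^{(1-\nu_\tsr{C})/2}$ again merely normalize that power to zero.

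The step I expect to be the main obstacle is the identification of the weak limit in item~(i), i.e.\ showing that the tangential part $\epsilon_k^{\iota(\nu_\tsr{C})}\nabla_{\!\smallpar}\hat{\vct{u}}_\rmf^{\epsilon_k}$ — which the a~priori estimate controls only in norm — actually converges weakly to zero, so that the whole scaled strain limit is carried by $\partial_{\!\vct{N}}\hat{\vct{u}}_\rmf^\#$. The remaining difficulty is purely bookkeeping: tracking which compactness for $\hat{p}_\rmf^{\epsilon_k}$ is available in each $(\nu_\matr{K},\nu_\omega)$-regime and verifying in each case that the leftover power of $\epsilon_k$ has the correct sign.
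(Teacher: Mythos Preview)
Your proposal is correct and follows essentially the same route as the paper's own proof: the same tangential/normal splitting, the same a~priori bounds from \Cref{prop:apriori}, the same weak convergences from \Cref{cor:conv}, and the same $L^\infty$-convergence of coefficients from \Cref{asm:eps} and \Cref{lem:G}. The only organizational difference is in item~(i): the paper expands $\bigl\langle \epsilon_k^{\nu_\tsr{C}+1}\hat{\tsr{C}}_\rmf^{\epsilon_k}\matr{e}^{\epsilon_k}(\hat{\vct{u}}_\rmf^{\epsilon_k}),\matr{e}^{\epsilon_k}(\vct{v}_\rmf)\bigr\rangle$ into the four cross-terms $\matr{e}_\smallpar$--$\matr{e}_\smallpar$, $\matr{e}_\smallpar$--$\matr{e}_{\vct{N}}$, $\matr{e}_{\vct{N}}$--$\matr{e}_\smallpar$, $\matr{e}_{\vct{N}}$--$\matr{e}_{\vct{N}}$ and treats each separately, whereas you identify the weak limit of the full scaled strain $\epsilon_k^{\iota(\nu_\tsr{C})}\matr{e}^{\epsilon_k}(\hat{\vct{u}}_\rmf^{\epsilon_k})$ and pair it with the strong limit of the test strain. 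Both variants rest on exactly the same key step --- that $\epsilon_k^{\iota(\nu_\tsr{C})}\nabla_{\!\smallpar}\hat{\vct{u}}_\rmf^{\epsilon_k}\rightharpoonup\matr{0}$, which the paper derives from the boundedness in \cref{eq:apriori_a,eq:apriori_b} precisely as you outline --- so the difference is purely cosmetic.
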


\begin{proof} 
\begin{enumerate}[label=(\roman*)]
\item 
With the \cref{eq:apriori_a,eq:apriori_b} in \Cref{prop:apriori}, we have 
\begin{align*}
\epsilon^\frac{\nu_\tsr{C} - 1}{2} \norm[\big]{\hat{\vct{u}}_\rmf^\epsilon}_{H^1 ( I ; \vct{L}^2 (\Omega_\rmf^1 ) ) } + \epsilon^\frac{\nu_\tsr{C} + 1}{2} \norm[\big]{\nabla_{\!\smallpar} \hat{\vct{u}}_\rmf^\epsilon}_{H^1 ( I ; \matr{L}^2 (\Omega_\rmf^1 ) )} \lesssim 1 
\end{align*}
for $\nu_\tsr{C} \ge 1$ and hence
\begin{align} \label{eq:fracconv1_proof_1}
\epsilon_k^\frac{\nu_\tsr{C} + 1}{2} \nabla_{\!\smallpar} \hat{\vct{u}}_\rmf^{\epsilon_k} \rightharpoonup \matr{0} \quad\enspace\text{in } H^1 ( I ; \matr{L}^2 (\Omega_\rmf^1 ) ). 
\end{align}
Now, let $2\matr{e}_\smallpar (\vct{v} ) := \nabla_{\!\smallpar} \vct{v} + ( \nabla_{\!\smallpar} \vct{v})^\rmt$ and $2\matr{e}_\vct{N} (\vct{v} ) := \nabla_{\!\vct{N}} \vct{v} + ( \nabla_{\!\vct{N}} \vct{v})^\rmt$. 
We rewrite the term~$\bigl\langle \epsilon_k^{\nu_\mathbb{C} + 1 } \hat{\mathbb{C}}_\rmf^{\epsilon_k} \matr{e}^{\epsilon_k} ( \hat{\vct{u}}_\rmf^{\epsilon_k} ) , \matr{e}^{\epsilon_k} ( \vct{v}_\rmf ) \bigr\rangle_{\Omega_\rmf^1  }$ as
\begin{subequations}
\begin{equation}
\begin{multlined}[c][0.875\displaywidth]
 \epsilon_k^{\nu_{\tsr{C} } +1 } \bigl\langle \hat{\mathbb{C}}_\rmf^{\epsilon_k} \matr{e}_\smallpar ( \hat{\vct{u}}_\rmf^{\epsilon_k} ) , \matr{e}_\smallpar ( \vct{v}_\rmf ) \bigr\rangle_{\Omega_\rmf^1 } +  \epsilon_k^{\nu_{\tsr{C} }  } \bigl\langle \hat{\mathbb{C}}_\rmf^{\epsilon_k} \matr{e}_\smallpar ( \hat{\vct{u}}_\rmf^{\epsilon_k} ) , \matr{e}_{\vct{N}} ( \vct{v}_\rmf ) \bigr\rangle_{\Omega_\rmf^1  } \\
  +  \epsilon_k^{\nu_{\tsr{C} }  } \bigl\langle \hat{\mathbb{C}}_\rmf^{\epsilon_k} \matr{e}_{\vct{N}} ( \hat{\vct{u}}_\rmf^{\epsilon_k} ) , \matr{e}_\smallpar ( \vct{v}_\rmf ) \bigr\rangle_{\Omega_\rmf^1  } + \epsilon_k^{\nu_{\tsr{C} } - 1 } \bigl\langle \hat{\mathbb{C}}_\rmf^{\epsilon_k} \matr{e}_{\vct{N}} ( \hat{\vct{u}}_\rmf^{\epsilon_k} ) , \matr{e}_{\vct{N}} ( \vct{v}_\rmf ) \bigr\rangle_{\Omega_\rmf^1  }
\end{multlined}
\end{equation}
and decompose the term $\bigl\langle \epsilon_k^{\frac{\nu_\tsr{C} + 3}{2} } \hat{\tsr{C}}_\rmf^{\epsilon_k} \matr{e}^{\epsilon_k} ( \hat{\vct{u}}_\rmf^{\epsilon_k} ) , \matr{e}^{\epsilon_k} ( \vct{v}_\rmf ) \bigr\rangle_{ \Omega_\rmf^1  }$ into 
\begin{equation}
\begin{multlined}[c][0.875\displaywidth]
 \epsilon_k^{\frac{\nu_{\tsr{C} } +3}{2} } \bigl\langle \hat{\mathbb{C}}_\rmf^{\epsilon_k} \matr{e}_\smallpar ( \hat{\vct{u}}_\rmf^{\epsilon_k} ) , \matr{e}_\smallpar ( \vct{v}_\rmf ) \bigr\rangle_{\Omega_\rmf^1 } +  \epsilon_k^{\frac{\nu_{\tsr{C} } + 1}{2}  } \bigl\langle \hat{\mathbb{C}}_\rmf^{\epsilon_k} \matr{e}_\smallpar ( \hat{\vct{u}}_\rmf^{\epsilon_k} ) , \matr{e}_{\vct{N}} ( \vct{v}_\rmf ) \bigr\rangle_{\Omega_\rmf^1  } \\
  +  \epsilon_k^{\frac{\nu_{\tsr{C} } + 1}{2}  } \bigl\langle \hat{\mathbb{C}}_\rmf^{\epsilon_k} \matr{e}_{\vct{N}} ( \hat{\vct{u}}_\rmf^{\epsilon_k} ) , \matr{e}_\smallpar ( \vct{v}_\rmf ) \bigr\rangle_{\Omega_\rmf^1  } + \epsilon_k^{\frac{\nu_{\tsr{C} } - 1}{2} } \bigl\langle \hat{\mathbb{C}}_\rmf^{\epsilon_k} \matr{e}_{\vct{N}} ( \hat{\vct{u}}_\rmf^{\epsilon_k} ) , \matr{e}_{\vct{N}} ( \vct{v}_\rmf ) \bigr\rangle_{\Omega_\rmf^1  }.
\end{multlined}
\end{equation}%
 \label{eq:fracconv1_proof_2}%
\end{subequations}%
With \Cref{asm:eps}~\ref{asm:C_eps}, \cref{eq:apriori_b} in \Cref{prop:apriori}, and \cref{eq:conv_u_c} in \Cref{cor:conv}, we obtain
\begin{align*}
\epsilon_k^{\nu_{\tsr{C} } - 1 } \bigl\langle \hat{\mathbb{C}}_\rmf^{\epsilon_k} \matr{e}_{\vct{N}} ( \hat{\vct{u}}_\rmf^{\epsilon_k} ) , \matr{e}_{\vct{N}} ( \vct{v}_\rmf ) \bigr\rangle_{\Omega_\rmf^1  } &\rightarrow \begin{cases} 
\bigl\langle \hat{\mathbb{C}}_\rmf \matr{e}_{\vct{N}} ( \hat{\vct{u}}_\rmf^\# ) , \matr{e}_\vct{N} ( \vct{v}_\rmf ) \bigr\rangle_{\Omega_\rmf^1 } &\text{if } \nu_\tsr{C} = 1 , \\
0 & \text{if } \nu_\tsr{C} > 1 , \end{cases} \\
\epsilon_k^{\frac{\nu_{\tsr{C} } - 1}{2} } \bigl\langle \hat{\mathbb{C}}_\rmf^{\epsilon_k} \matr{e}_{\vct{N}} ( \hat{\vct{u}}_\rmf^{\epsilon_k} ) , \matr{e}_{\vct{N}} ( \vct{v}_\rmf ) \bigr\rangle_{\Omega_\rmf^1  } &\rightarrow \bigl\langle
\hat{\tsr{C}}_\rmf \matr{e}_{\!\vct{N}} (\hat{\vct{u}}_\rmf^\# ) , \matr{e}_{\!\vct{N}} (\vct{v}_\rmf ) \bigr\rangle_{\Omega_\rmf^1  }
\end{align*}
as $k\rightarrow \infty$.
Besides, using \cref{eq:apriori_b} in \Cref{prop:apriori} and \cref{eq:fracconv1_proof_1}, we find that the other terms in \cref{eq:fracconv1_proof_2} vanish as $k\rightarrow \infty$.
Thus, the result follows with \Cref{asm:eps}~\ref{asm:C_eps}.
\item We write
\begin{align*}
\bigl\langle \epsilon_k^{\matr{\nu}_{\!\matr{\alpha}} + \matr{I}} \, \hat{p}_\rmf^{\epsilon_k} \hat{\matr{\alpha}}_\rmf^{\epsilon_k} \! , \nabla^{\epsilon_k} \vct{v}_\rmf  \bigr\rangle_{\Omega_\rmf^1  }  = \bigl\langle \epsilon_k^{\nu_{\!\matr{\alpha}}^\smallpar + 1} \hat{p}_\rmf^{\epsilon_k} \hat{\matr{\alpha}}_\rmf^{\epsilon_k} \! , \nablapar \vct{v}_\rmf  \bigr\rangle_{\Omega_\rmf^1  }  + \bigl\langle \epsilon_k^{\nu_{\!\matr{\alpha}}^\smallperp }\hat{p}_\rmf^{\epsilon_k} \hat{\matr{\alpha}}_\rmf^{\epsilon_k} \! , \nablaperp \vct{v}_\rmf  \bigr\rangle_{\Omega_\rmf^1  } 
\end{align*}
for the \cref{eq:fracconv1_c,eq:fracconv1_e} and analogously for the \cref{eq:fracconv1_d,eq:fracconv1_f}.
Then, the assertion follows with \cref{eq:apriori_d} in \Cref{prop:apriori}, the \cref{eq:conv_p_f,eq:conv_p_h} in \Cref{cor:conv}, and \Cref{asm:eps}~\ref{asm:alpha_eps}.
\item The result follows directly from \Cref{asm:eps}~\ref{asm:alpha_eps} and \ref{asm:f_eps}, \Cref{lem:G}, and \Cref{asm:nu}~\ref{asm:nuf} and~\ref{asm:nualpha}.  \qedhere
\end{enumerate}
\end{proof}

\subsubsection{Limit Models with \texorpdfstring{$\nu_\tsr{C} = 1$}{nuC=1}} \label{sec:4.2.1}
For $\nu_\tsr{C} = 1$, we obtain a limit problem characterized by a jump of the displacement vector but (except for source terms) continuous normal stress  across the fracture interface~$\gamma$. 
As described in \cref{sec:4.2.1.1}, this first leads to a limit problem within the full-dimensional fracture domain~$\Omega_\rmf^1$, where the displacement vector~$\vct{u}_\rmf$ satisfies an ODE in the normal direction. 
In \cref{sec:4.2.1.2}, we then succeed to reformulate the full-dimensional limit problem from \cref{sec:4.2.1.1} as a discrete fracture model. In this reduced setting, the fracture displacement is eliminated, and the displacement jump across the interface~$\gamma$ scales with an effective normal elasticity.
This reduction is only advantageous if the corresponding limit problem for the flow equation~\eqref{eq:fulldim-weak-trafo-c} (see \cref{sec:4.3}) is independent of the displacement vector~$\vct{u}_\rmf$ within the full-dimensional fracture domain~$\Omega_\rmf^1$.

\paragraph{Full-Dimensional Limit Model} 
\label{sec:4.2.1.1}
The strong formulation of the limit problem for $\nu_\tsr{C} = 1$ with full-dimensional fracture reads as follows. 

Find $p_\pm \colon \Omega_\pm^0 \times I \rightarrow \mathbb{R}$ and $\vct{u}_\pmf \colon \Omega_\pmf^\odot \times I \rightarrow \mathbb{R}^n$ such that
\begin{subequations}
\begin{alignat}{2}
-\partial_{\!\vct{N} } \hat{\vct{\sigma}}_\rmf \bigl( \vct{u}_\rmf , p_\rmf \bigr) &= \hat{\vct{f}}_\rmf^\mathrm{eff} \quad\enspace &&\text{in } \Omega_\rmf^1 \times I , \\
\vct{u}_\pm &= \Pi_\pm \vct{u}_\rmf \quad\enspace &&\text{on } \gamma \times I , \\
\hat{\matr{\sigma}}_\pm \bigl( \vct{u}_\pm , p_\pm \bigr) \vct{n}_\pm &= \Pi_\pm \hat{\vct{\sigma}}_\rmf \bigl( \vct{u}_\rmf , p_\rmf \bigr)  \quad &&\text{on } \gamma \times I ,  \\
\intertext{and the bulk limit problem~\eqref{eq:bulklimit} is satisfied. Besides, the initial displacement vector~$\vct{u}_{0,\pmf} := \vct{u}_\pmf (\cdot , 0)$ satisfies}
-\partial_{\!\vct{N} } \hat{\vct{\sigma}}_\rmf \bigl(\vct{u}_{0, \rmf} , \hat{p}_{0,\rmf} \bigr) &= \hat{\vct{f}}_\rmf^\mathrm{eff} (\cdot, 0) \quad\enspace &&\text{in } \Omega_\rmf^1 \times I , \\
\hat{\matr{\sigma}}_\rmf \bigl( \vct{u}_{0,\pm} , \hat{p}_{0,\pm} \bigr) \vct{n}_\pm &= \Pi_\pm \hat{\vct{\sigma}}_\rmf \bigl(\vct{u}_{0, \rmf} , \hat{p}_{0, \rmf}\bigr) \quad &&\text{on } \gamma \times I . 
\end{alignat}%
Here, the total fracture stress~$\hat{\matr{\sigma}}_\rmf$ is given by
\begin{align}
\label{eq:mechlimit_nuC1_f}\hat{\matr{\sigma}}_\rmf ( \vct{u}_\rmf , p_\rmf ) &:= \begin{cases} \hat{\matr{C}}_\rmf^{\!\vct{N}} \partial_{\!\vct{N}} \vct{u}_\rmf - ( p_\rmf + \hat{G}_\rmf ) \hat{\vct{\alpha}}_\rmf^\mathrm{eff} &\text{if } \nu_\matr{K} \le 1 \text{ or } \nu_\omega = -1 , \\
\hat{\matr{C}}_\rmf^{\!\vct{N}} \partial_{\!\vct{N}} \vct{u}_\rmf &\text{if } \nu_\matr{K} > 1 \text{ and } \nu_\omega > -1 .
\end{cases}
\end{align}%
\label{eq:mechlimit_nuC1}%
\end{subequations}%
Besides, $\Pi_\pm \colon L^2 (\gamma_\pm^1 ) \rightarrow L^2 (\gamma)$ denotes the projection from \cref{eq:proj}.

The pressure heads~$p_\pm$ and displacement vector~$\vct{u}_\pmf$ in \cref{eq:mechlimit_nuC1} correspond to the limit functions~$\hat{p}_\pm^\#$ and $\hat{\vct{u}}_\pmf^\#$ from \Cref{cor:conv}. 
The limit problem~\eqref{eq:mechlimit_nuC1} is to be completed by conditions or equations for the pressure head on the interface~$\gamma$ or the fracture domain~$\Omega_\rmf^1$ (see \Cref{sec:4.3}).
A weak formulation of \cref{eq:mechlimit_nuC1} is given by the following problem.
We recall the definition of the space~$\vct{V}^\#$ in \Cref{cor:conv}.

Find $\vct{u}_\pmf \in H^1 ( I ; \vct{V}^\# ) $ such that 
\begin{subequations}
\begin{align}
\label{eq:weak_mechlimit_C1_a}
\hat{\mathcal{A}}_\rmb^0 ( \vct{u}_\pm , \vct{v}_\pm) + \bigl\langle \hat{\vct{\sigma}}_\rmf \bigl( \vct{u}_\rmf , p_\rmf \bigr) , \partial_\vct{N}  \vct{v}_\rmf \bigr\rangle_{\Omega_\rmf^1 }  - \hat{\mathcal{B}}_\rmb^0 ( p_\pm , \vct{v}_\pm ) &= \hat{\mathcal{L}}^0_\rmb (\vct{v}_\pm ) + \bigl\langle \hat{\vct{f}}_\rmf^\mathrm{eff} , \vct{v}_\rmf \bigr\rangle_{\Omega_\rmf^1  }  
\end{align}
holds for all $\vct{v}_\pmf \in \vct{V}^\#$ with the total stress~$\hat{\matr{\sigma}}_\rmf$ given by \cref{eq:mechlimit_nuC1_f}.
In addition, for all $\vct{v}_\pmf \in \vct{V}^\#$, the initial displacement vector~$\vct{u}_{0,\pmf} := \vct{u}_\pmf ( \cdot , 0)$ satisfies
\begin{equation}
\label{eq:weak_mechlimit_C1_b}
\begin{multlined}[c][0.875\displaywidth]
\hat{\mathcal{A}}_\rmb^0 (\vct{u}_{0,\pm} , \vct{v}_\pm) + \bigl\langle \hat{\vct{\sigma}}_\rmf \bigl( \vct{u}_{0,\rmf} , \hat{p}_{0,\rmf} \bigr) , \partial_\vct{N}  \vct{v}_\rmf \bigr\rangle_{\Omega_\rmf^1 } - \hat{\mathcal{B}}_\rmb^0 ( \hat{p}_{0,\pm} , \vct{v}_\pm ) \\
 = \hat{\mathcal{L}}^0_\rmb (\vct{v}_\pm ) \bigr\vert_{t=0} + \bigl\langle \hat{\vct{f}}_\rmf^\mathrm{eff} ( 0) , \vct{v}_\rmf \bigr\rangle_{\Omega_\rmf^1  }.
\end{multlined}%
\end{equation}%
\label{eq:weak_mechlimit_C1}%
\end{subequations}%
\indent The weak formulation~\eqref{eq:weak_mechlimit_C1} is to be supplemented with a flow equation as presented in \Cref{sec:4.3}.

\paragraph{Discrete Fracture Limit Model}
\label{sec:4.2.1.2}
Starting from the two-scale limit problem in \cref{eq:weak_mechlimit_C1}, we subsequently present a corresponding discrete fracture limit problem for the bulk displacement only.  
We recall \Cref{def:average} and define the effective interfacial normal elasticity tensor~$\hat{\matr{C}}_\gamma^{\!\vct{N}} \in \matr{L}^\infty (\gamma ) $ and the effective source term~$\smash{\hat{\vct{F}}_\gamma^\eff} \in \vct{L}^2_a (\gamma )$ by
\begin{subequations}
\begin{align}
\hat{\matr{C}}_\gamma^{\!\vct{N}} (\vct{y} ) &:= \Bigl( \mathfrak{A}_{\!\vct{N}} \bigl( (\hat{\matr{C}}_\rmf^{\!\vct{N}})^{\! -1} \bigr) (\vct{y}) \Bigr)^{-1} , \\
\hat{\vct{F}}_\gamma^\mathrm{eff} (\vct{y} ) &:= \frac{\hat{\matr{C}_\gamma^{\!\vct{N}}} (\vct{y})}{a ( \vct{y})}\int_{-a_-(\vct{y})}^{a_+(\vct{y} ) } \biggl[ \int_{-a_-(\vct{y} )}^s \Bigl( \hat{\matr{C}}_\rmf^{\!\vct{N}} (\vct{y} + \bar{s} \vct{N} )  \Bigr)^{\!-1} \rmd \bar{s} \biggr] \, \hat{\vct{f}}_\rmf^\mathrm{eff} (\vct{y} + s \vct{N} ) \,\rmd s .
\end{align}
\end{subequations}
Further, for functions $h_\pm$ on~$\Omega_\pm^0$ with well-defined trace on~$\gamma$, let
\begin{align}
\label{eq:jump}
\jump{h }_\gamma &:= h_+ \bigr\vert_\gamma - h_-\bigr\vert_\gamma .
\end{align}
denote the jump operator across the interface~$\gamma$.
The reduced limit problem now has the following strong formulation with the pressure heads~$p_\pm$ and displacement vector~$\vct{u}_\pm$ corresponding to the limit functions~$\hat{p}_\pm^\#$ and $\hat{\vct{u}}_\pm^\#$ from \Cref{cor:conv}.

Find $p_\pm \colon \Omega_\pm^0 \times I \rightarrow \mathbb{R}$ and $\vct{u}_\pm \colon \Omega_\pm^0 \times I \rightarrow \mathbb{R}^n$ such that 
\begin{subequations}
\begin{alignat}{2}
\jump{\hat{\matr{\sigma}} ( \vct{u}) \vct{N}}_\gamma + a \mathfrak{A}_{\!\vct{N}} \hat{\vct{f}}_\rmf^\mathrm{eff} &= 0 \quad\enspace &&\text{on } \gamma \times I , \\
\hat{\matr{\sigma}}_+ ( \vct{u}_+)  \vct{N} +  \hat{\vct{F}}_\rmf^\mathrm{eff}&= \hat{\matr{\sigma}}_\gamma ( \jump{\vct{u}}_\gamma , p_\rmf )  \quad\enspace &&\text{on } \gamma \times I ,
\intertext{and the bulk limit problem~\eqref{eq:bulklimit} is satisfied. Besides, the initial displacement vector~$\vct{u}_{0,\pm} := \vct{u}_\pm (\cdot , 0)$ satisfies}
\jump{\hat{\matr{\sigma}} ( \vct{u}_0) \vct{N}}_\gamma + a \mathfrak{A}_{\!\vct{N}} \hat{\vct{f}}_\rmf^\mathrm{eff} (\cdot ,0 ) &= 0 \quad\enspace &&\text{on } \gamma \times I , \\
\hat{\matr{\sigma}}_+ ( \vct{u}_{0,+})  \vct{N} +  \hat{\vct{F}}_\rmf^\mathrm{eff} (\cdot , 0) &= \hat{\matr{\sigma}}_\gamma ( \jump{\vct{u}_0}_\gamma , \hat{p}_{0,\rmf} )  \quad\enspace &&\text{on } \gamma \times I  .
\end{alignat}%
Here, the effective total interface stress~$\hat{\matr{\sigma}}_\gamma$ is given by
\begin{equation}
\begin{multlined}[c][0.875\displaywidth]
\hat{\matr{\sigma}}_\gamma \bigl( \jump{\vct{u}}_\gamma , p_\rmf \bigr) := \\[2pt]
\begin{cases}
\tfrac{1}{a}  \hat{\matr{C}}_\gamma^{\!\vct{N}} \bigl( \jump{\vct{u}}_\gamma - a \mathfrak{A}_{\!\vct{N}} \bigl( \bigl[ p_\rmf - \hat{G}_\rmf \bigr]  \bigl[ \hat{\matr{C}}_\rmf^{\!\vct{N}} \bigr]^{-1} \hat{\vct{\alpha}}_\rmf^\mathrm{eff}  \bigr) \bigr) &\text{if } \nu_\matr{K} \le 1 \text{ or } \nu_\omega = -1 , \\
\tfrac{1}{a} \hat{\matr{C}}_\gamma^{\!\vct{N}} \jump{\vct{u}}_\gamma &\text{if } \nu_\matr{K} > 1 \text{ and } \nu_\omega > -1 . \end{cases}%
\end{multlined}%
\label{eq:mechlimit_nuC1_df_c}
\end{equation}%
\label{eq:mechlimit_nuC1_df}%
\end{subequations}%

Next, with the space~$\vct{V}_1$ defined by
\begin{align}
\vct{V}_1 &:= \bigl\{ \vct{v}_\pm \in \vct{H}^1_0 \bigl( \Omega_\pm^0 \bigr) \ \big\vert\  \jump{\vct{v}}_\gamma \in L^2_{a^{-1}} (\gamma ) \bigr\}
\end{align}
a weak formulation of \cref{eq:mechlimit_nuC1_df} is given by the following problem.

Find $\vct{u}_\pm \in H^1 ( I ; \vct{V}_1 ) $ such that
\begin{subequations}
\begin{equation}
\label{eq:weak_mechlimit_discrete_C1_a}
\begin{multlined}[c][0.875\displaywidth]
\hat{\mathcal{A}}_\rmb^0  ( \vct{u}_\pm , \vct{v}_\pm) + \bigl\langle \hat{\vct{\sigma}}_\gamma \bigl(\jump{ \vct{u}}_\gamma , p_\rmf \bigr) , \jump{\vct{v}}_\gamma \bigr\rangle_{\! \gamma }  - \hat{\mathcal{B}}_\rmb^0 ( p_\pm , \vct{v}_\pm ) \\
= \hat{\mathcal{L}}^0_\rmb (\vct{v}_\pm ) + \bigl\langle a \mathfrak{A}_{\!\vct{N}} \hat{\vct{f}}_\rmf^\mathrm{eff} , \vct{v}_- \bigr\rangle_{\! \gamma  } + \bigl\langle  \hat{\vct{F}}_\gamma^\mathrm{eff} , \jump{\vct{v}}_\gamma \bigr\rangle_{\!\gamma }   
\end{multlined}
\end{equation}
holds for all $\vct{v}_\pm \in \vct{V}_1$ with the effective total stress~$\hat{\matr{\sigma}}_\gamma$ given by \cref{eq:mechlimit_nuC1_df_c}.
In addition, for all $\vct{v}_\pm \in \vct{V}_1$, the initial displacement vector~$\vct{u}_{0,\pm} := \vct{u}_\pm ( \cdot , 0)$ satisfies
\begin{equation}
\label{eq:weak_mechlimit_discrete_C1_b}
\begin{multlined}[c][0.875\displaywidth]
\hat{\mathcal{A}}_\rmb^0 ( \vct{u}_{0,\pm} , \vct{v}_\pm) + \bigl\langle \hat{\vct{\sigma}}_\gamma \bigl(\jump{ \vct{u}_0}_\gamma , p_{0,\rmf} \bigr) , \jump{\vct{v}}_\gamma \bigr\rangle_{\! \gamma } - \hat{\mathcal{B}}_\rmb^0 ( p_{0,\pm} , \vct{v}_\pm ) \\
 = \hat{\mathcal{L}}^0_\rmb (\vct{v}_\pm ) \bigr\vert_{t=0} +  \bigl\langle a \mathfrak{A}_{\!\vct{N}} \hat{\vct{f}}_\rmf^\mathrm{eff} (0) , \vct{v}_- \bigr\rangle_{\! \gamma  } + \bigl\langle \hat{\vct{F}}_\gamma^\mathrm{eff} (0) , \jump{\vct{v}}_\gamma \bigr\rangle_{\! \gamma }  .
\end{multlined}
\end{equation}

\label{eq:weak_mechlimit_discrete_C1}%
\end{subequations}%
\begin{remark}
\begin{enumerate}[label=(\roman*)]
\item Solving the reduced limit problem~\eqref{eq:weak_mechlimit_discrete_C1} instead of the two-scale limit problem~\eqref{eq:weak_mechlimit_C1} is only desirable if the corresponding flow limit problem (see \Cref{sec:4.3}) does not require the displacement vector~$\vct{u}_\rmf$ in the full-dimensional fracture domain~$\Omega_\rmf^1$. 
\item The reduced limit problem \eqref{eq:weak_mechlimit_discrete_C1} may still depend on the full-di\-men\-sio\-nal fracture pressure head~$p_\rmf$.
In this case, a complete discrete fracture model for both the mechanics and the flow equation is only available if the fracture pressure head~$p_\rmf$ is constant in normal direction, i.e., if $\nu_\matr{K} < 1$ (see \Cref{cor:const}).
\item The reduced limit problem~\eqref{eq:weak_mechlimit_discrete_C1} is equivalent to the two-scale limit problem~\eqref{eq:weak_mechlimit_C1} except for the elimination of the fracture displacement vector.
When solving the reduced limit problem~\eqref{eq:weak_mechlimit_discrete_C1}, we can solve the full-di\-men\-sio\-nal limit problem~\eqref{eq:weak_mechlimit_C1} a posteriori for the fracture displacement~$\vct{u}_\rmf$ with the pressure heads~$p_\pmf$ and bulk displacements~$\vct{u}_\pm$ already known and given as boundary conditions.
\end{enumerate}
\end{remark}

The following result now establishes a relationship between the full-dimensional limit problem~\eqref{eq:weak_mechlimit_C1} and the reduced limit problem~\eqref{eq:weak_mechlimit_discrete_C1}.
\begin{theorem} 
Let $\vct{u}_\pmf \in \vct{V}^\#$ be a solution of the full-dimensional limit problem~\eqref{eq:weak_mechlimit_C1}.
Then, $\vct{u}_\pm \in \vct{V}_1$ solves the discrete fracture limit problem~\eqref{eq:weak_mechlimit_discrete_C1}.
\end{theorem}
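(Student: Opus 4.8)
The plan is to eliminate the full-dimensional fracture displacement $\vct{u}_\rmf$ from the two-scale problem \eqref{eq:weak_mechlimit_C1} by solving the (one-dimensional) fracture problem explicitly along each normal fibre $\{\vct{y}+s\vct{N} : -a_-(\vct{y})<s<a_+(\vct{y})\}$, $\vct{y}\in\gamma$, and inserting the outcome into the remaining bulk--interface equation. As a first step I would check that $\vct{u}_\pm\in\vct{V}_1$: since $\vct{u}_\pmf\in\vct{V}^\#$, the bulk parts lie in $\vct{H}^1_0(\Omega_\pm^0)$, and by \Cref{lem:trace_H1N} together with the trace coupling built into $\vct{V}^\#$ one has $\jump{\vct{u}}_\gamma=\mathfrak{T}_+\vct{u}_\rmf-\mathfrak{T}_-\vct{u}_\rmf=\int_{-a_-}^{a_+}\partial_{\!\vct{N}}\vct{u}_\rmf\,\rmd s$ for a.e.\ $\vct{y}\in\gamma$; a Cauchy--Schwarz estimate then gives $\norm{\jump{\vct{u}}_\gamma}_{L^2_{a^{-1}}(\gamma)}^2\le\norm{\nabla_{\!\vct{N}}\vct{u}_\rmf}_{\vct{L}^2(\Omega_\rmf^1)}^2<\infty$, hence $\vct{u}_\pm\in\vct{V}_1$ (and likewise $\jump{\vct{v}}_\gamma\in L^2_{a^{-1}}(\gamma)$ for every $\vct{v}_\pm\in\vct{V}_1$).

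The argument then rests on two ingredients. First, the \emph{fracture ODE}: testing \eqref{eq:weak_mechlimit_C1_a} with $\vct{v}_\pmf\in\vct{V}^\#$ whose fracture component has vanishing trace on $\gamma_\pm^1$ (so $\vct{v}_\pm=\vct{0}$) gives $\langle\hat{\vct{\sigma}}_\rmf(\vct{u}_\rmf,p_\rmf),\partial_{\!\vct{N}}\vct{w}\rangle_{\Omega_\rmf^1}=\langle\hat{\vct{f}}_\rmf^\mathrm{eff},\vct{w}\rangle_{\Omega_\rmf^1}$ for all such $\vct{w}$; since $\hat{\vct{\sigma}}_\rmf(\vct{u}_\rmf,p_\rmf)\in\vct{L}^2(\Omega_\rmf^1)$ by \eqref{eq:mechlimit_nuC1_f}, \Cref{lem:eff1}, the regularity of $\vct{u}_\rmf,p_\rmf$ from \Cref{cor:conv}, and boundedness of the data, this identifies $-\partial_{\!\vct{N}}\hat{\vct{\sigma}}_\rmf(\vct{u}_\rmf,p_\rmf)=\hat{\vct{f}}_\rmf^\mathrm{eff}$ fibrewise, so that $\hat{\vct{\sigma}}_\rmf$ is absolutely continuous along a.e.\ fibre with one-sided traces $\mathfrak{T}_\pm\hat{\vct{\sigma}}_\rmf$ and $\mathfrak{T}_+\hat{\vct{\sigma}}_\rmf-\mathfrak{T}_-\hat{\vct{\sigma}}_\rmf=-a\,\mathfrak{A}_{\!\vct{N}}\hat{\vct{f}}_\rmf^\mathrm{eff}$. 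Second, \emph{lifting}: given $\vct{v}_\pm\in\vct{V}_1$, extend it to $\vct{v}_\pmf\in\vct{V}^\#$ by taking $\vct{v}_\rmf$ affine in $s$ between the transported bulk traces, so that $\partial_{\!\vct{N}}\vct{v}_\rmf=a^{-1}\jump{\vct{v}}_\gamma$ and $\mathfrak{T}_\pm\vct{v}_\rmf=\vct{v}_\pm\vert_\gamma$ (the trace-matching and zero boundary values guarantee $\vct{v}_\pmf\in\vct{V}^\#$).

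Inserting this $\vct{v}_\pmf$ into \eqref{eq:weak_mechlimit_C1_a} and integrating by parts in the normal direction inside $\Omega_\rmf^1$ via the ODE, the fracture term becomes $\langle\hat{\vct{\sigma}}_\rmf,\partial_{\!\vct{N}}\vct{v}_\rmf\rangle_{\Omega_\rmf^1}=\langle\mathfrak{T}_+\hat{\vct{\sigma}}_\rmf,\vct{v}_+\rangle_\gamma-\langle\mathfrak{T}_-\hat{\vct{\sigma}}_\rmf,\vct{v}_-\rangle_\gamma+\langle\hat{\vct{f}}_\rmf^\mathrm{eff},\vct{v}_\rmf\rangle_{\Omega_\rmf^1}$; the last term cancels the $\hat{\vct{f}}_\rmf^\mathrm{eff}$ term on the right of \eqref{eq:weak_mechlimit_C1_a}, and the splitting $\mathfrak{T}_+\hat{\vct{\sigma}}_\rmf\cdot\vct{v}_+-\mathfrak{T}_-\hat{\vct{\sigma}}_\rmf\cdot\vct{v}_-=\mathfrak{T}_+\hat{\vct{\sigma}}_\rmf\cdot\jump{\vct{v}}_\gamma+(\mathfrak{T}_+\hat{\vct{\sigma}}_\rmf-\mathfrak{T}_-\hat{\vct{\sigma}}_\rmf)\cdot\vct{v}_-$ produces the interface source $\langle a\,\mathfrak{A}_{\!\vct{N}}\hat{\vct{f}}_\rmf^\mathrm{eff},\vct{v}_-\rangle_\gamma$. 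Finally I would identify $\mathfrak{T}_+\hat{\vct{\sigma}}_\rmf$: integrating the constitutive law $\partial_{\!\vct{N}}\vct{u}_\rmf=(\hat{\matr{C}}_\rmf^{\!\vct{N}})^{-1}[\hat{\vct{\sigma}}_\rmf+(p_\rmf+\hat{G}_\rmf)\hat{\vct{\alpha}}_\rmf^\mathrm{eff}]$ over a fibre, using $\hat{\vct{\sigma}}_\rmf(\vct{y}+s\vct{N})=\mathfrak{T}_+\hat{\vct{\sigma}}_\rmf+\int_s^{a_+}\hat{\vct{f}}_\rmf^\mathrm{eff}\,\rmd\bar{s}$, the identity $\int_{-a_-}^{a_+}(\hat{\matr{C}}_\rmf^{\!\vct{N}})^{-1}\,\rmd s=a\,\mathfrak{A}_{\!\vct{N}}\bigl((\hat{\matr{C}}_\rmf^{\!\vct{N}})^{-1}\bigr)=a\,(\hat{\matr{C}}_\gamma^{\!\vct{N}})^{-1}$, and Fubini to recognise the resulting double integral of $\hat{\vct{f}}_\rmf^\mathrm{eff}$ as $a\,(\hat{\matr{C}}_\gamma^{\!\vct{N}})^{-1}\hat{\vct{F}}_\gamma^\mathrm{eff}$ and the pressure integral as $a\,\mathfrak{A}_{\!\vct{N}}\bigl((p_\rmf+\hat{G}_\rmf)(\hat{\matr{C}}_\rmf^{\!\vct{N}})^{-1}\hat{\vct{\alpha}}_\rmf^\mathrm{eff}\bigr)$, one solves for $\mathfrak{T}_+\hat{\vct{\sigma}}_\rmf=\hat{\matr{\sigma}}_\gamma(\jump{\vct{u}}_\gamma,p_\rmf)-\hat{\vct{F}}_\gamma^\mathrm{eff}$ with $\hat{\matr{\sigma}}_\gamma$ exactly as in \eqref{eq:mechlimit_nuC1_df_c} (the uncoupled branch $\nu_\matr{K}>1$, $\nu_\omega>-1$ is the special case in which the $p_\rmf$-term of \eqref{eq:mechlimit_nuC1_f} is absent). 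Collecting the terms turns \eqref{eq:weak_mechlimit_C1_a} into \eqref{eq:weak_mechlimit_discrete_C1_a}, and applying the same manipulation to \eqref{eq:weak_mechlimit_C1_b} gives \eqref{eq:weak_mechlimit_discrete_C1_b}.

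The main obstacle is the fibrewise analysis of the fracture equation in the $\vct{H}^1_{\!\vct{N}}(\Omega_\rmf^1)$-setting: one must justify rigorously that $\hat{\vct{\sigma}}_\rmf$ possesses normal traces, that the normal-direction integration by parts and the fundamental theorem of calculus are valid along almost every fibre, and then carry out the Fubini bookkeeping so that the double integral of $\hat{\vct{f}}_\rmf^\mathrm{eff}$ and the weighted average of $(p_\rmf+\hat{G}_\rmf)(\hat{\matr{C}}_\rmf^{\!\vct{N}})^{-1}\hat{\vct{\alpha}}_\rmf^\mathrm{eff}$ reproduce the definitions of $\hat{\vct{F}}_\gamma^\mathrm{eff}$ and $\hat{\matr{\sigma}}_\gamma$ on the nose. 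The rest — membership of the affine lift in $\vct{V}^\#$, measurability of the fibrewise operations, and the treatment of the initial-time equation — is routine.
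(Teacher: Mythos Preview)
Your argument is correct, but the paper takes a noticeably shorter route. Instead of first deriving the fracture ODE and then inserting an \emph{affine} lift, the paper lifts $\vct{v}_\pm\in\vct{V}_1$ to $\vct{V}^\#$ via
\[
\vct{v}_\rmf(\vct{y}+s\vct{N})\;:=\;\vct{v}_-\big|_\gamma(\vct{y})+\frac{1}{a(\vct{y})}\int_{-a_-(\vct{y})}^{s}\bigl[\hat{\matr{C}}_\rmf^{\!\vct{N}}(\vct{y}+\bar{s}\vct{N})\bigr]^{-1}\hat{\matr{C}}_\gamma^{\!\vct{N}}(\vct{y})\,\jump{\vct{v}}_\gamma(\vct{y})\,\rmd\bar{s},
\]
so that $\hat{\matr{C}}_\rmf^{\!\vct{N}}\partial_{\!\vct{N}}\vct{v}_\rmf=a^{-1}\hat{\matr{C}}_\gamma^{\!\vct{N}}\jump{\vct{v}}_\gamma$ is constant along each fibre. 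With this choice, the fracture term $\langle\hat{\vct{\sigma}}_\rmf(\vct{u}_\rmf,p_\rmf),\partial_{\!\vct{N}}\vct{v}_\rmf\rangle_{\Omega_\rmf^1}$ reduces \emph{algebraically} (via $(\hat{\matr{C}}_\rmf^{\!\vct{N}})^{-1}\hat{\vct{\sigma}}_\rmf=\partial_{\!\vct{N}}\vct{u}_\rmf-(p_\rmf+\hat{G}_\rmf)(\hat{\matr{C}}_\rmf^{\!\vct{N}})^{-1}\hat{\vct{\alpha}}_\rmf^\mathrm{eff}$ and $\int\partial_{\!\vct{N}}\vct{u}_\rmf\,\rmd s=\jump{\vct{u}}_\gamma$) to $\langle\hat{\matr{\sigma}}_\gamma(\jump{\vct{u}}_\gamma,p_\rmf),\jump{\vct{v}}_\gamma\rangle_\gamma$, and the source term $\langle\hat{\vct{f}}_\rmf^\mathrm{eff},\vct{v}_\rmf\rangle_{\Omega_\rmf^1}$ splits, by the very definition of $\hat{\vct{F}}_\gamma^\mathrm{eff}$, into $\langle a\,\mathfrak{A}_{\!\vct{N}}\hat{\vct{f}}_\rmf^\mathrm{eff},\vct{v}_-\rangle_\gamma+\langle\hat{\vct{F}}_\gamma^\mathrm{eff},\jump{\vct{v}}_\gamma\rangle_\gamma$. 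No ODE, no normal traces of $\hat{\vct{\sigma}}_\rmf$, no fibrewise integration by parts are needed; membership $\vct{u}_\pm\in\vct{V}_1$ is quoted from the continuous embedding $\vct{V}^\#\hookrightarrow\vct{V}_1$ established in \cite[Lem.\ 4.12]{hoerl24}.

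What your approach buys is a transparent mechanical interpretation: you explicitly recover the strong-form ODE $-\partial_{\!\vct{N}}\hat{\vct{\sigma}}_\rmf=\hat{\vct{f}}_\rmf^\mathrm{eff}$ and identify the interface stress as the physical one-sided normal trace $\mathfrak{T}_+\hat{\vct{\sigma}}_\rmf=\hat{\matr{\sigma}}_\gamma(\jump{\vct{u}}_\gamma,p_\rmf)-\hat{\vct{F}}_\gamma^\mathrm{eff}$, which the paper's purely variational computation leaves implicit. The price is the additional regularity step (showing $\hat{\vct{\sigma}}_\rmf\in\vct{H}^1_{\!\vct{N}}(\Omega_\rmf^1)$ and justifying the fibrewise fundamental theorem of calculus) that you correctly flag as the main obstacle; the paper sidesteps this entirely by its choice of lift.
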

\begin{proof}
Let $\vct{u}_\pmf \in \vct{V}^\#$ be a solution of \cref{eq:weak_mechlimit_C1} and let $\vct{v}_\pm \in \vct{V}_1$. We define $\vct{v}_\rmf \in \vct{H}^1_{\!\vct{N}} (\Omega_\rmf^1 )$ by
\begin{align*}
\vct{v}_\rmf (\vct{y} + s\vct{N} ) := \vct{v}_- \vert_\gamma + \frac{1}{a(\vct{y})} \int_{-a_-(\vct{y})}^{s} \bigl[ \hat{\matr{C}}_\rmf^{\!\vct{N}} (\vct{y} + \bar{s} \vct{N} )  \bigr]^{-1} \hat{\matr{C}}_\gamma^{\!\vct{N}} (\vct{y} )\, \jump{\vct{v}}_\gamma ( \vct{y} ) \,\rmd \bar{s}
\end{align*}
where $\vct{y} \in \gamma$ and $s \in (-a_- (\vct{y}) , a_+ (\vct{y}))$.
Clearly, we then have $\vct{v}_\pmf \in \vct{V}^\#$ with 
\begin{align*}
\partial_{\vct{N}}\vct{v}_\rmf (\vct{y} + s\vct{N} ) = \frac{1}{a(\vct{y})}\bigl[ \hat{\matr{C}}_\rmf^{\!\vct{N}} (\vct{y} + s \vct{N} )  \bigr]^{-1} \hat{\matr{C}}_\gamma^{\!\vct{N}} (\vct{y} )\, \jump{\vct{v}}_\gamma ( \vct{y} ) .
\end{align*}
Choosing  $\vct{v}_\pmf \in \vct{V}^\#$ as defined above as the test function in \cref{eq:weak_mechlimit_C1} and using that
\begin{align*}
\bigl\langle \hat{\vct{\sigma}}_\rmf \bigl( \vct{u}_\rmf , p_\rmf \bigr) , \partial_\vct{N}  \vct{v}_\rmf \bigr\rangle_{\Omega_\rmf^1 } 
= \bigl\langle \tfrac{1}{a} \matr{C}_\gamma^{\!\vct{N}} \jump{\vct{u}}_\gamma , \jump{\vct{v}}_\gamma \bigr\rangle_{\! \gamma }  - \bigl\langle \matr{C}_\gamma^{\!\vct{N}}  \mathfrak{A}_{\!\vct{N}} \bigl[ ( p_\rmf - \hat{G}_\rmf )  \bigl( \matr{C}_\rmf^\vct{N} \bigr)^{-1} \hat{\vct{\alpha}}_\rmf^\mathrm{eff} \bigr] , \jump{\vct{v}}_\gamma  \bigr\rangle_{\! \gamma }
\end{align*} 
shows that $\vct{u}_\pm$ satisfies \cref{eq:weak_mechlimit_discrete_C1}, where, w.l.o.g., we have considered the case $\nu_\matr{K} \le 1$ or $\nu_\omega = -1$ in \cref{eq:mechlimit_nuC1_f}.
Moreover, since the map
\begin{align*}
\vct{V}^\# \rightarrow \vct{V}_1 , \quad \vct{w}_\pmf \mapsto \vct{w}_\pm
\end{align*}
defines a continuous embedding~\cite[Lem.\ 4.12]{hoerl24}, we have $\vct{u}_\pm \in \vct{V}_1$.
\end{proof}

\subsubsection{Limit Models with \texorpdfstring{$\nu_\tsr{C} > 1$}{nuC>1}}
\label{sec:4.2.2}
For $\nu_\tsr{C} > 1$, the fracture contribution to the mechanics equation~\eqref{eq:fulldim-weak-trafo-b} vanishes completely as $k\rightarrow \infty$.
However, by rescaling \cref{eq:fulldim-weak-trafo-b} with the factor~$\epsilon^{(1-\nu_\tsr{C})/2}$ inside the fracture, we can derive a limit equation for the (weak) limit~$\smash{\hat{\vct{u}}_\rmf^\#}$  of the scaled fracture displacement~$\epsilon^{(\nu_\tsr{C}- 1)/2}\hat{\vct{u}}_k^{\epsilon_k}$ as $k\rightarrow\infty$ (cf.\ \Cref{cor:conv}). 
In particular, $\smash{\hat{\vct{u}}_\rmf^\#}$ is not directly coupled to the bulk displacement vector~$\hat{\vct{u}}_\pm^\#$ in the limit model, only possibly to the fracture pressure head~$\hat{p}_\rmf^\#$ (if $2\nu_{\!\matr{\alpha}}^\smallperp = \nu_\tsr{C} -1$ and if $\nu_\matr{K}\le 1$ or $\nu_\omega = -1$).
The strong formulation of the mechanics limit problem for~$\nu_\tsr{C} > 1$ now reads as follows with $p_\pm$ and $\vct{u}_\pmf$ corresponding to the limit functions $\hat{p}_\pm^\#$ and $\hat{\vct{u}}_\pmf^\#$ from \Cref{cor:conv}.

Find $p_\pm \colon \Omega_\pm^0 \times I \rightarrow \mathbb{R}$ and $\vct{u}_\pmf \colon \Omega_\pmf^\odot \times I \rightarrow \mathbb{R}^n$ such that
\begin{subequations}
\begin{alignat}{2}
-\partial_{\!\vct{N} } \hat{\matr{\sigma}}_\rmf ( \vct{u}_\rmf , p_\rmf )  &= \hat{\vct{f}}_\rmf^\mathrm{eff} \quad\enspace &&\text{in } \Omega_\rmf^1 \times I , \\
\vct{u}_\rmf &= \vct{0} \quad\enspace &&\text{on } \gamma_\pm^1 \times I ,\\
\hat{\matr{\sigma}} ( \vct{u}_\pm , p_\pm ) \vct{n}_\pm &= \vct{0} \quad\enspace &&\text{on } \gamma \times I ,
\intertext{
and the bulk limit problem~\cref{eq:bulklimit} is satisfied.
In addition, the initial displacement vector $\vct{u}_{0, \rmf } := \vct{u}_\rmf ( \cdot , 0 ) $ fulfills}
-\partial_{\!\vct{N} } \hat{\matr{\sigma}}_\rmf (\vct{u}_{0,\rmf } , \hat{p}_{0, \rmf} ) &= \hat{\vct{f}}_\rmf^\mathrm{eff} (0) \quad\enspace &&\text{in } \Omega_\rmf^1 \times I , \\
\vct{u}_{0,\rmf} &= \vct{0} \quad\enspace &&\text{on } \gamma_\pm^1 \times I , \\
\hat{\matr{\sigma}} ( \vct{u}_{0,\pm } , \hat{p}_{0, \pm} ) \vct{n}_\pm &= \vct{0} \quad\enspace &&\text{on } \gamma \times I .
\end{alignat}
The total fracture stress~$\hat{\matr{\sigma}}_\rmf$ is given by
\begin{align}
\hat{\matr{\sigma}}_\rmf ( \vct{u}_\rmf , p_\rmf ) &:= \begin{cases} \hat{\matr{C}}_\rmf^{\!\vct{N}} \partial_{\!\vct{N}} \vct{u}_\rmf - ( p_\rmf + \hat{G}_\rmf ) \hat{\vct{\alpha}}_\rmf^\mathrm{eff} &\text{if } \nu_\matr{K} \le 1 \text{ or } \nu_\omega = -1 , \\
\hat{\matr{C}}_\rmf^{\!\vct{N}} \partial_{\!\vct{N}} \vct{u}_\rmf &\text{if } \nu_\matr{K} > 1 \text{ and } \nu_\omega > -1 .
\end{cases}
\end{align}%
\label{eq:mechlimit_nuCg1}%
\end{subequations}%
The mechanics limit problem~\eqref{eq:mechlimit_nuCg1} is to be completed by suitable conditions or equations in the fracture domain~$\Omega_\rmf^1$ or on the interface~$\gamma$ (see \Cref{sec:4.3}), in particular to determine the fracture pressure head~$p_\rmf$.

Next, we define the function space~$\vct{V}_{>1}$ by
\begin{align}
\vct{V}_{>1} &:= \bigl\{ (\vct{v}_\pm , \vct{v}_\rmf ) \in  \vct{H}^1_{0, \rho_{\pm}^0 }  \!\bigl( \Omega_\pm^0 \bigr) \times \vct{H}^1_{\!\vct{N}} ( \Omega_\rmf^1 )  \ \big\vert\  \mathfrak{T}_\pm \vct{v}_\rmf = \vct{0} \bigr\} ,
\end{align}
where $\mathfrak{T}_\pm$ denotes the trace operator from \Cref{lem:trace_H1N}.
Then, a weak formulation  of \cref{eq:mechlimit_nuCg1} is given by the following problem.

Find $\vct{u}_\pmf \colon H^1 ( I ; \vct{V}_{>1})$ such that
\begin{subequations}
\begin{align}
\hat{\mathcal{A}}_\rmb^0 ( \vct{u}_\pm , \vct{v}_\pm) - \hat{\mathcal{B}}_\rmb^0 ( p_\pm , \vct{v}_\pm ) &= \hat{\mathcal{L}}^0_\rmb (\vct{v}_\pm ) , \\
\label{eq:weak_mechlimit_nuCg1_b} \bigl\langle \hat{\matr{\sigma}}_\rmf \bigl( \vct{u}_\rmf , p_\rmf \bigr)  , \partial_{\!\vct{N}} \vct{v}_\rmf \bigr\rangle_{\Omega_\rmf^1 }  &= \bigl\langle \hat{\vct{f}}_\rmf^\mathrm{eff} \! , \vct{v}_\rmf \bigr\rangle_{\Omega_\rmf^1 }  , 
\intertext{holds for all $\vct{v}_\pmf \in \vct{V}_{>1}$.
In addition, for all $\vct{v}_\pmf \in \vct{V}_{>1}$, the initial displacement vector~$\vct{u}_{0,\pmf} := \vct{u}_\pmf ( \cdot , 0)$  satisfies}
\hat{\mathcal{A}}_\rmb^0 ( \vct{u}_{0,\pm} , \vct{v}_\pm) - \hat{\mathcal{B}}_\rmb^0 ( \hat{p}_{0,\pm} , \vct{v}_\pm ) &= \hat{\mathcal{L}}^0_\rmb (\vct{v}_\pm ) \bigr\vert_{t=0} , \\
\bigl\langle  \hat{\matr{\sigma}}_\rmf \bigl( \vct{u}_{0 ,\rmf } , \hat{p}_{0,\rmf } ) , \partial_\vct{N} \vct{v}_\rmf \bigr\rangle_{\Omega_\rmf^1 } &= \bigl\langle \hat{\vct{f}}_\rmf^\mathrm{eff}\! , \vct{v}_\rmf \bigr\rangle_{\Omega_\rmf^1}  .
\end{align}%
\label{eq:weak_mechlimit_nuCg1}%
\end{subequations}%
\indent The mechanics limit problem~\eqref{eq:weak_mechlimit_nuCg1} is to be supplemented by a flow equation to determine the pressure heads~$p_\pmf$ (see \Cref{sec:4.3}).
A rigorous reduction of the limit problem~\eqref{eq:weak_mechlimit_nuCg1} to a discrete fracture model does not seem to be possible.

\subsection{Fracture Limit Problems for the Flow Equation \texorpdfstring{\eqref{eq:fulldim-weak-trafo-c}}{}}
\label{sec:4.3}
In the following, we consider the limit of the flow equation~\eqref{eq:fulldim-weak-trafo-c} as $\epsilon \rightarrow 0$. 
In analogy to the case of Darcy flow in a non-deformable porous medium~\cite{hoerl24}, we distinguish between five different limit regimes: 
\begin{itemize}[topsep=0pt,leftmargin=*]
\item a highly conductive fracture with instantaneously equalized constant pressure head for $\nu_\matr{K} < -1$ presented in \Cref{sec:4.3.1},
\item a conductive fracture with a PDE in tangential direction on the interface~$\gamma$ (see \Cref{sec:4.3.2}),
\item a neutral fracture for $\nu_\matr{K} \in (-1,1)$ discussed in \Cref{sec:4.3.3} where the hydraulic conductivity within the fracture does not influence the flow across~$\gamma$,
\item a permeable barrier with an ODE in normal direction for $\nu_\matr{K} = 1$ (see \Cref{sec:4.3.4}),
\item a solid wall for $\nu_\matr{K} > 1$ presented in \Cref{sec:4.3.5}.
\end{itemize}
Together with a limit model for the mechanics equations~\eqref{eq:fulldim-weak-trafo-b} as presented in \Cref{sec:4.2} we obtain a closed limit model for each of these regimes, for which we verify uniqueness and provide the corresponding convergence theorems for the overall model. 
In particular, we show that the weak convergences in \Cref{cor:conv} actually hold as strong convergences for the whole sequences $\{ \hat{\vct{u}}_\pmf^\epsilon \}_{\epsilon \in ( 0,1]}$ and $\{ \hat{p}_\pmf^\epsilon \}_{\epsilon \in ( 0,1]}$ as $\epsilon \rightarrow 0$.
We start by analyzing the convergence of the individual terms in \cref{eq:fulldim-weak-trafo-c} in \Cref{lem:fracconv2} and define effective model parameters.

We define the normal hydraulic conductivity~$\hat{K}_\rmf^\vct{N} \in L^\infty (\Omega_\rmf^1 )$, the effective tangential hydraulic conductivity~$\smash{\hat{\matr{K}}_\gamma} \in \vct{L}^\infty (\gamma ) $, the effective inverse Biot modulus~$\hat{\omega}_\rmf^\mathrm{eff} \in L^\infty (\Omega_\rmf^1 ) $, and the effective source term~$\hat{q}_\rmf^\mathrm{eff} \in L^2 ( I ; \Omega_\rmf^1 ) $ inside the fracture by
\begin{subequations}
\begin{align}
\hat{K}_\rmf^{\!\vct{N}} &:= \hat{\matr{K}}_\rmf \vct{N} \cdot \vct{N} , \\
 \hat{\matr{K}}_\gamma &:= \mathfrak{A}_{\!\vct{N}} \bigl( \hat{\matr{K}_\rmf} - \bigl[ \hat{K}_\rmf^{\!\vct{N}} \bigr]^{-1} \hat{\matr{K}}_\rmf \vct{N} \otimes \hat{\matr{K}}_\rmf \vct{N} \bigr) , \\
\hat{\omega}_\rmf^\mathrm{eff} &:= \begin{cases} 
 \hat{\omega}_\rmf  &\text{if } \nu_\omega = -1 , \\
 0 &\text{if } \nu_\omega > -1  , \end{cases} \\
 \hat{q}_\rmf^\mathrm{eff} &:= \begin{cases}  \hat{q}_\rmf &\text{if } \nu_q = -1, \\ 0 &\text{if } \nu_q > -1  . \end{cases} 
\end{align}%
\label{eq:eff2}%
\end{subequations}%
In particular, $\hat{\matr{K}}_\gamma$ has the following properties.
\begin{lemma} \label{lem:eff2}
\begin{enumerate}[label=(\roman*)]
\item $\hat{\matr{K}}_\gamma \in \matr{L}^\infty (\gamma ) $ is symmetric and positive semidefinite. Besides, we have $\hat{\matr{K}}_\gamma ( \vct{y} ) \vct{\xi} \cdot \vct{\xi} > 0$ for almost all $\vct{y} \in \gamma$ and $\vct{\xi} \in \rmT_\vct{y} \gamma \setminus \{ \vct{0} \}$.
\item If $\hat{\matr{K}}_\rmf \in \mathcal{C}^0 ( \bar{\Omega}_\rmf^1 ; \mathbb{R}^{n\times n } ) $, then $\hat{\matr{K}}_\gamma$ is uniformly elliptic on~$\gamma$, i.e., $\hat{\matr{K}}_\gamma ( \vct{y} ) \vct{\xi} \cdot \vct{\xi} \gtrsim \abs{\vct{\xi}}^2$ for  all $\vct{y} \in \gamma$ and $\vct{\xi} \in \rmT_\vct{y} \gamma$.
\end{enumerate}
\end{lemma}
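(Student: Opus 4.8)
The plan is to reduce both statements to pointwise (in the normal variable) properties of the Schur-complement matrix field
\[
\matr{M} := \hat{\matr{K}}_\rmf - \bigl[ \hat{K}_\rmf^{\!\vct{N}} \bigr]^{-1} \, \hat{\matr{K}}_\rmf \vct{N} \otimes \hat{\matr{K}}_\rmf \vct{N} ,
\]
so that $\hat{\matr{K}}_\gamma = \mathfrak{A}_{\!\vct{N}} ( \matr{M} )$ by definition (cf.\ \cref{eq:eff2} and \Cref{def:average}). First I would check that $\matr{M}$ is a well-defined element of $\matr{L}^\infty ( \Omega_\rmf^1 )$: by \Cref{asm:eps}~\ref{asm:K_eps} we have $\hat{K}_\rmf^{\!\vct{N}} = \hat{\matr{K}}_\rmf \vct{N} \cdot \vct{N} \gtrsim \abs{\vct{N}}^2 = 1$ and $\hat{K}_\rmf^{\!\vct{N}} \in L^\infty ( \Omega_\rmf^1 )$, hence $\bigl[ \hat{K}_\rmf^{\!\vct{N}} \bigr]^{-1} \in L^\infty ( \Omega_\rmf^1 )$ and the rank-one term is bounded. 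Since $\mathfrak{A}_{\!\vct{N}}$ is linear, maps bounded fields to bounded fields, and preserves both symmetry and the nonnegativity of the associated quadratic form, the symmetry of $\hat{\matr{K}}_\gamma$ follows immediately from the symmetry of $\hat{\matr{K}}_\rmf$ and of $\hat{\matr{K}}_\rmf \vct{N} \otimes \hat{\matr{K}}_\rmf \vct{N}$.

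The definiteness rests on the completing-the-square identity: for every $\vct{\xi} \in \mathbb{R}^n$ and almost every $\vct{x} \in \Omega_\rmf^1$,
\[
\matr{M}(\vct{x}) \vct{\xi} \cdot \vct{\xi} = \hat{\matr{K}}_\rmf (\vct{x}) \vct{\xi} \cdot \vct{\xi} - \frac{\bigl( \hat{\matr{K}}_\rmf (\vct{x}) \vct{N} \cdot \vct{\xi} \bigr)^2}{\hat{K}_\rmf^{\!\vct{N}} (\vct{x})} = \min_{t \in \mathbb{R}} \, \hat{\matr{K}}_\rmf (\vct{x}) \, (\vct{\xi} + t \vct{N}) \cdot (\vct{\xi} + t \vct{N}) ,
\]
with the minimum attained at $t^\ast = - \hat{\matr{K}}_\rmf (\vct{x}) \vct{N} \cdot \vct{\xi} / \hat{K}_\rmf^{\!\vct{N}} (\vct{x})$; this is verified by expanding the quadratic in $t$ and using the symmetry of $\hat{\matr{K}}_\rmf$. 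Since $\hat{\matr{K}}_\rmf$ is almost everywhere positive definite, the right-hand side is nonnegative, so $\matr{M}$ is almost everywhere positive semidefinite, and averaging in the normal direction gives $\hat{\matr{K}}_\gamma (\vct{y}) \vct{\xi} \cdot \vct{\xi} = \mathfrak{A}_{\!\vct{N}} \bigl( \matr{M}(\cdot) \vct{\xi} \cdot \vct{\xi} \bigr)(\vct{y}) \geq 0$ for a.e.\ $\vct{y} \in \gamma$ (Fubini). If moreover $\vct{\xi} \in \rmT_\vct{y} \gamma = \vct{N}^\perp$ with $\vct{\xi} \neq \vct{0}$, then $\abs{\vct{\xi} + t \vct{N}}^2 = \abs{\vct{\xi}}^2 + t^2 \geq \abs{\vct{\xi}}^2 > 0$ for all $t$, so uniform ellipticity of $\hat{\matr{K}}_\rmf$ yields $\matr{M}(\vct{x}) \vct{\xi} \cdot \vct{\xi} \gtrsim \abs{\vct{\xi}}^2$ a.e., and averaging gives $\hat{\matr{K}}_\gamma (\vct{y}) \vct{\xi} \cdot \vct{\xi} \gtrsim \abs{\vct{\xi}}^2 > 0$ for a.e.\ $\vct{y} \in \gamma$, which is part~(i).

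For part~(ii), continuity of $\hat{\matr{K}}_\rmf$ on the compact set $\bar{\Omega}_\rmf^1$ propagates through the whole construction: $\hat{K}_\rmf^{\!\vct{N}}$ is continuous and bounded below by a positive constant, hence $\bigl[ \hat{K}_\rmf^{\!\vct{N}} \bigr]^{-1}$ and therefore $\matr{M}$ are continuous on $\bar{\Omega}_\rmf^1$; since $a_\pm \in \mathcal{C}^{0,1}(\bar{\gamma})$ and $a = a_+ + a_- > 0$, the averaged field $\hat{\matr{K}}_\gamma = \mathfrak{A}_{\!\vct{N}} ( \matr{M} )$ then has a continuous representative on $\bar{\gamma}$. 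Continuity upgrades the a.e.\ ellipticity of $\hat{\matr{K}}_\rmf$ to an everywhere bound with the same constant, so the estimate $\matr{M}(\vct{x}) \vct{\xi} \cdot \vct{\xi} \gtrsim \abs{\vct{\xi}}^2$ for $\vct{\xi} \in \vct{N}^\perp$ holds for \emph{every} $\vct{x} \in \bar{\Omega}_\rmf^1$, and averaging over the normal direction gives $\hat{\matr{K}}_\gamma (\vct{y}) \vct{\xi} \cdot \vct{\xi} \gtrsim \abs{\vct{\xi}}^2$ for every $\vct{y} \in \gamma$ and every $\vct{\xi} \in \rmT_\vct{y} \gamma$.

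I expect the only genuine subtlety to be the completing-the-square identity — equivalently, recognizing $\matr{M}$ as the Schur complement of the normal--normal block of $\hat{\matr{K}}_\rmf$ in a basis adapted to $\vct{N}$ — together with the pedantic passage from ``almost everywhere'' to ``everywhere'', which is precisely the step for which the continuity hypothesis in~(ii) is required; all remaining ingredients (boundedness of $\matr{M}$, symmetry, linearity and order-preservation of $\mathfrak{A}_{\!\vct{N}}$) are routine. This mirrors the corresponding argument for the normal elasticity tensor in \Cref{lem:eff1}.
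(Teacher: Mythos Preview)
Your argument is correct. The paper does not give its own proof here; it simply cites \cite[Lem.~4.6]{hoerl24}, so there is nothing in the paper to compare against directly. Your Schur-complement/completing-the-square identity
\[
\matr{M}(\vct{x})\,\vct{\xi}\cdot\vct{\xi}
= \min_{t\in\mathbb{R}} \hat{\matr{K}}_\rmf(\vct{x})(\vct{\xi}+t\vct{N})\cdot(\vct{\xi}+t\vct{N})
\]
is the standard route to such results and is almost certainly what the cited reference does. Note that your argument for~(i) in fact already yields the uniform lower bound $\hat{\matr{K}}_\gamma(\vct{y})\,\vct{\xi}\cdot\vct{\xi}\gtrsim\abs{\vct{\xi}}^2$ for a.e.\ $\vct{y}\in\gamma$ and all tangent $\vct{\xi}$, which is stronger than the bare positivity asserted in~(i); the only additional content of~(ii) is then precisely the upgrade from ``a.e.'' to ``everywhere'' via the continuity hypothesis, as you correctly identify.
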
 
\begin{proof}
See \cite[Lem.\ 4.6]{hoerl24}.
\end{proof}

The following lemma is concerned with the convergence of the individual terms associated with the fracture domain~$\Omega_\rmf^1$ in the flow equation~\eqref{eq:fulldim-weak-trafo-c}.
\begin{lemma} \label{lem:fracconv2}
Let the \Cref{asm:eps,asm:nu} hold.
\begin{subequations}
\begin{enumerate}[label=(\roman*),topsep=0pt]
\item Let $\nu_\tsr{C} \ge 1$. Then, as $k\rightarrow \infty$, for all $\vct{v}_\rmf \in \vct{H}^1 (\Omega_\rmf^1 ) $, we have
\begin{align}
\label{eq:frac_conv_b} &\bigl\langle    \epsilon_k^{\matr{\nu}_{\!\matr{\alpha }} + \matr{I}} \, \phi_\rmf \hat{\matr{\alpha}}_\rmf^{\epsilon_k}    , \nabla^{\epsilon_k} \partial_t \hat{\vct{u}}_\rmf^{\epsilon_k} \bigr\rangle_{\Omega_\rmf^1  } \rightarrow \bigl\langle \phi_\rmf \hat{\vct{\alpha}}_\rmf^\mathrm{eff}\! , \partial_{\!\vct{N}} \partial_t \hat{\vct{u}}_\rmf^\#  \bigr\rangle_{ \Omega_\rmf^1  } .
\end{align}
\item For all $\phi_\rmf \in H^1 (\Omega_\rmf^1 )$ with $\nabla_{\!\vct{N}} \phi_\rmf \equiv \vct{0}$, we have
\begin{align}
\bigl\langle \epsilon_k^{\nu_\matr{K} +1 } \hat{\matr{K}}_\rmf^{\epsilon_k} \nabla^{\epsilon_k} \hat{p}_\rmf^{\epsilon_k} , \nabla^{\epsilon_k} \phi_\rmf \bigr\rangle_{\Omega_\rmf^1  } &\rightarrow \begin{cases}  \bigl\langle a \hat{\matr{K}}_\gamma \nabla (\mathfrak{A}_{\!\vct{N}} \hat{p}_\rmf^\# ) , \mathfrak{A}_{\!\vct{N}} \phi_\rmf \bigr\rangle_{\gamma  } &\text{if } \nu_\matr{K} = -1 , \\
0 &\text{if } \nu_\matr{K} > -1 \end{cases} 
\end{align}
as $k\rightarrow \infty$. In addition, for all $\phi_\rmf \in H^1 (\Omega_\rmf^1 )$, it is 
\begin{align}
\bigl\langle \epsilon_k^{\nu_\matr{K} +1 } \hat{\matr{K}}_\rmf^{\epsilon_k} \nabla^{\epsilon_k} \hat{p}_\rmf^{\epsilon_k} , \nabla^{\epsilon_k} \phi_\rmf \bigr\rangle_{\Omega_\rmf^1  } &\rightarrow \begin{cases} \bigl\langle \hat{K}_\rmf^{\!\vct{N}} \partial_{\!\vct{N}} \hat{p}_\rmf^\# , \partial_{\!\vct{N}} \phi_\rmf \bigr\rangle_{ \Omega_\rmf^1  } &\text{if } \nu_\matr{K} = 1 , \\
0 &\text{if } \nu_\matr{K} > 1 ,
\end{cases} \\
\label{eq:frac_conv_e} \bigl\langle \epsilon_k^{\nu_\omega + 1 } \hat{\omega}_\rmf^{\epsilon_k} \partial_t \hat{p}_\rmf^{\epsilon_k} , \phi_\rmf \bigr\rangle_{ \Omega_\rmf^1  } &\rightarrow \bigl\langle \hat{\omega}_\rmf^\mathrm{eff} \partial_t \hat{p}_\rmf^\# , \phi_\rmf \bigr\rangle_{ \Omega_\rmf^1  }  .
\end{align}
\item As $\epsilon \rightarrow 0$, for all $\phi_\rmf \in H^1 (\Omega_\rmf^1 ) $, it is 
\begin{align}
\bigl\langle \epsilon^{\nu_q + 1 } \hat{q}_\rmf^{\epsilon} , \phi_\rmf \bigr\rangle_{\Omega_\rmf^1  } &\rightarrow \bigl\langle \hat{q}_\rmf^\mathrm{eff} , \phi_\rmf \bigr\rangle_{\Omega_\rmf^1  }  . 
\end{align}
\end{enumerate}
\end{subequations}
\end{lemma}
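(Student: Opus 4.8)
The plan is to mimic the proof of \Cref{lem:fracconv1}: in the fracture domain~$\Omega_\rmf^1$ the transformed gradient splits as $\nabla^{\epsilon_k}=\nablapar+\epsilon_k^{-1}\nablaperp$, with the tangential and normal components pointwise orthogonal, so every bilinear term decomposes into contributions carrying explicit powers of~$\epsilon_k$. In each contribution I would factor those powers so as to pair one copy of the (weakly or weak-$\ast$) convergent solution sequence from \Cref{cor:conv}---whose scaled norm is bounded by the corresponding a~priori estimate of \Cref{prop:apriori}---against the remaining factor $\epsilon_k^{\,\ge 0}$ times the fixed test function and the strongly convergent coefficient of \Cref{asm:eps}. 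A contribution with a strictly positive residual power of~$\epsilon_k$ on the test function tends to zero; a contribution with vanishing residual power passes to the limit by the strong$\,\times\,$weak product rule. The scaling constraints \Cref{asm:nu} are precisely what makes all residual powers nonnegative and matches the surviving terms to the effective parameters of \cref{eq:eff1,eq:eff2}; the time variable is handled throughout by the Bochner-space convergences of \Cref{cor:conv}.

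For part~(i) I would write, as in the proof of \cref{eq:fracconv1_c,eq:fracconv1_d}, $\langle\epsilon_k^{\matr{\nu}_{\!\matr{\alpha}}+\matr{I}}\phi_\rmf\hat{\matr{\alpha}}_\rmf^{\epsilon_k},\nabla^{\epsilon_k}\partial_t\hat{\vct{u}}_\rmf^{\epsilon_k}\rangle_{\Omega_\rmf^1}=\langle\epsilon_k^{\nu_{\!\matr{\alpha}}^\smallpar+1}\phi_\rmf\hat{\matr{\alpha}}_\rmf^{\epsilon_k},\nablapar\partial_t\hat{\vct{u}}_\rmf^{\epsilon_k}\rangle_{\Omega_\rmf^1}+\langle\epsilon_k^{\nu_{\!\matr{\alpha}}^\smallperp}\phi_\rmf\hat{\matr{\alpha}}_\rmf^{\epsilon_k},\nablaperp\partial_t\hat{\vct{u}}_\rmf^{\epsilon_k}\rangle_{\Omega_\rmf^1}$. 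For $\nu_\tsr{C}\ge 1$, \cref{eq:apriori_b} bounds $\epsilon_k^{\iota(\nu_\tsr{C})}\nablapar\partial_t\hat{\vct{u}}_\rmf^{\epsilon_k}$ and $\epsilon_k^{\iota(\nu_\tsr{C})-1}\nablaperp\partial_t\hat{\vct{u}}_\rmf^{\epsilon_k}$ in $L^2(I;\matr{\Lambda})$ (note $\iota(\nu_\tsr{C})-1=\theta(\nu_\tsr{C})$), while \Cref{asm:nu}~\ref{asm:nualpha} gives $\nu_{\!\matr{\alpha}}^\smallpar+1\ge\iota(\nu_\tsr{C})$ and $\nu_{\!\matr{\alpha}}^\smallperp\ge\theta(\nu_\tsr{C})$. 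Since $\epsilon_k^{\theta(\nu_\tsr{C})}\nablapar\hat{\vct{u}}_\rmf^{\epsilon_k}\rightharpoonup\matr{0}$ by \cref{eq:conv_u_c}, the tangential contribution vanishes; rewriting the normal one as $\langle\epsilon_k^{\nu_{\!\matr{\alpha}}^\smallperp-\theta(\nu_\tsr{C})}\phi_\rmf\hat{\matr{\alpha}}_\rmf^{\epsilon_k},\epsilon_k^{\theta(\nu_\tsr{C})}\nablaperp\partial_t\hat{\vct{u}}_\rmf^{\epsilon_k}\rangle$ and combining \cref{eq:conv_u_c} with the strong convergence $\hat{\matr{\alpha}}_\rmf^{\epsilon_k}\to\hat{\matr{\alpha}}_\rmf$ from \Cref{asm:eps}~\ref{asm:alpha_eps} and the identity $\matr{A}:(\vct{w}\vct{N}^\rmt)=\vct{w}\cdot(\matr{A}\vct{N})$ yields $\langle\phi_\rmf\hat{\vct{\alpha}}_\rmf^{\mathrm{eff}},\partial_{\!\vct{N}}\partial_t\hat{\vct{u}}_\rmf^\#\rangle_{\Omega_\rmf^1}$, the case split in \cref{eq:eff1} reflecting whether $\nu_{\!\matr{\alpha}}^\smallperp-\theta(\nu_\tsr{C})$ is zero or positive.

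For part~(ii) I would decompose $\langle\epsilon_k^{\nu_\matr{K}+1}\hat{\matr{K}}_\rmf^{\epsilon_k}\nabla^{\epsilon_k}\hat{p}_\rmf^{\epsilon_k},\nabla^{\epsilon_k}\phi_\rmf\rangle_{\Omega_\rmf^1}$ into its four tangential/normal cross terms, using from \cref{eq:apriori_e} the bounds on $\epsilon_k^{\iota(\nu_\matr{K})}\nablapar\hat{p}_\rmf^{\epsilon_k}$ and $\epsilon_k^{\iota(\nu_\matr{K})-1}\nablaperp\hat{p}_\rmf^{\epsilon_k}$ in $L^\infty(I;\vct{\Lambda})$. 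For a test function constant in the normal direction only the two terms paired with $\nablapar\phi_\rmf$ survive: if $\nu_\matr{K}>-1$ the residual power on $\nablapar\phi_\rmf$ is strictly positive and the term vanishes, whereas if $\nu_\matr{K}=-1$ (so $\iota(\nu_\matr{K})=0$) the scaled normal gradient $\epsilon_k^{-1}\nablaperp\hat{p}_\rmf^{\epsilon_k}$ is merely bounded and converges to a nontrivial corrector, so---exactly as in~\cite{hoerl24}---one tests the full flow equation~\eqref{eq:fulldim-weak-trafo-c} with suitable normal-oscillating functions to characterise this corrector through $\nablapar(\mathfrak{A}_{\!\vct{N}}\hat{p}_\rmf^\#)$ (recall $\hat{p}_\rmf^\#=\mathfrak{A}_{\!\vct{N}}\hat{p}_\rmf^\#$ by \Cref{cor:const} and $\hat{p}_\rmf^{\epsilon_k}\rightharpoonup\hat{p}_\rmf^\#$ in $L^2(I;H^1(\Omega_\rmf^1))$ by \cref{eq:conv_p_d}), and integration over the normal fibres together with \Cref{def:average} and the definition of $\hat{\matr{K}}_\gamma$ (well posed by \Cref{lem:eff2}) produces $\langle a\hat{\matr{K}}_\gamma\nabla(\mathfrak{A}_{\!\vct{N}}\hat{p}_\rmf^\#),\mathfrak{A}_{\!\vct{N}}\phi_\rmf\rangle_\gamma$. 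For a general test function only the normal--normal term $\epsilon_k^{\nu_\matr{K}-1}\langle\hat{\matr{K}}_\rmf^{\epsilon_k}\nablaperp\hat{p}_\rmf^{\epsilon_k},\nablaperp\phi_\rmf\rangle$ can persist: for $\nu_\matr{K}=1$ it converges, by \cref{eq:conv_p_f} and \Cref{asm:eps}~\ref{asm:K_eps}, to $\langle\hat{\matr{K}}_\rmf\nablaperp\hat{p}_\rmf^\#,\nablaperp\phi_\rmf\rangle=\langle\hat{K}_\rmf^{\!\vct{N}}\partial_{\!\vct{N}}\hat{p}_\rmf^\#,\partial_{\!\vct{N}}\phi_\rmf\rangle$ (using $\nablaperp\psi=(\partial_{\!\vct{N}}\psi)\vct{N}$), and for $\nu_\matr{K}>1$ the residual power on $\nablaperp\phi_\rmf$ is positive and it vanishes. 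Finally, writing $\epsilon_k^{\nu_\omega+1}=(\epsilon_k^{\iota(\nu_\omega)})^2$ and invoking \cref{eq:apriori_d,eq:conv_p_h} and \Cref{asm:eps}~\ref{asm:omega_eps}, the storage term tends to $\langle\hat{\omega}_\rmf^{\mathrm{eff}}\partial_t\hat{p}_\rmf^\#,\phi_\rmf\rangle$, the factor $\hat{\omega}_\rmf^{\mathrm{eff}}$ from \cref{eq:eff2} being $\hat{\omega}_\rmf$ when $\nu_\omega=-1$ and $0$ when $\nu_\omega>-1$.

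Part~(iii) is immediate: \Cref{asm:nu}~\ref{asm:nuq} forces $\nu_q\ge-1$, so $\epsilon^{\nu_q+1}$ has nonnegative exponent; for $\nu_q=-1$ the factor equals~$1$ and $\hat{q}_\rmf^\epsilon\to\hat{q}_\rmf$ in $L^2(I;\Lambda)$ by \Cref{asm:eps}~\ref{asm:q_eps}, while for $\nu_q>-1$ the factor tends to~$0$ against the uniformly bounded $\hat{q}_\rmf^\epsilon$ and the fixed $\phi_\rmf$; in both cases the limit is $\langle\hat{q}_\rmf^{\mathrm{eff}},\phi_\rmf\rangle_{\Omega_\rmf^1}$ with $\hat{q}_\rmf^{\mathrm{eff}}$ as in \cref{eq:eff2}. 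I expect the only genuinely delicate point to be the corrector identification for $\nu_\matr{K}=-1$ in part~(ii)---where the weak limit of the $\epsilon_k$-scaled normal pressure gradient must be pinned down via oscillating test functions in the flow equation---whereas every remaining case reduces to exponent bookkeeping against \Cref{prop:apriori} and the strong$\,\times\,$weak product rule.
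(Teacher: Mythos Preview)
Your proposal is correct and matches the paper's approach: the same tangential/normal decomposition, the same exponent bookkeeping against \Cref{prop:apriori}, and the same corrector identification for $\nu_\matr{K}=-1$ (the paper multiplies \cref{eq:fulldim-weak-trafo-c} by $\epsilon_k$, passes to the limit to obtain an equation for the weak limit $\xi^\#$ of $\epsilon_k^{-1}\partial_{\!\vct{N}}\hat{p}_\rmf^{\epsilon_k}$, solves it explicitly, and checks uniqueness). One small slip in part~(i): the convergence you need for the tangential contribution is $\epsilon_k^{\iota(\nu_\tsr{C})}\nablapar\partial_t\hat{\vct{u}}_\rmf^{\epsilon_k}\rightharpoonup\matr{0}$ (exponent $\iota$, not $\theta$), and this does not follow from \cref{eq:conv_u_c}---the space $\vct{H}^1_{\!\vct{N}}$ controls only normal derivatives---but from combining the $L^2$-bound on $\epsilon_k^{\theta(\nu_\tsr{C})}\hat{\vct{u}}_\rmf^{\epsilon_k}$ in \cref{eq:apriori_a} with the tangential-gradient bound in \cref{eq:apriori_b}, exactly as the paper does.
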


\begin{proof}
\begin{enumerate}[label=(\roman*)]
\item 
We rewrite the term~$\bigl\langle   \epsilon_k^{\matr{\nu}_{\!\matr{\alpha }} + \matr{I}} \,  \phi_\rmf \hat{\matr{\alpha}}_\rmf^{\epsilon_k} , \nabla^{\epsilon_k} \partial_t \hat{\vct{u}}_\rmf^{\epsilon_k} \bigr\rangle_{\Omega_\rmf^1  }$ as
\begin{align*}
\bigl\langle  \epsilon_k^{\nu_{\!\matr{\alpha}}^\smallpar + 1}  \phi_\rmf \hat{\matr{\alpha}}_\rmf^{\epsilon_k}   , \nablapar \partial_t \hat{\vct{u}}_\rmf^{\epsilon_k} \bigr\rangle_{\Omega_\rmf^1  } 
+ \bigl\langle \epsilon_k^{\nu_{\!\matr{\alpha}}^\smallperp}   \phi_\rmf \hat{\matr{\alpha}}_\rmf^{\epsilon_k}  , \nablaperp \partial_t \hat{\vct{u}}_\rmf^{\epsilon_k} \bigr\rangle_{\Omega_\rmf^1  } .
\end{align*}
Then, \cref{eq:frac_conv_b} follows from the \cref{eq:apriori_a,eq:apriori_b} in \Cref{prop:apriori}, \cref{eq:conv_u_c} in \Cref{cor:conv}, \Cref{asm:eps}~\ref{asm:alpha_eps}, and \Cref{asm:nu}~\ref{asm:nualpha}. In particular, we have 
\begin{align*}
\epsilon_k^{\iota( \nu_\tsr{C}) } \hat{\vct{u}}_\rmf^{\epsilon_k} \rightarrow \vct{0} \quad\text{in } H^1 (I ; \vct{H}^1_{\!\vct{N}} (\Omega_\rmf^1 ) ) \quad\ \text{and} \quad \ \norm[\big ]{\epsilon_k^{\iota ( \nu_\tsr{C} ) }\nablapar \hat{\vct{u}}_\rmf^{\epsilon_k} }_{H^1 ( I ; \vct{L}^2 ( \Omega_\rmf^1 ) ) } \lesssim 1
\end{align*}
for $\nu_\tsr{C} \ge 1$ and hence $\epsilon_k^{\iota ( \nu_\tsr{C} ) } \hat{\vct{u}}_\rmf^{\epsilon_k} \rightharpoonup \vct{0}$ in $H^1 (I ; \vct{H}^1 (\Omega_\rmf^1 ) ) $, where $\iota ( \nu ) := \tfrac{1}{2} ( \nu + 1)$.
\item 
\Cref{eq:frac_conv_e} follows directly from \cref{eq:apriori_d} in \Cref{prop:apriori}, \cref{eq:conv_p_h} in \Cref{cor:conv}, and \Cref{asm:eps}~\ref{asm:omega_eps}.
Further, using \cref{eq:nablaeps}, we can rewrite the term $\bigl\langle \epsilon_k^{\nu_\matr{K} +1 } \hat{\matr{K}}_\rmf^{\epsilon_k} \nabla^{\epsilon_k} \hat{p}_\rmf^{\epsilon_k}\! , \nabla^{\epsilon_k} \phi_\rmf \bigr\rangle_{\Omega_\rmf^1  }$ as
\begin{multline*}
 \bigl\langle \epsilon_k^{\nu_\matr{K} +1 } \hat{\matr{K}}_\rmf^{\epsilon_k} \nablapar \hat{p}_\rmf^{\epsilon_k}\! , \nablapar \phi_\rmf \bigr\rangle_{\Omega_\rmf^1  } 
+ \bigl\langle \epsilon_k^{\nu_\matr{K}  } \hat{\matr{K}}_\rmf^{\epsilon_k} \nablaperp \hat{p}_\rmf^{\epsilon_k} \! , \nablapar \phi_\rmf \bigr\rangle_{\Omega_\rmf^1  } \\
+ \bigl\langle \epsilon_k^{\nu_\matr{K}  } \hat{\matr{K}}_\rmf^{\epsilon_k} \nablapar \hat{p}_\rmf^{\epsilon_k} \! , \nablaperp \phi_\rmf \bigr\rangle_{\Omega_\rmf^1  } 
+ \bigl\langle \epsilon_k^{\nu_\matr{K} -1 } \hat{\matr{K}}_\rmf^{\epsilon_k} \nablaperp \hat{p}_\rmf^{\epsilon_k} \!  , \nablaperp \phi_\rmf \bigr\rangle_{\Omega_\rmf^1  } .
\end{multline*}
Thus, with the estimate~\eqref{eq:apriori_e} from \Cref{prop:apriori}, the weak convergences \eqref{eq:conv_p_d} and~\eqref{eq:conv_p_f} from \Cref{cor:conv}, and \Cref{asm:eps}~\ref{asm:K_eps}, we find 
\begin{align*}
 \bigl\langle \epsilon_k^{\nu_\matr{K} +1 } \hat{\matr{K}}_\rmf^{\epsilon_k} \nablapar \hat{p}_\rmf^{\epsilon_k}\! , \nablapar \phi_\rmf \bigr\rangle_{\Omega_\rmf^1  }  &\rightarrow \begin{cases}
 \bigl\langle \hat{\matr{K}}_\rmf \nablapar \hat{p}_\rmf^\# \! , \nablapar \phi_\rmf \bigr\rangle_{\Omega_\rmf^1  } &\text{if } \nu_\matr{K} = - 1, \\
 0 &\text{if } \nu_\matr{K} > -1,
 \end{cases} \\
 \bigl\langle \epsilon_k^{\nu_\matr{K} -1 } \hat{\matr{K}}_\rmf^{\epsilon_k} \nablaperp \hat{p}_\rmf^{\epsilon_k} \!  , \nablaperp \phi_\rmf \bigr\rangle_{\Omega_\rmf^1  } &\rightarrow \begin{cases} 
\bigl\langle  \hat{\matr{K}}_\rmf \nablaperp \hat{p}_\rmf^\# \!  , \nablaperp \phi_\rmf \bigr\rangle_{\Omega_\rmf^1  } &\text{if } \nu_\matr{K} = 1 , \\
 0 &\text{if } \nu_\matr{K} > 1 .
 \end{cases}
\end{align*}
In addition, with \cref{eq:apriori_d} and~\eqref{eq:apriori_e} in \Cref{prop:apriori}, we have
\begin{align*}
\norm{\hat{p}_\rmf^\epsilon }_{L^\infty( I ; H^1_{\!\vct{N}} (\Omega_\rmf^1 )) } + \epsilon \norm{\nabla_{\!\smallpar}  \hat{p}_\rmf^\epsilon}_{L^\infty ( I ; \vct{L}^2 (\Omega_\rmf^1 )) } \lesssim 1
\end{align*}
for $\nu_\matr{K} = 1$ and hence $\epsilon_k\nablapar \hat{p}_\rmf^{\epsilon_k} \rightharpoonup \vct{0}$ in~$L^2 ( I ; \vct{L}^2 (\Omega_\rmf^1 )) $. 
Thus, it is 
\begin{align*}
 \bigl\langle \epsilon_k^{\nu_\matr{K}  } \hat{\matr{K}}_\rmf^{\epsilon_k} \nablapar \hat{p}_\rmf^{\epsilon_k} \! , \nablaperp \phi_\rmf \bigr\rangle_{\Omega_\rmf^1  } &\rightarrow 0 \quad\ \text{if } \nu_\matr{K} \ge 1.
\end{align*}
Besides, with \cref{eq:apriori_e} in \Cref{prop:apriori}, we have
\begin{align*}
\norm[\big]{\epsilon^{-1} \nabla_{\!\vct{N}} \hat{p}_\rmf^\epsilon }_{L^\infty( I ; \vct{L}^2 (\Omega_\rmf^1 )) } \lesssim 1
\end{align*}
for $\nu_\matr{K} = -1$.
Therefore, there exists $\xi^\# \in L^\infty(I ; L^2 (\Omega_\rmf^1 ))$ such that 
\begin{align*}
\epsilon_k^{-1} \nabla_{\!\vct{N}} \hat{p}_\rmf^{\epsilon_k} \rightharpoonup \xi^\# \vct{N} \quad\ \text{in } L^2 ( I ; \vct{L}^2 (\Omega_\rmf^1 )).
\end{align*}
Now, for $\nu_\matr{K} = -1$, we multiply \cref{eq:fulldim-weak-trafo-c} with~$\epsilon_k$ for $\phi_\pmf \in \Phi$ and take the limit $k\rightarrow\infty$ to obtain
\begin{align}
\bigl\langle \hat{\matr{K}}_\rmf \nablapar \hat{p}_\rmf^\# , \nablaperp \phi_\rmf \bigr\rangle_{\Omega_\rmf^1  } + \bigl\langle \xi^\# \hat{\matr{K}}_\rmf \vct{N} , \nabla_{\!\vct{N}} \phi_\rmf \bigr\rangle_{\Omega_\rmf^1 } = 0 . \label{eq:xi_eq}
\end{align}
Clearly, one solution of \cref{eq:xi_eq} is 
\begin{align}
\xi^\# = -\bigl( \hat{K}_\rmf^{\!\vct{N}} \bigr)^{\! -1} \hat{\matr{K}}_\rmf \nablapar \hat{p}_\rmf^\# \cdot \vct{N} . \label{eq:xi}
\end{align}
Given another solution~$\bar{\xi}^\# \in L^\infty ( I ; L^2 ( \Omega_\rmf^1 ) ) $ and choosing 
\begin{align*}
\phi_\rmf (\vct{x} ) = \phi_\rmf ( \vct{y} + x_\vct{N}\vct{N} ) := \int^{x_\vct{N}}_{-a_-(\vct{y})} \bigl( \xi^\# - \bar{\xi}^\# \bigr) ( \vct{y} + s \vct{N} ) \,\rmd s
\end{align*} 
in \cref{eq:xi_eq}, where $\vct{y} := \vct{x} - x_\vct{N} \vct{N}$ and $x_\vct{N} := \vct{x} \cdot\vct{N}$, now yields
\begin{align*}
\bigl\langle  \hat{K}_\rmf^{\!\vct{N}}  \bigl( \xi^\# - \bar{\xi}^\# \bigr) , \xi^\# - \bar{\xi}^\# \bigr\rangle_{\Omega_\rmf^1  } = 0
\end{align*}
and hence $\xi^\# = \bar{\xi}^\#$. 
As a result, we obtain
\begin{multline*}
 \bigl\langle \epsilon_k^{\nu_\matr{K}  } \hat{\matr{K}}_\rmf^{\epsilon_k} \nablaperp \hat{p}_\rmf^{\epsilon_k} \! , \nablapar \phi_\rmf \bigr\rangle_{\Omega_\rmf^1  } \\ \rightarrow \begin{cases}
- \bigl\langle \bigl( \hat{K}_\rmf^{\!\vct{N}} \bigr)^{\! -1} \bigl( \hat{\matr{K}}_\rmf \vct{N} \otimes \hat{\matr{K}}_\rmf \vct{N} \bigr)  \nablapar \hat{p}_\rmf^\# ,  \nablapar \phi_\rmf \bigr\rangle_{\Omega_\rmf^1} &\text{if } \nu_\matr{K} = -1 , \\ 0 &\text{if } \nu_\matr{K} > -1 . \end{cases}
\end{multline*}
\item The result follows directly from \Cref{asm:eps}~\ref{asm:q_eps}. \qedhere
\end{enumerate}
\end{proof}

In addition, we obtain the following strong convergence result for the initial displacement vector. 
\begin{lemma}
\label{lem:u0conv}
Let $\nu_\tsr{C} \ge 1$. 
Besides, let $\hat{\vct{u}}_{0,\pmf}^\epsilon := \hat{\vct{u}}_\pmf^\epsilon ( \cdot , 0)$ and $\hat{\vct{u}}_{0,\pmf}^\# := \hat{\vct{u}}_\pmf^\# ( \cdot , 0)$.
Then, given the \Cref{asm:eps,asm:nu}, as $k\rightarrow \infty$, we have
\begin{subequations}
\begin{alignat}{2}
\hat{\vct{u}}_{0,\pm}^{\epsilon_k} &\rightarrow \hat{\vct{u}}_{0,\pm}^\# \quad\enspace &&\text{in } \vct{H}^1 (\Omega_\pm^0 ) , \\
\epsilon_k^{\theta ( \nu_\tsr{C} ) } \hat{\vct{u}}_{0,\rmf}^{\epsilon_k} &\rightarrow \hat{\vct{u}}_{0,\rmf}^\# \quad\enspace &&\text{in } \vct{H}^1_{\!\vct{N}} ( \Omega_\rmf^1 )  , \\
\epsilon_k^{\iota (\nu_\tsr{C} ) } \matr{e}_\smallpar\bigl( \hat{\vct{u}}_{0,\rmf}^{\epsilon_k} \bigr) &\rightarrow \matr{0} \quad\enspace &&\text{in } \matr{L}^2 (\Omega_\rmf^1 ) ,
\end{alignat}%
\label{eq:strongu0}%
\end{subequations}%
where $2\matr{e}_\smallpar (\vct{v} ) := \nabla_{\!\smallpar} \vct{v} + ( \nabla_{\!\smallpar} \vct{v})^\rmt$, as well as $\theta ( \nu ) := \tfrac{1}{2} ( \nu - 1) $ and $\iota ( \nu ) := \tfrac{1}{2} ( \nu + 1 )$.
\end{lemma}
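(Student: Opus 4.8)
The plan is to run the standard weak$\to$strong upgrade for the static elliptic problem \eqref{eq:fulldim-weak-trafo-d} that defines $\hat{\vct u}_{0,\pmf}^\epsilon$: produce an energy identity by testing with the solution itself, pass to the limit in the resulting right-hand side using \Cref{lem:bulkconv,lem:fracconv1} and the data convergences in \Cref{asm:eps}, identify the limit with the right-hand side of the limiting static mechanics problem, and then conclude via the uniform coercivity and strong $L^\infty$-convergence of $\hat{\tsr C}_\pmf^{\epsilon_k}$. First I would record the weak convergences: since $v\mapsto v(0)$ is a bounded (hence weak-weak continuous) linear map from $H^1(I;X)$ to $X$, \cref{eq:conv_u_a,eq:conv_u_c} evaluated at $t=0$ give $\hat{\vct u}_{0,\pm}^{\epsilon_k}\rightharpoonup\hat{\vct u}_{0,\pm}^\#$ in $\vct H^1(\Omega_\pm^0)$ and $\epsilon_k^{\theta(\nu_\tsr C)}\hat{\vct u}_{0,\rmf}^{\epsilon_k}\rightharpoonup\hat{\vct u}_{0,\rmf}^\#$ in $\vct H^1_{\!\vct N}(\Omega_\rmf^1)$. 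By \Cref{lem:apriori_u0} together with \Cref{lem:korn_decomp} (applicable since $\iota(\nu_\tsr C)\ge 1\ge\tfrac12$) the family $\matr E_k:=\fracfac{\epsilon_k^{\iota(\nu_\tsr C)}}\matr e^\fraceps(\hat{\vct u}_{0,\pmf}^{\epsilon_k})$ is bounded in $\matr\Lambda$, so along a further subsequence $\matr E_k\rightharpoonup\matr E^\#$; its bulk components are $\matr e(\hat{\vct u}_{0,\pm}^\#)$, and its fracture component is $\matr e_\vct N(\hat{\vct u}_{0,\rmf}^\#)$ because $\epsilon_k^{\iota(\nu_\tsr C)}\matr e^{\epsilon_k}(\hat{\vct u}_{0,\rmf}^{\epsilon_k})=\epsilon_k^{\iota(\nu_\tsr C)}\matr e_\smallpar(\hat{\vct u}_{0,\rmf}^{\epsilon_k})+\epsilon_k^{\theta(\nu_\tsr C)}\matr e_\vct N(\hat{\vct u}_{0,\rmf}^{\epsilon_k})$, where the second term converges weakly to $\matr e_\vct N(\hat{\vct u}_{0,\rmf}^\#)$ and the first to $\matr 0$ (indeed $\epsilon_k^{\iota(\nu_\tsr C)}\hat{\vct u}_{0,\rmf}^{\epsilon_k}=\epsilon_k\,\epsilon_k^{\theta(\nu_\tsr C)}\hat{\vct u}_{0,\rmf}^{\epsilon_k}\to\vct 0$ strongly in $\vct L^2(\Omega_\rmf^1)$, so the tangential gradient has weak limit $\matr 0$).

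\textbf{Energy convergence.} Testing \eqref{eq:fulldim-weak-trafo-d} with $\vct v_\pmf=\hat{\vct u}_{0,\pmf}^{\epsilon_k}$ and using $\epsilon^{\nu_\tsr C+1}=\epsilon^{2\iota(\nu_\tsr C)}$ to rewrite $\hat{\mathcal A}^{\epsilon_k}$ in terms of $\matr E_k$ yields $\langle\hat{\tsr C}^{\epsilon_k}\matr E_k,\matr E_k\rangle=\hat{\mathcal B}^{\epsilon_k}(\hat p_{0,\pmf}^{\epsilon_k},\hat{\vct u}_{0,\pmf}^{\epsilon_k})+\hat{\mathcal L}^{\epsilon_k}(\hat{\vct u}_{0,\pmf}^{\epsilon_k})\big|_{t=0}$. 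On the right-hand side the bulk contributions converge exactly as in \Cref{lem:bulkconv} (strong $L^\infty$-convergence of $\hat{\matr\alpha}_\pm^{\epsilon_k}$, strong convergence of $\hat p_{0,\pm}^{\epsilon_k}$ in $\Phi$ and of $\hat{\vct f}_\pm^{\epsilon_k}(0),\hat G_\pm^{\epsilon_k}$, weak convergence of $\nabla\hat{\vct u}_{0,\pm}^{\epsilon_k}$), while the fracture contributions are treated as in the proof of \Cref{lem:fracconv1}\,(ii),(iii): one rewrites the prefactors $\epsilon_k^{\matr\nu_{\!\matr\alpha}+\matr I}\nabla^{\epsilon_k}$ and $\epsilon_k^{\nu_{\!\vct f}+1}$ relative to the natural fracture scalings $\epsilon_k^{\theta(\nu_\tsr C)}$, $\epsilon_k^{\iota(\nu_\tsr C)}$, checks via \Cref{asm:nu}~\ref{asm:nuf},\ref{asm:nualpha} that the remaining exponents are non-negative, and combines the strong convergences of $\hat p_{0,\rmf}^{\epsilon_k}\hat{\matr\alpha}_\rmf^{\epsilon_k}$, $\hat{\vct f}_\rmf^{\epsilon_k}(0)$, $\hat G_\rmf^{\epsilon_k}$ with the weak convergences $\epsilon_k^{\theta(\nu_\tsr C)}\nablaperp\hat{\vct u}_{0,\rmf}^{\epsilon_k}\rightharpoonup\nablaperp\hat{\vct u}_{0,\rmf}^\#$ and $\epsilon_k^{\iota(\nu_\tsr C)}\nablapar\hat{\vct u}_{0,\rmf}^{\epsilon_k}\rightharpoonup\matr 0$. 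The resulting limit of the right-hand side is precisely the right-hand side of the limiting static mechanics problem (\eqref{eq:weak_mechlimit_C1_b} if $\nu_\tsr C=1$, resp.\ the $t=0$ equations of \eqref{eq:weak_mechlimit_nuCg1} added together if $\nu_\tsr C>1$, noting $\mathfrak T_\pm\hat{\vct u}_{0,\rmf}^\#=\vct 0$ by \Cref{prop:uftrace}) evaluated at $\vct v_\pmf=\hat{\vct u}_{0,\pmf}^\#$; moving the Biot and source terms to the other side and using $\langle\hat{\tsr C}_\rmf\matr e_\vct N(\cdot),\matr e_\vct N(\cdot)\rangle=\langle\hat{\matr C}_\rmf^{\!\vct N}\partial_{\!\vct N}(\cdot),\partial_{\!\vct N}(\cdot)\rangle$ (as in the proof of \Cref{lem:fracconv1}\,(i)), this equals $\langle\hat{\tsr C}^\#\matr E^\#,\matr E^\#\rangle$. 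Hence $\langle\hat{\tsr C}^{\epsilon_k}\matr E_k,\matr E_k\rangle\to\langle\hat{\tsr C}^\#\matr E^\#,\matr E^\#\rangle$. Here I would of course also need that $\hat{\vct u}_{0,\pmf}^\#$ indeed solves the limiting static problem, which follows by passing to the limit in \eqref{eq:fulldim-weak-trafo-d} for $\vct v_\pmf$ in the (dense) subspace $\vct V$, again via \Cref{lem:bulkconv,lem:fracconv1}.

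\textbf{Strong convergence.} Expanding $\langle\hat{\tsr C}^{\epsilon_k}(\matr E_k-\matr E^\#),\matr E_k-\matr E^\#\rangle$ and using $\hat{\tsr C}^{\epsilon_k}\to\hat{\tsr C}^\#$ in $L^\infty$ (so $\hat{\tsr C}^{\epsilon_k}\matr E^\#\to\hat{\tsr C}^\#\matr E^\#$ strongly in $\matr\Lambda$), the symmetry of $\hat{\tsr C}^{\epsilon_k}$, the weak convergence $\matr E_k\rightharpoonup\matr E^\#$, and the energy convergence of the previous step, each of the three terms tends to $\langle\hat{\tsr C}^\#\matr E^\#,\matr E^\#\rangle$, so the whole expression tends to zero; the uniform ellipticity from \Cref{asm:eps}~\ref{asm:C_eps} then gives $\matr E_k\to\matr E^\#$ strongly in $\matr\Lambda$. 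The bulk part yields $\matr e(\hat{\vct u}_{0,\pm}^{\epsilon_k})\to\matr e(\hat{\vct u}_{0,\pm}^\#)$ in $\matr L^2(\Omega_\pm^0)$, and since both functions lie in $\vct H^1_{0,\rho_\pm^0}(\Omega_\pm^0)$, Korn's inequality upgrades this to $\hat{\vct u}_{0,\pm}^{\epsilon_k}\to\hat{\vct u}_{0,\pm}^\#$ in $\vct H^1(\Omega_\pm^0)$, the first convergence in \eqref{eq:strongu0}.

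\textbf{Fracture conclusions and main obstacle.} It remains to extract the separate statements in \eqref{eq:strongu0} from the strong convergence $\epsilon_k^{\iota(\nu_\tsr C)}\matr e^{\epsilon_k}(\hat{\vct u}_{0,\rmf}^{\epsilon_k})=\epsilon_k^{\iota(\nu_\tsr C)}\matr e_\smallpar(\hat{\vct u}_{0,\rmf}^{\epsilon_k})+\epsilon_k^{\theta(\nu_\tsr C)}\matr e_\vct N(\hat{\vct u}_{0,\rmf}^{\epsilon_k})\to\matr e_\vct N(\hat{\vct u}_{0,\rmf}^\#)$ in $\matr L^2(\Omega_\rmf^1)$. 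My plan is to decompose $\mathbb R^{n\times n}_{\mathrm{sym}}$ orthogonally into the $\vct N\vct N^\rmt$-line, the tangential--normal coupling subspace, and the purely tangential block, observe that $\matr e_\smallpar(\vct v)$ carries no $\vct N\vct N^\rmt$-component (since $\matr e_\smallpar(\vct v)\vct N=\tfrac12\nablapar(\vct N\cdot\vct v)$ is tangential) while $\matr e_\vct N(\vct v)$ carries no purely tangential component, and argue as in \cite{hoerl24}, also invoking the fracture-interior limit equation \eqref{eq:weak_mechlimit_nuCg1_b} (with coefficient $\hat{\matr C}_\rmf^{\!\vct N}$, which is elliptic by \Cref{lem:eff1}) and, for the $\vct L^2$-part, the fundamental theorem of calculus in the normal direction together with the already established strong convergence of the bulk traces on $\gamma_\pm^1$. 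This last, fracture-interior step is the main obstacle: because $\vct H^1_{\!\vct N}(\Omega_\rmf^1)$ does not embed compactly into $\vct L^2(\Omega_\rmf^1)$ and the splitting $\matr e^\epsilon=\matr e_\smallpar+\epsilon^{-1}\matr e_\vct N$ is \emph{not} pointwise Frobenius-orthogonal, the tangential and normal contributions cannot be separated by a soft compactness argument and one must rely on the structural orthogonality of $\nablapar$ and $\nablaperp$ together with the coercivity built into \Cref{lem:korn_decomp}.
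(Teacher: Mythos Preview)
Your approach is essentially the same as the paper's: both test \eqref{eq:fulldim-weak-trafo-d} with the solution itself, pass to the limit on the right-hand side using the data convergences and the already-established weak convergences, identify the limit via the limit static problem \eqref{eq:weak_mechlimit_C1_b} resp.\ \eqref{eq:weak_mechlimit_nuCg1}, and upgrade weak to strong via energy convergence and ellipticity of~$\hat{\tsr{C}}_\pmf$. The one place where the paper is more efficient is precisely your ``main obstacle''. Rather than working with the aggregated strain $\matr{E}_k$ and trying to separate the tangential and normal fracture contributions \emph{a posteriori}, the paper introduces from the outset the inner-product norm
\[
\normiii{(\vct{v}_\pmf,\matr{M})}^2:=\hat{\mathcal{A}}^0_\rmb(\vct{v}_\pm,\vct{v}_\pm)+\bigl\langle\hat{\tsr{C}}_\rmf\bigl(\matr{e}_{\!\vct{N}}(\vct{v}_\rmf)+\matr{M}\bigr),\matr{e}_{\!\vct{N}}(\vct{v}_\rmf)+\matr{M}\bigr\rangle_{\Omega_\rmf^1}
\]
on the product $\vct{H}^1(\Omega_\pm^0)\times\vct{H}^1_{\!\vct{N}}(\Omega_\rmf^1)\times\matr{L}^2_{\smallpar,\mathrm{sym}}(\Omega_\rmf^1)$, where $\matr{L}^2_{\smallpar,\mathrm{sym}}:=\{\matr{M}\in\matr{L}^2(\Omega_\rmf^1):\matr{M}\text{ symmetric},\ \matr{M}\vct{N}=0\}$. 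Evaluating it at $\bigl(\fracfac{\epsilon_k^{\theta(\nu_\tsr{C})}}\hat{\vct{u}}_{0,\pmf}^{\epsilon_k},\ \epsilon_k^{\iota(\nu_\tsr{C})}\matr{e}_\smallpar(\hat{\vct{u}}_{0,\rmf}^{\epsilon_k})\bigr)$ reproduces, up to an $o(\epsilon_k)$ term from $\hat{\tsr{C}}^{\epsilon_k}\to\hat{\tsr{C}}$, exactly the energy $\hat{\mathcal{A}}^{\epsilon_k}(\hat{\vct{u}}_{0,\pmf}^{\epsilon_k},\hat{\vct{u}}_{0,\pmf}^{\epsilon_k})$ you compute. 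The structural point that dissolves your obstacle is that $\matr{e}_{\!\vct{N}}(\vct{v}_\rmf)$ lives entirely in the $\vct{N}$-row/column while any $\matr{M}\in\matr{L}^2_{\smallpar,\mathrm{sym}}$ lives in the purely tangential block; the paper asserts this makes $\normiii{\cdot}$ an equivalent norm on the product, so that norm convergence together with the weak convergences gives strong convergence of all three factors simultaneously---no separate off-diagonal analysis, no appeal to \eqref{eq:weak_mechlimit_nuCg1_b} or fundamental-theorem arguments is needed.
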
%
\begin{proof}
We define the space
\begin{align}
\label{eq:L2par}
 \matr{L}^2_{\smallpar , \mathrm{sym}} (\Omega_\rmf^1 ) := \{ \matr{M} \in \matr{L}^2 (\Omega_\rmf^1 ) \ \vert\ \matr{M} \text{ is symmetric with } \matr{M}\vct{N} = 0 \text{ a.e.}\} 
\end{align}
as a subspace of~$\matr{L}^2 (\Omega_\rmf^1)$. Then, as a consequence of \Cref{asm:eps}~\ref{asm:C_eps}, 
\begin{align}
\label{eq:normiii}
\normiii[\big]{ ( \vct{v}_\pmf , \matr{M} ) }^2 := \mathcal{A}_\rmb^0 ( \vct{v}_\pm , \vct{v}_\pm ) + \bigl\langle \hat{\tsr{C}}_\rmf ( \matr{e}_{\!\vct{N}} (\vct{v}_\rmf ) + \matr{M} ) , \matr{e}_{\!\vct{N}} (\vct{v}_\rmf ) + \matr{M} \bigr\rangle_{\Omega_\rmf^1 }
\end{align}
defines an equivalent norm on~$\vct{H}^1 (\Omega_\pm^0 ) \times \vct{H}^1_{\!\vct{N}} (\Omega_\rmf^1 ) \times \matr{L}^2_{\smallpar , \mathrm{sym}} (\Omega_\rmf^1)$. 

\Cref{cor:conv} already implies that the convergences in \cref{eq:strongu0} hold weakly.
In addition, as a consequence of the \Cref{lem:poincare_decomp,lem:korn_decomp,lem:apriori_u0}, we have 
\begin{align*}
\norm[\big]{\fracfac[\big]{\epsilon^{\theta(\nu_\tsr{C} )\! }} \hat{\vct{u}}_\pmf^\epsilon }_{\vct{L}^2 (\Omega_\rmf^1 )} + \norm[\big]{\fracfac[\big]{\epsilon^{\iota ( \nu_\tsr{C} )\! }} \nablapar \hat{\vct{u}}_\pmf^\epsilon }_{\vct{L}^2 (\Omega_\rmf^1 )} \lesssim 1
\end{align*}
and hence $\epsilon^{\iota ( \nu_\tsr{C})} \nablapar \hat{\vct{u}}_{0, \rmf}^{\epsilon_k} \rightharpoonup \matr{0} $ in $\vct{L}^2 (\Omega_\rmf^1 ) $.
In particular, using \Cref{lem:G} and \Cref{asm:eps}~\ref{asm:alpha_eps}, \ref{asm:f_eps}, and~\ref{asm:p0_eps}, we obtain
\begin{align*}
\bigl\langle\bigl( \hat{p}_{0,\pm}^{\epsilon_k} + \hat{G}_\pm^{\epsilon_k}  \bigr) \hat{\matr{\alpha}}_\pm^{\epsilon_k} , \nabla \hat{\vct{u}}_{0,\pm}^{\epsilon_k} \bigr\rangle_{\Omega_\pm^0} & \rightarrow \bigl\langle \bigl( \hat{p}_{0, \pm} + \hat{G}_\pm \bigr) \hat{\matr{\alpha}}_\pm , \nabla \hat{\vct{u}}_{0,\pm}^\# \bigr\rangle_{\Omega_\pm^0} , \\
\bigl\langle \epsilon_k^{\matr{\nu}_{\!\matr{\alpha}} + \matr{I}} \bigl( \hat{p}_{0,\rmf}^{\epsilon_k} + \hat{G}_\rmf^{\epsilon_k}  \bigr) \hat{\matr{\alpha}}_\rmf^{\epsilon_k} , \nabla^\epsilon \hat{\vct{u}}_{0,\rmf}^{\epsilon_k} \bigr\rangle_{\Omega_\rmf^1} & \rightarrow \bigl\langle \bigl( \hat{p}_{0, \rmf} + \hat{G}_\rmf \bigr) \hat{\vct{\alpha}}_\rmf^\eff \! , \partial_{\!\vct{N}} \hat{\vct{u}}_{0,\rmf}^\# \bigr\rangle_{\Omega_\rmf^1} , \\
\bigl\langle \hat{\vct{f}}_\pmf^{\epsilon_k} (0) , \hat{\vct{u}}_{0,\pmf}^{\epsilon_k} \bigr\rangle & \rightarrow \bigl\langle \hat{\vct{f}}_\pmf^{\fracfac{\! \eff\! } } (0) , \hat{\vct{u}}_{0,\pmf}^\# \bigr\rangle .
\end{align*}
Thus, with \cref{eq:fulldim-weak-trafo-d} and \Cref{asm:eps}~\ref{asm:C_eps}, we find
\begin{align*}
&\normiii[\big]{ \bigl( \fracfac[\big]{\epsilon_k^{\theta ( \nu_\tsr{C} )\! }}  \hat{\vct{u}}_{0,\pmf}^{\epsilon_k} ,\, \epsilon_k^{\iota ( \nu_\tsr{C} )} \matr{e}_\smallpar ( \hat{\vct{u}}_{0, \rmf }^{\epsilon_k} ) \bigr) }^2 +\smallo (\epsilon_k ) \\
&\hspace{2cm}= \hat{\mathcal{A}}^{\epsilon_k} \bigl( \hat{\vct{u}}_{0, \pmf}^{\epsilon_k} , \hat{\vct{u}}_{0, \pmf}^{\epsilon_k} \bigr) = \hat{\mathcal{B}}^{\epsilon_k} \bigl( \hat{p}_{0, \pmf}^{\epsilon_k} , \hat{\vct{u}}_{0, \pmf}^{\epsilon_k} \bigr) + \hat{\mathcal{L}}^{\epsilon_k} \bigl( \hat{\vct{u}}_{0, \pmf}^{\epsilon_k} \bigr) \bigr\vert_{t=0} \\
&\hspace{2cm}= \bigl\langle \fracfac[\big ]{\epsilon_k^{\matr{\nu}_{\!\matr{\alpha}} + \matr{I}}} \bigl( \hat{p}_{0,\pmf}^{\epsilon_k} + \hat{G}_\pmf^{\epsilon_k}  \bigr) \hat{\matr{\alpha}}_\pmf^{\epsilon_k} , \nablaeps \hat{\vct{u}}_{0,\pmf}^{\epsilon_k} \bigr\rangle + \bigl\langle \hat{\vct{f}}_\pmf^{\epsilon_k} (0) , \hat{\vct{u}}_{0,\pmf}^{\epsilon_k} \bigr\rangle \\
&\hspace{2cm}\rightarrow \hat{\mathcal{A}}_\rmb^0 \bigl( \hat{\vct{u}}_{0,\pm}^\# , \hat{\vct{u}}_{0,\pm}^\# \bigr) + \bigl\langle \hat{\matr{C}}^{\!\vct{N}}_\rmf \partial_{\!\vct{N}} \hat{\vct{u}}_{0,\rmf}^\#, \partial_{\!\vct{N}}  \hat{\vct{u}}_{0,\rmf}^\# \bigr\rangle_{\Omega_\rmf^1} =  \normiii[\big]{ \bigl(   \hat{\vct{u}}_{0,\pmf}^\# ,\, \matr{0}\bigr) }^2
\end{align*}
as $k \rightarrow \infty$, where we have used either \cref{eq:weak_mechlimit_C1} if $\nu_\tsr{C} = 1$ or \cref{eq:weak_mechlimit_nuCg1} if $\nu_\tsr{C} > 1$.
\end{proof}

\subsubsection{Case \texorpdfstring{$\nu_\matr{K} < -1$}{nuK<-1}} 
\label{sec:4.3.1}
For $\nu_\matr{K} < -1$, we obtain a perfectly conductive fracture. 
Fluctuations of the pressure head inside the fracture are instantaneously equalized, i.e., the pressure head inside the fracture is a constant. 
The set~$W \subset \mathbb{R}$ of admissible constants for the fracture pressure head is given by
\begin{align}
W := \bigl\{ \phi^\ast \in \mathbb{R} \ \big\vert\ \exists\, \phi_\pmf \in \Phi \text{ with } \phi_\rmf \equiv \phi^\ast \bigr\} 
\end{align}
and, depending on the boundary conditions, we either have $W = \{ 0 \}$ or $W = \mathbb{R}$.

The strong formulation of the limit problem for $\nu_\matr{K} < -1$ now reads as follows with $p_\pm$, $p_\gamma$, and $\vct{u}_\pmf$ corresponding to the limit functions~$\smash{\hat{p}_\pm^\#}$, $\mathfrak{A}_\rmf \smash{\hat{p}_\rmf^\#}$, and $\smash{\hat{\vct{u}}_\pmf^\#}$ from \Cref{cor:conv}.

Find $p_\pm \colon \Omega_\pm^0 \times I \rightarrow \mathbb{R}$, $p_\gamma \colon I \rightarrow W$, and $\vct{u}_\pmf \colon \Omega_\pmf^\odot \times I \rightarrow \mathbb{R}^n$  such that 
\begin{subequations}
\begin{alignat}{2}
p_\pm &\equiv p_\gamma \quad\enspace &&\text{on } \gamma \times I\\
\intertext{and the bulk limit problem~\eqref{eq:bulklimit}, as well as either the mechanical limit problem \eqref{eq:mechlimit_nuC1} if $\nu_\tsr{C } = 1$ or \eqref{eq:mechlimit_nuCg1} if $\nu_\tsr{C} > 1$ are satisfied.
Moreover, if $W = \mathbb{R}$, we have}
A \bigl( \mathfrak{A}_\rmf \hat{\omega}_\rmf^\mathrm{eff} \bigr) \partial_t p_\gamma -  \int_\gamma \jump{\hat{\matr{K}}\nabla p \cdot \!\vct{N} } \,\rmd S + \bigl\langle  \hat{\vct{\alpha}}_\rmf^\mathrm{eff}\! , \partial_{\!\vct{N}} \partial_t \vct{u}_\rmf \bigr\rangle_{\Omega_\rmf^1} &= A\, \mathfrak{A}_\rmf \hat{q}_\rmf^\mathrm{eff} \quad\enspace &&\text{in }  I , \\
p_\gamma (0) &= \mathfrak{A}_\rmf \hat{p}_{0 , \rmf } \quad\enspace &&\text{on } \gamma ,
\end{alignat}%
\label{eq:Klm1}%
\end{subequations}%
where $A := \int_\gamma a \,\rmd S $ and $\jump{\cdot}$ denotes the jump operator from \cref{eq:jump}.

Further, with the function space~$\Phi_{<-1}$ defined by
\begin{align}
\Phi_{<-1} := \Bigl\{ (\phi_\pm , \phi_\gamma ) \in H^1_{0 , \rho^0_{\pm , \mathrm{D}}} \! (\Omega_\pm^0 ) \times W \ \Big\vert\ \phi_\pm \bigr\vert_\gamma \equiv \phi_\gamma \Bigr\}  ,
\end{align}
a weak formulation of \cref{eq:Klm1} is given by the following problem.

Find $\vct{u}_\pmf \in H^1 ( I ; \vct{V}^\# ) $ if $\nu_\tsr{C} = 1$ or $\vct{u}_\pmf \in H^1 ( I ; \vct{V}_{>1} )$ if $\nu_\tsr{C} > 1$, as well as $p_\pm \in H^1 ( I ; L^2 (\Omega_\pm^0 ) )$ and $p_\gamma \in  H^1 ( I )$ with $ ( p_\pm , p_\gamma ) \in L^2 ( I ; \Phi_{<-1} )$ and $p_\gamma (0) = \mathfrak{A}_\rmf \hat{p}_{0, \rmf }$ such that
\begin{equation}
\label{eq:weak_C1_Klm1}
\begin{multlined}[c][0.875\displaywidth]
\hat{\mathcal{B}}_\rmb^0 ( \phi_\pm , \partial_t \vct{u}_\pm ) + \hat{\mathcal{C}}_\rmb^0 ( \partial_t p_\pm , \phi_\pm ) + \hat{\mathcal{D}}_\rmb^0 ( p_\pm , \phi_\pm ) + A \bigl( \mathfrak{A}_\rmf \hat{\omega}_\rmf^\mathrm{eff} \bigr) \partial_t p_\gamma \phi_\gamma   \\
 + \bigl\langle \phi_\gamma \hat{\vct{\alpha}}_\rmf^\mathrm{eff} , \partial_{\!\vct{N}} \partial_t \vct{u}_\rmf  \bigr\rangle_{\Omega_\rmf^1  } = \bigl\langle \hat{q}_\pm , \phi_\pm \bigr\rangle_{\Omega_\pm^0  } + A \bigl( \mathfrak{A}_\rmf \hat{q}_\rmf^\mathrm{eff} \bigr) \phi_\gamma 
\end{multlined}
\end{equation}
holds for all $(\phi_\pm , \phi_\gamma ) \in \Phi_{<-1}$ and either the mechanics limit problem~\eqref{eq:weak_mechlimit_C1} if $\nu_\tsr{C} = 1$ or \eqref{eq:weak_mechlimit_nuCg1} if $\nu_\tsr{C} > 1$ is satisfied. 

\begin{remark} \label{rem:df}
The limit problem~\eqref{eq:weak_C1_Klm1} is a discrete fracture model with respect to the pressure head but still depends on the displacement vector~$\vct{u}_\rmf$ inside the full-dimensional fracture~$\Omega_\rmf^1$.
However, given the additional assumption that $\hat{\vct{\alpha}}_\rmf^\mathrm{eff}$ is constant in normal direction (not only piecewise constant), i.e., 
\begin{align*}
\hat{\vct{\alpha}}_\rmf^\mathrm{eff} ( \vct{y} + s\vct{N}) = (\mathfrak{A}_{\!\vct{N}} \hat{\vct{\alpha}}_\rmf^\mathrm{eff} ) (\vct{y} )
\end{align*}
for almost all~$\vct{y} + s\vct{N} \in \Omega_\rmf^1$ with $\vct{y} \in \gamma$ and $s \in (-a_- (\vct{y} ) , a_+ (\vct{y} ) )$, we can rewrite the corresponding term in \cref{eq:weak_C1_Klm1} as 
\begin{align}
\begin{split}
\bigl\langle \phi_\gamma \hat{\vct{\alpha}}_\rmf^\mathrm{eff} , \partial_{\!\vct{N}} \partial_t \vct{u}_\rmf  \bigr\rangle_{\Omega_\rmf^1  } &= \int_\gamma \phi_\gamma \hat{\vct{\alpha}}_\rmf^\mathrm{eff}  \cdot \int_{-a_-(\vct{y})}^{a_+ (\vct{y} ) } \frac{\rmd }{\rmd s} \partial_t \vct{u}_\rmf (\vct{y} + s \vct{N} ) \,\rmd s \, \rmd \vct{y} \\
&= \begin{cases} \bigl\langle \phi_\gamma \hat{\vct{\alpha}}_\rmf^\mathrm{eff} , \jump{\partial_t \vct{u}} \bigr\rangle_{\gamma } &\text{if } \nu_\tsr{C} = 1 , \\ 
0 &\text{if } \nu_\tsr{C} > 1.
\end{cases}
\end{split}
\end{align}
Thus, in this case, the model is fully reduced to a discrete fracture model and we can solve the reduced mechanics problem~\eqref{eq:weak_mechlimit_discrete_C1} instead of \eqref{eq:weak_mechlimit_C1} if $\nu_\tsr{C} = 1$ and do not have to solve the fracture part~\eqref{eq:weak_mechlimit_nuCg1_b} of the mechanics problem~\eqref{eq:weak_mechlimit_nuCg1} if $\nu_\tsr{C} > 1$.
\end{remark}

We can now prove the following convergence theorem.

\begin{theorem} \label{thm:C1Klm1} 
Let $\nu_\tsr{C} \ge 1$, $\nu_\matr{K} < -1$, and assume that the \Cref{asm:eps,asm:nu} hold. 
\begin{enumerate}[label=(\roman*)]
\item  $( \hat{p}_\pm^\# , \mathfrak{A}_\rmf \hat{p}_\rmf^\# ) \in L^2 ( I ; \Phi_{<-1} ) \cap H^1 ( I ; L^2 (\Omega_\pm^0 ) \times \mathbb{R} )$, as well as $\smash{\hat{\vct{u}}_\pmf^\#} \in H^1 ( I ; \vct{V}^\# ) $ if $\nu_\tsr{C} = 1$ and $\smash{\hat{\vct{u}}_\pmf^\#} \in H^1 ( I ; \vct{V}_{>1} ) $ if $\nu_\tsr{C} > 1$, are the unique weak solution of the limit problem in \cref{eq:weak_C1_Klm1}. 
In particular, we have $\smash{\hat{p}_\rmf^\# (\vct{x} )} = \mathfrak{A}_\rmf \smash{\hat{p}_\rmf^\#} \in W$ for almost all $\vct{x} \in \Omega_\rmf^1 $. 
Moreover, the weak and weak-$\ast$ convergences in \Cref{cor:conv} hold for the entire sequences $\{ \hat{\vct{u}}_\pmf^\epsilon \}_{\epsilon \in ( 0,1]}$ and $\{ \hat{p}_\pmf^\epsilon \}_{\epsilon \in ( 0,1]}$.
\item Let $\iota ( \nu ) := \tfrac{1}{2} ( \nu + 1)$ and $\theta ( \nu ) := \tfrac{1}{2} ( \nu - 1)$, as well as $2 \matr{e}_\smallpar (\vct{v} ) := \nablapar \vct{v} + (\nablapar \vct{v} )^\rmt $.
Then, as $\epsilon \rightarrow 0$, the following strong convergences hold.
\end{enumerate}
\begin{subequations}
\begin{alignat}{3}
\hat{\vct{u}}_\pm^\epsilon &\rightarrow \hat{\vct{u}}_\pm^\# \quad\ &&\text{in } L^2 \bigl( I ; \vct{H}^1 ( \Omega_\pm^0 ) \bigr) , \\
\epsilon^{\theta ( \nu_\tsr{C} ) } \hat{\vct{u}}_\rmf^\epsilon &\rightarrow \hat{\vct{u}}_\rmf^\# \quad\ &&\text{in } L^2 \bigl( I ; \vct{H}^1_{\!\vct{N}} ( \Omega_\rmf^1 ) \bigr) , \\
\epsilon^{\iota ( \nu_\tsr{C} ) } \matr{e}_\smallpar (\hat{\vct{u}}_\rmf^\epsilon ) &\rightarrow \matr{0} \quad\ &&\text{in } L^2 \bigl( I ; \matr{L}^2 (\Omega_\rmf^1 ) \bigr) , \\
\hat{p}_\pmf^\epsilon &\rightarrow \hat{p}_\pmf^\# \quad\ &&\text{in } L^2  \bigl( I ; H^1 ( \Omega_\pmf^\odot ) \bigr) .
\end{alignat}%
\label{eq:strongconv_C1Klm1_a}%
\end{subequations}%
\vspace*{-12pt}
\begin{enumerate}[resume,label=(\roman*)]
\item Moreover, if we additionally have $2 \nu_q \ge \nu_\omega -1$ in \Cref{asm:nu}~\ref{asm:nuq}, as well as $\epsilon^{\iota ( \nu_\matr{K} ) } \nabla^\epsilon \hat{p}_{0,\rmf}^\epsilon \rightarrow \vct{0}$ in~$L^2 ( I ; \vct{L}^2 ( \Omega_\rmf^1 ) ) $ in \Cref{asm:eps}~\ref{asm:p0_eps}, then also the following strong convergences hold true as $\epsilon \rightarrow 0$.
\end{enumerate}
\vspace*{-12pt}
\begin{subequations}
\begin{alignat}{3}
\hat{\vct{u}}_\pm^\epsilon &\rightarrow \hat{\vct{u}}_\pm^\# \quad\ &&\text{in } H^1 \bigl( I ; \vct{H}^1 ( \Omega_\pm^0 ) \bigr) , \\
\epsilon^{\theta ( \nu_\tsr{C} ) } \hat{\vct{u}}_\rmf^\epsilon &\rightarrow \hat{\vct{u}}_\rmf^\# \quad\ &&\text{in } H^1 \bigl( I ; \vct{H}^1_{\!\vct{N}} ( \Omega_\rmf^1 ) \bigr) , \\
\epsilon^{\iota ( \nu_\tsr{C} ) } \matr{e}_\smallpar (\hat{\vct{u}}_\rmf^\epsilon ) &\rightarrow \matr{0} \quad\ &&\text{in } H^1 \bigl( I ; \matr{L}^2 (\Omega_\rmf^1 ) \bigr) , \\
\hat{p}_\pm^\epsilon &\rightarrow \hat{p}_\pm^\# \quad\ &&\text{in } H^1 \bigl( I ; L^2 ( \Omega_\pm^0 ) \bigr) , \\
\hat{p}_\rmf^\epsilon &\rightarrow \hat{p}_\rmf^\# \quad\ &&\text{in } H^1 \bigl( I ; L^2 ( \Omega_\rmf^1 ) \bigr) \quad \text{ if } \nu_\omega = -1.
\end{alignat}%
\label{eq:strongconv_C1Klm1_b}%
\end{subequations}%
\vspace*{-12pt}
\end{theorem}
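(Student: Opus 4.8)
The plan is to pass to the limit along the subsequence $\{\epsilon_k\}$ of \Cref{cor:conv} in the transformed weak problem~\eqref{eq:fulldim-weak-trafo}, to identify the coupled limit system, to prove its uniqueness, and finally to upgrade the weak convergences to strong ones by an energy argument. For the mechanics equation~\eqref{eq:fulldim-weak-trafo-b} I would test with $\vct{v}_\pmf \in \vct{V}$; since $\nu_\matr{K} < -1 \le 1$, \Cref{lem:bulkconv} and \Cref{lem:fracconv1}, together with the effective quantities $\hat{\vct{\alpha}}_\rmf^\mathrm{eff}$ and $\hat{\vct{f}}_\rmf^\mathrm{eff}$ from~\eqref{eq:eff1}, allow passing to the limit in every term. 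For $\nu_\tsr{C} = 1$ this yields~\eqref{eq:weak_mechlimit_C1_a} on $\vct{V}$ and hence, by density, on $\vct{V}^\#$; for $\nu_\tsr{C} > 1$ all fracture contributions of $\hat{\mathcal{A}}^\epsilon$, $\hat{\mathcal{B}}^\epsilon$, $\hat{\mathcal{L}}^\epsilon$ vanish and one obtains the bulk part of~\eqref{eq:weak_mechlimit_nuCg1}, while multiplying~\eqref{eq:fulldim-weak-trafo-b} by $\epsilon^{(1-\nu_\tsr{C})/2}$ and testing with $\vct{v}_\pmf \in \vct{V}$ that vanish in the bulk domains produces the fracture equation~\eqref{eq:weak_mechlimit_nuCg1_b} through the $\epsilon^{(\nu_\tsr{C}+3)/2}$-scaled convergences of \Cref{lem:fracconv1}. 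For the flow equation~\eqref{eq:fulldim-weak-trafo-c} I would test with $\phi_\pmf \in \Phi$ whose fracture component $\phi_\rmf \equiv \phi_\gamma$ is constant, so that $\nabla^\epsilon \phi_\rmf = \vct{0}$ eliminates the singular term $\langle \epsilon^{\nu_\matr{K}+1}\hat{\matr{K}}_\rmf^\epsilon \nabla^\epsilon \hat{p}_\rmf^\epsilon , \nabla^\epsilon \phi_\rmf \rangle$; then \Cref{lem:bulkconv,lem:fracconv2} and the identity $\mathfrak{A}_\rmf f = A^{-1}\int_{\Omega_\rmf^1} f \,\rmd\vct{x}$ with $A = \int_\gamma a\,\rmd S$ give exactly~\eqref{eq:weak_C1_Klm1}. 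That $\hat{p}_\rmf^\#$ equals the constant $\mathfrak{A}_\rmf\hat{p}_\rmf^\# \in W$ and that $(\hat{p}_\pm^\#,\mathfrak{A}_\rmf\hat{p}_\rmf^\#) \in L^2(I;\Phi_{<-1})$ follow from \Cref{cor:const} and the interface-continuity proposition preceding \Cref{lem:trace_H1N}; the initial conditions in the limit problems follow by passing to the limit in~\eqref{eq:fulldim-weak-trafo-d} using the strong convergence of \Cref{lem:u0conv} and \Cref{asm:eps}~\ref{asm:p0_eps}.

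For uniqueness of the coupled limit system I would take the difference of two solutions, test the mechanics equation with the time derivative of the displacement difference and the flow equation with the pressure difference so that the $\hat{\mathcal{B}}_\rmb^0$-coupling and the fracture Biot coupling cancel, integrate in time, and invoke coercivity of $\hat{\mathcal{A}}_\rmb^0$ and $\hat{\mathcal{C}}_\rmb^0$, ellipticity of $\langle \hat{\matr{C}}_\rmf^{\!\vct{N}}\,\cdot\,,\,\cdot\,\rangle_{\Omega_\rmf^1}$ (\Cref{lem:eff1}), positivity of $\hat{\mathcal{D}}_\rmb^0$ and of the storage term in the $p_\gamma$-equation, together with standard Poincaré and Korn inequalities (for the fracture displacement exploiting the structure of $\vct{V}^\#$ resp.\ $\vct{V}_{>1}$), and conclude by Grönwall's inequality. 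Uniqueness then promotes the subsequential weak and weak-$\ast$ convergences of \Cref{cor:conv} to convergences of the entire sequences $\{\hat{\vct{u}}_\pmf^\epsilon\}$ and $\{\hat{p}_\pmf^\epsilon\}$, since every subsequence possesses a further subsequence converging to the same limit.

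For the strong convergences I would use the energy identity obtained by testing~\eqref{eq:fulldim-weak-trafo-b} with $\partial_t\hat{\vct{u}}_\pmf^\epsilon$ and~\eqref{eq:fulldim-weak-trafo-c} with $\hat{p}_\pmf^\epsilon$, adding (so the $\hat{\mathcal{B}}^\epsilon$-coupling cancels) and integrating over $(0,T)$; the limit system satisfies the analogous identity. After removing $\partial_t$ from $\hat{\vct{u}}^\epsilon$ in the term $\hat{\mathcal{L}}^\epsilon(\partial_t\hat{\vct{u}}^\epsilon)$ by integration by parts in time, the right-hand side of the $\epsilon$-identity converges to that of the limit identity thanks to the strong data convergences in \Cref{asm:eps} and \Cref{lem:G} and the weak solution convergences, while the initial energies converge by \Cref{lem:u0conv}. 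Weak lower semicontinuity of the quadratic energy and dissipation functionals — via $\langle\hat{\tsr{C}}_\pmf^\epsilon\matr{M},\matr{M}\rangle \ge 2\langle\hat{\tsr{C}}_\pmf^\epsilon\matr{M},\matr{M}^\#\rangle - \langle\hat{\tsr{C}}_\pmf^\epsilon\matr{M}^\#,\matr{M}^\#\rangle$ combined with the $\matr{L}^\infty$-convergences of $\hat{\tsr{C}}_\pmf^\epsilon$, $\hat{\matr{K}}_\pmf^\epsilon$, $\hat{\omega}_\pmf^\epsilon$ — then sandwiches the $\epsilon$-energies and dissipations between a $\liminf$ bounded below and a $\limsup$ bounded above by the corresponding limit values, forcing all quadratic quantities to converge. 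Combined with the weak convergences this gives the strong convergences~\eqref{eq:strongconv_C1Klm1_a}: the bulk pressure gradients and bulk strains converge strongly from the convergence of the $\hat{\mathcal{D}}_\rmb^0$- and $\hat{\mathcal{A}}_\rmb^0$-type contributions, the normal fracture displacement from the fracture part of the elastic energy, the fracture pressure additionally from the gradient decay of \Cref{prop:apriori} together with the strong bulk trace on $\gamma$, and the tangential contributions $\epsilon^{\iota(\nu_\tsr{C})}\matr{e}_\smallpar(\hat{\vct{u}}_\rmf^\epsilon)$ vanish directly by \Cref{prop:apriori}. Under the additional hypotheses $2\nu_q \ge \nu_\omega - 1$ in \Cref{asm:nu}~\ref{asm:nuq} and $\epsilon^{\iota(\nu_\matr{K})}\nabla^\epsilon\hat{p}_{0,\rmf}^\epsilon \to \vct{0}$ in \Cref{asm:eps}~\ref{asm:p0_eps}, the same scheme applied to the time-differentiated equations (testing~\eqref{eq:apriori_dtu} with $\partial_t\hat{\vct{u}}^\epsilon$ and~\eqref{eq:fulldim-weak-trafo-c} with $\partial_t\hat{p}^\epsilon$, as in Step~2 of the proof of \Cref{prop:apriori}) yields the strengthened convergences~\eqref{eq:strongconv_C1Klm1_b}; the extra hypotheses are precisely what makes the corresponding $\epsilon$-energy identity uniform and its initial and source terms convergent.

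The main obstacle I expect is this last step: one has to keep precise track of how the $\epsilon$-weighted fracture contributions degenerate — the $\epsilon^{\nu_\tsr{C}+1}$-scaled elasticity, whose only surviving part is the normal one $\langle\hat{\matr{C}}_\rmf^{\!\vct{N}}\partial_{\!\vct{N}}\hat{\vct{u}}_\rmf^\#,\partial_{\!\vct{N}}\hat{\vct{u}}_\rmf^\#\rangle_{\Omega_\rmf^1}$, the $\epsilon^{\matr{\nu}_{\!\matr{\alpha}}+\matr{I}}$-scaled Biot coupling, and the $\epsilon^{\nu_\omega+1}$-scaled storage term (which disappears when $\nu_\omega > -1$) — so that the limit energy identity matches exactly the limit of the $\epsilon$-energy identities and no tangential or cross contributions survive spuriously in the $\liminf$, and one has to justify both the time integration by parts and the lower-semicontinuity arguments for these quantities that are singular in $\epsilon$.
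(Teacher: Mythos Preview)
Your proposal is correct and follows essentially the same route as the paper: identical choices of test functions (constant $\phi_\rmf$ for the flow equation, rescaling by $\epsilon^{(1-\nu_\tsr{C})/2}$ for the fracture mechanics when $\nu_\tsr{C}>1$), the same uniqueness-by-energy argument, and the same ``energy identity plus weak lower semicontinuity'' scheme for the strong convergences. One refinement the paper makes that addresses precisely the obstacle you flag: rather than integrating the energy identity over $(0,T)$, it integrates over $(0,t)$ for every $t\in\bar I$, obtaining pointwise-in-$t$ convergence of the combined energy $\tfrac12\normiii{(\,\cdot\,)}^2+\tfrac12\norm{\cdot}_\mathrm{ii}^2+\int_0^t\norm{\cdot}_\mathrm{i}^2$; the $L^2(I;\vct{H}^1)$ convergence of the displacement then follows by dominated convergence using the $L^\infty(I;\cdot)$ bounds of \Cref{prop:apriori}. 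Integrating only to $T$ would give the dissipation convergence but not directly the $L^2(I)$ displacement convergence. Also, Gr\"onwall is not needed for uniqueness---the homogeneous energy identity yields zero directly, as in the proof of \Cref{thm:wellposed_biot_sd}---and the strong convergence $\epsilon^{\iota(\nu_\tsr{C})}\matr{e}_\smallpar(\hat{\vct{u}}_\rmf^\epsilon)\to\matr{0}$ does not follow ``directly'' from \Cref{prop:apriori} (which only gives boundedness) but from the same norm-convergence argument, with the tangential strain included as a component of the $\normiii{\cdot}$-tuple.
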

\begin{proof} 
\begin{enumerate}[label=(\roman*),wide]
\item We choose test functions $\phi_\pmf \in \Phi$ and $\vct{v}_\pmf \in \vct{V}$ with $\phi_\rmf \equiv \phi_\gamma \in W$ in \cref{eq:fulldim-weak-trafo} and apply the \Cref{lem:bulkconv,lem:fracconv1,lem:fracconv2} to pass to the limit~$k\rightarrow \infty$.
In the case $\nu_\tsr{C} > 1$, we proceed in a second step: We choose test functions~$\vct{v}_\pm = \vct{0}$ and $\vct{v}_\rmf = \epsilon^{ (1-\nu_\tsr{C}) /2} \bar{\vct{v}}_\rmf$ with $\bar{\vct{v}}_\rmf \in \vct{H}^1_0 (\Omega_\rmf^1 ) $ in \cref{eq:fulldim-weak-trafo-b} and use \Cref{lem:fracconv2} to take the limit~$k\rightarrow \infty$.
Then, additionally using \Cref{cor:const} and \Cref{prop:uftrace}, we find that $\smash{( \hat{p}_\pm^\# , \mathfrak{A}_\rmf \hat{p}_\rmf^\# )} \in L^2 ( I ; \Phi_{<-1} ) \cap H^1 ( I ; L^2 (\Omega_\pm^0 ) \times \mathbb{R} )$, as well as $\smash{\hat{\vct{u}}_\pmf^\#} \in H^1 ( I ; \vct{V}^\# ) $ if $\nu_\tsr{C} = 1$ and $\smash{\hat{\vct{u}}_\pmf^\#} \in H^1 ( I ; \vct{V}_{>1} ) $ if $\nu_\tsr{C} > 1$ solve the limit problem~\eqref{eq:weak_C1_Klm1}.
Further, using the \Cref{lem:eff1,lem:eff2}, the uniqueness of the solution follows in analogy to the proof of~\Cref{thm:wellposed_biot_sd} in Appendix~\ref{sec:A}.
This directly implies the weak convergence of the entire sequences $\{ \hat{\vct{u}}_\pmf^\epsilon \}_{\epsilon \in ( 0,1]}$ and $\{ \hat{p}_\pmf^\epsilon \}_{\epsilon \in ( 0,1]}$, as any weakly convergent subsequence must convergence to the same unique limit.

\item We define the norms 
\begin{subequations}
\begin{alignat}{2}
\label{eq:norm_i} \Phi &\rightarrow \mathbb{R}_0^+ , \quad  &\norm{\phi_\pmf }_\mathrm{i}^2 &:= \bigl\langle \hat{\matr{K}}_\pmf \nabla \phi_\pmf , \nabla \phi_\pmf \bigr\rangle , \\    
\label{eq:norm_ii} \Lambda &\rightarrow \mathbb{R}_0^+ , \quad  &\norm{\phi_\pmf }_\mathrm{ii}^2 &:= \bigl\langle \hat{\omega}_\pmf^{\fracfac{\!\mathrm{eff}\!}} \phi_\pmf ,  \phi_\pmf \bigr\rangle .
\end{alignat}
\end{subequations}
Note that $\norm{\cdot }_\mathrm{ii}$ is only a norm on~$\Lambda$ if $\nu_\omega = -1$. Otherwise, if $\nu_\omega > -1$, $\norm{\cdot }_\mathrm{ii}$ defines a norm on~$L^2 (\Omega_\pm^0 )$. 
In addition, we use the norm $\normiii{\cdot} $ on $\vct{H}^1 (\Omega_\pm^0 ) \times \vct{H}^1_{\!\vct{N}} (\Omega_\rmf^1 ) \times \matr{L}^2_{\smallpar , \mathrm{sym}} (\Omega_\rmf^1 )$ as defined in \cref{eq:normiii} with the space~$\matr{L}^2_{\smallpar , \mathrm{sym}}(\Omega_\rmf^1 )$ given by~\cref{eq:L2par}.

Next, we choose $\vct{v}_\pmf = \partial_t \hat{\vct{u}}_\pmf^\epsilon $ and $\phi_\pmf = \hat{p}_\pmf^\epsilon$ in \cref{eq:fulldim-weak-trafo} and integrate from $0$ to~$t\in\bar{I}$ to find
\begin{equation}
\label{eq:thm_C1Klm1_1}
\begin{multlined}[c][0.875\displaywidth]
\tfrac{1}{2} \hat{\mathcal{A}}^\epsilon \bigl( \hat{\vct{u}}_\pmf^\epsilon , \hat{\vct{u}}_\pmf^\epsilon \bigr) \bigr\vert_0^t + \tfrac{1}{2} \hat{\mathcal{C}}^\epsilon \bigl( \hat{p}_\pmf^\epsilon , \hat{p}_\pmf^\epsilon \bigr) \bigr\vert_0^t + \int_0^t \! \hat{\mathcal{D}}^\epsilon \bigl( \hat{p}_\pmf^\epsilon (\bar{t}) , \hat{p}_\pmf^\epsilon (\bar{t}) \bigr) \,\rmd\bar{t} \\
= \int_0^t \Bigl[ \hat{\mathcal{L}}^\epsilon \bigl( \partial_t \hat{\vct{u}}_\pmf^\epsilon (\bar{t}) \bigr) + \bigl\langle \fracfac{\epsilon^{\nu_q + 1}} \hat{q}_\pmf^\epsilon (\bar{t}) , \hat{p}_\pmf^\epsilon (\bar{t}) \bigr\rangle \Bigr] \,\rmd\bar{t} .
\end{multlined}
\end{equation}
In the same way, by choosing $\phi_\pm = \hat{p}_\pm^\#$, $\phi_\gamma = \mathfrak{A}_\rmf \hat{p}_\rmf^\#$ in \cref{eq:weak_C1_Klm1}, as well as $\vct{v}_\pmf = \partial_t \hat{\vct{u}}_\pmf^\#$ in \cref{eq:weak_mechlimit_C1} if $\nu_\tsr{C} = 1$ or in \cref{eq:weak_mechlimit_nuCg1} if $\nu_\tsr{C} > 1$, we obtain 
\begin{equation}
\label{eq:thm_C1Klm1_2}
\begin{multlined}[c][0.875\displaywidth]
\tfrac{1}{2} \hat{\mathcal{A}}^0_\rmb \bigl( \hat{\vct{u}}_\pm^\# , \hat{\vct{u}}_\pm^\# \bigr)\bigr\vert_0^t + \tfrac{1}{2} \bigl\langle \hat{\matr{C}}^{\!\vct{N}}_\rmf \partial_{\!\vct{N}} \hat{\vct{u}}_\rmf^\# \! , \partial_{\!\vct{N}} \hat{\vct{u}}_\rmf^\# \bigr\rangle_{\Omega_\rmf^1 } \bigr\vert_0^t  \\  + \tfrac{1}{2} \bigl\langle  \hat{\omega}_\pmf^{\fracfac{\! \eff\!}}  \hat{p}_\pmf^\# , \hat{p}_\pmf^\# \bigr\rangle \bigr\vert_0^t 
 + \int_0^t \hat{\mathcal{D}}^0_\rmb \bigl( \hat{p}_\pm^\# (\bar{t}) , \hat{p}_\pm^\#(\bar{t}) \bigr) \,\rmd \bar{t} 
= \int_0^t \Bigl[ \hat{\mathcal{L}}_\rmb^0 \bigl( \partial_t \hat{\vct{u}}_\pm^\# \bigr)(\bar{t}) \\ + \bigl\langle \hat{\vct{f}}_\rmf^\mathrm{eff} (\bar{t}), \partial_t \hat{\vct{u}}_\rmf^\# (\bar{t}) \bigr\rangle_{\Omega_\rmf^1 }  + \bigl\langle \hat{G}_\rmf \hat{\vct{\alpha}}_\rmf^\eff \! , \partial_{\!\vct{N}} \partial_t \hat{\vct{u}}_\rmf^\# ( \bar{t} ) \bigr\rangle_{\Omega_\rmf^1} + \bigl\langle \hat{q}_\pmf^{\fracfac{\!\mathrm{eff}\!}} (\bar{t}) , \hat{p}_\pmf^\# (\bar{t}) \bigr\rangle  \Bigr] \,\rmd\bar{t}
\end{multlined}
\end{equation}
after integrating from~$0$ to~$t\in\bar{I}$.
Consequently, for $t \in \bar{I}$, we have
\begin{align*}
&\int_0^t \norm[\big ]{\hat{p}_\pmf^\epsilon (\bar{t}) }_\mathrm{i}^2  \,\rmd \bar{t} + \tfrac{1}{2}\norm[\big ]{\hat{p}_\pmf^\epsilon (t) }_\mathrm{ii}^2 + \tfrac{1}{2}\normiii[\big ]{\bigl( \fracfac[\big]{\epsilon^{\theta ( \nu_\tsr{C} )\! } }\hat{\vct{u}}_\pmf^\epsilon (t)  , \, \epsilon^{\iota ( \nu_\tsr{C} ) } \matr{e}_\smallpar (\hat{\vct{u}}_\rmf^\epsilon (t) ) \bigr)  }^2  + \smallo ( \epsilon ) \\
&\quad \le \int_0^t \hat{\mathcal{D}}^\epsilon \bigl( \hat{p}_\pmf^\epsilon ( \bar{t}) ,  \hat{p}_\pmf^\epsilon ( \bar{t}) \bigr)  \,\rmd \bar{t} + \tfrac{1}{2} \hat{\mathcal{C}}^\epsilon \bigl( \hat{p}_\pmf^\epsilon (t) , \hat{p}_\pmf^\epsilon (t) \bigr) + \tfrac{1}{2} \hat{\mathcal{A}}^\epsilon \bigl( \hat{\vct{u}}_\pmf^\epsilon (t) , \hat{\vct{u}}_\pmf^\epsilon (t) \bigr)  \\
&\quad = \tfrac{1}{2} \hat{\mathcal{A}}^\epsilon \bigl( \hat{\vct{u}}_{0, \pmf}^\epsilon , \hat{\vct{u}}_{0, \pmf}^\epsilon \bigr) +  \tfrac{1}{2} \hat{\mathcal{C}}^\epsilon \bigl( \hat{p}_{0, \pmf}^\epsilon , \hat{p}_{0, \pmf}^\epsilon \bigr) 
+ \int_0^t \Bigl[ \hat{\mathcal{L}}^\epsilon \bigl( \partial_t \hat{\vct{u}}_\pmf^\epsilon  \bigr) + \bigl\langle \fracfac{\epsilon^{\nu_q + 1}} \hat{q}_\pmf^\epsilon , \hat{p}_\pmf^\epsilon  \bigr\rangle \Bigr] \,\rmd\bar{t} \\
&\quad \rightarrow \tfrac{1}{2} \Bigl[ \hat{\mathcal{A}}^0_\rmb \bigl( \hat{\vct{u}}_{0, \pm} , \hat{\vct{u}}_{0, \pm} \bigr) + \bigl\langle \hat{\matr{C}}^{\!\vct{N}}_\rmf \partial_{\!\vct{N}} \hat{\vct{u}}_{0,\rmf} , \partial_{\!\vct{N}} \hat{\vct{u}}_{0,\rmf}  \bigr\rangle_{\Omega_\rmf^1} +   \bigl\langle \hat{\omega}_\pmf^{\fracfac{\! \eff \!}}  \hat{p}_{0, \pmf} , \hat{p}_{0, \pmf} \bigr\rangle  \Bigr] \\ 
&\hspace{1.45cm} +  \int_0^t \Bigl[ \hat{\mathcal{L}}_\rmb^0 \bigl( \partial_t \hat{\vct{u}}_\pm^\# \bigr) + \bigl\langle \hat{\vct{f}}_\rmf^\mathrm{eff} , \partial_t \hat{\vct{u}}_\rmf^\#  \bigr\rangle_{\Omega_\rmf^1 } + \bigl\langle \hat{G}_\rmf \hat{\vct{\alpha}}_\rmf^\eff \! , \partial_{\!\vct{N}} \partial_t \hat{\vct{u}}_\rmf^\# \bigr\rangle_{\Omega_\rmf^1} + \bigl\langle \hat{q}_\pmf^{\fracfac{\!\mathrm{eff}\!}}  , \hat{p}_\pmf^\#  \bigr\rangle  \Bigr] \,\rmd\bar{t} \\
&\quad \le  \int_0^t \norm[\big ]{\hat{p}_\pmf^\# (\bar{t}) }_\mathrm{i}^2  \,\rmd \bar{t} + \tfrac{1}{2}\norm[\big ]{\hat{p}_\pmf^\# (t) }_\mathrm{ii}^2 + \tfrac{1}{2}\norm[\big ]{\bigl( \hat{\vct{u}}_\pmf^\# (t) , \matr{0} \bigr)  }_\mathrm{iii}^2 
\end{align*}
as $\epsilon \rightarrow 0$, where we have applied \Cref{prop:apriori} and \Cref{asm:eps}~\ref{asm:omega_eps}, \ref{asm:C_eps}, and~\ref{asm:K_eps} in the first step and \cref{eq:thm_C1Klm1_1} in the second step.
Besides, the third step follows from \Cref{asm:eps}, \Cref{lem:G}, \Cref{cor:conv}, and \Cref{lem:u0conv}, as well as the fact that $\epsilon^{\iota(\nu_\tsr{C} )} \nablapar \partial_t \hat{\vct{u}}_\rmf^\epsilon \rightharpoonup \vct{0}$ in $L^2 ( I ; \vct{L}^2 (\Omega_\rmf^1 ) ) $ as a consequence of \cref{eq:apriori_b} in \Cref{prop:apriori}.
The last step follows with \cref{eq:thm_C1Klm1_2}. 
Thus, with the weak lower semicontinuity of the norms, we obtain 
\begin{equation}
\label{eq:thm_C1Klm1_3}
\begin{multlined}[c][0.875\displaywidth]
\lim_{\epsilon\rightarrow 0 } \biggl[   \tfrac{1}{2}\normiii[\big ]{\bigl( \fracfac[\big]{\epsilon^{\theta ( \nu_\tsr{C} )\! } }\hat{\vct{u}}_\pmf^\epsilon (t)  , \, \epsilon^{\iota ( \nu_\tsr{C} ) } \matr{e}_\smallpar (\hat{\vct{u}}_\rmf^\epsilon (t) ) \bigr)  }^2 + \tfrac{1}{2}\norm[\big ]{\hat{p}_\pmf^\epsilon (t) }_\mathrm{ii}^2  \\
+ \int_0^t \norm[\big ]{\hat{p}_\pmf^\epsilon (\bar{t}) }_\mathrm{i}^2  \,\rmd \bar{t}  \bigg] =  \tfrac{1}{2}\normiii[\big ]{\bigl( \hat{\vct{u}}_\pmf^\# (t) , \matr{0} \bigr)  }^2 + \tfrac{1}{2}\norm[\big ]{\hat{p}_\pmf^\# (t) }_\mathrm{ii}^2  + \int_0^t \norm[\big ]{\hat{p}_\pmf^\# (\bar{t}) }_\mathrm{i}^2  \,\rmd \bar{t}  .
\end{multlined}
\end{equation}
\item  Considering \cref{eq:weak_C1_Klm1} with either \cref{eq:weak_mechlimit_C1_a} if $\nu_\tsr{C} = 1$ or \cref{eq:weak_mechlimit_nuCg1} if $\nu_\tsr{C} > 1$ yields
\begin{equation}
\label{eq:thm_C1Klm1_4}
\begin{multlined}[c][0.875\displaywidth]
\int_0^t \Bigl[ \hat{\mathcal{A}}^0_\rmb \bigl( \partial_t \hat{\vct{u}}_\pm^\# (\bar{t}),  \partial_t \hat{\vct{u}}_\pm^\# (\bar{t}) \bigr) + \bigl\langle \hat{\matr{C}}_\rmf^{\!\vct{N}} \partial_t\partial_\vct{N} \hat{\vct{u}}_\rmf^\# \! (\bar{t}) , \partial_t\partial_\vct{N} \hat{\vct{u}}_\rmf^\# \! (\bar{t})\bigr\rangle_{\Omega_\rmf^1 } \\ 
+ \bigl\langle \hat{\omega}_\rmf^{\fracfac{\!\eff\!}} \partial_t \hat{p}_\pmf^\# (\bar{t}), \partial_t \hat{p}_\pmf^\# (\bar{t})\bigr\rangle  \Bigr] \,\rmd\bar{t} + \tfrac{1}{2} \hat{\mathcal{D}}^0_\rmb \bigl( \hat{p}_\pm^\# , \hat{p}_\pm^\# \bigr) \bigr\vert_0^t \\
 =  \int_0^t \Bigl( \bigl\langle \partial_t \hat{\vct{f}}_\pmf^{\fracfac{\!\mathrm{eff}\!}} \! (\bar{t}), \partial_t \hat{\vct{u}}_\pmf^\# (\bar{t}) \bigr\rangle
 + \bigl\langle \hat{q}_\pmf^{\fracfac{\!\mathrm{eff}\!}} \! (\bar{t}) , \partial_t \hat{p}_\pmf^\#  (\bar{t}) \bigr\rangle  \Bigr) \,\rmd \bar{t} 
\end{multlined}
\end{equation}
by applying analogous arguments as in the second step of the proof of \Cref{prop:apriori} for the derivation of \cref{eq:apriori_4}.
Then, as $\epsilon \rightarrow 0$, we have
\begin{align*}
&\tfrac{1}{2} \norm[\big ]{\hat{p}_\pmf^\epsilon (t) }_\mathrm{i}^2 +  \int_0^t \Bigl[ \norm[\big ]{\partial_t \hat{p}_\pmf^\epsilon (\bar{t}) }_\mathrm{ii}^2 + \normiii[\big ]{\bigl( \fracfac[\big ]{\epsilon^{ \theta (\nu_\tsr{C} )\! }}\partial_t \hat{\vct{u}}_\pmf^\epsilon (\bar{t} ) , \, \epsilon^{\iota ( \nu_\tsr{C} ) } \matr{e}_\smallpar (\partial_t \hat{\vct{u}}_\rmf^\epsilon ) \bigr) }^2 \Bigr] \,\rmd\bar{t} + \smallo (\epsilon ) \\
&\qquad \le \tfrac{1}{2} \hat{\mathcal{D}}^\epsilon \bigl( \hat{p}_\pmf^\epsilon (t) , \hat{p}_\pmf^\epsilon (t) \bigr) + \int_0^t \Bigl[ \hat{\mathcal{A}}^\epsilon \bigl( \partial_t \hat{\vct{u}}_\pmf^\epsilon (\bar{t}) ,\partial_t \hat{\vct{u}}_\pmf^\epsilon ( \bar{t})  \bigr) + \hat{\mathcal{C}}^\epsilon \bigl( \partial_t \hat{p}_\pmf^\epsilon (\bar{t}) , \partial_t \hat{p}_\pmf^\epsilon (\bar{t}) \bigr) \Bigr]\,\rmd \bar{t} \\
&\qquad = \tfrac{1}{2} \hat{\mathcal{D}}^\epsilon \bigl( \hat{p}^\epsilon_{0, \pmf} , \hat{p}^\epsilon_{0, \pmf } \bigr)  + \int_0^t \Bigl[ \bigl\langle \fracfac{\epsilon^{\nu_{\!\vct{f}}  + 1}} \partial_t \hat{\vct{f}}_\pmf^\epsilon , \partial_t \hat{\vct{u}}_\pmf^\epsilon \bigr\rangle
 + \bigl\langle \fracfac{\epsilon^{\nu_q  + 1}} \hat{q}_\pmf^\epsilon , \partial_t \hat{p}_\pmf^\epsilon  \bigr\rangle  \Bigr] \,\rmd \bar{t}  \\
&\qquad \rightarrow \tfrac{1}{2} \hat{\mathcal{D}}^0_\rmb \bigl( \hat{p}_{0, \pm} , \hat{p}_{0, \pm } \bigr)  +  \int_0^t \Bigl[ \bigl\langle \partial_t \hat{\vct{f}}_\pmf^{\fracfac{\!\mathrm{eff}\!}} , \partial_t \hat{\vct{u}}_\pmf^\# \bigr\rangle
 + \bigl\langle \hat{q}_\pmf^{\fracfac{\!\mathrm{eff}\!}} , \partial_t \hat{p}_\pmf^\#  \bigr\rangle  \Bigr] \,\rmd \bar{t} \\
&\qquad \le \tfrac{1}{2} \norm[\big ]{\hat{p}_\pmf^\# (t) }_\mathrm{i}^2 +  \int_0^t \Bigl[ \norm[\big ]{\partial_t \hat{p}_\pmf^\# (\bar{t}) }_\mathrm{ii}^2 + \normiii[\big ]{\bigl( \partial_t \hat{\vct{u}}_\pmf^\# (\bar{t} ) ,\matr{0} \bigr) }^2 \Bigr] \,\rmd\bar{t}  ,
\end{align*}
where we have used \Cref{prop:apriori} and \Cref{asm:eps}~\ref{asm:omega_eps}, \ref{asm:C_eps}, and \ref{asm:K_eps} in the first step and \cref{eq:apriori_4} in the second step.
Additionally, the third the step follows from \Cref{cor:conv} and \Cref{asm:eps}~\ref{asm:f_eps}, \ref{asm:q_eps}, and \ref{asm:K_eps}, as well as  $\epsilon^{\iota ( \nu_\matr{K} ) } \nabla^\epsilon \hat{p}_{0,\rmf} \rightarrow \vct{0}$ in $L^2 ( I ; \vct{L}^2 ( \Omega_\rmf^1 ) )$ and $2 \nu_q \ge \nu_\omega - 1$ by assumption and the fact that $\epsilon^{\iota ( \nu_\omega ) } \partial_t \hat{p}_\rmf^\epsilon \rightharpoonup 0$ for $\nu_\omega > -1$ and $\nu_\matr{K} \le 1$ as a consequence of \cref{eq:apriori_d} in \Cref{prop:apriori}. 
The last step follows from \cref{eq:thm_C1Klm1_4}.
Thus, using the weak lower semicontinuity of the norms, we find 
\begin{equation}
\label{eq:thm_C1Klm1_5}
\begin{multlined}[c][0.875\displaywidth]
\lim_{\epsilon\rightarrow 0 } \biggl[   \int_0^t \Bigl[ \norm[\big ]{\partial_t \hat{p}_\pmf^\epsilon (\bar{t}) }_\mathrm{ii}^2 + \normiii[\big ]{\bigl( \fracfac[\big ]{\epsilon^{ \theta (\nu_\tsr{C} )\! }}\partial_t \hat{\vct{u}}_\pmf^\epsilon (\bar{t} ) , \, \epsilon^{\iota ( \nu_\tsr{C} ) } \matr{e}_\smallpar (\partial_t \hat{\vct{u}}_\rmf^\epsilon ) \bigr) }^2 \Bigr] \,\rmd\bar{t}  \\
+ \tfrac{1}{2} \norm[\big ]{\hat{p}_\pmf^\epsilon (t) }_\mathrm{i}^2  \biggr] = \int_0^t \Bigl[ \norm[\big ]{\partial_t \hat{p}_\pmf^\# (\bar{t}) }_\mathrm{ii}^2 + \normiii[\big ]{\bigl( \partial_t \hat{\vct{u}}_\pmf^\# (\bar{t} ) ,\matr{0} \bigr) }^2 \Bigr] \,\rmd\bar{t} +  \tfrac{1}{2} \norm[\big ]{\hat{p}_\pmf^\# (t) }_\mathrm{i}^2 
\end{multlined}
\end{equation}
so that \cref{eq:strongconv_C1Klm1_b} follows together with \cref{eq:thm_C1Klm1_3}. \qedhere
\end{enumerate}
\end{proof}

\subsubsection{Case \texorpdfstring{$\nu_\matr{K} = -1$}{nuK=-1}} 
\label{sec:4.3.2}
In this case, we obtain a conductive fracture with continuous pressure heads across the fracture interface~$\gamma$. Besides, the fracture pressure head satisfies an interfacial PDE on~$\gamma$. 
The strong formulation of the limit problem for $\nu_\matr{K} = -1$ now reads as follows, where $\vct{u}_\pmf$, $p_\pm$, and $p_\gamma$ correspond to the limit functions~$\hat{\vct{u}}_\pmf^\#$, $\hat{p}_\pm^\#$, and $\mathfrak{A}_{\!\vct{N}}\hat{p}_\rmf^\#$ in \Cref{cor:conv}.

Find $p_\pm \colon \Omega_\pm^0 \times I \rightarrow \mathbb{R}$, $p_\gamma \colon \gamma \times I \rightarrow \mathbb{R}$, and $\vct{u}_\pmf \colon \Omega_\pmf^\odot \times I \rightarrow \mathbb{R}^n$ such that 
\begin{subequations}
\begin{align}
 \begin{split}
 a( \mathfrak{A}_{\!\vct{N}} \hat{\omega}_\rmf^\mathrm{eff} ) \partial_t p_\gamma - \nabla \cdot \bigl( a \hat{\matr{K}}_\gamma \nabla p_\gamma &\bigr) + a\mathfrak{A}_{\!\vct{N}} \bigl( \hat{\vct{\alpha}}_\rmf^\mathrm{eff}\! \cdot \partial_{\!\vct{N}} \partial_t \vct{u}_\rmf \bigr) \\
 &= a \mathfrak{A}_{\!\vct{N}} \hat{q}_\rmf^\mathrm{eff} +  \jump{\hat{\matr{K}} \nabla p  \cdot \! \vct{N}}  
 \end{split}  &&\text{on } \gamma \times I , \\
 p_+ = p_- &= p_\gamma \quad &&\text{on } \gamma \times I , \\
p_\gamma (0) &= \mathfrak{A}_{\!\vct{N}}\hat{p}_{0, \rmf} \quad\enspace &&\text{on } \gamma  , \\
p_\gamma &= 0 \quad\enspace &&\text{on } \partial \gamma_\rmD \times I , \\
\matr{K}_\gamma \nabla p_\gamma \cdot \vct{n} &= 0 \quad\enspace &&\text {on } \partial\gamma_\rmN \times I ,
\end{align}%
\label{eq:C1_Km1}%
\end{subequations}%
and the bulk limit problem~\eqref{eq:bulklimit}, as well as either the mechanical limit problem \eqref{eq:mechlimit_nuC1} if $\nu_\tsr{C } = 1$ or \eqref{eq:mechlimit_nuCg1} if $\nu_\tsr{C} > 1$, are satisfied. 

The boundary parts~$\partial\gamma_\rmD$ and~$\partial \gamma_\rmN$ in \cref{eq:C1_Km1} are given by 
\begin{subequations}
\begin{align}
\partial \gamma_\rmD &:= \bigl\{ \vct{y} \in \partial \gamma \ \big\vert\ \exists Z \subset \rho_{\rmf, \rmD}^1 \text{ with } \vct{y} \in Z \text{ and } \abs{Z} > 0  \bigr\}  , \\
\partial \gamma_\rmN &:= \partial \gamma \setminus \partial \gamma_\rmD .
\end{align}
\end{subequations}
Further, we define the function space
\begin{align}
\Phi_{-1} := \Bigl\{ (\phi_\pm , \phi_\gamma ) \in H^1_{0 , \rho^0_{\pm , \mathrm{D}}} \! (\Omega_\pm^0 ) \times H^1_a (\gamma )  \ \Big\vert\ \phi_\pm \vert_\gamma = \phi_\gamma , \ \phi_\gamma \vert_{\partial \gamma_\mathrm{D}} = 0 \Bigr\}  ,
\end{align}
where $H^1_a ( \gamma ) := \bigl\{ h \in L^2_a ( \gamma ) \ \vert \ \nabla h \in \vct{L}^2_a ( \gamma ) \bigr\} $. 
Then, the following problem defines a weak formulation of \cref{eq:C1_Km1}.

Find $\vct{u}_\pmf \in H^1 ( I ; \vct{V}^\# ) $ if $\nu_\tsr{C} = 1$ or $\vct{u}_\pmf \in H^1 ( I ; \vct{V}_{>1} )$ if $\nu_\tsr{C} > 1$, as well as $p_\pm \in H^1 ( I ; L^2 (\Omega_\pm^0 ) )$ and $p_\gamma \in  H^1 ( I ; L^2 (\gamma ) )$ with $ ( p_\pm , p_\gamma ) \in L^2 ( I ; \Phi_{-1} )$ and $\hat{p}_\gamma (0) = \mathfrak{A}_{\!\vct{N}}\hat{p}_{0, \rmf}$ such that
\begin{equation}
\label{eq:weak_C1_Km1}
\begin{multlined}[c][0.875\displaywidth]
\hat{\mathcal{B}}_\rmb^0 ( \phi_\pm , \partial_t \vct{u}_\pm ) + \hat{\mathcal{C}}_\rmb^0 ( \partial_t p_\pm , \phi_\pm ) + \hat{\mathcal{D}}_\rmb^0 ( p_\pm , \phi_\pm ) + \bigl\langle a ( \mathfrak{A}_{\!\vct{N}}\hat{\omega}_\rmf^\mathrm{eff} ) \partial_t p_\gamma , \phi_\gamma \bigr\rangle_{ \gamma  } 
\\   + \bigl\langle \phi_\gamma \hat{\vct{\alpha}}_\rmf^\mathrm{eff} , \partial_{\!\vct{N}} \partial_t \vct{u}_\rmf  \bigr\rangle_{\Omega_\rmf^1 }  
 + \bigl\langle a \hat{\matr{K}}_\gamma \nabla p_\gamma , \nabla \phi_\gamma \bigr\rangle_{\gamma } = \bigl\langle \hat{q}_\pm , \phi_\pm \bigr\rangle_{\Omega_\pm^0  } + \bigl\langle a \mathfrak{A}_{\!\vct{N}}\hat{q}_\rmf^\eff \! ,  \phi_\gamma \bigr\rangle_{\gamma  } 
\end{multlined}
\end{equation}
holds for all $(\phi_\pm , \phi_\gamma ) \in \Phi_{-1}$ and either the mechanics limit problem~\eqref{eq:weak_mechlimit_C1} if $\nu_\tsr{C} = 1$ or \eqref{eq:weak_mechlimit_nuCg1} if $\nu_\tsr{C} > 1$ is satisfied. 

The limit problem~\eqref{eq:weak_C1_Km1} is a discrete fracture model with respect to the pressure head, but, in general, not with respect to the displacement vector. A reduction to a full discrete fracture model is possible if $\hat{\vct{\alpha}}_\rmf^\mathrm{eff}$ is constant in normal direction in the same way as for $\nu_\matr{K} < -1$ (see \cref{rem:df}).

We now obtain the following convergence theorem.
\begin{theorem} \label{thm:C1Km1} 
Let $\nu_\tsr{C} \ge 1$, $\nu_\matr{K} = -1$, and assume that the \Cref{asm:eps,asm:nu} hold. 
\begin{enumerate}[label=(\roman*)]
\item \label{item:thm:C1Km1_1} $ ( \hat{p}_\pm^\#  , \mathfrak{A}_{\!\vct{N}} \hat{p}_\rmf^\# )  \in \bigl[  H^1 ( I ; L^2 (\Omega_\pm^0 ) ) \times H^1 ( I ; L^2 (\gamma ) ) \bigr] \cap L^2 ( I ; \Phi_{-1} )$, as well as $\hat{\vct{u}}_\pmf^\# \in H^1 ( I ; \vct{V}^\# ) $ if $\nu_\tsr{C} = 1$ and $\smash{\hat{\vct{u}}_\pmf^\#} \in H^1 ( I  ; \vct{V}_{>1 })$ if $\nu_\tsr{C} > 1$, are a weak solution of the limit problem in~\cref{eq:weak_C1_Km1}.  
In particular, we have $\smash{\hat{p}_\rmf^\#} (\vct{y} +s \vct{N} ) = ( \mathfrak{A}_{\!\vct{N}} \smash{\hat{p}_\rmf^\#} ) (\vct{y})$ for almost all $\vct{y} + s \vct{N} \in \Omega_\rmf^1$ with $\vct{y} \in \gamma$ and $s \in (-a_-(\vct{y}) , a_+(\vct{y}))$.
\item \label{item:thm:C1Km1_2} Let $\iota ( \nu ) := \tfrac{1}{2} ( \nu + 1)$ and $\theta ( \nu ) := \tfrac{1}{2} ( \nu - 1)$, as well as $2 \matr{e}_\smallpar (\vct{v} ) := \nablapar \vct{v} + (\nablapar \vct{v} )^\rmt $.
Then, as $k\rightarrow \infty$, the following strong convergences hold. 
\end{enumerate}
\vspace*{-12pt}
\begin{subequations}
\begin{alignat}{3}
\hat{\vct{u}}_\pm^{\epsilon_k} &\rightarrow \hat{\vct{u}}_\pm^\# \quad\ &&\text{in } L^2 \bigl( I ; \vct{H}^1 ( \Omega_\pm^0 ) \bigr) , \\
\epsilon_k^{\theta ( \nu_\tsr{C} ) } \hat{\vct{u}}_\rmf^{\epsilon_k} &\rightarrow \hat{\vct{u}}_\rmf^\# \quad\ &&\text{in } L^2 \bigl( I ; \vct{H}^1_{\!\vct{N}} ( \Omega_\rmf^1 ) \bigr) , \\
\epsilon_k^{\iota ( \nu_\tsr{C} ) } \matr{e}_\smallpar (\hat{\vct{u}}_\rmf^{\epsilon_k} ) &\rightarrow \matr{0} \quad\ &&\text{in } L^2 \bigl( I ; \matr{L}^2 (\Omega_\rmf^1 ) \bigr) , \\
\hat{p}_\pmf^{\epsilon_k} &\rightarrow \hat{p}_\pmf^\# \quad\ &&\text{in } L^2  \bigl( I ; H^1 ( \Omega_\pmf^\odot ) \bigr) , \\
\epsilon_k^{-1} \nabla_{\!\vct{N}} \hat{p}_\rmf^{\epsilon_k} &\rightarrow \xi^\# \vct{N} \quad\ &&\text{in } L^2 ( I ; \vct{L}^2 (\Omega_\rmf^1 )) ,
\end{alignat}%
\label{eq:strongconv_Km1_a}%
\end{subequations}%
\vspace*{-24pt}
\begin{enumerate}[resume,label=(\roman*)] 
\item[] where $\xi^\#$ is given by \cref{eq:xi}.
\item \label{item:thm:C1Km1_3} Moreover, if we additionally have $2 \nu_q \ge \nu_\omega -1$ in \Cref{asm:nu}~\ref{asm:nuq} and
\begin{align*}
\epsilon^{-1} \nabla_{\!\vct{N}}^\epsilon \hat{p}_{0,\rmf}^\epsilon \rightarrow -\bigl( \hat{K}_\rmf^{\!\vct{N}} \bigr)^{\! -1} \bigl( \hat{\matr{K}}_\rmf \nablapar \hat{p}_{0, \rmf } \cdot \vct{N} \bigr) \vct{N} \quad\enspace \text{in } L^2 ( I ; \vct{L}^2 ( \Omega_\rmf^1 ) ) 
\end{align*}
in \Cref{asm:eps}~\ref{asm:p0_eps}, then also the following strong convergences hold true as $k\rightarrow \infty$. 
\end{enumerate}
\vspace*{-12pt}
\begin{subequations}
\begin{alignat}{3}
\hat{\vct{u}}_\pm^{\epsilon_k} &\rightarrow \hat{\vct{u}}_\pm^\# \quad\ &&\text{in } H^1 \bigl( I ; \vct{H}^1 ( \Omega_\pm^0 ) \bigr) , \\
\epsilon_k^{\theta ( \nu_\tsr{C} ) } \hat{\vct{u}}_\rmf^{\epsilon_k} &\rightarrow \hat{\vct{u}}_\rmf^\# \quad\ &&\text{in } H^1 \bigl( I ; \vct{H}^1_{\!\vct{N}} ( \Omega_\rmf^1 ) \bigr) , \\
\epsilon_k^{ \iota ( \nu_\tsr{C} ) } \matr{e}_\smallpar (\hat{\vct{u}}_\rmf^{\epsilon_k} ) &\rightarrow \matr{0} \quad\ &&\text{in } H^1 \bigl( I ; \matr{L}^2 (\Omega_\rmf^1 ) \bigr) , \\
\hat{p}_\pm^{\epsilon_k} &\rightarrow \hat{p}_\pm^\# \quad\ &&\text{in } H^1 \bigl( I ; L^2 ( \Omega_\pm^0 ) \bigr) , \\
\hat{p}_\rmf^{\epsilon_k} &\rightarrow \hat{p}_\rmf^\# \quad\ &&\text{in } H^1 \bigl( I ; L^2 ( \Omega_\rmf^1 ) \bigr) \quad \text{ if } \nu_\omega = -1.
\end{alignat}%
\label{eq:strongconv_Km1_b}%
\end{subequations}%
\vspace*{-24pt}
\begin{enumerate}[resume,label=(\roman*)]
\item \label{item:thm:C1Km1_4} If $\hat{\matr{K}}_\gamma$ is uniformly elliptic on~$\gamma$ (cf.\ \Cref{lem:eff2}), then $\smash{\hat{\vct{u}}_\pmf^\#}$ and $\smash{( \hat{p}_\pm^\# , \mathfrak{A}_{\!\vct{N}} \hat{p}_\rmf^\# ) }$ are the unique weak solutions of \cref{eq:weak_C1_Km1} and the entire sequences $\{ \hat{\vct{u}}_\pmf^\epsilon \}_{\epsilon \in ( 0,1]}$ and $\{ \hat{p}_\pmf^\epsilon \}_{\epsilon \in ( 0,1]}$ converge in \Cref{cor:conv}, as well as in  the \cref{eq:strongconv_Km1_a,eq:strongconv_Km1_b}.
\end{enumerate}
\end{theorem}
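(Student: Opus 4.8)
The plan is to follow the strategy of the proof of \Cref{thm:C1Klm1}, adapting the treatment of the flow equation~\eqref{eq:fulldim-weak-trafo-c} to the conductive regime $\nu_\matr{K} = -1$ and borrowing the Darcy-specific corrector identifications from \cite{hoerl24}. For part~(i) I would insert test functions $\phi_\pmf \in \Phi$ and $\vct{v}_\pmf \in \vct{V}$ into the transformed weak formulation~\eqref{eq:fulldim-weak-trafo}, pass to the limit $k \to \infty$ along the subsequence of \Cref{cor:conv}, and invoke \Cref{lem:bulkconv} for the bulk terms together with \Cref{lem:fracconv1} and \Cref{lem:fracconv2} for the fracture terms. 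For $\nu_\matr{K} = -1$, the limit of $\bigl\langle \epsilon_k^{\nu_\matr{K}+1}\hat{\matr{K}}_\rmf^{\epsilon_k}\nabla^{\epsilon_k}\hat{p}_\rmf^{\epsilon_k}, \nabla^{\epsilon_k}\phi_\rmf\bigr\rangle_{\Omega_\rmf^1}$ from \Cref{lem:fracconv2}~(ii)---which produces the effective tangential conductivity $\hat{\matr{K}}_\gamma$ through the auxiliary field $\xi^\#$ of~\eqref{eq:xi}---yields the interfacial diffusion term of~\eqref{eq:weak_C1_Km1}, while \eqref{eq:frac_conv_e} and \Cref{lem:fracconv2}~(iii) supply the storage and source contributions. \Cref{cor:const} gives $\hat{p}_\rmf^\# = \mathfrak{A}_{\!\vct{N}}\hat{p}_\rmf^\#$ and $\hat{p}_\pm^\#\vert_\gamma = \mathfrak{A}_{\!\vct{N}}\hat{p}_\rmf^\#$, so $(\hat{p}_\pm^\#, \mathfrak{A}_{\!\vct{N}}\hat{p}_\rmf^\#) \in L^2(I; \Phi_{-1})$; in the case $\nu_\tsr{C} > 1$ one additionally rescales~\eqref{eq:fulldim-weak-trafo-b} by $\epsilon^{(1-\nu_\tsr{C})/2}$ inside the fracture and passes to the limit using \Cref{lem:fracconv1,lem:fracconv2} to obtain~\eqref{eq:weak_mechlimit_nuCg1_b}, with \Cref{prop:uftrace} providing the homogeneous trace of $\hat{\vct{u}}_\rmf^\#$ on $\gamma_\pm^1$. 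Together with the bulk limit problem~\eqref{eq:bulklimit} and the mechanics limit problem~\eqref{eq:weak_mechlimit_C1} (resp.\ \eqref{eq:weak_mechlimit_nuCg1}) this establishes the stated regularity and that the limit solves~\eqref{eq:weak_C1_Km1}.

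For parts~(ii) and~(iii) I would carry out the energy method exactly as in the second and third steps of the proof of \Cref{thm:C1Klm1}. For~(ii) one tests~\eqref{eq:fulldim-weak-trafo} with $\vct{v}_\pmf = \partial_t\hat{\vct{u}}_\pmf^\epsilon$ and $\phi_\pmf = \hat{p}_\pmf^\epsilon$, integrates from $0$ to $t$, and performs the analogous manipulation on the limit problem, here taking the fracture diffusion (semi)norm to be the $\epsilon$-scaled Dirichlet energy $\bigl\langle \hat{\matr{K}}_\rmf^{\epsilon}\nabla^{\epsilon}\hat{p}_\rmf^{\epsilon}, \nabla^{\epsilon}\hat{p}_\rmf^{\epsilon}\bigr\rangle_{\Omega_\rmf^1}$, whose limit is the effective interfacial energy $\bigl\langle a\hat{\matr{K}}_\gamma\nabla(\mathfrak{A}_{\!\vct{N}}\hat{p}_\rmf^\#), \nabla(\mathfrak{A}_{\!\vct{N}}\hat{p}_\rmf^\#)\bigr\rangle_\gamma$ in view of the minimality of $\xi^\#$ shown in the proof of \Cref{lem:fracconv2}~(ii). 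Combining \Cref{prop:apriori}, \Cref{cor:conv}, \Cref{lem:u0conv}, weak lower semicontinuity of the relevant norms, and the limit identities~\eqref{eq:thm_C1Klm1_1}--\eqref{eq:thm_C1Klm1_2} adapted to~\eqref{eq:weak_C1_Km1} yields convergence of energies, hence the strong convergences in~\eqref{eq:strongconv_Km1_a}, including $\epsilon_k^{-1}\nablaperp\hat{p}_\rmf^{\epsilon_k} \to \xi^\#\vct{N}$. For~(iii) one differentiates in time, testing as in the derivation of~\eqref{eq:apriori_4} with $\vct{v}_\pmf = \partial_t\hat{\vct{u}}_\pmf^\epsilon$ (after the density argument of~\eqref{eq:apriori_dtu}) and $\phi_\pmf = \partial_t\hat{p}_\pmf^\epsilon$, and compares with the corresponding identity for the limit problem; the extra hypotheses $2\nu_q \ge \nu_\omega - 1$ and the strengthened convergence of the scaled initial velocity in \Cref{asm:eps}~\ref{asm:p0_eps} are exactly what is needed to pass to the limit in the source and initial-data terms, as in \Cref{thm:C1Klm1}~(iii), giving~\eqref{eq:strongconv_Km1_b}.

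For part~(iv), if $\hat{\matr{K}}_\gamma$ is uniformly elliptic on $\gamma$---which by \Cref{lem:eff2}~(ii) holds whenever $\hat{\matr{K}}_\rmf$ is continuous on $\bar\Omega_\rmf^1$---the interfacial bilinear form $\bigl\langle a\hat{\matr{K}}_\gamma\nabla\,\cdot\,, \nabla\,\cdot\,\bigr\rangle_\gamma$ is coercive on the tangential gradient, so~\eqref{eq:weak_C1_Km1} together with the mechanics limit problem acquires the structure of a mixed-dimensional quasi-static Biot system on $\Omega_\pm^0 \cup \gamma$, and uniqueness follows by the argument used for \Cref{thm:wellposed_biot_sd} and with the help of \Cref{lem:eff1,lem:eff2}. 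Since every weakly convergent subsequence of $\{\hat{\vct{u}}_\pmf^\epsilon\}$ and $\{\hat{p}_\pmf^\epsilon\}$ must then converge to this unique limit, the whole families converge in the sense of \Cref{cor:conv} and of~\eqref{eq:strongconv_Km1_a} and~\eqref{eq:strongconv_Km1_b}.

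The main obstacle, relative to the $\nu_\matr{K} < -1$ case of \Cref{thm:C1Klm1}, is the identification of the limit of the $\epsilon$-scaled fracture diffusion energy. One must split $\bigl\langle \hat{\matr{K}}_\rmf^{\epsilon}\nabla^{\epsilon}\hat{p}_\rmf^{\epsilon}, \nabla^{\epsilon}\hat{p}_\rmf^{\epsilon}\bigr\rangle_{\Omega_\rmf^1}$ into a tangential contribution converging to the $\hat{\matr{K}}_\gamma$-energy on $\gamma$ and a normal contribution $\bigl\langle \hat{K}_\rmf^{\!\vct{N}}\,\epsilon^{-1}\partial_{\!\vct{N}}\hat{p}_\rmf^\epsilon, \epsilon^{-1}\partial_{\!\vct{N}}\hat{p}_\rmf^\epsilon\bigr\rangle$, show that the weak limit $\xi^\#$ of the normal corrector is characterized uniquely by~\eqref{eq:xi_eq} and~\eqref{eq:xi}, and verify that this decomposition is compatible with the weak lower semicontinuity chain so that the energy equality both closes and forces strong convergence of $\epsilon_k^{-1}\nablaperp\hat{p}_\rmf^{\epsilon_k}$; doing this simultaneously with the poroelastic coupling term $\bigl\langle \phi_\gamma\hat{\vct{\alpha}}_\rmf^\mathrm{eff}, \partial_{\!\vct{N}}\partial_t\vct{u}_\rmf\bigr\rangle_{\Omega_\rmf^1}$ in the case $\nu_\tsr{C} = 1$ is what makes this regime the delicate one.
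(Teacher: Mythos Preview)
Your proposal is correct and follows essentially the same route as the paper's proof, which simply says the result is shown ``analogously as in \Cref{thm:C1Klm1}'' with two modifications: for part~(i) the test function $\phi_\pmf\in\Phi$ is restricted to satisfy $\nablaperp\phi_\rmf=\vct{0}$ (which you invoke implicitly through \Cref{lem:fracconv2}(ii), but should state explicitly), and for parts~(ii)--(iii) the norm $\norm{\cdot}_\mathrm{i}$ is redefined on $\Phi\times L^2(\Omega_\rmf^1)$ with an auxiliary slot for the corrector $\xi=\epsilon_k^{-1}\partial_{\!\vct{N}}\hat{p}_\rmf^{\epsilon_k}$, precisely the structure you describe in your final paragraph.
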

\begin{proof}
The result can be shown analogously as in \Cref{thm:C1Klm1} with the following modifications. 
For \ref{item:thm:C1Km1_1}, we choose a test function~$\phi_\pmf\in \Phi$ with $\nablaperp \phi_\rmf = \vct{0}$ in \cref{eq:fulldim-weak-trafo}. 
Moreover, for \ref{item:thm:C1Km1_2} and~\ref{item:thm:C1Km1_3},  we define the norm~$\norm{\cdot}_\mathrm{i}$ in \cref{eq:norm_i} by
\begin{equation*}
\begin{multlined}[c][0.875\displaywidth]
\Phi \times L^2 (\Omega_\rmf^1 ) \rightarrow \mathbb{R}_0^+ , \quad\ \norm[\big ]{\bigl( \phi_\pmf , \xi \bigr) }_\mathrm{i} := \bigl\langle \hat{\matr{K}}_\pm \nabla \phi_\pm , \nabla \phi_\pm \bigr\rangle_{\Omega_\pm^0 }   + \bigl\langle \hat{\matr{K}}_\rmf \nablaperp \phi_\rmf , \nablaperp \phi_\rmf \bigr\rangle_{\Omega_\rmf^1}  \\ +   \bigl\langle \hat{\matr{K}}_\rmf \bigl[ \nablapar \phi_\rmf + \xi \vct{N} \bigr] , \nablapar \phi_\rmf + \xi \vct{N} \bigr\rangle_{\Omega_\rmf^1} 
\end{multlined}
\end{equation*}
and consider $\xi = \epsilon_k^{-1} \partial_\vct{N} \hat{p}_\rmf^{\epsilon_k}$ in the \cref{eq:thm_C1Klm1_3,eq:thm_C1Klm1_5}.
\end{proof}

\subsubsection{Case \texorpdfstring{$\nu_\matr{K} \in (-1, 1)$}{-1<nuK<1}} 
\label{sec:4.3.3}  
For $\nu_\matr{K} \in (-1,1 )$, we obtain a fracture with continuous pressure head across the interface~$\gamma$ and neutral hydraulic conductivity, i.e., the fracture conductivity does not have an influence in the limit model. The jump of normal velocities across~$\gamma$ is determined only by the source term, storage term, and displacement term (if present) inside the fracture. 
The strong formulation of the limit problem reads as follows with $p_\pm$ and $\vct{u}_\pmf$ corresponding to the limit functions $\hat{p}_\pm^\#$ and $\hat{\vct{u}}_\pmf^\#$ from \cref{cor:conv}.

Find $p_\pm \colon \Omega_\pm^0 \times I \rightarrow \mathbb{R}$ and $\vct{u}_\pmf \colon \Omega_\pmf^\odot \times I \rightarrow \mathbb{R}^n$ such that 
\begin{subequations}
\begin{align}
 \begin{split}
 a( \mathfrak{A}_{\!\vct{N}} \hat{\omega}_\rmf^\mathrm{eff} ) \partial_t p_\gamma + a\mathfrak{A}_{\!\vct{N}} \bigl( \hat{\vct{\alpha}}_\rmf^\mathrm{eff}\! \cdot \partial_{\!\vct{N}} \partial_t \vct{u}_\rmf \bigr) 
 &= a \mathfrak{A}_{\!\vct{N}} q_\rmf^\mathrm{eff} +  \jump{\hat{\matr{K}} \nabla p \cdot \! \vct{N}}  
 \end{split}  &&\text{on } \gamma \times I , \\
 p_+ &= p_-  \quad &&\text{on } \gamma \times I , \\
 p_\gamma (0) &= \mathfrak{A}_{\!\vct{N}}\hat{p}_{0, \rmf} \quad\enspace &&\text{on } \gamma ,
\end{align}%
\label{eq:Km11}%
\end{subequations}%
and the bulk limit problem~\eqref{eq:bulklimit}, as well as either the mechanical limit problem \eqref{eq:mechlimit_nuC1} if $\nu_\tsr{C } = 1$ or \eqref{eq:mechlimit_nuCg1} if $\nu_\tsr{C} > 1$, are satisfied.

Given the function space
\begin{align}
\Phi_{(-1,1)} := \Bigl\{ \phi_\pm  \in H^1_{0 , \rho^0_{\pm , \mathrm{D}}} \! (\Omega_\pm^0 )  \ \Big\vert\ \phi_+ \vert_\gamma = \phi_- \vert_\gamma  \Bigr\}  ,
\end{align}
the following problem defines a weak formulation of~\cref{eq:Km11}.

Find $\vct{u}_\pmf \in H^1 ( I ; \vct{V}^\# ) $ if $\nu_\tsr{C} = 1$ or $\vct{u}_\pmf \in H^1 ( I ; \vct{V}_{>1} )$ if $\nu_\tsr{C} > 1$, as well as $p_\pm  \in H^1 ( I ; L^2 (\Omega_\pm^0 ) ) \cap L^2 ( I ; \Phi_{(-1,1)} ) $ with $p_\gamma := p_\pm \vert_\gamma$ such that
\begin{equation}
\label{eq:weak_C1_Km11}
\begin{multlined}[c][0.875\displaywidth]
\hat{\mathcal{B}}_\rmb^0 ( \phi_\pm , \partial_t \vct{u}_\pm ) + \hat{\mathcal{C}}_\rmb^0 ( \partial_t p_\pm , \phi_\pm ) + \hat{\mathcal{D}}_\rmb^0 ( p_\pm , \phi_\pm ) + \bigl\langle a ( \mathfrak{A}_{\!\vct{N}}\hat{\omega}_\rmf^\mathrm{eff} ) \partial_t p_\gamma , \phi_\gamma \bigr\rangle_{\gamma  } \\ 
+ \bigl\langle \phi_\gamma \hat{\vct{\alpha}}_\rmf^\mathrm{eff} \! , \partial_{\!\vct{N}} \partial_t \vct{u}_\rmf \bigr\rangle_{\Omega_\rmf^1  } 
 = \bigl\langle \hat{q}_\pm , \phi_\pm \bigr\rangle_{\Omega_\pm^0  } + \bigl\langle a\mathfrak{A}_{\!\vct{N}} \hat{q}_\rmf^\mathrm{eff} \!,  \phi_\gamma \bigr\rangle_{\gamma  } 
\end{multlined}
\end{equation}
holds for all $\phi_\pm  \in \Phi_{(-1,1)}$ with $\phi_\gamma := \phi_\pm \vert_\gamma$ and either the mechanics limit problem~\eqref{eq:weak_mechlimit_C1} if $\nu_\tsr{C} = 1$ or \eqref{eq:weak_mechlimit_nuCg1} if $\nu_\tsr{C} > 1$ is satisfied. 

The limit model~\eqref{eq:weak_C1_Km11} still depends on fracture displacement in the  full-di\-men\-sio\-nal fracture domain~$\Omega_\rmf^1$ but can be reduced to a full discrete fracture model if $\vct{\alpha}_\rmf^\mathrm{eff}$ is constant in normal direction as discussed in \Cref{rem:df}.
Further, we have the following convergence theorem.

\begin{theorem} 
Let $\nu_\tsr{C} \ge 1$, $\nu_\matr{K} \in (-1, 1)$, and assume that the \Cref{asm:eps,asm:nu} hold. 
\begin{enumerate}[label=(\roman*),topsep=0pt]
\item $\hat{p}_\pm^\#  \in H^1 ( I ; L^2 (\Omega_\pm^0 ) ) \cap L^2 ( I ; \Phi_{(-1,1)} ) $, as well as $\hat{\vct{u}}_\pmf^\# \in H^1 ( I ; \vct{V}^\# ) $ if $\nu_\tsr{C} = 1$ and $\smash{\hat{\vct{u}}_\pmf^\#} \in H^1 ( I ; \vct{V}_{>1} )$ if $\nu_\tsr{C} > 1$, are the unique weak solution of the limit problem in \cref{eq:weak_C1_K1}.
In particular, we have $\smash{\hat{p}_\pm^\#} = \mathfrak{A}_{\!\vct{N}} \smash{\hat{p}_\rmf^\#}$ and  $\smash{\hat{p}_\rmf^\#} (\vct{y} +s \vct{N} ) = ( \mathfrak{A}_{\!\vct{N}} \smash{\hat{p}_\rmf^\#} ) (\vct{y})$ for almost all $\vct{y} + s \vct{N} \in \Omega_\rmf^1$ with $\vct{y} \in \gamma$ and $s \in (-a_-(\vct{y}) , a_+(\vct{y}))$. 
 Moreover, the weak and weak-$\ast$ convergences in \Cref{cor:conv} hold for the entire sequences $\{ \hat{\vct{u}}_\pmf^\epsilon \}_{\epsilon \in ( 0,1]}$ and $\{ \hat{p}_\pmf^\epsilon \}_{\epsilon \in ( 0,1]}$.
\item Let $\iota ( \nu ) := \tfrac{1}{2} ( \nu + 1)$ and $\theta ( \nu ) := \tfrac{1}{2} ( \nu - 1)$, as well as $2 \matr{e}_\smallpar (\vct{v} ) := \nablapar \vct{v} + (\nablapar \vct{v} )^\rmt $.
Then, as $\epsilon \rightarrow 0$, the following strong convergences hold.
\end{enumerate}
\begin{subequations}
\begin{alignat}{3}
\hat{\vct{u}}_\pm^\epsilon &\rightarrow \hat{\vct{u}}_\pm^\# \quad\ &&\text{in } L^2 \bigl( I ; \vct{H}^1 ( \Omega_\pm^0 ) \bigr) , \\
\epsilon^{\theta ( \nu_\tsr{C} ) } \hat{\vct{u}}_\rmf^\epsilon &\rightarrow \hat{\vct{u}}_\rmf^\# \quad\ &&\text{in } L^2 \bigl( I ; \vct{H}^1_{\!\vct{N}} ( \Omega_\rmf^1 ) \bigr) , \\
\epsilon^{\iota ( \nu_\tsr{C} ) } \matr{e}_\smallpar (\hat{\vct{u}}_\rmf^\epsilon ) &\rightarrow \matr{0} \quad\ &&\text{in } L^2 \bigl( I ; \matr{L}^2 (\Omega_\rmf^1 ) \bigr) , \\
\hat{p}_\pm^\epsilon &\rightarrow \hat{p}_\pm^\# \quad\ &&\text{in } L^2  \bigl( I ; H^1 ( \Omega_\pm^0 ) \bigr) , \\
\hat{p}_\rmf^\epsilon &\rightarrow \hat{p}_\rmf^\# \quad\ &&\text{in } L^2  \bigl( I ; H^1_{\!\vct{N}} ( \Omega_\rmf^1 ) \bigr) .
\end{alignat}%
\label{eq:strongconv_Km11_a}%
\end{subequations}%
\vspace*{-12pt}
\begin{enumerate}[resume,label=(\roman*)]
\item Moreover, if we additionally have $2 \nu_q \ge \nu_\omega -1$ in \Cref{asm:nu}~\ref{asm:nuq} and 
\begin{align*}
\epsilon^{\theta ( \nu_\matr{K} ) } \nablaperp \hat{p}_{0,\rmf}^\epsilon \rightarrow \vct{0} \quad\text{in } L^2 ( I ; \vct{L}^2 ( \Omega_\rmf^1 ) ) 
\end{align*}
in \Cref{asm:eps}~\ref{asm:p0_eps}, then also the following strong convergences hold true as $\epsilon \rightarrow 0$.
\end{enumerate}
\begin{subequations}
\begin{alignat}{3}
\hat{\vct{u}}_\pm^\epsilon &\rightarrow \hat{\vct{u}}_\pm^\# \quad\ &&\text{in } H^1 \bigl( I ; \vct{H}^1 ( \Omega_\pm^0 ) \bigr) , \\
\epsilon^{\theta ( \nu_\tsr{C} ) } \hat{\vct{u}}_\rmf^\epsilon &\rightarrow \hat{\vct{u}}_\rmf^\# \quad\ &&\text{in } H^1 \bigl( I ; \vct{H}^1_{\!\vct{N}} ( \Omega_\rmf^1 ) \bigr) , \\
\epsilon^{ \iota ( \nu_\tsr{C} ) } \matr{e}_\smallpar (\hat{\vct{u}}_\rmf^\epsilon ) &\rightarrow \matr{0} \quad\ &&\text{in } H^1 \bigl( I ; \matr{L}^2 (\Omega_\rmf^1 ) \bigr) , \\
\hat{p}_\pm^\epsilon &\rightarrow \hat{p}_\pm^\# \quad\ &&\text{in } H^1 \bigl( I ; L^2 ( \Omega_\pm^0 ) \bigr) , \\
\hat{p}_\rmf^\epsilon &\rightarrow \hat{p}_\rmf^\# \quad\ &&\text{in } H^1 \bigl( I ; L^2 ( \Omega_\rmf^1 ) \bigr) \quad \text{ if } \nu_\omega = -1.
\end{alignat}%
\label{eq:strongconv_Km11_b}%
\end{subequations}%
\vspace*{-12pt}
\end{theorem}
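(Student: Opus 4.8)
The plan is to follow the proofs of \Cref{thm:C1Klm1,thm:C1Km1} closely, adapting them to the neutral regime $\nu_\matr{K}\in(-1,1)$, in which the fracture conductivity term disappears in the limit.

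For part~(i), I would extract the weakly convergent subsequence of \Cref{cor:conv} and pass to the limit in the transformed weak formulation~\eqref{eq:fulldim-weak-trafo}. In the flow equation~\eqref{eq:fulldim-weak-trafo-c} I would take test functions $\phi_\pmf\in\Phi$ with $\nablaperp\phi_\rmf\equiv\vct{0}$, so that by \Cref{lem:fracconv2}~(ii) the fracture contribution $\langle\epsilon_k^{\nu_\matr{K}+1}\hat{\matr{K}}_\rmf^{\epsilon_k}\nabla^{\epsilon_k}\hat{p}_\rmf^{\epsilon_k},\nabla^{\epsilon_k}\phi_\rmf\rangle_{\Omega_\rmf^1}$ vanishes as $k\to\infty$ (here $\nu_\matr{K}>-1$), while all remaining terms are treated via \Cref{lem:bulkconv,lem:fracconv1,lem:fracconv2}. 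For the mechanics equation~\eqref{eq:fulldim-weak-trafo-b} I would argue exactly as in \Cref{thm:C1Klm1}: directly for $\nu_\tsr{C}=1$, and after multiplying by $\epsilon^{(1-\nu_\tsr{C})/2}$ and using test functions $\vct{v}_\rmf\in\vct{H}^1_0(\Omega_\rmf^1)$ for $\nu_\tsr{C}>1$. Since $\nu_\matr{K}<1$, \Cref{cor:const} gives that $\hat{p}_\rmf^\#$ is constant in normal direction, the proposition following \Cref{cor:const} yields $\hat{p}_\pm^\#=\mathfrak{A}_{\!\vct{N}}\hat{p}_\rmf^\#$ on $\gamma$, and for $\nu_\tsr{C}>1$ \Cref{prop:uftrace} controls the fracture displacement trace. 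This identifies $(\hat{p}_\pm^\#,\hat{\vct{u}}_\pmf^\#)$ as a solution of~\eqref{eq:weak_C1_Km11} coupled with the mechanics limit problem~\eqref{eq:weak_mechlimit_C1} or~\eqref{eq:weak_mechlimit_nuCg1}. Uniqueness then follows by the Biot-type energy argument of \Cref{thm:wellposed_biot_sd}: no fracture diffusion survives, so the flow limit is a bulk Biot problem coupled on $\gamma$ only through the averaged storage and Biot-displacement terms, and coercivity is provided by $\hat{\mathcal{A}}_\rmb^0$ (bulk Korn), the normal elasticity tensor $\hat{\matr{C}}_\rmf^{\!\vct{N}}$ (\Cref{lem:eff1}), and the bulk conductivity $\hat{\matr{K}}_\pm$ together with \Cref{asm:nu}~\ref{asm:dirichlet}; uniqueness of the limit forces the whole families $\{\hat{\vct{u}}_\pmf^\epsilon\}$ and $\{\hat{p}_\pmf^\epsilon\}$ to converge by a subsequence argument.

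For the strong convergences~\eqref{eq:strongconv_Km11_a}, I would reuse the energy identity of \Cref{thm:C1Klm1}: testing~\eqref{eq:fulldim-weak-trafo} with $\vct{v}_\pmf=\partial_t\hat{\vct{u}}_\pmf^\epsilon$ and $\phi_\pmf=\hat{p}_\pmf^\epsilon$ and integrating in time gives the analogue of~\eqref{eq:thm_C1Klm1_1}, while testing the limit system~\eqref{eq:weak_C1_Km11} together with~\eqref{eq:weak_mechlimit_C1}/\eqref{eq:weak_mechlimit_nuCg1} gives the analogue of~\eqref{eq:thm_C1Klm1_2}. Here the relevant pressure energy norm is $\norm{\phi_\pmf}_\mathrm{i}^2:=\langle\hat{\matr{K}}_\pm\nabla\phi_\pm,\nabla\phi_\pm\rangle_{\Omega_\pm^0}+\langle\hat{\matr{K}}_\rmf\nablaperp\phi_\rmf,\nablaperp\phi_\rmf\rangle_{\Omega_\rmf^1}$; the fracture normal part is retained because $\epsilon^{\nu_\matr{K}-1}\ge 1$ for $\nu_\matr{K}\le 1$ and $\epsilon\le 1$. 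With the strong convergence of the initial data from \Cref{lem:u0conv}, passing to the limit and invoking weak lower semicontinuity of the norms as in~\eqref{eq:thm_C1Klm1_3} yields equality between the limits of the energy norms and the norms of the limits, hence the $L^2$-in-time strong convergences in~\eqref{eq:strongconv_Km11_a}. The strong $L^2(I;L^2(\Omega_\rmf^1))$ convergence of $\hat{p}_\rmf^\epsilon$ is then recovered from the strong convergence of its interface trace (a consequence of $\hat{p}_\pm^\epsilon\to\hat{p}_\pm^\#$ in $L^2(I;\vct{H}^1(\Omega_\pm^0))$ and $\hat{p}_\pm^\epsilon|_\gamma=\Pi_\pm\hat{p}_\rmf^\epsilon$) together with $\nablaperp\hat{p}_\rmf^\epsilon\to\vct{0}$ and a Poincaré inequality in normal direction. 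The additional $H^1$-in-time convergences~\eqref{eq:strongconv_Km11_b} follow from the time-differentiated weak formulation, as in Step~2 of the proof of \Cref{prop:apriori} and part~(iii) of \Cref{thm:C1Klm1}, testing with $\partial_t\hat{\vct{u}}_\pmf^\epsilon$ and $\partial_t\hat{p}_\pmf^\epsilon$; the hypotheses $2\nu_q\ge\nu_\omega-1$ and $\epsilon^{\theta(\nu_\matr{K})}\nablaperp\hat{p}_{0,\rmf}^\epsilon\to\vct{0}$ are precisely what controls the source term and the normal-diffusion energy at $t=0$.

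I expect the main obstacle to lie in the fracture bookkeeping under the scaling $\epsilon^{\nu_\matr{K}+1}$ with $\nu_\matr{K}\in(-1,1)$ — tracking which contributions vanish, which merely stay bounded, and which survive in the limit energy — and, relatedly, in securing the strong $L^2(\Omega_\rmf^1)$ convergence of $\hat{p}_\rmf^\epsilon$: in contrast to the regimes $\nu_\matr{K}\le-1$, no tangential gradient bound is available inside the fracture, so compactness alone does not suffice and one must combine the strongly convergent interface trace with the vanishing normal derivative.
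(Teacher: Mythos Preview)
Your proposal is correct and follows essentially the same route as the paper's proof: test functions with $\nablaperp\phi_\rmf=\vct{0}$ for part~(i), and the modified energy norm $\norm{\phi_\pmf}_\mathrm{i}^2=\langle\hat{\matr{K}}_\pm\nabla\phi_\pm,\nabla\phi_\pm\rangle_{\Omega_\pm^0}+\langle\hat{\matr{K}}_\rmf\nablaperp\phi_\rmf,\nablaperp\phi_\rmf\rangle_{\Omega_\rmf^1}$ for parts~(ii) and~(iii), with everything else carried over from \Cref{thm:C1Klm1}. Your explicit trace-plus-normal-Poincar\'e argument for the $L^2(\Omega_\rmf^1)$ convergence of $\hat{p}_\rmf^\epsilon$ spells out a step the paper leaves implicit (there, it is absorbed into the equivalence of $\norm{\cdot}_\mathrm{i}$ with the standard norm on~$\Phi$ via the trace coupling), but the mechanism is the same.
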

\begin{proof}
The result can be shown analogously as in \Cref{thm:C1Klm1} with the following modifications. 
For \ref{item:thm:C1Km1_1}, we choose a test function~$\phi_\pmf\in \Phi$ with $\nablaperp \phi_\rmf = \vct{0}$ in \cref{eq:fulldim-weak-trafo}. 
Moreover, for \ref{item:thm:C1Km1_2} and~\ref{item:thm:C1Km1_3},  we define the norm~$\norm{\cdot}_\mathrm{i}$ in \cref{eq:norm_i} by
\begin{align*}
\Phi  \rightarrow \mathbb{R}_0^+ , \quad\ \norm[\big ]{ \phi_\pmf  }_\mathrm{i} := \bigl\langle \hat{\matr{K}}_\pm \nabla \phi_\pm , \nabla \phi_\pm \bigr\rangle_{\Omega_\pm^0 }   + \bigl\langle \hat{\matr{K}}_\rmf \nablaperp \phi_\rmf , \nablaperp \phi_\rmf \bigr\rangle_{\Omega_\rmf^1}  . \tag*{\qedhere}
\end{align*}
\end{proof}

\subsubsection{Case \texorpdfstring{$\nu_\matr{K} = 1$}{nuK=1}} 
\label{sec:4.3.4} 
For $\nu_\matr{K} = 1$, the fracture becomes a permeable barrier with a pressure jump across the interface~$\gamma$. 
In general, as presented in \Cref{sec:4.3.4.1}, the limit model is a two-scale model that includes an ODE for the fracture pressure head inside the full-dimensional fracture domain~$\Omega_\rmf^1$ and depends on the displacement vector inside~$\Omega_\rmf^1$.  
In the special case that $\nu_\omega > -1$ (i.e., the fracture storage term vanishes) and either $2\nu_{\!\matr{\alpha}}^\smallperp > \nu_\tsr{C} -1$ (i.e., flow and mechanics are decoupled inside the fracture in the limit) or the effective Biot vector~$\hat{\vct{\alpha}}_\rmf^\mathrm{eff}$ and the normal hydraulic conductivity~$\smash{\hat{K}_\rmf^{\!\vct{N}}}$ are constant in normal direction, we can reduce the flow equation to an interface condition on~$\gamma$ as discussed in \Cref{sec:4.3.4.2}.
However, in the latter case, the limit model still depends on the displacement vector in~$\Omega_\rmf^1$ so that the model remains a two-scale problem.

\paragraph{Full-Dimensional Limit Model} 
\label{sec:4.3.4.1}
The limit problem for $\nu_\matr{K} = 1$ has the following strong formulation with $p_\pmf$ and $\vct{u}_\pmf$ corresponding to the limit functions~$\hat{p}_\pmf^\#$ and $\smash{\hat{\vct{u}}_\pmf^\#}$ from \Cref{cor:conv}. 

Find $p_\pmf \colon \Omega_\pmf^\odot \times I \rightarrow \mathbb{R}$ and $\vct{u}_\pmf \colon \Omega_\pmf^\odot \times I \rightarrow \mathbb{R}^n$ such that 
\begin{subequations}
\begin{align}
\hat{\omega}_\rmf^\mathrm{eff} \partial_t p_\rmf +\hat{\vct{\alpha}}_\rmf^\mathrm{eff} \!\cdot \partial_{\!\vct{N}} \partial_t \vct{u}_\rmf  - \partial_{\!\vct{N}}  \bigl( \hat{K}_\rmf^{\!\vct{N}} \partial_{\!\vct{N}} p_\rmf \bigr)  &=  q_\rmf^\mathrm{eff}  \ &&\text{on } \Omega_\rmf^1 \times I , \\
p_\rmf (\cdot , 0 ) &= \hat{p}_{0 , \rmf } \quad\enspace &&\text{on } \Omega_\rmf^1 , \\
 p_\pm &= \Pi_\pm p_\rmf \ &&\text{on } \gamma \times I , \\
 \hat{\matr{K}}_\pm \nabla p_\pm \cdot \vct{N} &= \Pi_\pm \bigl( \hat{K}_\rmf^\vct{N} \partial_{\!\vct{N}} p_\rmf \bigr) \ &&\text{on } \gamma \times I ,
\end{align}%
\label{eq:K1}%
and the bulk limit problem~\eqref{eq:bulklimit}, as well as either the mechanical limit problem \eqref{eq:mechlimit_nuC1} if $\nu_\tsr{C } = 1$ or \eqref{eq:mechlimit_nuCg1} if $\nu_\tsr{C} > 1$, are satisfied.
\end{subequations}%

A weak formulation of \cref{eq:K1} is given by the following problem.

Find $\vct{u}_\pmf \in H^1 ( I ; \vct{V}^\# ) $ if $\nu_\tsr{C} = 1$ or $\vct{u}_\pmf \in H^1 ( I ; \vct{V}_{>1} )$ if $\nu_\tsr{C} > 1$, as well as $p_\pmf  \in H^1 ( I ; \Lambda  ) \cap L^2 ( I ; \Phi^\# )$ with $p_\rmf (0) = \hat{p}_{0, \rmf }$ such that
\begin{equation}
\label{eq:weak_C1_K1}
\begin{multlined}[c][0.875\displaywidth]
\hat{\mathcal{B}}_\rmb^0 ( \phi_\pm , \partial_t \vct{u}_\pm ) + \hat{\mathcal{C}}_\rmb^0 ( \partial_t p_\pm , \phi_\pm ) + \hat{\mathcal{D}}_\rmb^0 ( p_\pm , \phi_\pm ) + \bigl\langle \hat{\omega}_\rmf^\mathrm{eff} \partial_t p_\rmf , \phi_\rmf \bigr\rangle_{\Omega_\rmf^1 } \\ + \bigl\langle \phi_\rmf \hat{\vct{\alpha}}_\rmf^\mathrm{eff} , \partial_{\!\vct{N}} \partial_t \vct{u}_\rmf \bigr\rangle_{\Omega_\rmf^1  } 
+ \bigl\langle \hat{K}_\rmf^{\!\vct{N}} \partial_{\!\vct{N}} p_\rmf , \partial_{\!\vct{N}} \phi_\rmf \bigr\rangle_{\Omega_\rmf^1  } =  \bigl\langle \hat{q}_\pmf^{\fracfac{\!\eff\! }} \! ,  \phi_\pmf \bigr\rangle
\end{multlined}
\end{equation}
holds for all $\phi_\pmf  \in \Phi^\#$ and either the mechanics limit problem~\eqref{eq:weak_mechlimit_C1} if $\nu_\tsr{C} = 1$ or \eqref{eq:weak_mechlimit_nuCg1} if $\nu_\tsr{C} > 1$ is satisfied. 

We obtain the following convergence theorem.
\begin{theorem}
Let $\nu_\tsr{C} \ge 1$, $\nu_\matr{K}  = 1$, and assume that the \Cref{asm:eps,asm:nu} hold. 
\begin{enumerate}[label=(\roman*),topsep=0pt]
\item  $\hat{p}_\pmf^\#  \in H^1 ( I ; \Lambda  ) \cap L^2 ( I ; \Phi^\# )$, as well as $\hat{\vct{u}}_\pmf^\# \in H^1 ( I ; \vct{V}^\# ) $ if $\nu_\tsr{C} = 1$ and $\smash{\hat{\vct{u}}_\pmf^\#} \in H^1 ( I ; \vct{V}_{>1} )$ if $\nu_\tsr{C} > 1$, are the unique weak solution of the limit problem in \cref{eq:weak_C1_K1}.
Moreover, the weak and weak-$\ast$ convergences in \Cref{cor:conv} hold for the entire sequences $\{ \hat{\vct{u}}_\pmf^\epsilon \}_{\epsilon \in ( 0,1]}$ and $\{ \hat{p}_\pmf^\epsilon \}_{\epsilon \in ( 0,1]}$.
\item Let $\iota ( \nu ) := \tfrac{1}{2} ( \nu + 1)$ and $\theta ( \nu ) := \tfrac{1}{2} ( \nu - 1)$, as well as $2 \matr{e}_\smallpar (\vct{v} ) := \nablapar \vct{v} + (\nablapar \vct{v} )^\rmt $.
Then, as $\epsilon \rightarrow 0$, the following strong convergences hold. 
\end{enumerate}
\vspace*{-12pt}
\begin{subequations}
\begin{alignat}{3}
\hat{\vct{u}}_\pm^\epsilon &\rightarrow \hat{\vct{u}}_\pm^\# \quad\ &&\text{in } L^2 \bigl( I ; \vct{H}^1 ( \Omega_\pm^0 ) \bigr) , \\
\epsilon^{\theta ( \nu_\tsr{C} ) } \hat{\vct{u}}_\rmf^\epsilon &\rightarrow \hat{\vct{u}}_\rmf^\# \quad\ &&\text{in } L^2 \bigl( I ; \vct{H}^1_{\!\vct{N}} ( \Omega_\rmf^1 ) \bigr) , \\
\epsilon^{\iota ( \nu_\tsr{C} ) } \matr{e}_\smallpar (\hat{\vct{u}}_\rmf^\epsilon ) &\rightarrow \matr{0} \quad\ &&\text{in } L^2 \bigl( I ; \matr{L}^2 (\Omega_\rmf^1 ) \bigr) , \\
\hat{p}_\pm^\epsilon &\rightarrow \hat{p}_\pm^\# \quad\ &&\text{in } L^2  \bigl( I ; H^1 ( \Omega_\pm^0 ) \bigr) , \\
\hat{p}_\rmf^\epsilon &\rightarrow \hat{p}_\rmf^\# \quad\ &&\text{in } L^2  \bigl( I ; H^1_{\!\vct{N}} ( \Omega_\rmf^1 ) \bigr) , \\
\epsilon \nablapar \hat{p}_\rmf^\epsilon &\rightarrow \vct{0} \quad\enspace &&\text{in } L^2 ( I , \vct{L}^2 ( \Omega_\rmf^1 ) ).
\end{alignat}%
\label{eq:strongconv_K1_a}%
\end{subequations}%
\vspace*{-24pt}
\begin{enumerate}[resume,label=(\roman*)]
\item Moreover, if we additionally have $2 \nu_q \ge \nu_\omega -1$ in \Cref{asm:nu}~\ref{asm:nuq}, then also the following strong convergences hold true as $\epsilon \rightarrow 0$. 
\end{enumerate}
\vspace*{-12pt}
\begin{subequations}
\begin{alignat}{3}
\hat{\vct{u}}_\pm^\epsilon &\rightarrow \hat{\vct{u}}_\pm^\# \quad\ &&\text{in } H^1 \bigl( I ; \vct{H}^1 ( \Omega_\pm^0 ) \bigr) , \\
\epsilon^{\theta ( \nu_\tsr{C} ) } \hat{\vct{u}}_\rmf^\epsilon &\rightarrow \hat{\vct{u}}_\rmf^\# \quad\ &&\text{in } H^1 \bigl( I ; \vct{H}^1_{\!\vct{N}} ( \Omega_\rmf^1 ) \bigr) , \\
\epsilon^{ \iota ( \nu_\tsr{C} ) } \matr{e}_\smallpar (\hat{\vct{u}}_\rmf^\epsilon ) &\rightarrow \matr{0} \quad\ &&\text{in } H^1 \bigl( I ; \matr{L}^2 (\Omega_\rmf^1 ) \bigr) , \\
\hat{p}_\pm^\epsilon &\rightarrow \hat{p}_\pm^\# \quad\ &&\text{in } H^1 \bigl( I ; L^2 ( \Omega_\pm^0 ) \bigr) , \\
\hat{p}_\rmf^\epsilon &\rightarrow \hat{p}_\rmf^\# \quad\ &&\text{in } H^1 \bigl( I ; L^2 ( \Omega_\rmf^1 ) \bigr) \quad \text{ if } \nu_\omega = -1.
\end{alignat}%
\label{eq:strongconv_K1_b}%
\end{subequations}%
\vspace*{-24pt}
\end{theorem}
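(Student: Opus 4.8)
The plan is to follow the scheme of the proof of \Cref{thm:C1Klm1} almost verbatim, making only the adjustments dictated by the regime $\nu_\matr{K} = 1$; all the technical ingredients (\Cref{prop:apriori}, \Cref{cor:conv}, \Cref{cor:const}, \Cref{prop:uftrace}, \Cref{lem:bulkconv}, \Cref{lem:fracconv1}, \Cref{lem:fracconv2}, \Cref{lem:u0conv}, \Cref{lem:eff1}, \Cref{lem:eff2}) are already at hand. First, to establish~(i), I would insert arbitrary test functions $\phi_\pmf \in \Phi$ and $\vct{v}_\pmf \in \vct{V}$ into \cref{eq:fulldim-weak-trafo} and pass to the limit $k\to\infty$ along the subsequence of \Cref{cor:conv}, using \Cref{lem:bulkconv} for the bulk terms and \Cref{lem:fracconv1,lem:fracconv2} for the fracture terms; since $\nu_\matr{K} = 1 \le 1$, no restriction on $\phi_\rmf$ is needed and \Cref{lem:fracconv2} produces the normal-conductivity term $\langle \hat{K}_\rmf^{\!\vct{N}} \partial_{\!\vct{N}} \hat{p}_\rmf^\#, \partial_{\!\vct{N}} \phi_\rmf\rangle_{\Omega_\rmf^1}$. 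For $\nu_\tsr{C} > 1$ I would additionally test the fracture part of \cref{eq:fulldim-weak-trafo-b}, rescaled by $\epsilon_k^{(1-\nu_\tsr{C})/2}$, with $\vct{v}_\pm = \vct{0}$ and $\vct{v}_\rmf \in \vct{H}^1_0(\Omega_\rmf^1)$; combined with \Cref{prop:uftrace} this yields $\hat{\vct{u}}_\pmf^\# \in \vct{V}_{>1}$, whereas \Cref{cor:conv} already gives $\hat{p}_\pmf^\# \in L^\infty(I;\Phi^\#)$ and the stated regularity in time. Uniqueness of \cref{eq:weak_C1_K1} then follows---using the uniform ellipticity of $\hat{K}_\rmf^{\!\vct{N}}$ and $\hat{\matr{C}}_\rmf^{\!\vct{N}}$ from \Cref{lem:eff1,lem:eff2}---exactly as in the proof of \Cref{thm:wellposed_biot_sd}, and uniqueness upgrades the subsequential convergences of \Cref{cor:conv} to convergences of the full sequences.

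For~(ii) I would run the energy argument of \Cref{thm:C1Klm1}. The $\norm{\cdot}_\mathrm{ii}$- and $\normiii{\cdot}$-seminorms are as in that proof, while the $\norm{\cdot}_\mathrm{i}$-seminorm on $\Phi^\#$ has to be redefined to match the scaled fracture gradient in the regime $\nu_\matr{K} = 1$, namely
\begin{align*}
\norm{\phi_\pmf}_\mathrm{i}^2 := \bigl\langle \hat{\matr{K}}_\pm \nabla \phi_\pm, \nabla \phi_\pm \bigr\rangle_{\Omega_\pm^0} + \bigl\langle \hat{\matr{K}}_\rmf \nablaperp \phi_\rmf, \nablaperp \phi_\rmf \bigr\rangle_{\Omega_\rmf^1},
\end{align*}
keeping track of the scaled tangential gradient $\epsilon\nablapar \hat{p}_\rmf^\epsilon$ as an auxiliary quantity converging weakly to $\vct{0}$ (cf.\ the proof of \Cref{lem:fracconv2}). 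Testing \cref{eq:fulldim-weak-trafo} with $\vct{v}_\pmf = \partial_t \hat{\vct{u}}_\pmf^\epsilon$, $\phi_\pmf = \hat{p}_\pmf^\epsilon$ and the limit problem \cref{eq:weak_C1_K1}, \cref{eq:weak_mechlimit_C1} (resp.\ \cref{eq:weak_mechlimit_nuCg1}) with $\phi_\pmf = \hat{p}_\pmf^\#$, $\vct{v}_\pmf = \partial_t \hat{\vct{u}}_\pmf^\#$, integrating in time, passing to the limit in the right-hand sides by \Cref{lem:u0conv}, \Cref{asm:eps}, \Cref{lem:G}, and \Cref{cor:conv}, and finally invoking the weak lower semicontinuity of the three seminorms, I would obtain that the limit superior of the $\epsilon$-energy coincides with the limit energy; the squeeze then turns the weak convergences of \Cref{cor:conv} into the strong convergences \cref{eq:strongconv_K1_a}, the convergence $\epsilon\nablapar \hat{p}_\rmf^\epsilon \to \vct{0}$ following from the strong convergence of $\epsilon \nabla^\epsilon \hat{p}_\rmf^\epsilon$ in the $\norm{\cdot}_\mathrm{i}$-energy together with the known weak limit of its normal part.

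For~(iii), I would differentiate the equations in time exactly as in Step~2 of the proof of \Cref{prop:apriori} (testing the time-differentiated \cref{eq:fulldim-weak-trafo-b} against $\hat{\vct{u}}_\pmf^\epsilon$ after a density argument, and \cref{eq:fulldim-weak-trafo-c} against $\partial_t \hat{p}_\pmf^\epsilon$), derive the analogue of \cref{eq:apriori_4} for both the $\epsilon$-problem and the limit problem, and repeat the energy/squeeze argument. The extra hypothesis $2\nu_q \ge \nu_\omega - 1$ is precisely what is needed so that the flow source term can be absorbed into $\norm{\partial_t \hat{p}_\pmf^\epsilon}_\mathrm{ii}$ rather than into the scaled pressure gradient (whose time derivative is not controlled), after which the weak-$\ast$ bounds of \Cref{prop:apriori} and \Cref{cor:conv} allow passage to the limit, yielding \cref{eq:strongconv_K1_b}.

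I expect the main obstacle to be Step~2: verifying that every cross term and source term in the $\epsilon$-energy identity converges to its exact counterpart in the limit energy identity---in particular the poroelastic coupling term $\langle \phi_\rmf \hat{\vct{\alpha}}_\rmf^\mathrm{eff}, \partial_{\!\vct{N}} \partial_t \vct{u}_\rmf\rangle_{\Omega_\rmf^1}$, which mixes the (merely weakly converging) scaled fracture displacement with the fracture pressure---and choosing the $\norm{\cdot}_\mathrm{i}$-seminorm so that it simultaneously controls $\epsilon\nablapar \hat{p}_\rmf^\epsilon$ and $\nablaperp \hat{p}_\rmf^\epsilon$, so that the strong convergence $\epsilon\nablapar \hat{p}_\rmf^\epsilon \to \vct{0}$ in \cref{eq:strongconv_K1_a} actually comes out.
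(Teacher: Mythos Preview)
Your proposal is correct and follows the same scheme as the paper's own proof. The only point where you differ in detail is precisely the one you flag as the obstacle: the paper does not define $\norm{\cdot}_\mathrm{i}$ on $\Phi^\#$ alone but rather on $\Phi \times \vct{L}_\smallpar^2(\Omega_\rmf^1)$ via
\[
\norm[\big]{(\phi_\pmf,\vct{\xi})}_\mathrm{i}^2 := \bigl\langle \hat{\matr{K}}_\pm \nabla \phi_\pm , \nabla \phi_\pm \bigr\rangle_{\Omega_\pm^0} + \bigl\langle \hat{\matr{K}}_\rmf \bigl[\nablaperp \phi_\rmf + \vct{\xi}\bigr] , \nablaperp \phi_\rmf + \vct{\xi} \bigr\rangle_{\Omega_\rmf^1},
\]
and evaluates it at $\vct{\xi} = \epsilon\,\nablapar \hat{p}_\rmf^\epsilon$ for the $\epsilon$-energy and at $\vct{\xi}=\vct{0}$ for the limit. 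This is exactly the fracture part of $\hat{\mathcal{D}}^\epsilon(\hat{p}_\pmf^\epsilon,\hat{p}_\pmf^\epsilon)$ when $\nu_\matr{K}=1$, so no cross terms are left over; and since $\nablaperp \phi_\rmf$ and $\vct{\xi}$ are Euclidean-orthogonal, the uniform ellipticity of $\hat{\matr{K}}_\rmf$ makes this a genuine norm controlling both pieces separately. The squeeze argument then delivers strong convergence of the pair $(\hat{p}_\pmf^\epsilon,\epsilon\,\nablapar \hat{p}_\rmf^\epsilon)\to(\hat{p}_\pmf^\#,\vct{0})$, giving in particular $\epsilon\,\nablapar \hat{p}_\rmf^\epsilon\to\vct{0}$ in $L^2(I;\vct{L}^2(\Omega_\rmf^1))$ without any extra bookkeeping.
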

\begin{proof}
The result can be shown analogously as in \Cref{thm:C1Klm1} with the following modifications. 
For \ref{item:thm:C1Km1_1}, we choose a test function~$\phi_\pmf\in \Phi$ without further restrictions in \cref{eq:fulldim-weak-trafo}. 
Moreover, for \ref{item:thm:C1Km1_2} and~\ref{item:thm:C1Km1_3},  we define the norm~$\norm{\cdot}_\mathrm{i}$ in \cref{eq:norm_i} by 
\begin{equation*}
\begin{multlined}[c][0.875\displaywidth]
\Phi \times \vct{L}_\smallpar^2 (\Omega_\rmf^1 ) \rightarrow \mathbb{R}_0^+ , \\ \norm[\big ]{\bigl( \phi_\pmf , \vct{\xi} \bigr) }_\mathrm{i} := \bigl\langle \hat{\matr{K}}_\pm \nabla \phi_\pm , \nabla \phi_\pm \bigr\rangle_{\Omega_\pm^0 }   +    \bigl\langle \hat{\matr{K}}_\rmf \bigl[ \nablaperp \phi_\rmf + \vct{\xi}  \bigr] , \nablaperp \phi_\rmf + \vct{\xi}  \bigr\rangle_{\Omega_\rmf^1} 
\end{multlined}
\end{equation*}
and consider $\vct{\xi} = \epsilon \nablapar \hat{p}_\rmf^{\epsilon}$ in the \cref{eq:thm_C1Klm1_3,eq:thm_C1Klm1_5}.
\end{proof}

\paragraph{Discrete Fracture Limit Model}
\label{sec:4.3.4.2}
Let $\nu_\omega > -1$ (i.e., $\hat{\omega}_\rmf^\mathrm{eff} \equiv 0$) and assume that $\smash{\hat{\vct{\alpha}}_\rmf^\eff}$ and $\smash{\hat{K}_\rmf^{\!\vct{N}}}$ are constant in normal direction, i.e., 
\begin{align}
\hat{\vct{\alpha}}_\rmf^\eff ( \vct{y} + s \vct{N} ) &=  \hat{\vct{\alpha}}_\gamma^\eff (\vct{y} ) , \quad\enspace \hat{K}_\rmf^{\!\vct{N}} ( \vct{y} + s \vct{N}) = \hat{K}_\gamma^{\!\vct{N}} ( \vct{y} ) \label{eq:normalconst}
\end{align}
for almost all $\vct{y} + s\vct{N} \in \Omega_\rmf^1$ with $\vct{y} \in \gamma$ and $s \in ( -a_-(\vct{y} ) , a_+ (\vct{y} ) $, where $\hat{\vct{\alpha}}_\gamma^\eff := \mathfrak{A}_{\!\vct{N}} \hat{\vct{\alpha}}_\rmf^\eff$ and $\smash{\hat{K}_\gamma^{\!\vct{N}}} := \mathfrak{A}_{\!\vct{N}} \smash{\hat{K}_\rmf^{\!\vct{N}}}$.
Then, the flow equation of the full-dimensional limit problem~\eqref{eq:K1} can be reduced to an interface condition on~$\gamma$. 
For this, we define the effective source term $\hat{Q}_\gamma^\mathrm{eff} \in L^2_a (\gamma ) $ by 
\begin{align}
\hat{Q}_\gamma^\mathrm{eff} (\vct{y}  ) &:= \frac{1}{a(\vct{y})} \int_{-a_-(\vct{y})}^{a_+(\vct{y})} \bigl( s - a_-(\vct{y} ) \bigr) \, \hat{q}_\rmf^\mathrm{eff} ( \vct{y} + s \vct{N} ) \,\rmd s .
\end{align}
Then, the reduced reformulation of the limit problem~\eqref{eq:K1} reads as follows, where the functions~$p_\pm$ and $\vct{u}_\pmf$ corresponding to the limit functions~$\smash{\hat{p}_\pm^\#}$ and $\smash{\hat{\vct{u}}_\pmf^\#}$ from \Cref{cor:conv}.

Find $p_\pm \colon \Omega_\pm^0 \times I \rightarrow \mathbb{R}$ and $\vct{u}_\pmf \colon \Omega_\pmf^\odot \times I \rightarrow \mathbb{R}^n$ such that 
\begin{subequations}
\begin{alignat}{2}
\jump{\matr{K} \nabla p \cdot \! \vct{N} } + a\mathfrak{A}_{\!\vct{N}} \hat{q}_\rmf^\eff &= \begin{cases}  \hat{\vct{\alpha}}_\gamma^\eff \cdot  \jump{\partial_t \vct{u}} &\text{if } \nu_\tsr{C} = 1 , \\ 
0 &\text{if } \nu_\tsr{C} > 1 , \end{cases} \\
\hat{\matr{K}}_+ \nabla p_+  \cdot \vct{N} + \hat{Q}_\gamma^\eff &= \frac{\hat{K}_\gamma^{\!\vct{N}}}{a} \jump{p} +  \begin{cases} \hat{\vct{\alpha}}_\gamma^\eff\! \cdot \bigl(  \jump{\partial_t \vct{u}} - \mathfrak{A}_{\!\vct{N}} (\partial_t \vct{u}_\rmf ) \bigr) &\text{if } \nu_\tsr{C} = 1 , \\ 
- \hat{\vct{\alpha}}_\gamma^\eff \! \cdot \mathfrak{A}_{\!\vct{N}} (\partial_t \vct{u}_\rmf ) &\text{if } \nu_\tsr{C} > 1  \end{cases}
\end{alignat}%
\label{eq:K1_df}%
\end{subequations}%
holds on~$\gamma$, 
and the bulk limit problem~\eqref{eq:bulklimit}, as well as either the mechanical limit problem \eqref{eq:mechlimit_nuC1} if $\nu_\tsr{C } = 1$ or \eqref{eq:mechlimit_nuCg1} if $\nu_\tsr{C} > 1$, are satisfied.

Next, we define the space
\begin{align}
\Phi_1 := \Bigl\{ \phi_\pm  \in H^1_{0 , \rho^0_{\pm , \mathrm{D}}} \! (\Omega_\pm^0 )  \ \Big\vert\ \jump{\phi} \in L^2_{a^{-1}} (\gamma ) \Bigr\} .
\end{align}
Then, a weak formulation of \cref{eq:K1_df} is given by the following problem.

If $\nu_\tsr{C} = 1$, find $\vct{u}_\pmf \in H^1 ( I ; \vct{V}^\# ) $  and $p_\pm  \in H^1 ( I ; L^2 (\Omega_\pm^0 ) ) \cap L^2 ( I ; \Phi_1 )$ such that%
\begin{subequations}
\begin{equation}
\begin{multlined}[c][0.875\displaywidth]
\hat{\mathcal{B}}_\rmb^0 ( \phi_\pm , \partial_t \vct{u}_\pm ) + \hat{\mathcal{C}}_\rmb^0 ( \partial_t p_\pm , \phi_\pm ) + \hat{\mathcal{D}}_\rmb^0 ( p_\pm , \phi_\pm ) \\ + \bigl\langle a^{-1}  \hat{K}_\gamma^{\!\vct{N}} \jump{p} , \jump{\phi} \bigr\rangle_\gamma  + \bigl\langle  \hat{\vct{\alpha}}_\gamma^\mathrm{eff} \! , \jump{\phi\, \partial_t \vct{u} } \bigr\rangle_\gamma  - 
\bigl\langle \jump{\phi}  \hat{\vct{\alpha}}_\gamma^\mathrm{eff} \!  , \mathfrak{A}_{\!\vct{N}} ( \partial_t \vct{u}_\rmf ) \bigr\rangle_\gamma 
\\ = \bigl\langle \hat{q}_\pm , \phi_\pm \bigr\rangle_{\Omega_\pm^0  } + \bigl\langle a\mathfrak{A}_{\!\vct{N}} \hat{q}_\rmf^\mathrm{eff} \! , \phi_- \bigr\rangle_\gamma + \bigl\langle \hat{Q}_\gamma^\mathrm{eff} \! , \jump{\phi} \bigr\rangle_\gamma .
\end{multlined} 
\end{equation}
Otherwise, if $\nu_\tsr{C} > 1$, find $\vct{u}_\pmf \in H^1 ( I ; \vct{V}_{>1} )$ and  $p_\pm  \in H^1 ( I ; L^2 (\Omega_\pm^0 ) ) \cap L^2 ( I ; \Phi_1 )$ with
\begin{equation}
\begin{multlined}[c][0.875\displaywidth]
\hat{\mathcal{B}}_\rmb^0 ( \phi_\pm , \partial_t \vct{u}_\pm ) + \hat{\mathcal{C}}_\rmb^0 ( \partial_t p_\pm , \phi_\pm ) + \hat{\mathcal{D}}_\rmb^0 ( p_\pm , \phi_\pm )  -\bigl\langle \jump{\phi} \hat{\vct{\alpha}}_\gamma^\mathrm{eff} \!  , \mathfrak{A}_{\!\vct{N}}( \partial_t \vct{u}_\rmf ) \bigr\rangle_\gamma \\
+ \bigl\langle a^{-1}  \hat{K}_\gamma^{\!\vct{N}} \jump{p} , \jump{\phi} \bigr\rangle_\gamma = \bigl\langle \hat{q}_\pm , \phi_\pm \bigr\rangle_{\Omega_\pm^0  } + \bigl\langle a\mathfrak{A}_{\!\vct{N}} \hat{q}_\rmf^\mathrm{eff} \! , \phi_- \bigr\rangle_\gamma + \bigl\langle \hat{Q}_\gamma^\mathrm{eff}\! , \jump{\phi} \bigr\rangle_\gamma .
\end{multlined} 
\end{equation}%
\label{eq:weak_C1_K1_df}%
\end{subequations}%

The reduced limit problem~\eqref{eq:weak_C1_K1_df} emerges from the full-dimensional limit problem in \cref{eq:weak_C1_K1} as detailed in the following theorem.
\begin{theorem}
Let $\nu_\omega > -1$ and assume that $\smash{\hat{\vct{\alpha}}_\rmf^\eff}$ and $\smash{\hat{K}_\rmf^{\!\vct{N}}}$ are constant in normal direction in the sense of \cref{eq:normalconst}.
Besides, let $p_\pmf  \in H^1 ( I ; \Lambda  ) \cap L^2 ( I ; \Phi^\# )$ be a solution of the full-dimensional limit problem~\eqref{eq:weak_C1_K1}.
Then, $p_\pm  \in H^1 ( I ; L^2 (\Omega_\pm^0 ) ) \cap L^2 ( I ; \Phi_1 )$ solves the reduced limit problem~\eqref{eq:weak_C1_K1_df}.
\end{theorem}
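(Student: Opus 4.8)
The plan is to take a given solution $p_\pmf$ of the full-dimensional limit problem~\eqref{eq:weak_C1_K1} (together with the accompanying $\vct{u}_\pmf$) and to produce, for an arbitrary test function $\phi_\pm \in \Phi_1$, an admissible test function $\phi_\pmf \in \Phi^\#$ for~\eqref{eq:weak_C1_K1} whose fracture component is \emph{affine} in the transverse variable. Writing a point of $\Omega_\rmf^1$ as $\vct{y} + s\vct{N}$ with $\vct{y}\in\gamma$ and $s\in(-a_-(\vct{y}),a_+(\vct{y}))$, I would set
\begin{align*}
\phi_\rmf(\vct{y}+s\vct{N}) := \phi_-\vert_\gamma(\vct{y}) + \frac{s+a_-(\vct{y})}{a(\vct{y})}\,\jump{\phi}_\gamma(\vct{y}),
\end{align*}
so that $\mathfrak{T}_-\phi_\rmf = \phi_-\vert_\gamma$ and $\mathfrak{T}_+\phi_\rmf = \phi_+\vert_\gamma$ (hence $\phi_\pm = \Pi_\pm \phi_\rmf$ on $\gamma$), while $\partial_{\!\vct{N}}\phi_\rmf = a^{-1}\jump{\phi}_\gamma$ is constant in the normal direction. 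Exactly as in the mechanics reduction in~\Cref{sec:4.2.1.2}, one checks $\phi_\pmf \in \Phi^\#$ using $\jump{\phi}_\gamma \in L^2_{a^{-1}}(\gamma)$ and the boundedness of $a$; the continuous-embedding argument of~\cite{hoerl24} applies. At the same time I would record that $p_\pm$ inherits the claimed regularity: since $p_\pmf \in L^2(I;\Phi^\#)$, the trace relation $\mathfrak{T}_\pm p_\rmf = p_\pm\vert_\gamma$ (\Cref{lem:trace_H1N}) gives $\jump{p}_\gamma = \int_{-a_-}^{a_+}\partial_{\!\vct{N}}p_\rmf\,\rmd s$ for a.e.\ $t$, and Cauchy--Schwarz yields $\norm{a^{-1/2}\jump{p}_\gamma}_{L^2(\gamma)}^2 \le \norm{\partial_{\!\vct{N}}p_\rmf}_{\vct{L}^2(\Omega_\rmf^1)}^2$, so $p_\pm \in L^2(I;\Phi_1)$; the $H^1(I;L^2(\Omega_\pm^0))$-regularity is already part of $p_\pmf \in H^1(I;\Lambda)$.

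The core of the argument is to substitute this $\phi_\pmf$ into~\eqref{eq:weak_C1_K1} and to collapse the four fracture integrals, invoking the two structural hypotheses exactly where needed. First, the storage contribution $\langle \hat{\omega}_\rmf^\mathrm{eff} \partial_t p_\rmf, \phi_\rmf\rangle_{\Omega_\rmf^1}$ vanishes because $\nu_\omega > -1$ forces $\hat{\omega}_\rmf^\mathrm{eff} \equiv 0$. Second, since $\hat{K}_\rmf^{\!\vct{N}}$ equals $\hat{K}_\gamma^{\!\vct{N}}$ (constant in $\vct{N}$), Fubini's theorem together with $\partial_{\!\vct{N}}\phi_\rmf = a^{-1}\jump{\phi}_\gamma$ turns $\langle \hat{K}_\rmf^{\!\vct{N}} \partial_{\!\vct{N}} p_\rmf, \partial_{\!\vct{N}}\phi_\rmf\rangle_{\Omega_\rmf^1}$ into $\langle a^{-1}\hat{K}_\gamma^{\!\vct{N}}\jump{p}_\gamma, \jump{\phi}_\gamma\rangle_\gamma$. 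Third, for the Biot term $\langle \phi_\rmf\hat{\vct{\alpha}}_\rmf^\mathrm{eff}, \partial_{\!\vct{N}}\partial_t\vct{u}_\rmf\rangle_{\Omega_\rmf^1}$ I would use that $\hat{\vct{\alpha}}_\rmf^\mathrm{eff} = \hat{\vct{\alpha}}_\gamma^\mathrm{eff}$ is constant in $\vct{N}$ and integrate by parts in $s$: the boundary term produces $\mathfrak{T}_\pm\partial_t\vct{u}_\rmf$, which equals $\partial_t\vct{u}_\pm\vert_\gamma$ when $\nu_\tsr{C} = 1$ (as $\vct{u}_\pmf \in \vct{V}^\#$) and $\vct{0}$ when $\nu_\tsr{C} > 1$ (as $\vct{u}_\pmf \in \vct{V}_{>1}$, cf.\ \Cref{prop:uftrace}), and the remaining integral, thanks to the affine form of $\phi_\rmf$, contributes $-\langle \jump{\phi}_\gamma\hat{\vct{\alpha}}_\gamma^\mathrm{eff}, \mathfrak{A}_{\!\vct{N}}(\partial_t\vct{u}_\rmf)\rangle_\gamma$; combined with the elementary identity $\phi_-\vert_\gamma\jump{\partial_t\vct{u}}_\gamma + \jump{\phi}_\gamma\,\partial_t\vct{u}_+\vert_\gamma = \jump{\phi\,\partial_t\vct{u}}_\gamma$, this reproduces precisely the $\hat{\vct{\alpha}}_\gamma^\mathrm{eff}$-terms of~\eqref{eq:weak_C1_K1_df} in both cases. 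Fourth, the fracture part of the right-hand side $\langle \hat{q}_\rmf^\mathrm{eff}, \phi_\rmf\rangle_{\Omega_\rmf^1}$ splits into $\langle a\,\mathfrak{A}_{\!\vct{N}}\hat{q}_\rmf^\mathrm{eff}, \phi_-\rangle_\gamma$ plus $\langle \hat{Q}_\gamma^\mathrm{eff}, \jump{\phi}_\gamma\rangle_\gamma$ by the definition of $\hat{Q}_\gamma^\mathrm{eff}$. The bulk bilinear forms $\hat{\mathcal{B}}_\rmb^0, \hat{\mathcal{C}}_\rmb^0, \hat{\mathcal{D}}_\rmb^0$ and the bulk source $\langle \hat{q}_\pm, \phi_\pm\rangle_{\Omega_\pm^0}$ carry over unchanged, and collecting all terms is~\eqref{eq:weak_C1_K1_df}. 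Since $\vct{u}_\pmf$ is untouched, the accompanying mechanics problem~\eqref{eq:weak_mechlimit_C1} resp.~\eqref{eq:weak_mechlimit_nuCg1} continues to hold.

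I expect the main obstacle to be the bookkeeping in the Biot coupling term: one must track, uniformly in the two cases $\nu_\tsr{C} = 1$ and $\nu_\tsr{C} > 1$, the boundary contributions that arise from integrating $\phi_\rmf\,\partial_{\!\vct{N}}\partial_t\vct{u}_\rmf$ by parts against both the constant part $\phi_-\vert_\gamma$ and the linear part $a^{-1}(s+a_-)\jump{\phi}_\gamma$ of $\phi_\rmf$, and verify that these assemble---together with the leftover average $\mathfrak{A}_{\!\vct{N}}(\partial_t\vct{u}_\rmf)$---into exactly the interfacial terms of~\eqref{eq:K1_df} and~\eqref{eq:weak_C1_K1_df}. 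Everything else is routine. As an alternative route, one could first extract the strong interface conditions~\eqref{eq:K1_df} by integrating the fracture ODE in~\eqref{eq:K1} twice in the normal direction (here $\hat{\omega}_\rmf^\mathrm{eff} \equiv 0$ removes the time-derivative term, and the normal-constancy of $\hat{K}_\rmf^{\!\vct{N}}$ and $\hat{\vct{\alpha}}_\rmf^\mathrm{eff}$ makes the two integrations explicit, with $\mathfrak{T}_\pm p_\rmf$ and $\mathfrak{T}_\pm\vct{u}_\rmf$ supplying the integration constants), and then re-derive~\eqref{eq:weak_C1_K1_df} by testing the bulk equations against $\phi_\pm \in \Phi_1$ and substituting; this is more transparent but passes through the strong formulation.
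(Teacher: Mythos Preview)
Your proposal is correct and follows essentially the same approach as the paper: construct an affine-in-$s$ extension $\phi_\rmf$ of $\phi_\pm\in\Phi_1$ so that $\phi_\pmf\in\Phi^\#$ with $\partial_{\!\vct{N}}\phi_\rmf = a^{-1}\jump{\phi}_\gamma$, substitute into~\eqref{eq:weak_C1_K1}, and reduce each fracture integral using $\hat{\omega}_\rmf^\mathrm{eff}\equiv 0$ and the normal-constancy of $\hat{K}_\rmf^{\!\vct{N}}$ and $\hat{\vct{\alpha}}_\rmf^\mathrm{eff}$. Your Cauchy--Schwarz argument for $p_\pm\in L^2(I;\Phi_1)$ is a self-contained alternative to the paper's citation of the continuous embedding $\Phi^\#\hookrightarrow\Phi_1$ from~\cite[Lem.~4.12]{hoerl24}, and your integration-by-parts treatment of the Biot term spells out what the paper states as an identity.
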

\begin{proof}
Let $\phi_\pm \in \Phi_1$. 
We define $\phi_\rmf \in H^1_{\!\vct{N}}(\Omega_\rmf^1 ) $ by 
\begin{align*}
\phi_\rmf ( \vct{y} +s \vct{N} ) := \phi_- \bigr\vert_\gamma ( \vct{y} ) + \frac{s - a_-(\vct{y})}{a ( \vct{y}) } \jump{\phi} ( \vct{y} ) 
\end{align*}
for $\vct{y} \in \gamma$ and $s \in (a_-(\vct{y} ) , a_+ (\vct{y}))$
such that $\phi_\pmf \in \Phi^\#$ with 
\begin{align*}
\partial_{\!\vct{N}} \phi_\rmf (\vct{y} + s \vct{N} )  =  a^{-1} (\vct{y} ) \jump{\phi} ( \vct{y}) .
\end{align*}
By inserting $\phi_\pmf \in \Phi^\#$ as test function in \cref{eq:weak_C1_K1} and using the relations
\begin{align*}
\bigl\langle \phi_\rmf \hat{\vct{\alpha}}_\rmf^\mathrm{eff} , \partial_{\!\vct{N}} \partial_t \vct{u}_\rmf \bigr\rangle_{\Omega_\rmf^1  } &=  \begin{cases}
\bigl\langle  \hat{\vct{\alpha}}_\gamma^\mathrm{eff} , \jump{\phi\, \partial_t \vct{u} } \bigr\rangle_\gamma - 
\bigl\langle \jump{\phi}  \hat{\vct{\alpha}}_\gamma^\mathrm{eff}  , \mathfrak{A}_{\!\vct{N}} ( \partial_t \vct{u}_\rmf ) \bigr\rangle_\gamma  &\text{if } \nu_\tsr{C} = 1 , \\
-\bigl\langle \jump{\phi} \hat{\vct{\alpha}}_\gamma^\mathrm{eff}  , \mathfrak{A}_{\!\vct{N}} ( \partial_t \vct{u}_\rmf ) \bigr\rangle_\gamma  &\text{if } \nu_\tsr{C} > 1, 
\end{cases}  \\
\bigl\langle \hat{K}_\rmf^\vct{N} \partial_{\!\vct{N}} p_\rmf , \partial_{\!\vct{N}} \phi_\rmf \bigr\rangle_{\Omega_\rmf^1  } &= \bigl\langle a^{-1}  \hat{K}_\gamma^{\!\vct{N}} \jump{p} , \jump{\phi} \bigr\rangle_\gamma  ,
\end{align*}
we find that $p_\pm$ satisfies \cref{eq:weak_C1_K1_df}.
In addition, we have $p_\pm \in L^2 (I ; \Phi_1)$ since the map
\begin{align*}
\Phi^\# \rightarrow \Phi_1 ,\enspace \phi_\pmf \mapsto \phi_\pm
\end{align*}
defines a continuous embedding~\cite[Lem.\ 4.12]{hoerl24}.
\end{proof}

\begin{remark}
\begin{enumerate}[label=(\roman*)]
\item Except for the elimination of the fracture pressure head, the reduced limit problem~\eqref{eq:weak_C1_K1_df} is equivalent to the two-scale limit problem~\eqref{eq:weak_C1_K1}. 
When solving the reduced problem~\eqref{eq:weak_C1_K1_df}, the fracture pressure head can be obtained a~posteriori by solving \cref{eq:weak_C1_K1} with the displacement vectors~$\vct{u}_\pmf$ and bulk pressure heads~$p_\pm$  already given.
\item In general, if $2\nu_{\!\matr{\alpha}}^\smallperp = \nu_\tsr{C} - 1$ and hence $\hat{\vct{\alpha}}_\rmf^\eff \not= \vct{0}$, the reduced limit problem~\eqref{eq:weak_C1_K1_df} depends on the normal average of the fracture displacement vector~$\vct{u}_\rmf$ and requires the solution of a two-scale problem with full-dimensional fracture~$\Omega_\rmf^1$ for the mechanics equation.  
\item If $2\nu_{\!\matr{\alpha}}^\smallperp > \nu_\tsr{C} - 1$ (i.e., $\hat{\vct{\alpha}}_\rmf^\eff \equiv \vct{0}$), we can drop the assumption that $\smash{\hat{K}_\rmf^{\!\vct{N}}}$ is constant in normal direction for the reduced reformulation of \cref{eq:weak_C1_K1}. 
In this case, the effective normal conductivity~$\hat{K}_\gamma^{\!\vct{N}} \in L^\infty ( \gamma ) $ and the effective source~$\smash{\hat{Q}_\rmf^\eff} \in L^2_a (\gamma ) $ are given by%
\end{enumerate}
\vspace*{-9pt}
\begin{subequations}
\begin{align}
\hat{K}_\gamma^{\!\vct{N}} (\vct{y} )  &:= \biggl( \frac{1}{a(\vct{y})}\int_{-a_-(\vct{y} ) }^{a_+(\vct{y} ) } \bigl[ \hat{K}_\rmf^{\!\vct{N}} (\vct{y} +s \vct{N}) \bigr]^{-1} \,\rmd s \biggr)^{-1} , \\
\hat{Q}_\gamma^\eff (\vct{y}) &:= \frac{\hat{K}_\gamma^{\!\vct{N}} ( \vct{y})}{a(\vct{y})} \int_{-a_-(\vct{y} ) }^{a_+ ( \vct{y} ) } \hat{q}_\rmf^\eff (\vct{y} + s \vct{N} ) \int_{-a_-(\vct{y} ) }^{s } \bigl[ \hat{K}_\rmf^{\!\vct{N}} (\vct{y} + \bar{s} \vct{N}) \bigr]^{-1} \rmd \bar{s} \,\rmd s   .
\end{align}
\end{subequations}
\begin{enumerate}[resume]
For details, we refer to \cite[\S 4.4]{hoerl24}.
\end{enumerate}
\end{remark}

\subsubsection{Case \texorpdfstring{$\nu_\matr{K} > 1$}{nuK>1}} 
\label{sec:4.3.5} 
For $\nu_\matr{K} > 1$, the fracture becomes an impermeable wall with no flow from either bulk domains across the fracture interface~$\gamma$. 
For $\nu_\omega = -1$, we still get a flow equation inside the fracture that is not directly coupled to the bulk flow, but depends on the displacement vector inside the fracture. In contrast, for $\nu_\omega > -1$, we do not obtain convergence for the pressure head inside the fracture.
The strong formulation of the limit problem now reads as follows, with $p_{\pm (\rmf )}$ and $\vct{u}_\pmf$ corresponding to the limit functions~$\smash{\hat{p}_{\pm (\rmf )}^\#}$ and~$\smash{\hat{\vct{u}}_{\pmf}^\#}$ in \Cref{cor:conv}.

Find $p_\pm \colon \Omega_\pm^0 \times I \rightarrow \mathbb{R}$ and $\vct{u}_\pmf \colon \Omega_\pmf^\odot \times I \rightarrow \mathbb{R}^n$ such that 
\begin{subequations}
\begin{align}
 \hat{\matr{K}}_\pm \nabla p_\pm \cdot \vct{N} &= 0 \quad\enspace \text{on } \gamma \times I ,
\end{align}
and the bulk limit problem~\eqref{eq:bulklimit}, as well as either the mechanical limit problem \eqref{eq:mechlimit_nuC1} if $\nu_\tsr{C } = 1$ or \eqref{eq:mechlimit_nuCg1} if $\nu_\tsr{C} > 1$, are satisfied. Moreover, if $\nu_\omega = -1$, find $p_\rmf \colon \Omega_\rmf^1 \times I \rightarrow \mathbb{R}$ such that
\begin{alignat}{2}
\label{eq:strong_C1Kg1_c} \hat{\omega}_\rmf \partial_t p_\rmf + \hat{\vct{\alpha}}_\rmf^\mathrm{eff} \!\cdot \partial_{\!\vct{N}} \partial_t \vct{u}_\rmf  &= \hat{q}_\rmf^\mathrm{eff} \quad\enspace &&\text{in } \Omega_\rmf^1 \times I , \\
\label{eq:strong_C1Kg1_d} p_\rmf ( \cdot , 0 ) &= \hat{p}_{0, \rmf } \quad\enspace &&\text{on } \Omega_\rmf^1 .
\end{alignat}%
\label{eq:strong_C1Kg1}
\end{subequations}%
\indent Further, we define the space 
\begin{align}
\Phi_{>1} &:= H^1_{0, \rho_{\pm , \mathrm{D}}^0 } \! (\Omega_\pm^0 )  .
\end{align}
Then, a weak formulation of \cref{eq:strong_C1Kg1} is given by the following problem.

Find $\vct{u}_\pmf \in H^1 ( I ; \vct{V}^\# ) $ if $\nu_\tsr{C} = 1$ or $\vct{u}_\pmf \in H^1 ( I ; \vct{V}_{>1} )$ if $\nu_\tsr{C} > 1$, as well as $p_\pm \in H^1 ( I ; L^2 (\Omega_\pm^0 ) ) \cap L^2 ( I ; \Phi_{> 1})$ such that%
\begin{equation}
\label{eq:weak_C1_Kg1}
\begin{multlined}[c][0.875\displaywidth]
\hat{\mathcal{B}}_\rmb^0 ( \phi_\pm , \partial_t \vct{u}_\pm ) + \hat{\mathcal{C}}_\rmb^0 ( \partial_t p_\pm , \phi_\pm ) + \hat{\mathcal{D}}_\rmb^0 ( p_\pm , \phi_\pm ) = \bigl\langle \hat{q}_\pm , \phi_\pm \bigr\rangle_{\Omega_\pm^0  } 
\end{multlined}
\end{equation}
holds for all $\phi_\pm  \in \Phi_5$ and either the mechanics limit problem~\eqref{eq:weak_mechlimit_C1} if $\nu_\tsr{C} = 1$ or \eqref{eq:weak_mechlimit_nuCg1} if $\nu_\tsr{C} > 1$ is satisfied. 
Moreover, if $\nu_\omega = -1$, find $p_\rmf \in H^1 (I ; L^2 (\Omega_\rmf^1 ) )$ with \cref{eq:strong_C1Kg1_d} such that \cref{eq:strong_C1Kg1_c} is satisfied almost everywhere.

We now obtain the following convergence theorem.
\begin{theorem}  
Let $\nu_\tsr{C} \ge 1$, $\nu_\matr{K} > 1$, and assume that the \Cref{asm:eps,asm:nu} hold.  
Besides, if $\nu_\omega > - 1$, let additionally $2 \nu_{\!\matr{\alpha}}^\smallpar \ge \min\{ \nu_\omega - 1, \nu_\matr{K} - 3 \}$ and $2 \nu_{\!\matr{\alpha}}^\smallperp \ge \min\{ \nu_\omega + 1, \nu_\matr{K} - 1 \}$.
\begin{enumerate}[label=(\roman*)]
\item $\hat{p}_\pm^\#  \in H^1 ( I ; L^2 (\Omega_\pm^0 ) ) \cap L^2 ( I ; \Phi_5 )$ and, if $\nu_\omega = -1$, $\hat{p}^\#_\rmf \in H^1 (I ; L^2 (\Omega_\rmf^1 ) )$, as well as $\smash{\hat{\vct{u}}_\pmf^\#} \in H^1 ( I ; \vct{V}^\# ) $ if $\nu_\tsr{C} = 1$ and $\smash{\hat{\vct{u}}_\pmf^\#} \in H^1 ( I ; \vct{V}_{>1} )$ if $\nu_\tsr{C} > 1$, are the unique weak solution of the limit problem in \cref{eq:weak_C1_Kg1}. 
Moreover, the weak and weak-$\ast$ convergences in \Cref{cor:conv} hold for the entire sequences $\{ \hat{\vct{u}}_\pmf^\epsilon \}_{\epsilon \in ( 0,1]}$ and $\{ \hat{p}_\pmf^\epsilon \}_{\epsilon \in ( 0,1]}$.
\item 
Let $\iota ( \nu ) := \tfrac{1}{2} ( \nu + 1)$ and $\theta ( \nu ) := \tfrac{1}{2} ( \nu - 1)$, as well as $2 \matr{e}_\smallpar (\vct{v} ) := \nablapar \vct{v} + (\nablapar \vct{v} )^\rmt $.
Besides, if $\nu_\omega > -1$, let $2 \nu_q > \min \{ \nu_\omega - 1 , \nu_\matr{K} + 1 \}$.
Then, as $\epsilon \rightarrow 0$, the following strong convergences hold. 
\end{enumerate}
\begin{subequations}
\begin{alignat}{3}
\hat{\vct{u}}_\pm^\epsilon &\rightarrow \hat{\vct{u}}_\pm^\# \quad\ &&\text{in } L^2 \bigl( I ; \vct{H}^1 ( \Omega_\pm^0 ) \bigr) , \\
\epsilon^{\theta ( \nu_\tsr{C} ) } \hat{\vct{u}}_\rmf^\epsilon &\rightarrow \hat{\vct{u}}_\rmf^\# \quad\ &&\text{in } L^2 \bigl( I ; \vct{H}^1_{\!\vct{N}} ( \Omega_\rmf^1 ) \bigr) , \\
\epsilon^{\iota ( \nu_\tsr{C} ) } \matr{e}_\smallpar (\hat{\vct{u}}_\rmf^\epsilon ) &\rightarrow \matr{0} \quad\ &&\text{in } L^2 \bigl( I ; \matr{L}^2 (\Omega_\rmf^1 ) \bigr) , \\
\hat{p}_\pm^\epsilon &\rightarrow \hat{p}_\pm^\# \quad\ &&\text{in } L^2  \bigl( I ; H^1 ( \Omega_\pm^0 ) \bigr) , \\
\hat{p}_\rmf^\epsilon &\rightarrow \hat{p}_\rmf^\# \quad\ &&\text{in } L^2  \bigl( I ; L^2 ( \Omega_\rmf^1 ) \bigr) .
\end{alignat}%
\label{eq:strongconv_Kg1_a}%
\end{subequations}%
\vspace*{-12pt}
\begin{enumerate}[resume,label=(\roman*)]
\item Let $2 \nu_q > \nu_\omega - 1$ if $\nu_\omega > -1 $. Then, also the following strong convergences hold true as $\epsilon \rightarrow 0$.
\end{enumerate}
\begin{subequations}
\begin{alignat}{3}
\hat{\vct{u}}_\pm^\epsilon &\rightarrow \hat{\vct{u}}_\pm^\# \quad\ &&\text{in } H^1 \bigl( I ; \vct{H}^1 ( \Omega_\pm^0 ) \bigr) , \\
\epsilon^{\theta ( \nu_\tsr{C} ) } \hat{\vct{u}}_\rmf^\epsilon &\rightarrow \hat{\vct{u}}_\rmf^\# \quad\ &&\text{in } H^1 \bigl( I ; \vct{H}^1_{\!\vct{N}} ( \Omega_\rmf^1 ) \bigr) , \\
\epsilon^{ \iota ( \nu_\tsr{C} ) } \matr{e}_\smallpar (\hat{\vct{u}}_\rmf^\epsilon ) &\rightarrow \matr{0} \quad\ &&\text{in } H^1 \bigl( I ; \matr{L}^2 (\Omega_\rmf^1 ) \bigr) , \\
\hat{p}_\pm^\epsilon &\rightarrow \hat{p}_\pm^\# \quad\ &&\text{in } H^1 \bigl( I ; L^2 ( \Omega_\pm^0 ) \bigr) , \\
\hat{p}_\rmf^\epsilon &\rightarrow \hat{p}_\rmf^\# \quad\ &&\text{in } H^1 \bigl( I ; L^2 ( \Omega_\rmf^1 ) \bigr) \quad \text{ if } \nu_\omega = -1.
\end{alignat}%
\label{eq:strongconv_Kg1_b}%
\end{subequations}%
\vspace*{-12pt}
\end{theorem}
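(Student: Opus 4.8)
The plan is to follow the template already established in the proof of \Cref{thm:C1Klm1}, adapting the three ingredients—identification of the limit, uniqueness, and the energy argument for strong convergence—to the current regime $\nu_\matr{K} > 1$. First I would pass to the limit along the subsequence $\{\epsilon_k\}$ from \Cref{cor:conv}: choosing arbitrary test functions $\phi_\pmf \in \Phi$ and $\vct{v}_\pmf \in \vct{V}$ in \cref{eq:fulldim-weak-trafo}, I apply \Cref{lem:bulkconv} to the bulk terms and \Cref{lem:fracconv1,lem:fracconv2} to the fracture terms. For $\nu_\matr{K} > 1$ the fracture flux terms $\langle \epsilon_k^{\nu_\matr{K}+1}\hat{\matr{K}}_\rmf^{\epsilon_k}\nabla^{\epsilon_k}\hat{p}_\rmf^{\epsilon_k},\nabla^{\epsilon_k}\phi_\rmf\rangle$ vanish in the limit by \Cref{lem:fracconv2}, and when $\nu_\omega > -1$ the coupling terms involving $\hat{\matr{\alpha}}_\rmf^{\epsilon_k}$ also vanish under the extra hypotheses $2\nu_{\!\matr{\alpha}}^\smallpar \ge \min\{\nu_\omega-1,\nu_\matr{K}-3\}$ and $2\nu_{\!\matr{\alpha}}^\smallperp \ge \min\{\nu_\omega+1,\nu_\matr{K}-1\}$ via \cref{eq:fracconv1_e,eq:fracconv1_f}; this gives the decoupled bulk flow equation~\eqref{eq:weak_C1_Kg1}. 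When $\nu_\omega = -1$, the storage term $\langle\hat{\omega}_\rmf\partial_t\hat{p}_\rmf^{\epsilon_k},\phi_\rmf\rangle$ survives by \cref{eq:frac_conv_e}, and testing with $\phi_\pm = \vct{0}$ and $\phi_\rmf \in H^1_{\!\vct{N}}(\Omega_\rmf^1)$ together with \cref{eq:frac_conv_b} yields the fracture ODE~\eqref{eq:strong_C1Kg1_c}–\eqref{eq:strong_C1Kg1_d}. For $\nu_\tsr{C} > 1$ I additionally rescale \cref{eq:fulldim-weak-trafo-b} by $\epsilon^{(1-\nu_\tsr{C})/2}$ inside the fracture and pass to the limit using \Cref{lem:fracconv2}, invoking \Cref{prop:uftrace} to get the boundary condition $\mathfrak{T}_\pm\hat{\vct{u}}_\rmf^\# = \vct{0}$ on $\gamma_\pm^1$. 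Uniqueness of the coupled limit system follows from the ellipticity statements in \Cref{lem:eff1,lem:eff2} by the same Gronwall-type argument as in \Cref{thm:wellposed_biot_sd}, and uniqueness forces the weak and weak-$\ast$ convergences to hold for the entire sequence.

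For part~(ii), the strong convergences, I follow the energy-equality scheme of \Cref{thm:C1Klm1}(ii). I choose $\vct{v}_\pmf = \partial_t\hat{\vct{u}}_\pmf^\epsilon$ and $\phi_\pmf = \hat{p}_\pmf^\epsilon$ in \cref{eq:fulldim-weak-trafo}, integrate from $0$ to $t$, and obtain the analogue of \cref{eq:thm_C1Klm1_1}; I do the same for the limit problem~\eqref{eq:weak_C1_Kg1} (together with \eqref{eq:weak_mechlimit_C1} or \eqref{eq:weak_mechlimit_nuCg1} and, when $\nu_\omega=-1$, the fracture ODE tested with $\partial_t p_\rmf$) to get the analogue of \cref{eq:thm_C1Klm1_2}. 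The key point is the choice of norms: I set $\norm{\phi_\pmf}_\mathrm{i}^2 := \langle\hat{\matr{K}}_\pm\nabla\phi_\pm,\nabla\phi_\pm\rangle_{\Omega_\pm^0}$ (no fracture contribution, since $\hat{\matr{K}}_\rmf^{\epsilon_k}\nabla^{\epsilon_k}$-terms vanish in the limit for $\nu_\matr{K}>1$) and $\norm{\phi_\pmf}_\mathrm{ii}^2 := \langle\hat{\omega}_\pmf^{\fracfac{\!\mathrm{eff}\!}}\phi_\pmf,\phi_\pmf\rangle$, together with $\normiii{\cdot}$ from \cref{eq:normiii}. Combining the lower bound from \Cref{prop:apriori} (the $\smallo(\epsilon)$ appears precisely because the $\epsilon^{\nu_\matr{K}+1}$-scaled fracture flux term is asymptotically negligible), the energy identity for $\hat{p}_\pmf^\epsilon$, the convergence of initial data from \Cref{lem:u0conv}, and the limit energy identity, I get equality of the limsup of the scaled energies with the limit energy; weak lower semicontinuity then upgrades weak to strong convergence, giving \cref{eq:strongconv_Kg1_a}. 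For part~(iii), I repeat the argument with $\vct{v}_\pmf = \partial_t\hat{\vct{u}}_\pmf^\epsilon$ and $\phi_\pmf = \partial_t\hat{p}_\pmf^\epsilon$, obtaining the analogue of \cref{eq:apriori_4} and \cref{eq:thm_C1Klm1_4}, which needs the hypothesis $2\nu_q > \nu_\omega - 1$ (replacing the weaker bound in \Cref{asm:nu}\ref{asm:nuq}) to control the source term $\langle\fracfac{\epsilon^{\nu_q+1}}\hat{q}_\pmf^\epsilon,\partial_t\hat{p}_\pmf^\epsilon\rangle$ and the fact that $\epsilon^{\iota(\nu_\omega)}\partial_t\hat{p}_\rmf^\epsilon\rightharpoonup 0$ when $\nu_\omega > -1$; this yields \cref{eq:strongconv_Kg1_b}.

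The main obstacle I anticipate is the case $\nu_\omega > -1$, where the a~priori estimates in \Cref{prop:apriori} do not control $\hat{p}_\rmf^\epsilon$ in any Sobolev norm on the fracture strong enough to close the coupling—indeed the theorem statement restricts the fracture pressure convergence to $L^2(I;L^2(\Omega_\rmf^1))$ and omits the $\partial_t$-convergence for $\hat{p}_\rmf^\epsilon$ entirely in this subcase. The extra structural hypotheses on $\nu_{\!\matr{\alpha}}^\smallpar$, $\nu_{\!\matr{\alpha}}^\smallperp$, and $\nu_q$ are exactly what is needed to ensure the troublesome Biot-coupling and source terms in the fracture are asymptotically negligible in the relevant norm, so the argument reduces cleanly to the decoupled bulk system plus the (possibly absent) fracture ODE. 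The remaining delicate bookkeeping is verifying that the $\smallo(\epsilon)$ remainders in the energy identities—arising from the mismatch between the $\epsilon$-scaled norms $\normiii{(\fracfac{\epsilon^{\theta(\nu_\tsr{C})}}\hat{\vct{u}}_\pmf^\epsilon,\epsilon^{\iota(\nu_\tsr{C})}\matr{e}_\smallpar(\hat{\vct{u}}_\rmf^\epsilon))}$ and the limit norm $\normiii{(\hat{\vct{u}}_\pmf^\#,\matr{0})}$—vanish, which follows as in \Cref{thm:C1Klm1} from \cref{eq:apriori_b} giving $\epsilon^{\iota(\nu_\tsr{C})}\nablapar\hat{\vct{u}}_\rmf^\epsilon \rightharpoonup \vct{0}$ and the corresponding statement for $\partial_t$.
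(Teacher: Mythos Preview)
Your proposal is correct and follows essentially the same approach as the paper. The paper's proof is a two-line reference back to \Cref{thm:C1Klm1}, specifying only that one chooses test functions $\phi_\pmf \in \Phi$ without further restrictions for part~(i) and that the norm $\norm{\cdot}_\mathrm{i}$ is redefined on $\Phi_{>1}$ as $\norm{\phi_\pm}_\mathrm{i} := \langle \hat{\matr{K}}_\pm \nabla \phi_\pm , \nabla \phi_\pm \rangle_{\Omega_\pm^0}$ (bulk only) for parts~(ii) and~(iii); you have correctly identified both modifications and supplied the surrounding detail.
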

\begin{proof}
The result can be shown analogously as in \Cref{thm:C1Klm1} with the following modifications. 
For \ref{item:thm:C1Km1_1}, we choose a test function~$\phi_\pmf\in \Phi$ without further restrictions in \cref{eq:fulldim-weak-trafo}. 
 Moreover, for \ref{item:thm:C1Km1_2} and~\ref{item:thm:C1Km1_3},  we define the norm~$\norm{\cdot}_\mathrm{i}$ in \cref{eq:norm_i} by 
\begin{align*}
\Phi_{>1}  \rightarrow \mathbb{R}_0^+ , \quad \norm{ \phi_\pm  }_\mathrm{i} := \bigl\langle \hat{\matr{K}}_\pm \nabla \phi_\pm , \nabla \phi_\pm \bigr\rangle_{\Omega_\pm^0 }   . \tag*{\qedhere}
\end{align*}
\end{proof}

\begin{appendices}
\section{Existence and Uniqueness for Linear Poroelasticity}
\label{sec:A}
Let $D \subset \mathbb{R}^n$ be a bounded domain representing an anisotropic poroelastic medium and consider the time interval $I := ( 0, T)$ with $T>0$.
Besides, let $\partial D = \partial D_\rmD \cup \partial D_\rmN$ with $\partial D_\rmD \cap \partial D_\rmN = \emptyset$ and $\abs{\partial D_\rmD} > 0$.
We assume that the deformation of~$D$ and the flow of a single-phase fluid in~$D$ are governed by the quasi-static Biot equations~\cite{biot41,mikelic12}. 
Specifically, we consider the following Biot system in dimensionless form (see \Cref{sec:2.4} for a non-dimensionalization).

Find $p \colon D \times I \rightarrow \mathbb{R}$ and $\vct{u}\colon D\times I \rightarrow \mathbb{R}^n$ such that 
\begin{subequations}
\begin{alignat}{2}
-\nabla \cdot \bigl( \tsr{C} \matr{e} ( \vct{u} ) - p \matr{\alpha} \bigr) &= \vct{f} - \nabla \cdot ( G \matr{\alpha} ) \quad &&\text{in } D \times I , \\
\partial_t \bigl( \omega p + \nabla \cdot ( \matr{\alpha} \vct{u} ) \bigr) - \nabla \cdot ( \matr{K} \nabla p  ) &= q \quad\enspace &&\text{in } D \times I, \\
u &= \vct{0} \quad\enspace &&\text{on } \partial D \times I , \\
p &= 0 \quad\enspace &&\text{on } \partial D_\rmD \times I , \\
\matr{K} \nabla p \cdot \vct{n} &= 0 \quad\enspace &&\text{on } \partial D_\rmN \times I , \\
\label{eq:initial_p_sd} p ( \cdot , 0 ) &= p_0  \quad\enspace &&\text{on } D.
\intertext{Besides, given the initial pressure head~$p_0 \colon D \rightarrow \mathbb{R}$, the initial displacement vector $\vct{u} ( \cdot, 0 ) =:\vct{u}_0 \colon D \rightarrow \mathbb{R}^n$ is defined such that}
-\nabla \cdot \bigl( \tsr{C} \matr{e} ( \vct{u}_0 ) - p_0 \matr{\alpha} \bigr) &= \vct{f} (0) - \nabla \cdot ( G \matr{\alpha } ) \quad &&\text{in } D , \\
\vct{u}_0 &= \vct{0} \quad\enspace &&\text{on } \partial D .
\end{alignat}%
\label{eq:biot_sd_strong}%
\end{subequations}%
In \cref{eq:biot_sd_strong}, $\tsr{C} \colon D \rightarrow \mathbb{R}^{n\times n \times n \times n}$ denotes the elasticity tensor and $\matr{e} ( \vct{u})$ is the symmetrized gradient of~$\vct{u}$ given by
\begin{align}
2 \matr{e} ( \vct{u} ) := \nabla \vct{u} + ( \nabla \vct{u} )^\rmt .
\end{align}
Further, $\omega \colon D \rightarrow \mathbb{R}$ denotes the inverse Biot modulus and $\matr{\alpha} \colon D \rightarrow \mathbb{R}^{n\times n }$ is Biot's coupling tensor, which is regarded here as a matrix, following \cite{mikelic12}. 
The hydraulic conductivity matrix is denoted by $\matr{K} \colon D \rightarrow \mathbb{R}^{n\times n }$ and $G\colon D \rightarrow \mathbb{R}$ is the function $G ( \vct{x} ) := \vct{x} \cdot \vct{g}$ with the gravitational acceleration~$\matr{g} \in \mathbb{R}^n$. 
In addition, $\vct{f} \colon D \times I \rightarrow \mathbb{R}^n$ and~$q \colon D \times I \rightarrow \mathbb{R}$ are source or sink terms and $\vct{n} \colon \partial D \rightarrow \mathbb{R}^n$ denotes the outer unit normal on~$\partial D$.
We make the following assumptions on the model parameters.
\begin{assumption} 
\label{asm:biot_sd}
We assume that
\begin{enumerate}[label=(\roman*),topsep=0pt,itemsep=0pt]
\item \label{asm:alpha_sd} $\matr{\alpha} \colon D \rightarrow \mathbb{R}^{n\times n }$ is symmetric and piecewise constant,
\item \label{asm:fq_sd} $\vct{f} \in H^2 (I ; \vct{L}^2 (D))$ and $q \in H^1 (I ; L^2 (D) )$,
\item \label{asm:p0_sd} $p_0 \in H^1_{0,\partial D_\rmD}\! (D)$ and $\matr{K} \nabla p_0 \in \vct{H}_{\mathrm{div}} (D)$ with $\matr{K}\nabla p_0 \cdot \vct{n} = 0$ on~$\partial D_\rmN$, 
\item \label{asm:omega_sd} $\omega \in L^\infty (D)$ is almost everywhere uniformly positive, i.e., $\omega \gtrsim 1 $,
\item \label{asm:C_sd} $\tsr{C}\in L^\infty ( D ; \mathbb{R}^{n \times n \times n \times n} )$  is almost everywhere uniformly elliptic, i.e., we have $\tsr{C} \matr{M} : \matr{M} \gtrsim \matr{M} : \matr{M} $ for all $\matr{M} \in \mathbb{R}^{n\times n }$,  and satisfies the symmetry properties
\begin{align*}
 \tsr{C}_{ijkl} =  \tsr{C}_{klij} , \quad \tsr{C}_{ijkl} =  \tsr{C}_{jikl} , \quad  \tsr{C}_{ijkl} =  \tsr{C}_{ijlk},
\end{align*}
\item \label{asm:K_sd} $\matr{K} \in \matr{L}^\infty (D)$ is almost everywhere symmetric and uniformly elliptic, i.e., $\matr{K} \vct{w} \cdot \vct{w} \gtrsim \vert \vct{w} \vert^2$ for all $\vct{w} \in \mathbb{R}^n $.
\end{enumerate}
\end{assumption}
A weak formulation of the Biot system~\eqref{eq:biot_sd_strong} is now given by the following problem.

Find the pressure head~$p \in H^1 (I ; L^2 (D)) \,\cap\, L^2 (I ; H^1_{0, \partial D_\rmD} \! (D))$ and the displacement vector $\vct{u} \in H^1 (I ; \vct{H}^1_0 (D))$ such that the initial condition~\eqref{eq:initial_p_sd} is satisfied and the equations%
\begin{subequations}
\begin{align}
\bigl\langle \tsr{C} \matr{e} ( \vct{u} ) , \matr{e} ( \vct{v} ) \bigr\rangle_{\! D } - \bigl\langle p \matr{\alpha} , \nabla \vct{v} \bigr\rangle_{\! D } &= \bigl\langle \vct{f} , \vct{v} \bigr\rangle_{\! D} + \bigl\langle G \matr{\alpha} , \nabla \vct{v} \bigr\rangle_{\! D } , \\
\bigl\langle \omega \partial_t p + \partial_t \nabla \cdot ( \matr{\alpha} \vct{u} ) , \phi \bigr\rangle_{\! D } + \bigl\langle \matr{K}  \nabla p , \nabla \phi \bigr\rangle_{\! D} &= \bigl\langle q , \phi \bigr\rangle_{\! D} ,
\intertext{hold for all $\vct{v} \in \vct{H}^1_0 (D) $ and~$\phi \in  H^1_{0,\partial D_\rmD } \! (D)$ and almost all~$t \in I$. 
Besides, the initial displacement vector~$\vct{u} ( \cdot , 0 ) =: \vct{u}_0 \in \vct{H}^1_0 ( D )$ satisfies } 
\bigl\langle \tsr{C} \matr{e} ( \vct{u_0} ) , \matr{e} ( \vct{v} ) \bigr\rangle_{\! D } - \bigl\langle p_0 \matr{\alpha} , \nabla \vct{v} \bigr\rangle_{\! D } &= \bigl\langle \vct{f} (0) , \vct{v} \bigr\rangle_{\! D}  + \bigl\langle G \matr{\alpha} , \nabla \vct{v} \bigr\rangle_{ \! D } 
\end{align}%
\label{eq:biot_sd_weak}%
\end{subequations}%
for all~$\vct{v} \in \vct{H}^1_0 (D)$.

Subsequently, we prove the following existence and uniqueness theorem for the Biot problem~\eqref{eq:biot_sd_weak}.
\begin{theorem} 
\label{thm:wellposed_biot_sd}
Given the \Cref{asm:biot_sd}, the Biot problem~\eqref{eq:biot_sd_weak} admits unique solutions $p \in H^1 ( I ; H^1_{0 ,\partial D_\rmD} \! (D)) \,\cap\, L^\infty ( I ; H^1 (D)) \,\cap\, W^{1,\infty } (I ; L^2 (D) )$ and $\vct{u} \in W^{1,\infty } (I ; \vct{H}^1_0 (D)).$
\end{theorem}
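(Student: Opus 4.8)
The plan is to establish existence via a Galerkin scheme in space combined with energy estimates, then pass to the limit, and finally prove uniqueness by testing the equations with the solution differences. First I would introduce a spectral or finite-element basis $\{\vct{\psi}_i\}$ of $\vct{H}^1_0(D)$ and $\{\phi_j\}$ of $H^1_{0,\partial D_\rmD}(D)$, and seek approximate solutions $\vct{u}^m(t)$, $p^m(t)$ in the span of the first $m$ basis functions. The initial displacement $\vct{u}_0^m$ is determined by the elliptic problem obtained from the first equation at $t=0$ with $p_0^m$ the projection of $p_0$; wellposedness of this finite-dimensional elliptic problem follows from Korn's inequality together with \Cref{asm:biot_sd}~\ref{asm:C_sd}. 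The coupled system for $(\vct{u}^m, p^m)$ then becomes a linear system of ODEs with an invertible mass matrix (invertibility uses uniform positivity of $\omega$ from \Cref{asm:biot_sd}~\ref{asm:omega_sd}), so a unique $C^1$ solution exists on $I$ by Picard--Lindelöf.

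The key step is the derivation of $m$-independent a~priori bounds. Testing the mechanics equation with $\partial_t \vct{u}^m$ and the flow equation with $p^m$, adding, and integrating in time yields after integration by parts in the source term (using $\vct{f}\in H^2(I;\vct{L}^2(D))$) a bound of the form
\begin{align*}
\norm{\matr{e}(\vct{u}^m(t))}_{\matr{L}^2(D)}^2 + \norm{p^m(t)}_{L^2(D)}^2 + \int_0^t \norm{\nabla p^m}_{\vct{L}^2(D)}^2 \,\rmd\bar{t} \lesssim 1,
\end{align*}
where Korn's inequality upgrades the strain bound to an $\vct{H}^1_0(D)$ bound on $\vct{u}^m$, and the Poincaré inequality on $H^1_{0,\partial D_\rmD}(D)$ handles the pressure. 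To obtain the higher regularity claimed in the theorem, I would differentiate the mechanics equation formally in time (justified at the Galerkin level by testing with $\partial_{tt}\vct{u}^m$ after an integration by parts, exactly as in Step~2 of the proof of \Cref{prop:apriori}), then test the time-differentiated mechanics equation with $\partial_t\vct{u}^m$ and the flow equation with $\partial_t p^m$; this produces uniform bounds on $\partial_t\vct{u}^m$ in $L^\infty(I;\vct{H}^1_0(D))$ and on $\partial_t p^m$ in $L^2(I;L^2(D))$, as well as $\nabla p^m$ in $L^\infty(I;\vct{L}^2(D))$. The regularity assumption \Cref{asm:biot_sd}~\ref{asm:p0_sd} on $\matr{K}\nabla p_0\in\vct{H}_{\mathrm{div}}(D)$ is what makes the initial flow term controllable when differentiating in time. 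These bounds give, via Banach--Alaoglu and Aubin--Lions, a subsequence converging weakly-$\ast$ in the stated spaces to a limit $(\vct{u},p)$; passing to the limit in the Galerkin equations (the system being linear, only weak convergence is needed) shows $(\vct{u},p)$ solves \cref{eq:biot_sd_weak}, and the initial conditions are recovered from the $H^1(I;\cdot)$-regularity.

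For uniqueness, suppose $(\vct{u}_1,p_1)$ and $(\vct{u}_2,p_2)$ are two solutions with the same data; set $\vct{u}:=\vct{u}_1-\vct{u}_2$, $p:=p_1-p_2$, which solve the homogeneous problem with $\vct{u}(0)=\vct{0}$, $p(0)=0$. Testing the homogeneous mechanics equation with $\partial_t\vct{u}$ and the homogeneous flow equation with $p$, adding and integrating yields
\begin{align*}
\tfrac12\bigl\langle \tsr{C}\matr{e}(\vct{u}(t)),\matr{e}(\vct{u}(t))\bigr\rangle_D + \tfrac12\bigl\langle\omega p(t),p(t)\bigr\rangle_D + \int_0^t\bigl\langle\matr{K}\nabla p,\nabla p\bigr\rangle_D\,\rmd\bar{t} = 0,
\end{align*}
where the coupling terms $\langle p\matr{\alpha},\nabla\partial_t\vct{u}\rangle_D$ and $\langle\partial_t\nabla\cdot(\matr{\alpha}\vct{u}),p\rangle_D$ cancel after integration by parts because $\matr{\alpha}$ is symmetric (\Cref{asm:biot_sd}~\ref{asm:alpha_sd}). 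Ellipticity of $\tsr{C}$, $\omega$, and $\matr{K}$ then forces $\matr{e}(\vct{u})\equiv\matr{0}$ and $p\equiv 0$, and Korn's inequality gives $\vct{u}\equiv\vct{0}$. I expect the main obstacle to be the rigorous justification of the time-differentiation argument at the Galerkin level and the careful bookkeeping of the initial-data terms, since a naive differentiation in $t$ is not immediately legitimate; the remedy, as in \Cref{prop:apriori}, is to test with $\partial_{tt}$ of a smooth-in-time test function, integrate by parts in time, and then use a density argument to extend the identity to $H^1(I;\vct{V})$ test functions before choosing the test function equal to the solution itself.
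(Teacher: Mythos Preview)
Your overall strategy---Galerkin approximation, energy estimates, weak compactness, uniqueness by testing with the difference---matches the paper's, and your uniqueness argument is essentially identical. However, the estimates you outline do not reach the full regularity claimed. Testing the time-differentiated mechanics equation with $\partial_t\vct{u}^m$ and the flow equation with $\partial_t p^m$ and adding yields, after integrating in time, only
\[
\int_0^t\Bigl[\norm{\matr{e}(\partial_t\vct{u}^m)}_{\matr{L}^2(D)}^2+\norm{\partial_t p^m}_{L^2(D)}^2\Bigr]\,\rmd\bar{t}+\norm{\nabla p^m(t)}_{\vct{L}^2(D)}^2\lesssim 1+\norm{\nabla p_0^m}_{\vct{L}^2(D)}^2,
\]
so $\partial_t\vct{u}^m$ and $\partial_t p^m$ are bounded only in $L^2$ in time, not in $L^\infty$ as you assert, and there is no control of $\partial_t\nabla p^m$. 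The theorem, however, requires $p\in H^1(I;H^1)\cap W^{1,\infty}(I;L^2)$ and $\vct{u}\in W^{1,\infty}(I;\vct{H}^1_0)$. The paper obtains these via a \emph{third} estimate: it differentiates the flow equation in time and tests with $\partial_t p^N$, and simultaneously tests the time-differentiated mechanics equation with $\partial_{tt}\vct{u}^N$; the coupling terms again cancel, and one obtains pointwise-in-time control of $\norm{\matr{e}(\partial_t\vct{u}^N)}_{\matr{L}^2}$ and $\norm{\partial_t p^N}_{L^2}$ together with $\partial_t\nabla p^N\in L^2(I;\vct{L}^2)$.

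A second, related gap concerns the Galerkin basis. The third estimate produces initial-data terms $\norm{\partial_t\vct{u}^N(0)}_{\vct{H}^1}$ and $\norm{\partial_t p^N(0)}_{L^2}$, whose control reduces (via the equations at $t=0$) to bounding $\norm{\nabla\cdot(\matr{K}\nabla p_0^N)}_{L^2(D)}$; already the second estimate needs $\norm{\nabla p_0^N}_{\vct{L}^2(D)}\lesssim 1$. For a generic finite-element basis with $p_0^N$ the $L^2$-projection of $p_0$, neither bound is available. The paper resolves this by choosing $\{\varphi_j\}$ to be the eigenfunctions of $-\nabla\cdot(\matr{K}\nabla\,\cdot\,)$, so that $\Phi^N$ is invariant under this operator; two integrations by parts then give $\norm{\nabla p_0^N}_{\vct{L}^2}\le\norm{\nabla p_0}_{\vct{L}^2}$ and $\norm{\nabla\cdot(\matr{K}\nabla p_0^N)}_{L^2}\le\norm{\nabla\cdot(\matr{K}\nabla p_0)}_{L^2}$. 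It is precisely here that the hypothesis $\matr{K}\nabla p_0\in\vct{H}_{\mathrm{div}}(D)$ with $\matr{K}\nabla p_0\cdot\vct{n}=0$ on $\partial D_\rmN$ enters, not merely as a vague ``initial flow term'' control.
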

\begin{remark}
While the wellposedness of the Biot equations are well studied in the literature~\cite{auriault77,marciniak15,showalter00,zenisek84}, we continue to provide a proof for \Cref{thm:wellposed_biot_sd} for completeness, as no proof is available in this specific setting.
This result is used in the main part of the paper in \Cref{prop:wellposed-fulldim} to establish the wellposedness of a Biot problem in a fractured domain involving multiple subdomains.
In particular our analysis covers assumptions stated in \Cref{asm:biot_sd} with matrix-valued piecewise constant Biot coupling and otherwise function-valued model parameters, as well as weak regularity assumptions on the initial data.
\end{remark}
In order to prove \Cref{thm:wellposed_biot_sd}, we consider a sequence of Galerkin approximations to the Biot problem~\eqref{eq:biot_sd_weak}. 
Let $\{ \vct{w}_j \}_{j=1}^\infty$ be an arbitrary basis of $\vct{H}^1_0 (D )$.
Further, let $\{ \varphi_j \}_{j=1}^\infty \subset H^1_{0, \partial D_\rmD }\!  (D)$ be the $H^1 (D)$-orthogonal and $L^2 (D)$-orthonormal basis of eigenvectors of the elliptic operator~$L  \phi  := -\nabla \cdot ( \matr{K} \nabla \phi ) $ (cf.\ \cite[Satz~8.39]{dobrowolski10}).
Then, for $k\in \mathbb{N}$, there is $\lambda_k \in \mathbb{R}$ such that
\begin{align}
\bigl\langle \matr{K} \nabla \varphi_k , \nabla \psi \bigr\rangle_{\! D } &= \lambda_k \bigl\langle \varphi_k , \psi \bigr\rangle_{\! D} \quad\enspace\text{for all } \psi \in H^1_{0,\partial D_\rmD } \! (D)
\end{align}
and hence~$\matr{K} \nabla \varphi_k \in \vct{H}_\mathrm{div} (D)$ with $-\nabla \cdot ( \matr{K} \nabla \varphi_k ) = \lambda_k \varphi_k \in H^1_{0, \partial D_\rmD }\! ( D)$, as well as $\matr{K}\nabla \varphi_k \cdot\vct{n} = 0$ on~$\partial D_\rmN$.
Further, for $N \in \mathbb{N}$, we define the finite-dimensional spaces 
\begin{align}
\vct{V}^N := \operatorname{span} \{ \vct{w}_j \}_{j=1}^N , \quad \Phi^N := \operatorname{span} \{ \varphi_j \}_{j=1}^N .
\end{align}
In addition, let $\Pi_{\Phi^{\! N}} \colon L^2 (D) \rightarrow L^2 (D)$ denote the orthogonal projection onto~$\Phi^N$.
We now consider the following finite-dimensional approximation for~\eqref{eq:biot_sd_weak}.

Find $p^N := \sum_{j=1}^N P_j^N\! (t) \varphi_j \in H^1 (I ; \Phi^N )$ and $\vct{u}^N := \sum_{j=1}^N U_j^N\! (t) \vct{w}_j \in H^1 ( I ;  \vct{V}^N )$ such that%
\begin{subequations}
\begin{align}
\label{eq:galerkin-a} p^N (\cdot , 0) = \Pi_{\Phi^{\! N}} p_0 &=: p_0^N
\end{align}
and the equations 
\begin{align}
\bigl\langle \tsr{C} \matr{e} ( \vct{u}^N ) , \matr{e} ( \vct{v} ) \bigr\rangle_{ \! D } - \bigl\langle p^N \! \matr{\alpha} , \nabla \vct{v} \bigr\rangle_{ \! D } &= \bigl\langle \vct{f} , \vct{v} \bigr\rangle_{\! D} + \bigl\langle G\matr{\alpha} , \nabla \vct{v} \bigr\rangle_{\! D } , \\
\label{eq:galerkin-c} \bigl\langle \omega \partial_t p^N + \partial_t \nabla \cdot ( \matr{\alpha} \vct{u}^N ) , \phi \bigr\rangle_{\! D } + \bigl\langle \matr{K}  \nabla p^N\! , \nabla \phi \bigr\rangle_{\! D } &= \bigl\langle q , \phi \bigr\rangle_{\! D } \\
\intertext{hold for all~$\vct{v} \in \vct{V}^N$ and $\phi \in \Phi^N$ and almost all~$t \in I$. Moreover, the initial displacement vector~$ \vct{u}_0^N = \sum_{j=1}^N U_{0,j}^N \vct{w}_j \in \vct{V}^N$ satisfies } 
\label{eq:galerkin-d} \bigl\langle \tsr{C} \matr{e} ( \vct{u}^N_0 ) , \matr{e} ( \vct{v} ) \bigr\rangle_{ \! D } - \bigl\langle p^N_0\! \matr{\alpha} , \nabla \vct{v} \bigr\rangle_{\! D } &= \bigl\langle \vct{f} (0) , \vct{v} \bigr\rangle_{\! D }  + \bigl\langle G\matr{\alpha} , \nabla \vct{v} \bigr\rangle_{\! D }
\end{align}%
\label{eq:galerkin}%
\end{subequations}%
for all~$\vct{v} \in \vct{V}^N$. 

The Galerkin system~\eqref{eq:galerkin} can be equivalently rewritten as a linear system of ordinary differential equations. 
For this, we can define the matrices~$\matr{A}^{\! N} = (A_{ij}^N )$, $\matr{E}^N = (E_{ij}^N )$, $\matr{K}^N = (K_{ij}^N )$, $\matr{O}^N = ( O_{ij}^N ) \in \mathbb{R}^{N\times N }$ with
\begin{subequations}
\begin{equation}
\begin{alignedat}{2}
A_{ij}^N &:= \bigl\langle \varphi_j \matr{\alpha} , \nabla \vct{w}_i \bigl\rangle_{\! D} , \quad &E_{ij}^N &:= \bigl\langle \tsr{C} \matr{e} ( \vct{w}_j ) , \matr{e} ( \vct{w}_i) \bigr\rangle_{\! D } , \\
K_{ij}^N &:= \bigl\langle \matr{K}  \nabla \varphi_j , \nabla \varphi_i \bigr\rangle_{\! D }, \quad &O_{ij}^N &:= \bigl\langle \omega \varphi_j , \varphi_i \bigl\rangle_{\! D }
\end{alignedat}
\end{equation}
for $i,j \in \{ 1, \dots , N \}$, as well as the vectors $\vct{f}^N = (f^N_i)$,  $\vct{g}^N = (g^N_i)$,  $\vct{P}^N_0 = (P^N_{0,i})$, $\vct{q}^N = (q^N_i) \in \mathbb{R}^n $ with
\begin{equation}
\begin{alignedat}{2}
f^N_i \! (t) &:= \bigl\langle \vct{f} (t) , \vct{w}_i \bigr\rangle_{\! D} , \quad 
&g^N_i &:= \bigl\langle G \matr{\alpha} , \nabla \vct{w}_i \bigr\rangle_{\! D} ,  \\
P^N_{0,i} &:= \bigl\langle p^N_0 , \varphi_i \bigr\rangle_{\! D } , \quad 
&q_i^N \! (t) &:= \bigl\langle q (t) , \varphi_i \bigr\rangle_{\! D} 
\end{alignedat}
\end{equation}
\end{subequations}
for $i\in \{ 1, \dots , N \}$.
Then, using \Cref{asm:biot_sd}~\ref{asm:alpha_sd}, we can equivalently formulate the Galerkin system~\eqref{eq:galerkin} as the following system of ordinary differential equations.

Find $\vct{P}^N = (P^N_i) \in H^1 ( I ; \mathbb{R}^N)$ and $\vct{U}^N = (U^N_i) \in H^1 ( I ;  \mathbb{R}^N)$ such that
\begin{subequations}
\begin{align}
\label{eq:galerkin_PN_UN-a} \matr{E}^N \vct{U}^N\! (t) - \matr{A}^{\! N} \vct{P}^N\! (t) &= \vct{f}^N\! (t) + \vct{g}^N\! , \\
\label{eq:galerkin_PN_UN-b} \matr{O}^N \tfrac{\rmd }{\rmd t } \vct{P}^N\! (t) + \matr{K}^N \vct{P}^N\! (t) + (\matr{A}^{\! N} )^\rmt \tfrac{\rmd }{\rmd t }\vct{U}^N \! (t) &= \vct{q}^N\! (t)  , \\
 \vct{P}^N\! (0) &= \vct{P}^N_0\! .
 \end{align}
 Besides, $\vct{U}^N_0 := \vct{U}^N\! ( 0)$ is given by
\begin{align}
 \vct{U}^N_0  &= \bigl(\matr{E}^N\bigr)^{\! -1} \bigl( \vct{f}^N\! (0) + \vct{g}^N  + \matr{A}^{\! N} \vct{P}^N_0\bigr).
\end{align}%
\label{eq:galerkin_PN_UN}%
\end{subequations}%
Here, the matrix~$\matr{E}^N$ is positive definite and hence invertible as a consequence of \Cref{asm:biot_sd}~\ref{asm:C_sd} and Korn's inequality.
By inserting \cref{eq:galerkin_PN_UN-a} into \cref{eq:galerkin_PN_UN-b}, we obtain a linear system of ordinary differential equations for~$\vct{P}^N$ only, viz.,%
\begin{subequations}
\begin{align}
\label{eq:galerkin_PN-a} \matr{D}^N \tfrac{\rmd }{\rmd t } \vct{P}^N\! (t) + \matr{K}^N \vct{P}^N\! (t)  &= \vct{q}^N\! (t)  - (\matr{A}^{\! N} )^\rmt ( \matr{E}^N )^{-1} \tfrac{\rmd }{\rmd t}\vct{f}^N\! (t) , \\
\vct{P}^N\! (0) &= \vct{P}^N_0\! ,
\end{align}%
\label{eq:galerkin_PN}%
\end{subequations}%
where $\matr{D}^N := \matr{O}^N + (\matr{A}^{\! N} )^\rmt (\matr{E}^N )^{-1} \matr{A}^{\! N} \in \mathbb{R}^{N \times N }$.
Now, from the wellposedness of the system~\eqref{eq:galerkin_PN}, we can infer the wellposedness of the Galerkin system~\eqref{eq:galerkin}.
\begin{lemma}
Given \Cref{asm:biot_sd}, the Galerkin system~\eqref{eq:galerkin} admits unique solutions $p^N \in H^2 \bigl(I ; \Phi^N \bigr)$ and $\vct{u}^N \in H^2 \bigl( I ; \vct{V}^N \bigr) $.
\end{lemma}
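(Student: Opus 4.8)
The plan is to build on the reduction already carried out in the excerpt: the Galerkin system~\eqref{eq:galerkin} is equivalent to the first-order linear ODE system~\eqref{eq:galerkin_PN_UN}, and, after eliminating~$\vct{U}^N$ via~\cref{eq:galerkin_PN_UN-a}, to the system~\eqref{eq:galerkin_PN} for~$\vct{P}^N$ alone with coefficient matrix~$\matr{D}^N = \matr{O}^N + (\matr{A}^{\! N} )^\rmt ( \matr{E}^N )^{-1} \matr{A}^{\! N}$. The proof then has three parts: (i) show that $\matr{D}^N$ is symmetric positive definite, hence invertible, so that~\eqref{eq:galerkin_PN} becomes an explicit linear ODE with constant coefficient matrix; (ii) verify that its right-hand side lies in $H^1(I;\mathbb{R}^N)$ and invoke standard ODE theory to obtain a unique $\vct{P}^N$, bootstrapping to $H^2$-regularity; (iii) recover $\vct{U}^N$ from~\cref{eq:galerkin_PN_UN-a} and translate back to $p^N$ and $\vct{u}^N$.

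For (i), I would argue that $\matr{O}^N$ is positive definite: for $\vct{c} = (c_i) \in \mathbb{R}^N \setminus \{ \vct{0} \}$ one has $\vct{c} \cdot \matr{O}^N \vct{c} = \bigl\langle \omega \sum_i c_i \varphi_i , \sum_j c_j \varphi_j \bigr\rangle_{\! D} \gtrsim \norm{\sum_i c_i \varphi_i}_{L^2(D)}^2 > 0$ by \Cref{asm:biot_sd}~\ref{asm:omega_sd} and the linear independence of the basis~$\{ \varphi_j \}$. Since $\matr{E}^N$ is symmetric positive definite by \Cref{asm:biot_sd}~\ref{asm:C_sd} and Korn's inequality (as already noted below~\eqref{eq:galerkin_PN_UN}), its inverse is symmetric positive definite and hence $(\matr{A}^{\! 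N})^\rmt (\matr{E}^N)^{-1} \matr{A}^{\! N}$ is symmetric positive semidefinite; adding the two matrices yields that $\matr{D}^N$ is symmetric positive definite, so $(\matr{D}^N)^{-1}$ exists. Thus~\cref{eq:galerkin_PN-a} is equivalent to
\begin{align*}
\tfrac{\rmd}{\rmd t} \vct{P}^N (t) = (\matr{D}^N )^{-1} \bigl( \vct{q}^N(t) - (\matr{A}^{\! N})^\rmt (\matr{E}^N)^{-1} \tfrac{\rmd}{\rmd t} \vct{f}^N(t) \bigr) , \quad \vct{P}^N(0) = \vct{P}^N_0 ,
\end{align*}
a linear ODE with constant coefficients. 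By \Cref{asm:biot_sd}~\ref{asm:fq_sd} we have $q \in H^1(I;L^2(D))$, so $\vct{q}^N \in H^1(I;\mathbb{R}^N)$, and $\vct{f} \in H^2(I;\vct{L}^2(D))$ gives $\tfrac{\rmd}{\rmd t}\vct{f}^N \in H^1(I;\mathbb{R}^N)$; moreover $\vct{P}^N_0$ is well defined since $p_0 \in H^1_{0,\partial D_\rmD}(D) \subset L^2(D)$. Hence the right-hand side is in $H^1(I;\mathbb{R}^N) \subset L^2(I;\mathbb{R}^N)$, and the variation-of-constants formula (or Picard--Lindelöf) provides a unique $\vct{P}^N \in H^1(I;\mathbb{R}^N)$; differentiating the equation once more, the time derivative $\tfrac{\rmd}{\rmd t}\vct{P}^N$ itself lies in $H^1(I;\mathbb{R}^N)$, so in fact $\vct{P}^N \in H^2(I;\mathbb{R}^N)$.

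Finally, for (iii), $\vct{U}^N = (\matr{E}^N)^{-1}\bigl( \vct{f}^N + \vct{g}^N + \matr{A}^{\! N} \vct{P}^N \bigr)$ is uniquely determined by~\cref{eq:galerkin_PN_UN-a}, and since $\vct{f}^N \in H^2(I;\mathbb{R}^N)$ (using $H^2(I;\vct{L}^2(D)) \hookrightarrow \mathcal{C}^1(\bar I;\vct{L}^2(D))$, which also makes $\vct{f}(0)$ and hence $\vct{U}^N_0$ meaningful), $\vct{g}^N$ constant, and $\vct{P}^N \in H^2(I;\mathbb{R}^N)$, we get $\vct{U}^N \in H^2(I;\mathbb{R}^N)$. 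Setting $p^N = \sum_j P^N_j \varphi_j$ and $\vct{u}^N = \sum_j U^N_j \vct{w}_j$ and using the equivalence of~\eqref{eq:galerkin}, \eqref{eq:galerkin_PN_UN}, and~\eqref{eq:galerkin_PN} then delivers the claimed unique solutions $p^N \in H^2(I;\Phi^N)$ and $\vct{u}^N \in H^2(I;\vct{V}^N)$. The only genuinely substantive step is the positive definiteness of~$\matr{D}^N$ in part (i) — all the rest is bookkeeping with finite-dimensional linear ODE theory and the regularity hypotheses on the data — and even that is immediate once one observes that the Schur-complement structure of~$\matr{D}^N$ together with the uniform positivity $\omega \gtrsim 1$ forces strict positivity.
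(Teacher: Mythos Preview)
Your approach is essentially identical to the paper's: both show $\matr{D}^N$ is positive definite (hence invertible) via the uniform positivity of~$\omega$ and the positive definiteness of~$\matr{E}^N$, invoke Picard--Lindelöf for the resulting explicit linear ODE, and then bootstrap to $H^2$-regularity by differentiating and recover $\vct{U}^N$ from~\cref{eq:galerkin_PN_UN-a}. One slip: your displayed equation for $\tfrac{\rmd}{\rmd t}\vct{P}^N$ omits the term $-(\matr{D}^N)^{-1}\matr{K}^N \vct{P}^N(t)$ coming from~\cref{eq:galerkin_PN-a}; with that term restored (as your phrasing ``linear ODE with constant coefficients'' and the appeal to Picard--Lindelöf make clear you intended), the argument goes through exactly as in the paper.
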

\begin{proof}
The matrix~$\matr{D}^N$ is positive definite and hence invertible as a consequence of \Cref{asm:biot_sd}~\ref{asm:omega_sd} and the positive definiteness of~$\matr{E}^N$. 
Thus, we can define $\vct{h} \colon I \times \mathbb{R}^N \rightarrow \mathbb{R}^N$ by 
\begin{align*}
 \vct{h} \bigl( t, \vct{P}^N\! (t) \bigr) := (\matr{D}^N)^{-1}  \Bigl( - \matr{K}^N \vct{P}^N \! (t) +   \vct{q}^N\! (t)  - (\matr{A}^{\!N})^\rmt ( \matr{E}^N )^{-1} \tfrac{\rmd }{\rmd t } \vct{f}^N\! (t) \Bigr) 
\end{align*}
and rewrite \cref{eq:galerkin_PN} as
\begin{align*}
\tfrac{\rmd }{\rmd t} \vct{P}^N\! (t) &= \vct{h} \bigl( t , \vct{P}^N\! (t) \bigr) , \quad \vct{P}^N\! (0) = \vct{P}^N_0\! . 
\end{align*}
Here, the function $\vct{h}$ is continuous on~$I \times \mathbb{R}^N$ due to \Cref{asm:biot_sd}~\ref{asm:fq_sd} and the continuity of the embedding~$H^1 (I ) \hookrightarrow \mathcal{C}^0 (\bar{I} )$. 
Besides, $h$ is clearly globally Lipschitz continuous with respect to the second argument.
Thus, the Picard-Lindelöf theorem yields the existence of a unique solution~$\vct{P}^N \! \in \mathcal{C}^1 ( \bar{I}; \mathbb{R}^N )$ to the system in~\eqref{eq:galerkin_PN}.
Moreover, by weakly differentiating \cref{eq:galerkin_PN-a}, we find 
\begin{align}
\tfrac{\rmd^2}{\rmd t^2} \vct{P}^N\! (t) &= (\matr{D}^N)^{-1}  \Bigl( - \matr{K}^N \tfrac{\rmd }{\rmd t} \vct{P}^N\! (t) +  \tfrac{\rmd }{\rmd t} \vct{q}^N\! (t)  - (\matr{A}^{\!N})^\rmt ( \matr{E}^N )^{-1} \tfrac{\rmd^2 }{\rmd t^2 } \vct{f}^N\! (t) \Bigr) 
\label{eq:dt2PN}%
\end{align}%
and hence $\vct{P}^N \!\in H^2 (I ; \mathbb{R}^N ) $.
Then, with \cref{eq:galerkin_PN_UN-a}, we also have $\vct{U}^N \in H^2 (I ; \mathbb{R}^N )$. 
\end{proof}
Next, we derive a~priori estimates for the unique solution of the Galerkin system~\eqref{eq:galerkin}. 
\begin{lemma}
\label{lem:apriori_sd} 
Given \Cref{asm:biot_sd}, the solutions $p^N \in H^2 ( I ; \Phi^N ) $ and $\vct{u}^N \in H^2 (I ; \vct{V}^N)$ of the Galerkin system~\eqref{eq:galerkin} satisfy the estimates
\begin{subequations}
\begin{align}
\norm[\big]{ \vct{u}^N }_{W^{1, \infty } (I ; \vct{H}^1 (D ))}  &\lesssim 1 , \\
\norm[\big]{p^N  }_{ W^{1, \infty } (I ; L^2 (D))}  + \norm[\big]{ \nabla  p^N}_{ L^\infty (I ; L^2 (D) )} + \norm[\big ]{\partial_t \nabla p^N }_{L^2 ( I ; \vct{L}^2 (D) ) } &\lesssim 1 ,
\end{align}%
\end{subequations}%
where the implicit constants are independent from~$N$.
\end{lemma}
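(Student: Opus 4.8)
The plan is to work entirely on the Galerkin level: I derive, uniformly in $N$, three energy estimates by testing the Galerkin system~\eqref{eq:galerkin} and its time derivative with suitable functions, in the spirit of the proof of \Cref{prop:apriori}, and then read off the claimed norms by Korn's and Poincaré's inequality together with the uniform ellipticity in \Cref{asm:biot_sd}~\ref{asm:omega_sd}, \ref{asm:C_sd}, \ref{asm:K_sd}. The recurring structural point is that, since $\matr{\alpha}$ is symmetric and piecewise constant, $\nabla\cdot(\matr{\alpha}\vct{w})=\matr{\alpha}:\nabla\vct{w}$, so $\langle\partial_t\nabla\cdot(\matr{\alpha}\vct{u}^N),\phi\rangle_D=\langle\phi\,\matr{\alpha},\nabla\partial_t\vct{u}^N\rangle_D$; hence whenever the mechanics equation is tested with a displacement rate and the flow equation with the corresponding pressure (rate), the Biot coupling terms cancel. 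I first record $\norm{\vct{u}_0^N}_{\vct{H}^1(D)}\lesssim1$ by testing \eqref{eq:galerkin-d} with $\vct{v}=\vct{u}_0^N$ and using Korn, Poincaré, $\norm{p_0^N}_{L^2(D)}\le\norm{p_0}_{L^2(D)}$, the bound $\norm{G\matr{\alpha}}_{\matr{L}^2(D)}\lesssim1$, and the embedding $H^1(I)\hookrightarrow\mathcal{C}^0(\bar{I})$ to estimate $\vct{f}(0)$.

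Step~0: test the mechanics equation of~\eqref{eq:galerkin} with $\vct{v}=\partial_t\vct{u}^N$ and the flow equation~\eqref{eq:galerkin-c} with $\phi=p^N$ and add; after the cancellation one is left with $\tfrac12\tfrac{\rmd}{\rmd t}[\langle\tsr{C}\matr{e}(\vct{u}^N),\matr{e}(\vct{u}^N)\rangle_D+\langle\omega p^N,p^N\rangle_D]+\langle\matr{K}\nabla p^N,\nabla p^N\rangle_D=\langle\vct{f},\partial_t\vct{u}^N\rangle_D+\langle G\matr{\alpha},\nabla\partial_t\vct{u}^N\rangle_D+\langle q,p^N\rangle_D$. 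Integrating over $(0,t)$, integrating the first two right-hand terms by parts in time, and closing with Young's inequality and a bootstrap/Grönwall argument using \Cref{asm:biot_sd}~\ref{asm:fq_sd} and the initial-data bound, I obtain $\norm{\vct{u}^N}_{L^\infty(I;\vct{H}^1(D))}+\norm{p^N}_{L^\infty(I;L^2(D))}+\norm{\nabla p^N}_{L^2(I;\vct{L}^2(D))}\lesssim1$. Step~1 mimics Step~2 of the proof of \Cref{prop:apriori}: testing the mechanics equation of~\eqref{eq:galerkin} with $\vct{v}=\partial_{tt}\bar{\vct{v}}$, integrating over $(0,t)$, integrating by parts in time and using \eqref{eq:galerkin-d} (after a density argument) gives a time-integrated identity in $\partial_t\vct{u}^N,\partial_t p^N$; taking $\bar{\vct{v}}=\vct{u}^N$ together with \eqref{eq:galerkin-c} tested with $\phi=\partial_t p^N$, the coupling terms cancel once more and, after integration over $(0,t)$, I arrive at $\int_0^t\langle\tsr{C}\matr{e}(\partial_t\vct{u}^N),\matr{e}(\partial_t\vct{u}^N)\rangle_D\,\rmd\bar{t}+\int_0^t\langle\omega\partial_t p^N,\partial_t p^N\rangle_D\,\rmd\bar{t}+\tfrac12\langle\matr{K}\nabla p^N(t),\nabla p^N(t)\rangle_D=\tfrac12\langle\matr{K}\nabla p_0^N,\nabla p_0^N\rangle_D+\int_0^t[\langle\partial_t\vct{f},\partial_t\vct{u}^N\rangle_D+\langle q,\partial_t p^N\rangle_D]\,\rmd\bar{t}$. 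Since $\{\varphi_j\}$ diagonalises $\langle\matr{K}\nabla\,\cdot\,,\nabla\,\cdot\,\rangle_D$ we have $\langle\matr{K}\nabla p_0^N,\nabla p_0^N\rangle_D\le\langle\matr{K}\nabla p_0,\nabla p_0\rangle_D\lesssim1$, and the remaining terms are handled by Step~0 and \Cref{asm:biot_sd}~\ref{asm:fq_sd}; this yields $\norm{\partial_t\vct{u}^N}_{L^2(I;\vct{H}^1(D))}+\norm{\partial_t p^N}_{L^2(I;L^2(D))}+\norm{\nabla p^N}_{L^\infty(I;\vct{L}^2(D))}\lesssim1$.

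Step~2 supplies the missing $L^\infty$-in-time bounds. Differentiating both equations of~\eqref{eq:galerkin} in time (legitimate since $p^N\in H^2(I;\Phi^N)$, $\vct{u}^N\in H^2(I;\vct{V}^N)$), testing the differentiated mechanics equation with $\vct{v}=\partial_{tt}\vct{u}^N$ and the differentiated flow equation with $\phi=\partial_t p^N$, and adding, the coupling terms cancel again and one obtains $\tfrac12\tfrac{\rmd}{\rmd t}[\langle\tsr{C}\matr{e}(\partial_t\vct{u}^N),\matr{e}(\partial_t\vct{u}^N)\rangle_D+\langle\omega\partial_t p^N,\partial_t p^N\rangle_D]+\langle\matr{K}\nabla\partial_t p^N,\nabla\partial_t p^N\rangle_D=\langle\partial_t\vct{f},\partial_{tt}\vct{u}^N\rangle_D+\langle\partial_t q,\partial_t p^N\rangle_D$. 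Integrating over $(0,t)$, integrating $\int_0^t\langle\partial_t\vct{f},\partial_{tt}\vct{u}^N\rangle_D$ by parts in time so that only $\partial_t\vct{u}^N$ occurs (using $\partial_{tt}\vct{f}\in L^2(I;\vct{L}^2(D))$ and Step~1), and using \Cref{asm:biot_sd}~\ref{asm:fq_sd} and Step~1 for the rest, all terms are controlled once $\norm{\partial_t\vct{u}^N(0)}_{\vct{H}^1(D)}+\norm{\partial_t p^N(0)}_{L^2(D)}\lesssim1$. For the latter I test the $t=0$ instances of the differentiated mechanics equation with $\partial_t\vct{u}^N(0)$ and of~\eqref{eq:galerkin-c} with $\partial_t p^N(0)$ and add; the coupling terms cancel, leaving the single term $\langle\matr{K}\nabla p_0^N,\nabla\partial_t p^N(0)\rangle_D=-\langle\nabla\cdot(\matr{K}\nabla p_0^N),\partial_t p^N(0)\rangle_D$ (no boundary contribution, as $\partial_t p^N(0)\in\Phi^N$ and $\matr{K}\nabla p_0^N\cdot\vct{n}=0$ on $\partial D_\rmN$), which is bounded by $\norm{\nabla\cdot(\matr{K}\nabla p_0^N)}_{L^2(D)}\norm{\partial_t p^N(0)}_{L^2(D)}$; since $\nabla\cdot(\matr{K}\nabla p_0^N)$ is the $L^2(D)$-projection of $\nabla\cdot(\matr{K}\nabla p_0)$ onto $\Phi^N$, one has $\norm{\nabla\cdot(\matr{K}\nabla p_0^N)}_{L^2(D)}\le\norm{\nabla\cdot(\matr{K}\nabla p_0)}_{L^2(D)}\lesssim1$, and Young's inequality closes the estimate. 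Reinserting this and using Korn, Poincaré and \Cref{asm:biot_sd}~\ref{asm:omega_sd}, \ref{asm:K_sd} gives $\norm{\partial_t\vct{u}^N}_{L^\infty(I;\vct{H}^1(D))}+\norm{\partial_t p^N}_{L^\infty(I;L^2(D))}+\norm{\partial_t\nabla p^N}_{L^2(I;\vct{L}^2(D))}\lesssim1$, which together with Steps~0 and~1 gives the assertion.

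The main obstacle is exactly this last point: the $N$-independent control of $\partial_t\vct{u}^N(0)$ and $\partial_t p^N(0)$. It relies crucially on the regularity $\matr{K}\nabla p_0\in\vct{H}_{\mathrm{div}}(D)$ with vanishing normal trace in \Cref{asm:biot_sd}~\ref{asm:p0_sd} and on the choice of $\{\varphi_j\}$ as the eigenbasis of $-\nabla\cdot(\matr{K}\nabla\,\cdot\,)$, which together make the term $\langle\matr{K}\nabla p_0^N,\nabla\partial_t p^N(0)\rangle_D$ amenable to an integration by parts with a bound independent of $N$; without these, one only controls $\nabla p^N$ in $L^2$ in time and the $W^{1,\infty}$-bounds are out of reach. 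The remaining arguments are a routine combination of the three energy identities with uniform ellipticity, Korn's and Poincaré's inequality, the embeddings $H^k(I)\hookrightarrow\mathcal{C}^{k-1}(\bar{I})$, Young's inequality, and a bootstrap/Grönwall argument.
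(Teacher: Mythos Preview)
Your proposal is correct and follows essentially the same three-step energy-estimate approach as the paper: test with $(\partial_t\vct{u}^N,p^N)$, then with $(\partial_t\vct{u}^N,\partial_t p^N)$ after differentiating the mechanics equation, and finally with $(\partial_{tt}\vct{u}^N,\partial_t p^N)$ after differentiating both equations, exploiting the eigenbasis for $-\nabla\cdot(\matr{K}\nabla\,\cdot\,)$ to control $\langle\matr{K}\nabla p_0^N,\nabla p_0^N\rangle_D$ and $\norm{\nabla\cdot(\matr{K}\nabla p_0^N)}_{L^2(D)}$ uniformly in $N$. The only cosmetic differences are that you invoke the diagonalisation directly (and identify $\nabla\cdot(\matr{K}\nabla p_0^N)$ as the $L^2$-projection of $\nabla\cdot(\matr{K}\nabla p_0)$) where the paper writes out the corresponding integrations by parts, and that in your Step~1 you use the integration-by-parts-in-time device of \Cref{prop:apriori} instead of differentiating the mechanics equation directly; both lead to the same identity.
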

\begin{proof} 
\emph{Step~1.} We choose $\vct{v} = \partial_t \vct{u}^N$ and $\phi = p^N$ in \cref{eq:galerkin}. 
Then, with \Cref{asm:biot_sd}~\ref{asm:alpha_sd}, we obtain
\begin{equation}
\begin{multlined}[c][0.875\displaywidth]
\frac{1}{2} \frac{\rmd }{\rmd t} \Bigl( \bigl\langle \tsr{C} \matr{e} (\vct{u}^N ) , \matr{e} ( \vct{u}^N ) \bigr\rangle_{\! D } + \bigl\langle \omega p^N , p^N \bigr\rangle_{\! D} \Bigr) + \bigl\langle \matr{K} \nabla p^N , \nabla p^N \bigr\rangle_{\! D } \\
  = \bigl\langle \vct{f} , \partial_t \vct{u}^N \bigr\rangle_{\! D} + \bigl\langle q , p^N \bigr\rangle_{\! D} + \bigl\langle G \matr{\alpha } , \partial_t \nabla \vct{u}^N \bigr\rangle_{\! D} .
\label{eq:apriori_sd_1}%
\end{multlined}%
\end{equation}%
Integrating from $0$ to $t \in \bar{I}$ in \cref{eq:apriori_sd_1} yields
\begin{equation}
\begin{multlined}[c][0.875\displaywidth]
\frac{1}{2} \Bigl( \bigl\langle \tsr{C} \matr{e} (\vct{u}^N  ) , \matr{e} ( \vct{u}^N  ) \bigr\rangle_{\! D }  + \bigl\langle \omega p^N \! , p^N  \bigr\rangle_{\! D} \Bigr) \Bigr\vert_t + \int_0^t \bigl\langle \matr{K} \nabla p^N\! , \nabla p^N \bigr\rangle_{\! D }\rmd \bar{t} \\
= \frac{1}{2} \Bigl( \bigl\langle \tsr{C} \matr{e} (\vct{u}^N_0  ) , \matr{e} ( \vct{u}^N_0  ) \bigr\rangle_{\! D  }  + \bigl\langle \omega p^N_0 \! , p^N_0  \bigr\rangle_{\! D } \Bigr) \\
+ \int_0^t \Bigl(  \bigl\langle \vct{f} , \partial_t \vct{u}^N \bigr\rangle_{\! D } + \bigl\langle q , p^N \bigr\rangle_{\! D } + \bigl\langle G \matr{\alpha } , \partial_t \nabla \vct{u}^N \bigr\rangle_{\! D} \Bigr)\, \rmd \bar{t}
\label{eq:apriori_sd_2}%
\end{multlined}%
\end{equation}%
Now, with \Cref{asm:biot_sd}~\ref{asm:omega_sd} and~\ref{asm:C_sd} and by applying Korn's and Poincaré's inequality, we have 
\begin{align*}
\Bigl( \bigl\langle \tsr{C} \matr{e} (\vct{u}^N  ) , \matr{e} ( \vct{u}^N  ) \bigr\rangle_{\! D }  + \bigl\langle \omega p^N \! , p^N  \bigr\rangle_{\! D } \Bigr) \Bigr\vert_t \gtrsim \norm[\big]{ \vct{u}^N\! (t) }_{\vct{H}^1 (D )}^2 + \norm[\big]{p^N\! (t) }^2_{L^2 (D)} .
\end{align*}
Moreover, on the right-hand side of~\eqref{eq:apriori_sd_2}, integration by parts and Young's inequality yield the estimate 
\begin{align*}
&\int_0^t  \Bigl(  \bigl\langle \vct{f} , \partial_t \vct{u}^N \bigr\rangle_{\! D} + \bigl\langle q , p^N \bigr\rangle_{\! D} + \bigl\langle G \matr{\alpha } , \partial_t \nabla \vct{u}^N \bigr\rangle_{\! D}  \Bigr) \,\rmd \bar{t} \\
 &\quad = \Bigl( \bigl\langle \vct{f } , \vct{u}^N \bigr\rangle_{\! D} + \bigl\langle G\matr{\alpha }, \nabla \vct{u}^N \bigr\rangle_{\! D } \Bigr) \Bigr\vert_0^t + \int_0^t \Bigl( \bigl\langle q , p^N \bigr\rangle_{\! D} - \bigl\langle \partial_t \vct{f} , \vct{u}^N \bigr\rangle_{\! D } \Bigr) \,\rmd \bar{t} \\
 &\quad \lesssim\norm[\big]{\vct{u}^N_0 }_{\vct{H}^1 (D)} +  \delta^{-1} + \delta \Bigl(  \norm[\big]{p^N }_{ L^2 (I ; L^2 (D))}^2 +  \norm[\big]{\vct{u}^N\! (t)}_{\vct{H}^1 (D)}^2 + \norm[\big]{\vct{u}^N}_{L^2 (I ; \vct{L}^2 (D))}^2 \Bigr) 
\end{align*}
for any $\delta > 0$.
In addition, choosing~$\vct{v} = \vct{u}_0^N$ in \cref{eq:galerkin-d} and applying Korn's and Poincaré's inequality yields
\begin{align*}
\norm[\big ]{\vct{u}^N_0}_{\vct{H}^1 (D) } \le \norm{ \vct{f} (0 )}_{\vct{L}^2 (D) } + \norm{G \matr{\alpha }}_{\vct{L}^2 (D) } + \norm{p_0}_{L^2 (D) } .
\end{align*}
Thus, by using \Cref{asm:biot_sd}~\ref{asm:K_sd}, \cref{eq:apriori_sd_2} now yields the inequality
\begin{align}
\begin{split}
&(1 - \delta ) \norm[\big]{ \vct{u}^N\! (t) }_{\vct{H}^1 (D )}^2 + \norm[\big]{p^N \! (t) }^2_{L^2 (D)} +  \norm[\big]{\nabla p^N}_{L^2 ( 0, t; \, \vct{L}^2 (D)) } \\
&\hspace{3.5cm} \lesssim 1 +   \delta^{-1} + \delta \Bigl( \norm[\big]{\vct{u}^N}_{L^2 (I; \vct{L}^2 (D))}^2 + \norm[\big]{p^N }_{L^2 (I; L^2 (D))}^2 \Bigr) 
\end{split}%
\label{eq:apriori_sd_3}
\end{align}%
for any~$\delta > 0$. 
By integrating \cref{eq:apriori_sd_3} from~$0$ to~$T$ and choosing~$\delta > 0$ sufficiently small, we find
\begin{align}
\norm[\big]{ \vct{u}^N }_{L^2 (I ; \vct{H}^1 (D ))}^2 + \norm[\big]{p^N  }^2_{L^2 (I ; L^2 (D))} \lesssim 1 . \label{eq:apriori_sd_4}
\end{align}
Inserting \eqref{eq:apriori_sd_4} back into \cref{eq:apriori_sd_3} yields
\begin{align*}
\norm[\big]{ \vct{u}^N }_{L^\infty (I ; \vct{H}^1 (D ))}^2 + \norm[\big]{p^N  }^2_{L^\infty (I ; L^2 (D))} + \norm[\big]{\nabla p^N}_{L^2 ( I; \vct{L}^2 (D)) } \lesssim 1 .
\end{align*}

\emph{Step~2}. Next, we differentiate~$\vct{U}^N$ in~\cref{eq:galerkin_PN_UN-a} and find that 
\begin{align}
\bigl\langle \tsr{C} \matr{e} (\partial_t \vct{u}^N  ), \matr{e} ( \vct{v} ) \bigr\rangle_{\! D } - \bigl\langle \partial_t p^N\! \matr{\alpha} , \nabla \vct{v} \bigr\rangle_{ \! D } &= \bigl\langle \partial_t \vct{f} , \vct{v} \bigr\rangle_{\! D } \label{eq:apriori-dtun}
\end{align}
holds for all $\vct{v} \in \vct{V}^N$. 
Choosing $\vct{v} = \partial_t \vct{u}^N$ in \cref{eq:apriori-dtun} and $\phi = \partial_t p^N$ in \cref{eq:galerkin-c} yields
\begin{equation}
\begin{multlined}[c][0.875\displaywidth]
\bigl\langle \tsr{C} \matr{e} ( \partial_t \vct{u}^N ) , \matr{e} ( \partial_t \vct{u}^N ) \bigr\rangle_{\! D} + \bigl\langle \omega \partial_t p^N \! , \partial_t p^N \bigr\rangle_{\! D}  + \frac{1}{2} \frac{\rmd }{\rmd t} \bigl\langle \matr{K} \nabla p^N\!  , \nabla p^N \bigr\rangle_{\! D } \\
 = \bigl\langle \partial_t \vct{f} , \partial_t \vct{u}^N \bigr\rangle_{\! D } + \bigl\langle q , \partial_t p^N \bigr\rangle_{\! D }  .
 \label{eq:apriori_sd_5}
\end{multlined}
\end{equation}
Further, we integrate from~$0$ to~$t \in \bar{I}$ in \cref{eq:apriori_sd_5}. 
Then, by using \Cref{asm:biot_sd}~\ref{asm:omega_sd}, \ref{asm:C_sd}, and~\ref{asm:K_sd} and by applying Young's inequality on the right-hand side and Korn's and Poincaré's inequality for the first term on the left-hand side of~\cref{eq:apriori_sd_5}, we obtain 
\begin{align*}
\norm[\big]{\partial_t \vct{u}^N }^2_{L^2 ( 0, t ; \, \vct{H}^1 (D)) } + \norm[\big]{\partial_t p^N }^2_{L^2 (0, t ; \, L^2 (D)) } + \norm[\big]{\nabla p^N \! (t) }^2_{\vct{L}^2 (D) } \lesssim 1 + \bigl\langle \matr{K} \nabla p^N_0 \! , \nabla p^N_0 \bigr\rangle_{\! D} .
\end{align*}
Further, by definition of the space~$\Phi^N$, we have~$\nabla \cdot ( \matr{K} \nabla p^N ) \in \Phi^N $.  
Consequently, by using \cref{eq:galerkin-a}, \Cref{asm:biot_sd}~\ref{asm:p0_sd}, and integration by parts, we find
\begin{align*}
\bigl\langle \matr{K} \nabla p^N_0 , \nabla p^N_0 \bigr\rangle_{\! D } = - \bigl\langle \nabla \cdot \bigl( \matr{K } \nabla p^N_0 \bigr) , p_0 \bigr\rangle_{\! D } = \bigl\langle \matr{K} \nabla p^N_0 , \nabla p_0 \bigr\rangle_{\! D } 
\end{align*}
and hence $\norm{\nabla p^N_0 }_{\vct{L}^2 (D) } \le \norm{ \nabla  p_0 }_{\vct{L}^2 (D) }$.
Thus, we have 
\begin{align}
\norm[\big]{\partial_t \vct{u}^N }^2_{L^2 ( I ; \, \vct{H}^1 (D)) } + \norm[\big]{\partial_t p^N }^2_{L^2 (I ; \, L^2 (D)) } + \norm[\big]{\nabla p^N }^2_{L^\infty ( I ; \vct{L}^2 (D)) } \lesssim 1 . \label{eq:apriori_sd_6}
\end{align}

\emph{Step~3}. 
We weakly differentiate \cref{eq:galerkin-c} in time and obtain
\begin{align}
 \bigl\langle \omega \partial_{tt} p^N + \partial_{tt} \nabla \cdot ( \matr{\alpha} \vct{u}^N ) , \phi \bigr\rangle_{\! D  } +  \bigl\langle \matr{K}  \partial_t \nabla p^N \! , \nabla \phi \bigr\rangle_{\! D }  =  \bigl\langle \partial_t q , \phi \bigr\rangle_{\! D } 
 \label{eq:apriori_dttpN}
\end{align}
for all $\phi \in \Phi^N$ and almost all~$t\in I$.
By integrating from $0$ to~$t\in\bar{I}$ and choosing $\vct{v} = \partial_{tt} \vct{u}^N$ in \cref{eq:apriori-dtun} and $\phi = \partial_t p^N$ in \cref{eq:apriori_dttpN}, we find 
\begin{align}
\begin{split}
&(1-\delta ) \norm[\big ]{\partial_t  \vct{u}^N \! (t) }^2_{\vct{H}^1 (D)} + \norm[\big ]{\partial_t p^N \! (t) }^2_{L^2 (D)} + \norm[\big]{\partial_t \nabla p^N }_{L^2 (0,t; \vct{L}^2 (D))}^2 \\
&\hspace{2cm}\lesssim 1 + \delta^{-1} + \norm[\big ]{\partial_t \vct{u}^N \! (0) }^2_{\vct{H}^1 (D)} + \norm[\big ]{\partial_t p^N \! (0) }^2_{L^2 (D)} \\
&\hspace{4cm}+ \delta\Bigl( \norm[\big ]{\partial_t \vct{u}^N }^2_{L^2 (I ; \vct{L}^2 (D))} + \norm[\big ]{\partial_t p^N }^2_{L^2 (I ; L^2 (D))}  \Bigr)
\end{split}%
\label{eq:apriori_sd_7}%
\end{align}%
for any~$\delta > 0$, where we have used \Cref{asm:biot_sd}~\ref{asm:fq_sd}, \ref{asm:omega_sd}, \ref{asm:C_sd}, and~\ref{asm:K_sd}, applied Korn's, Poincaré's, and Young's inequality, and used the relation
\begin{align*}
\int_0^t \bigl\langle \partial_t \vct{f} , \partial_{tt} \vct{u}^N \bigr\rangle_{\! D} \,\rmd \bar{t} = \bigl\langle \partial_t \vct{f} , \partial_t \vct{u}^N \bigr\rangle_{\! D} \Bigr\vert_0^t -\int_0^t \bigl\langle \partial_{tt} \vct{f} , \partial_t \vct{u}^N \bigr\rangle_{\! D } \,\rmd\bar{t}  .
\end{align*}
Besides, considering \cref{eq:apriori_sd_5} at $t=0$ yields 
\begin{multline*}
\norm[\big ]{\partial_t \vct{u}^N \! (0) }_{\vct{H}^1 (D)}^2 + \norm[\big ]{\partial_t p^N \! ( 0 ) }_{L^2 (D)}^2 \\ 
\lesssim 1 + \norm[\big ]{\partial_t \vct{f} (0)}_{\vct{L}^2 (D) }^2 + \norm{q (0) }_{L^2(D)}^2 + \norm[\big]{\nabla \cdot \bigl( \matr{K} \nabla p^N_0 \bigr) }_{L^2 (D)}^2 ,
\end{multline*}
where we have used \Cref{asm:biot_sd}~\ref{asm:C_sd} and \ref{asm:omega_sd}, applied Korn's, Poincaré's, and Young's inequality, and integrated by parts to obtain
\begin{align*}
\bigl\langle \matr{K} \nabla p^N_0 , \partial_t \nabla p^N\! (0) \bigr\rangle_{\! D } &= - \bigl\langle \nabla \cdot \bigl( \matr{K} \nabla p^N_0 \bigr) , \partial_t p^N \! (0) \bigr\rangle_{\! D }  .
\end{align*}
By definition of the space~$\Phi^N$\!, we have
\begin{align*}
\nabla \cdot \bigl( \matr{K} \nabla p^N_0 \bigr) \in \Phi^N  \quad\ \text{and} \quad\ \matr{K} \nabla \bigl[ \nabla \cdot \bigl( \matr{K} \nabla p^N_0 \bigr) \bigr] \in  \vct{H}_\mathrm{div} (D) .
\end{align*}
Thus, using \Cref{asm:biot_sd}~\ref{asm:p0_sd} and~\ref{asm:K_sd}, integration by parts, and \cref{eq:galerkin-a}, we find 
\begin{align*}
\norm[\big ]{\nabla \cdot \bigl( \matr{K} \nabla p^N_0 \bigr) }^2_{L^2 (D) } &= \bigl\langle \nabla \cdot \bigl( \matr{K} \nabla \bigl[ \nabla \cdot \bigl( \matr{K} \nabla p^N_0 \bigl) \bigl] \bigl) , p_0  \bigr\rangle_{\! D } \\
&= \bigl\langle \nabla \cdot \bigl( \matr{K} \nabla p^N_0 \bigr) , \nabla \cdot \bigl( \matr{K} \nabla p_0 \bigr) \bigr\rangle_{\! D }  .
\end{align*}
and hence $\norm{\nabla \cdot ( \matr{K} \nabla p^N_0 ) }_{L^2 (D) } \le \norm{\nabla \cdot ( \matr{K} \nabla p_0 ) }_{L^2 (D) }$.
Now, with \cref{eq:apriori_sd_6}, \cref{eq:apriori_sd_7} yields 
\begin{align*}
\norm[\big]{\partial_t \vct{u}^N}_{L^\infty (I ; \vct{H}^1 (D))} + \norm[\big ]{\partial_t p^N }_{L^\infty (I ; L^2 (D))} + \norm[\big ]{\partial_t \nabla p^N }_{L^2 ( I ; \vct{L}^2 (D))} \lesssim 1 .
\end{align*} 
\end{proof}
With the estimates in \Cref{lem:apriori_sd} we can now take the limit~$N\rightarrow \infty$ in the Galerkin system~\eqref{eq:galerkin} and prove~\Cref{thm:wellposed_biot_sd}.
\begin{proof}[Proof of \Cref{thm:wellposed_biot_sd}]
As a consequence of \Cref{lem:apriori_sd}, there exist functions 
\begin{align*}
p \in H^1 ( I ; H^1_{0;\partial D_\rmD} \! (D)) \cap L^\infty ( I ; H^1 (D)) \cap W^{1,\infty } (I ; L^2 (D) ) \quad\text{and}\quad \vct{u} \in W^{1,\infty } (I ; \vct{H}^1_0 (D))
\end{align*}
and a strictly increasing subsequence~$\{ N_k \}_{k \in \mathbb{N}} \subset \mathbb{N}$ such that 
\begin{subequations}
\begin{alignat}{2}
\label{eq:pN_conv_sd} p^{N_k} &\rightharpoonup p \quad\enspace &&\text{in } H^1 \bigl( I ; H^1 (D) \bigr)  , \\
p^{N_k} &\overset{*}{\rightharpoonup} p \quad\enspace &&\text{in } L^\infty \bigl( I ; H^1 (D) \bigr) \cap W^{1,\infty } \bigl( I ; L^2 (D) \bigr) , \\
\label{eq:uN_conv_sd} \vct{u}^{N_k} &\rightharpoonup \vct{u} \quad\enspace &&\text{in } H^1 \bigl( I ; \vct{H}^1 (D) \bigr) , \\
\vct{u}^{N_k} &\overset{*}{\rightharpoonup} \vct{u} \quad\enspace &&\text{in } W^{1, \infty } \bigl( I ; \vct{H}^1 (D) \bigr)
\end{alignat}
\end{subequations}
as $k \rightarrow \infty$. 
By using the weak convergences~\eqref{eq:pN_conv_sd} and~\eqref{eq:uN_conv_sd} to take the limit $N_k \rightarrow \infty$ in the Galerkin system~\eqref{eq:galerkin}, we find that the limit functions~$p$ and~$\vct{u}$ solve the Biot system~\eqref{eq:biot_sd_weak}.

It remains to prove uniqueness. 
For this, we consider a solution~$(p, \vct{u} )$ to the homogeneous system corresponding to \cref{eq:biot_sd_weak} (i.e., $\vct{f} \equiv \vct{0}$, $q \equiv 0$, $p_0 \equiv 0$, $\vct{u}_0 \equiv \vct{0}$ and $G \equiv 0$).
Choosing $\vct{v} = \partial_t \vct{u}$ and $\phi = p$ in \cref{eq:biot_sd_weak} and integrating from $0$ to~$t\in\bar{I}$ yields
\begin{multline*}
\norm{\vct{u} (t) }^2_{\vct{H}^1 (D) } + \norm{p (t) }^2_{L^2 (D) } \lesssim \tfrac{1}{2} \bigl\langle \tsr{C} \matr{e} ( \vct{u} (t) ) , \matr{e} ( \vct{u} (t) ) \bigr\rangle_{\! D } \\
 +  \tfrac{1}{2} \bigl\langle \omega p (t), p(t) \bigr\rangle_{\! D} + \int_0^t \bigl\langle \matr{K} \nabla p , \nabla p \bigr\rangle_{\! D } \,\rmd \bar{t} = 0
\end{multline*}
and hence $p\equiv 0 $ and $\vct{u} \equiv \vct{0}$ so that uniqueness follows.
\end{proof}
\end{appendices}

\printbibliography

\end{document}